\numberwithin{equation}{section}
\def\be{\begin{equation}}
\def\ee{\end{equation}}
\def\bea{\begin{eqnarray}}
\def\eea{\end{eqnarray}}
\def\veps{\varepsilon}
\def\cS{{\mathcal S}} % changed; interferes with prior LaTeX command
\def\cH{{\mathscr H}} % for Hausdorff measure
\def\cM{{\mathscr M}} % for Minkowski content
\def\cL{{\mathscr L}}
\def\R{{\mathbb R}}
\def\F{{\mathcal F}}
\def\N{{\mathbb N}}
\def\Z{{\mathbb Z}}
\def\bS{{\mathbb S}} % for sphere (changed; interferes with paragraph sign)
\def\H{{\mathbb H}} % for hyperbolic space
\def\One{{\mathbbm 1}} % indicator function
\def\cI{{\mathcal I}}
\newcommand{\M}{{\mathcal M}}
\newcommand{\vol}{\mathrm{vol}} 
\newcommand{\supp}{{\rm supp}\ } 
\newcommand{\Lip}{\mathrm{Lip}}
\def\bal{\begin{aligned}}
\def\eal{\end{aligned}}
\def\proofof#1{\begin{proof}[Proof of #1]}
\def\part#1#2{\par\noindent{\underline{\it Part~#1.}}\emph{ #2}\\}
\def\({\left(}
\def\){\right)}
\DeclareMathOperator{\diam}{\rm diam}
\DeclareMathOperator{\inter}{\rm Int}
\DeclareMathOperator*{\argmin}{arg\,min}
\newcommand*\di{\mathop{}\!\mathrm{d}}
\newcommand{\mres}{\mathbin{\vrule height 1.6ex depth 0pt width
0.13ex\vrule height 0.13ex depth 0pt width 1.3ex}}
\def\XXint#1#2#3{{\setbox0=\hbox{$#1{#2#3}{\int}$} \vcenter{\vspace{-1pt}\hbox{$#2#3$}}\kern-.5\wd0}}
\theoremstyle{plain}
\newtheorem{lemma}{Lemma}[section]
\newtheorem{prop}[lemma]{Proposition}
\newtheorem{theorem}[lemma]{Theorem}
\newtheorem{example}[lemma]{Example}
\newtheorem{corol}[lemma]{Corollary}
\newtheorem{defin}[lemma]{Definition}
\newtheorem{remark}[lemma]{Remark}
\newtheorem*{remark*}{Remark}
\newtheorem*{recall*}{Recall}
\newtheorem*{notation*}{Notation}
\newtheorem*{hypothesis*}{Hypothesis}
\newtheorem*{theorem*}{Theorem}
\newtheorem*{conj*}{Conjecture}
\newcounter{mt}
\begin{document}

\title[Asymptotic quantization on metric measure spaces]{Asymptotics of the quantization problem on metric measure spaces}
\author{Ata Deniz Ayd{\i}n}
\address{ETH Z\"urich, Department of Mathematics, R\"amistrasse 101, 8092 Z\"urich, Switzerland.}
\email[Ata Deniz Ayd{\i}n]{deniz.aydin@math.ethz.ch}

\begin{abstract}
The problem of \emph{quantization of measures} looks for best approximations of probability measures on a metric space by discrete measures supported on $N$ points, where the error of approximation is measured with respect to the Wasserstein distance. 
\emph{Zador's theorem} states that, for measures on $\R^d$ or $d$-dimensional Riemannian manifolds satisfying appropriate integrability conditions, the quantization error decays to zero as $N \to \infty$ at the rate $N^{-1/d}$.

In this paper, we provide a general treatment of the asymptotics of quantization on metric measure spaces $(X, \nu)$. We show that a weaker version of Zador's theorem involving the Hausdorff densities of $\nu$ holds also in this general setting. 
We also prove Zador's theorem in full for appropriate $m$-rectifiable measures on Euclidean space, answering a conjecture by Graf and Luschgy in the affirmative.
For both results, the higher integrability conditions of Zador's theorem are replaced with a general notion of \emph{$(p,s)$-quantizability}, which follows from Pierce-type (non-asymptotic) upper bounds on the quantization error, and we also prove multiple such bounds at the level of metric measure spaces.
\end{abstract}

\maketitle

\section{Introduction}

Given a finite Borel measure $\mu$ on a complete and separable (\emph{Polish}) metric space $X$, the \emph{quantization problem} asks to find a measure supported on $N$ points which minimizes the $p$-Wasserstein distance to $\mu$:
\[ e_{N,p}(\mu) := \inf_{\mu_N = \sum_{i=1}^N a_i \delta_{x_i}} W_p(\mu, \mu_N). \]
{ This problem is meaningful for measures with finite $p$th moments, i.e.~$\int d(\cdot,x_0)^p \di\mu < \infty$ for some (equivalently all) $x_0 \in X$. }
In terms of the support $\supp\mu_N = \{x_i\}_{i=1}^N$, the problem reduces to the following minimization problem over sets of cardinality at most $N$ {(cf.~\cite[Lem.~3.4]{quantbook})}:
\begin{equation}\label{eq:setmin}
e_{N,p}(\mu) = \inf_{\# S \leq N} e_p(\mu; S); \quad e_p(\mu; S) := \|d(\cdot, S)\|_{L^p(X; \mu)} = \left\{ \begin{matrix} \left( \int_X d(x, S)^p \di\mu(x) \right)^{1/p}, & p < \infty; \\ \sup_{x \in \supp\mu} d(x, S), & p = \infty. \end{matrix} \right. 
\end{equation}

For example, suppose $X$ represents a geographical region, and $\mu$ describes the distribution of a population on the region. The points in $S$ are then chosen to be optimally reachable by the population in an $L^p$ average sense. For $p = 1$, $S$ is chosen such that the average distance to $S$ is minimized; as $p \to \infty$, $S$ is chosen such that the maximal distance to $S$ is minimized; other values of $p$ interpolate between these problems. 

{For other equivalent formulations of the quantization problem, we refer to \cite[Sect.~3]{quantbook} and \cite{Gruber2004}; while existing literature is mostly restricted to the setting $X = \R^d$, these equivalences extend readily to the metric setting.} We give a broad survey of the history and applications of the quantization problem in Section \ref{sect:hist}.

In domains such as $\R^d$ and Riemannian manifolds, under appropriate decay assumptions on $\mu$, the quantization error $e_{N,p}(\mu)$ is known to decay on the order of $N^{-1/d}$ as $N \to \infty$, where $d$ represents the dimensionality of the domain. For measures on $\R^d$, \emph{Zador's theorem} \cite{zador}, \cite{buckwise}, \cite[Thm.~6.2]{quantbook} provides the following precise expression for the asymptotics of the quantization error:
\begin{equation}\label{eq:zadorasym0}
\lim_{N \to \infty} N^{1/d} e_{N,p}(\mu) = C_{p,d} \left( \int_{\R^d} \rho(x)^{\frac{d}{d+p}} \di x \right)^{\frac{d+p}{dp}},
\end{equation}
provided the following higher integrabiity condition holds:
\begin{equation}\label{eq:momentcond0}
\int_{\R^d} |x|^{p+\delta} \di\mu(x) < \infty \quad \text{for some } \delta > 0, 
\end{equation}
where $\rho \in L^1(\R^d)$ the density of the absolutely continuous component of $\mu$ with respect to the Lebesgue measure, and $C_{p,d} \in (0,\infty)$ is a constant, in general unknown, which depends only on $p$ and $d$. 

Graf and Luschgy \cite[Thm.~7.5]{quantbook} prove moreover that for $\mu$ absolutely continuous, optimal quantizers are asymptotically distributed according to the density
\[ \frac{\rho^{\frac{d}{d+p}} }{\int_{\R^d} \rho^{\frac{d}{d+p}} \di x}. \]

The complete proof of Zador's theorem, as given by Graf and Luschgy \cite[Thm.~6.2]{quantbook}, starts from the uniform measure on the unit cube and generalizes to broader and broader classes of measures by approximation.
This argument relies on \emph{Pierce's lemma} \cite[Thm.~1]{pierce}, \cite[Lem.~6.6]{quantbook}, which gives a uniform integral upper bound on the quantization error:
\begin{equation}\label{eq:pierce}
\left( N^{1/d} e_{N,p}(\mu) \right)^p \leq C \int_{\R^d} (1 + |x|^{p+\delta}) \di \mu(x) 
\end{equation}
for arbitrary $\mu$ and $N \in \N$, with $C > 0$ depending only on $p, d$ and $\delta$. 

Similar \emph{Pierce-type} non-asymptotic upper bounds on the quantization error are key to extending Zador's theorem to more general domains. For compactly supported measures on Riemannian manifolds, Zador's theorem follows by a direct reduction to $\R^d$ along finitely many small charts \cite{Gruber2001, Gruber2004, discapprox, iacasym}, while measures of non-compact support require stronger integrability conditions, which include an additional term involving the large-scale growth of the manifold \cite[Thm.~1.4]{iacasym}, \cite[Thm.~1.7]{covgrow}. For example, on the hyperbolic space $\H^d$, Zador's theorem can only be shown to hold under an exponential moment condition $\int e^{p d(x,x_0)} \di \mu(x) < \infty$ \cite[Cor.~1.6]{iacasym}, and no polynomial moment condition is sufficient \cite[Thm.~1.7]{iacasym}. We review these results in more detail in Section \ref{sect:hist}.

The same decay rate of $N^{-1/s}$ has also been established for various classes of measures with fractal dimension $s \in (0,\infty)$, but without the existence of the limit $\lim_{N \to \infty} N^{1/s} e_{N,p}(\mu)$. 
\emph{Ahlfors regular} measures, which satisfy uniform concentration inequalities of the form $\alpha r^s \leq \mu(B_r(x)) \leq \beta r^s$, admit quantitative estimates
\[ C_1 \beta^{-1/s} \leq \liminf_{N \to \infty} N^{1/s} e_{N,p}(\mu) \leq \limsup_{N \to \infty} N^{1/s} e_{N,p}(\mu) < C_2 \alpha^{-1/s} \]
for explicit constants $C_2 > C_1 > 0$ \cite[Sect.~12]{quantbook}, \cite[Sect.~4]{discapprox}. 

For the particular case of compact rectifiable curves $\Gamma \subset \R^d$, equipped with the Hausdorff measure $\mu = \cH^1|_\Gamma$, Graf and Luschgy \cite[Thm.~13.12]{quantbook} showed that the following analogue of Zador's theorem does hold:
\[ \lim_{N \to \infty} N e_{N,p}(\cH^1|_\Gamma) = C_{p,1} \cH^1(\Gamma)^{\frac{1+p}{p}}. \]
% Graf and Luschgy have also conjectured \cite[Rem.~13.13]{quantbook} that the limit $\lim_{N \to \infty} N^{1/m} e_{N,p}(\mu)$ exists for all {$m$-rectifiable measures} $\mu$ on $\R^d$, a conjecture that to our knowledge has remained open. 
{
In light of this result, Graf and Luschgy have made the following conjecture regarding rectifiable measures on $\R^d$:
\begin{conj*}[Graf and Luschgy {\cite[Rem.~13.13]{quantbook}}]
Let $\mu$ be an \emph{$m$-rectifiable measure} on $\R^d$, i.e.~$m \in \{1, \ldots, d\}$, $\mu$ is absolutely continuous with respect to $\cH^m$, and $\mu$ is supported on a countable union of $C^1$ manifolds. Then for all $1 \leq p \leq \infty$, the limit
\[ \lim_{N \to \infty} N^{1/m} e_{N,p}(\mu) \]
exists and lies in $(0,\infty)$.
\end{conj*}
We answer this conjecture in the case $p < \infty$ under a more general definition of rectifiability, obtaining also the explicit value of the limit under appropriate assumptions, and providing counterexamples to the conjecture when the assumptions are not satisfied.}

\subsection{Main definitions and results}

For general measures on Polish metric spaces, we characterize the asymptotics of the quantization error in terms of \emph{quantization coefficients}:

\begin{defin}
The \emph{lower} resp.~\emph{upper quantization coefficient} of $\mu$ of order $p \in [1,\infty]$ and dimension $s \in (0,\infty)$ is given by
\[ \underline Q_{p,s}(\mu) := \liminf_{N \to \infty} N^{1/s} e_{N,p}(\mu); \quad \overline Q_{p,s}(\mu) := \limsup_{N \to \infty} N^{1/s} e_{N,p}(\mu). \]
If the limit exists, it is called the \emph{quantization coefficient} and denoted by $Q_{p,s}(\mu)$.
\end{defin}

{The quantization coefficients of $\mu$ are nontrivial only for a single value of $s$, called the lower resp.~upper \emph{quantization dimension} of $\mu$ and denoted by $\underline D_p(\mu)$ resp.~$\overline D_p(\mu)$: $\underline Q_{p,s}(\mu) = \infty$ for $s < \underline D_p(\mu)$ and $\underline Q_{p,s}(\mu) = 0$ for $s > \underline D_p(\mu)$, and likewise for $\overline Q_{p,s}(\mu)$ \cite[Prop.~11.3]{quantbook}. We review quantization dimensions in Section \ref{sect:quantdims}, but will focus mostly on quantitative estimates on quantization coefficients.}

For $p < \infty$, we also introduce the following notion which captures the role of higher integrability conditions for measures on $\R^d$ or Riemannian manifolds: 

\begin{defin}
A {finite Borel measure $\mu$ with finite $p$th moments} is \emph{$(p,s)$-quantizable of order $p \in [1,\infty)$ and dimension $s \in (0,\infty)$} if $\overline{Q}_{p,s}(\mu_k) \to 0$ for any sequence of measures $\mu_k \leq \mu$ such that $\mu_k(X) \to 0$.

{Equivalently, for each $\veps > 0$ there exists $\delta > 0$ such that any measure $\nu \leq \mu$ with $\nu(X) < \delta$ satisfies $\overline{Q}_{p,s}(\nu) < \veps$.}
\end{defin}

This condition is named by analogy with the notion of O-quantizability defined by Zador \cite{zador}, which concerns instead the existence of the limit.
{
{This assumption signifies that regions of small measure have uniformly small quantization errors at an $O(N^{-1/s})$ rate, and is the minimal condition required for various approximation arguments in the proof of Zador's theorem to carry over to the metric space setting.}
% This condition signifies that regions of small measure can be quantized at a uniform $O(N^{-1/s})$ rate; see also Remark \ref{rem:psstabstat}.

Importantly, Pierce-type integral upper bounds of the form $\overline{Q}_{p,s}(\mu)^p \leq \int F \di \mu$ imply that every measure satisfying $\int F \di \mu < \infty$ is $(p,s)$-quantizable. 
Bounds of this form will be the most common tool we will use to obtain the $(p,s)$-quantizability of measures in various settings.
}

The condition of $(p,s)$-quantizability will allow us to obtain fine estimates of quantization coefficients on general metric measure spaces, and also show the existence of the limit for rectifiable measures on $\R^d$. 
These results will follow from general properties of quantization coefficients under operations such as countable sums, abstracted from proofs of Zador's theorem but valid for general Polish metric spaces, which are presented in Section \ref{sect:prel} and proven in full in Appendix \ref{app:quantcoeffs}.

\subsubsection{Density bounds in metric measure spaces}
In Section \ref{sect:densbds}, we characterize the quantization coefficients of measures in the general setting of \emph{metric measure spaces}, i.e.~a Polish metric space $X$ endowed with a locally finite Borel measure $\nu$. This includes $(\R^d, \cL^d)$ and $(M, \vol_g)$ for a Riemannian manifold $(M, g)$, but also allows for non-smooth domains. We seek to find analogues of Zador's theorem for such spaces, with the measure $\nu$ taking the place of the Lebesgue measure or the Riemannian volume form.

In this general setting, while the existence of the limit in Zador's theorem does not carry over, we can still obtain matching lower and upper bounds on quantization coefficients. We first recall the notion of upper and lower densities:

\begin{defin}
The $s$-dimensional \emph{upper} resp.~\emph{lower (Hausdorff) density} of $\nu$ at a point $x \in X$ is given by the upper resp.~lower limit
\[ \overline{\vartheta}^{(\nu)}_s(x) := \limsup_{r\to0^+} \frac{\nu(B_r(x))}{\omega_s r^s}; \quad \underline{\vartheta}^{(\nu)}_s(x) := \liminf_{r\to0^+} \frac{\nu(B_r(x))}{\omega_s r^s}, \]
where $\omega_s := \frac{\pi^{s/2}}{\Gamma(1+s/2)}$ coincides with the volume of the unit $s$-dimensional ball for $s$ integer.

For $\delta > 0$ fixed, we also consider the approximate densities
\[ \overline{\vartheta}^{(\nu)}_s(x, \delta) := \sup_{r < \delta} \frac{\nu(B_r(x))}{\omega_s r^s}; \quad \underline{\vartheta}^{(\nu)}_s(x, \delta) := \inf_{r < \delta} \frac{\nu(B_r(x))}{\omega_s r^s}, \]
so that $\overline{\vartheta}^{(\nu)}_s(x) = \lim_{\delta \to 0} \overline{\vartheta}^{(\nu)}_s(x, \delta)$ and $\underline{\vartheta}^{(\nu)}_s(x) = \lim_{\delta \to 0} \underline{\vartheta}^{(\nu)}_s(x, \delta)$.
\end{defin}

For a $d$-dimensional Riemannian manifold $(M, g)$, the $d$-dimensional Hausdorff density for the Riemannian volume form $\vol_g$ exists and is identically $1$. 
Observe that $\nu$ is Ahlfors regular of dimension $s$ iff $\nu$ admits uniform bounds of the form $\alpha \leq \underline{\vartheta}^{(\nu)}_s(\cdot, \delta) \leq \overline{\vartheta}^{(\nu)}_s(\cdot, \delta) \leq \beta$ on $\supp\nu$.

Hausdorff densities capture the dimensionality of $\nu$ as follows: if $\underline{\vartheta}^{(\nu)}_s(x) > 0$, then $\nu$ is at most as diffuse as an $s$-dimensional measure; likewise $\overline{\vartheta}^{(\nu)}_s(x) < \infty$ implies that $\nu$ is at least $s$-dimensional. This is made precise by the notion of upper resp.~lower \emph{local dimensions} for measures, cf.~\cite[Sect.~10.1]{falc97}, {which we review also in Section \ref{sect:quantdims}}. 

With this definition, we can state the precise result that we will prove in Section \ref{sect:densbdproof}:

\begin{theorem}[Hausdorff density bounds] \label{thm:densbd}
Let $(X, \nu)$ be a Polish metric measure space, $p \in [1,\infty)$, $s \in (0,\infty)$, and set $p^\prime := \frac{sp}{s+p}$.
Then for any {finite Borel measure $\mu$ on $X$ with finite $p$th moments}, 
\[ \underline{Q}_{p,s}(\mu)^{p^\prime} \geq C_1 \int \rho(x)^{\frac{s}{s+p}} \overline{\vartheta}^{(\nu)}_s(x)^{-\frac{p}{s+p}} \di\nu(x), \]
and if in addition $\mu$ is $(p,s)$-quantizable and $\underline{\vartheta}^{(\nu)}_s > 0$ $\mu$-a.e.,
\[ \overline{Q}_{p,s}(\mu)^{p^\prime} \leq C_2 \int \rho(x)^{\frac{s}{s+p}} \underline{\vartheta}^{(\nu)}_s(x)^{-\frac{p}{s+p}} \di\nu(x), \]
where $C_2 > C_1 > 0$ are explicit constants depending only on $p$ and $s$, and $\rho$ is the density of the absolutely continuous component of $\mu$ with respect to $\nu$.
\end{theorem}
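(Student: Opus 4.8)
\emph{Strategy.} Both inequalities will be obtained by a patching argument: decompose $\mu$ into countably many pieces on which the density $\rho$ of the $\nu$-absolutely continuous part and the relevant Hausdorff density of $\nu$ are each confined to a multiplicative window $[(1+\eta)^{-1}c,\,c)$, prove the estimate on each piece, and reassemble using the countable super-/sub-additivity of $\underline{Q}_{p,s}^{p^\prime}$ and $\overline{Q}_{p,s}^{p^\prime}$ and the related stability properties from Section~\ref{sect:prel} (Appendix~\ref{app:quantcoeffs}). The exponent $p^\prime=\frac{sp}{s+p}$ is chosen precisely so that these powers behave additively while $Q_{p,s}(\lambda\,\nu|_A)^{p^\prime}$ is linear in $\lambda\,\nu(A)=\int_A\rho^{s/(s+p)}\di\nu$ in the model case $\rho\equiv\lambda$; the identities $(1/p+1/s)p^\prime=1$ and $p^\prime/s=p/(s+p)$ make the exponent arithmetic below consistent. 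For the lower bound only $\rho$ enters, so by monotonicity of $\underline{Q}_{p,s}$ we may assume $\mu=\rho\nu$; for the upper bound the $\nu$-singular part of $\mu$ will turn out negligible.

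\emph{Lower bound.} Fix $\eta>0$. Partition $\{0<\rho<\infty\}\cap\{0<\overline{\vartheta}^{(\nu)}_s<\infty\}$ into Borel sets $A_j$ on each of which $\rho$ and $\overline{\vartheta}^{(\nu)}_s$ lie in windows of ratio $1+\eta$ (tops $a_j$, $\beta_j$) and, after a further countable refinement, $\overline{\vartheta}^{(\nu)}_s(\cdot,\delta_j)\le\beta_j$ for one $\delta_j>0$. For $\mu_j:=\mu|_{A_j}$ and any $S$ with $\#S\le N$, the layer-cake identity $d(x,S)^p=p\int_0^\infty t^{p-1}\One[d(x,S)>t]\di t$ combined with $\mu_j(\overline{B}_t(x_i))\le 2^sa_j\beta_j\omega_s t^s$ whenever $\overline{B}_t(x_i)$ meets $A_j$ and $2t<\delta_j$ (enlarge the ball to one centred in $A_j$) gives
\[ e_p(\mu_j;S)^p\ \ge\ p\int_0^{\delta_j/2}t^{p-1}\big(\mu_j(X)-2^sa_j\beta_j\omega_s N t^s\big)_+\di t . \]
Evaluating this for $N$ large yields $N^{1/s}e_{N,p}(\mu_j)\gtrsim_{p,s}(a_j\beta_j)^{-1/s}\mu_j(X)^{1/p+1/s}$, hence, raising to the power $p^\prime$ and inserting $\mu_j(X)\ge(1+\eta)^{-1}a_j\nu(A_j)$ together with $\rho\asymp a_j$, $\overline{\vartheta}^{(\nu)}_s\asymp\beta_j$ on $A_j$ (ratio $1+\eta$),
\[ \underline{Q}_{p,s}(\mu_j)^{p^\prime}\ \ge\ c_{p,s}(1+\eta)^{-O(1)}\int_{A_j}\rho(x)^{\frac{s}{s+p}}\,\overline{\vartheta}^{(\nu)}_s(x)^{-\frac{p}{s+p}}\di\nu(x). \]
Summing over $j$ by countable superadditivity of $\underline{Q}_{p,s}^{p^\prime}$ and sending $\eta\to0$ gives the first inequality with an explicit $C_1=C_1(p,s)$; the part of $\{\rho>0\}$ where $\overline{\vartheta}^{(\nu)}_s=0$, on which the right-hand side is $+\infty$, is handled by the same computation as $\beta_j\to0$ and forces $\underline{Q}_{p,s}(\mu)=\infty$.

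\emph{Upper bound.} Since $\underline{\vartheta}^{(\nu)}_s>0$ $\mu$-a.e., the $\nu$-singular part $\mu^\perp$ of $\mu$ is carried by a $\nu$-null set on which $\underline{\vartheta}^{(\nu)}_s>0$; the covering estimate below, applied to compact subsets $K$ with $\nu(K)=0$ (so $\nu(\{\dist(\cdot,K)<r\})\to0$, which forces $N^{1/s}e_{N,p}\to0$), shows $\overline{Q}_{p,s}(\mu^\perp)=0$, so we may assume $\mu=\rho\nu$ (still $(p,s)$-quantizable, being $\le\mu$) and, discarding a trivial case, that $\int\rho^{s/(s+p)}(\underline{\vartheta}^{(\nu)}_s)^{-p/(s+p)}\di\nu<\infty$. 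Fix $\eta,\varepsilon>0$; partition $\{\rho>0\}$ into Borel $A_j$ with $\rho,\underline{\vartheta}^{(\nu)}_s$ in windows of ratio $1+\eta$ (bottoms $a_j^\flat$, $\alpha_j^\flat$; tops $a_j,\alpha_j$) and, after refinement, $\underline{\vartheta}^{(\nu)}_s(\cdot,\delta_j)\ge\alpha_j^\flat$ for one $\delta_j>0$; choose compact $K_j\subseteq A_j$ with $\mu(A_j\setminus K_j)<\varepsilon 2^{-j}$. For small $r<\delta_j$, a maximal $r$-separated set $\{y_i\}_{i\le M_r}\subseteq K_j$ has disjoint balls $B_{r/2}(y_i)$ with centres in $A_j$, so $M_r\,\alpha_j^\flat\omega_s(r/2)^s\le\nu(\{\dist(\cdot,K_j)<r\})\le(1+\eta)\nu(K_j)$ (downward continuity of $\nu$, $K_j$ compact), i.e. $M_r\lesssim\nu(K_j)/(\alpha_j r^s)$, while $\{B_r(y_i)\}$ covers $K_j$, so $e_{M_r,p}(\mu|_{K_j})\le r\,\mu(K_j)^{1/p}$. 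Choosing, for each large $N$, the smallest $r$ with $M_r\le N$ (then $r\asymp(\nu(K_j)/(\alpha_j N))^{1/s}$) and using $\mu(K_j)\le a_j\nu(K_j)$, $\rho\asymp a_j$, $\underline{\vartheta}^{(\nu)}_s\le\alpha_j$ on $K_j$ gives
\[ \overline{Q}_{p,s}(\mu|_{K_j})^{p^\prime}\ \lesssim_{p,s}\ (1+\eta)^{O(1)}\int_{K_j}\rho(x)^{\frac{s}{s+p}}\,\underline{\vartheta}^{(\nu)}_s(x)^{-\frac{p}{s+p}}\di\nu(x). \]
Summing by countable subadditivity of $\overline{Q}_{p,s}^{p^\prime}$ (the series converges by assumption) bounds $\overline{Q}_{p,s}(\sum_j\mu|_{K_j})^{p^\prime}$ by $(1+\eta)^{O(1)}$ times the target integral, while $\mu-\sum_j\mu|_{K_j}\le\mu$ has total mass $<\varepsilon$, so by $(p,s)$-quantizability its $\overline{Q}_{p,s}$ is at most some modulus $\omega(\varepsilon)\to0$. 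Sending $\varepsilon\to0$, then $\eta\to0$, finishes the proof, with an explicit $C_2=C_2(p,s)$.

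\emph{Main obstacle.} The crux is the upper-bound building block: turning the pointwise lower-density bound $\underline{\vartheta}^{(\nu)}_s\ge\alpha$ into uniform control, as $r\to0$, of the number of balls of radius $r$ needed to cover the compact piece $K_j$, while keeping $\nu(\{\dist(\cdot,K_j)<r\})$ comparable to $\nu(K_j)$ — this is exactly where inner regularity (to pass to compact $K_j$) and a separate refinement (to upgrade the pointwise $\liminf$ to a uniform-in-$x$ radius threshold) are needed — together with the reduction of the $\nu$-singular part by the same covering mechanism. Everything else is bookkeeping for the countable refinements and an appeal to the abstract properties of $\underline{Q}_{p,s}^{p^\prime}$ and $\overline{Q}_{p,s}^{p^\prime}$ from Section~\ref{sect:prel}.
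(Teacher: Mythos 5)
Your proof is correct and follows essentially the same route as the paper's: a layer-cake concentration lower bound with the $2^s$ ball-enlargement trick, a maximal-packing/covering upper bound on compact pieces, a countable partition into multiplicative windows for the densities reassembled via the super-/sub-additivity of $Q_{p,s}^{p^\prime}$, compact exhaustion plus $(p,s)$-quantizability for the remainders, and the same packing argument on $\nu$-null compacta for the singular part. The only difference is organizational — you run a single joint partition in $(\rho,\vartheta)$-windows where the paper first treats $\mu=\nu|_A$ (Lemmas \ref{lem:densbdd}--\ref{lem:nua}) and then folds in general $\rho$ via Proposition \ref{prop:genzadorineq} — which does not change the substance of the argument.
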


In particular, if the density $\vartheta^{(\nu)}_s$ exists or is bounded above and below, the quantization coefficients of $\mu$ will be sandwiched between different multiples of the same integral. 
{
In Section \ref{sect:examples} we demonstrate this with three example settings, also discussed below, where one can fix a measure $\nu$ whose upper and lower densities are comparable and bounded from below.
The free choice of $\nu$ thus extends the applicability of the theorem even when the Hausdorff densities of $\mu$ itself are difficult to estimate.

On the other hand, by choosing $\nu = \mu$ we can deduce qualitative relations between the quantization dimensions of $\mu$ and its Hausdorff and packing dimensions. The precise statement is given in Corollary \ref{cor:quantdims}, which also leads to a compactly supported self-similar counterexample to $(p,s)$-quantizability (Ex.~ \ref{ex:selfsimcounter}). 
}

The proof of Theorem \ref{thm:densbd} starts from preexisting concentration inequalities for Ahlfors regular measures, restated in Appendix \ref{app:conc}, and relaxes to broader and broader classes of measures using the general properties presented in Section \ref{sect:prel}. The constants $C_2 > C_1 > 0$ originating from these inequalities {are explicit} but will not be precise in general. As such, consequences of the existence of the limit such as the asymptotic spatial distribution of quantizers cannot be deduced from this result.

\subsubsection{Sufficient conditions for $(p,s)$-quantizability} 
We also use Hausdorff densities to formulate general sufficient conditions for measures on $(X,\nu)$ to be $(p,s)$-quantizable. We {generalize to the metric setting the \emph{random quantizer} arguments given in} \cite[Thm.~9.1, 9.2]{quantbook}, \cite[Thm.~2]{Potz2001} and \cite[Thm.~4.2, 4.3]{lossy-2021}, where quantizers for $\mu$ are sampled independently from another probability measure $\nu$, in order to obtain the following integral upper bound:

\begin{theorem}[Random quantizer condition]\label{thm:randstab}
Let $X$ be a Polish metric space, $p \in [1,\infty)$, $s \in (0,\infty)$. Suppose there exists a Borel probability measure $\nu$ on $X$ which satisfies the decay condition
\[ 1 - \nu(B_R(x_0)) \leq (\beta R)^{-\alpha} \quad \text{for all } R \geq R_0 \]
for some $x_0 \in X$ and constants $\alpha, \beta, R_0 > 0$. 
Then there exists {$C = C(p,s,\alpha,\beta,R_0) > 0$} such that, for any {finite Borel measure $\mu$ on $X$ with finite $p$th moments}, 
{$N \in \N$} and $\delta > 0$, the following inequality holds:
\[ N^{p/s} e_{N,p}(\mu)^p \leq C \int_X \left[ 1 + d(x,x_0)^p \right] \left[ 1 + (1 + \delta^{-p}) \underline{\vartheta}_s^{(\nu)}(x, \delta)^{-p/s} \right] \di \mu(x). \]
Consequently, if 
\[ \int_X \left[ 1 + d(x,x_0)^p \right] \underline{\vartheta}_s^{(\nu)}(x, \delta)^{-p/s} \di \mu(x) < \infty \]
for some $\delta > 0$ sufficiently small, then $\mu$ is $(p,s)$-quantizable.
\end{theorem}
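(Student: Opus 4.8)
The plan is to construct a random quantizer by sampling $N$ points i.i.d.\ from the given probability measure $\nu$, and to bound the expected quantization error $\E\, e_p(\mu; S_N)^p$ from above; since $V_{N,p}(\mu) = e_{N,p}(\mu)^p$ is the infimum over all $N$-point sets, this expected value dominates $V_{N,p}(\mu)$. The core estimate is pointwise: for a fixed $x \in X$, I want to control $\E\, d(x, S_N)^p$, where $S_N = \{Y_1, \dots, Y_N\}$ with $Y_i$ i.i.d.\ $\sim \nu$. Writing $q(x,r) := \nu(B_r(x))$, the complementary CDF of $d(x, S_N)$ is $\P(d(x,S_N) > r) = (1 - q(x,r))^N \le e^{-N q(x,r)}$, so that
\[ \E\, d(x, S_N)^p = \int_0^\infty p r^{p-1} (1 - q(x,r))^N \di r \le \int_0^\infty p r^{p-1} e^{-N q(x,r)} \di r. \]
The right-hand side must be split into a near-regime $r < \delta$ and a far-regime $r \ge \delta$. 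In the near-regime, I use the definition of the approximate lower density: for $r < \delta$, $q(x,r) = \nu(B_r(x)) \ge \omega_s\, \underline{\vartheta}_s^{(\nu)}(x,\delta)\, r^s$, so substituting $u = N \omega_s \underline{\vartheta}_s^{(\nu)}(x,\delta) r^s$ and recognizing a Gamma integral yields a bound of the form $C(p,s)\, N^{-p/s}\, \underline{\vartheta}_s^{(\nu)}(x,\delta)^{-p/s}$ on the part of the integral over $\{r < \delta\}$ — this is where the main term $N^{-p/s}\underline{\vartheta}_s^{(\nu)}(x,\delta)^{-p/s}$ comes from.

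The far-regime $r \ge \delta$ is the technical heart of the argument and where I expect the main obstacle. Here the local density gives no information, so I must use the global decay hypothesis $1 - \nu(B_R(x_0)) \le (\beta R_0)^{-\alpha}$ for $R \ge R_0$ — I suspect this should read as a decay in $R$, e.g.\ $(\beta R)^{-\alpha}$, and I will use it in that form. For $r \ge \delta$ and $x$ fixed, if $r$ is at least $\max\{\delta, 2 R_0, 2 d(x,x_0)\}$ or so, then $B_r(x) \supseteq B_{r/2}(x_0)$, hence $1 - q(x,r) \le 1 - \nu(B_{r/2}(x_0)) \le (\beta r/2)^{-\alpha}$, which is $\ll 1$ for $r$ large; combined with $(1-t)^N \le e^{-Nt}$ this forces the tail $\int r^{p-1}(1-q(x,r))^N \di r$ to be small once $N$ is large (this is the source of the threshold $N_0$ and of the factors $1 + d(x,x_0)^p$ and the $\delta^{-p}$ term, since the "intermediate" range $\delta \le r \lesssim \max\{R_0, d(x,x_0)\}$ contributes a term controlled by that range's length and the worst density there). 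The bookkeeping to extract exactly the stated bound $C\int [1+d(x,x_0)^p][1 + (1+\delta^{-p})\underline{\vartheta}_s^{(\nu)}(x,\delta)^{-p/s}]\di\mu(x)$ — in particular absorbing the intermediate range into the product form and checking uniformity of constants in $x$ — is the fussy part; I would organize it by crudely bounding $(1-q(x,r))^N \le (1-q(x,\delta))^N \le e^{-N \omega_s \underline{\vartheta}_s^{(\nu)}(x,\delta)\delta^s}$ on the intermediate range and using $e^{-Nt} \le C_p (Nt)^{-p}$ to convert it to the $N^{-p/s}$ scale, which produces the $\delta^{-p}$ factor multiplying $\underline{\vartheta}_s^{(\nu)}(x,\delta)^{-p/s}$.

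Finally, multiplying the pointwise bound by $N^{p/s}$ and integrating against $\mu$ gives $N^{p/s} V_{N,p}(\mu) \le N^{p/s}\E\, e_p(\mu;S_N)^p = \int_X N^{p/s}\E\, d(x,S_N)^p \di\mu(x)$, which is exactly the claimed inequality; the $1 + d(x,x_0)^p$ weight appears because the far-regime contribution is controlled only after comparison through $x_0$. For the consequence, suppose the displayed integral $\int_X [1+d(x,x_0)^p]\underline{\vartheta}_s^{(\nu)}(x,\delta)^{-p/s}\di\mu(x)$ is finite for some small $\delta$; then for any $\mu_k \le \mu$ with $\mu_k(X) \to 0$, the right-hand side of the main inequality applied to $\mu_k$ tends to $0$ by dominated convergence (the integrand is dominated by the $\mu$-integrable function and $\mu_k \le \mu$ with total mass vanishing forces the integrals to $0$ — more carefully, one uses absolute continuity of the finite measure $[1+d(\cdot,x_0)^p][1+(1+\delta^{-p})\underline{\vartheta}_s^{(\nu)}(\cdot,\delta)^{-p/s}]\di\mu$ with respect to $\mu$, so small $\mu$-mass sets have small integral). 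Hence $\limsup_N N^{p/s} V_{N,p}(\mu_k) \le C \varepsilon_k$ with $\varepsilon_k \to 0$, i.e.\ $\overline{Q}_{p,s}(\mu_k)^p = \limsup_N (N^{1/s}e_{N,p}(\mu_k))^p \to 0$, which is precisely $(p,s)$-quantizability of $\mu$.
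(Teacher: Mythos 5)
Your proposal is correct and follows essentially the same route as the paper: an i.i.d.\ random quantizer sampled from $\nu$, the layer-cake representation of $\E\, d(x,S_N)^p$, and a three-regime split of the radial integral (near range via the approximate lower density, intermediate range via $\nu(B_r(x)) \ge \nu(B_\delta(x))$, far range via the decay of $\nu$ around $x_0$, which indeed should read $(\beta R)^{-\alpha}$). The only slip is in the intermediate range, where the elementary inequality you need is $e^{-Nt} \le C\,(Nt)^{-p/s}$ rather than $(Nt)^{-p}$ in order to land on the $N^{-p/s}$ scale for every $s$; with that exponent the term becomes exactly $C\,\delta^{-p}\,\underline{\vartheta}_s^{(\nu)}(x,\delta)^{-p/s}$ as you state.
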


Note that since $\underline{\vartheta}_s^{(\nu)}(x, \delta) \nearrow \underline{\vartheta}_s^{(\nu)}(x)$, the condition becomes weaker as $\delta$ is decreased. {However, the $\underline{\vartheta}_s^{(\nu)}(x, \delta)$ term cannot be replaced with $\underline{\vartheta}_s^{(\nu)}(x)$ due to the $\delta^{-p}$ term in the upper bound. }

From this theorem, we deduce the following corollary for metric measure spaces $(X, \nu)$, where the measure $\nu$ is instead locally finite:

\begin{corol}[Volume growth condition]\label{cor:volgrowstab}
Let $X$ be a Polish metric space, $p \in [1,\infty)$, $s \in (0,\infty)$. 

Let $\nu$ be a locally finite Radon measure on $X$ such that, for some $x_0 \in \supp\nu$, $\beta > 1$ and $R_0 > 0$, there exists a nondecreasing function $V \colon [0,\infty) \to [0,\infty)$ such that
\[ \nu(B_{R+R_0}(x_0) \setminus B_{R}(x_0)) \leq V(R) \quad \text{for all } R \geq R_0. \]
{Then any finite Borel measure $\mu$ on $X$ with finite $p$th moments, which satisfies}
\[ \int_X \left[ 1 + {d(x,x_0)^p} + d(x,x_0)^{p+{(\alpha+1)} p/s} V(d(x,x_0))^{p/s} \right] \underline{\vartheta}_s^{(\nu)}(x, \delta)^{-p/s} \di \mu(x) < \infty \]
for some $\alpha, \delta > 0$, 
is $(p,s)$-quantizable.
\end{corol}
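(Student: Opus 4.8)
The plan is to deduce the corollary from Theorem~\ref{thm:randstab} by manufacturing, out of the locally finite measure $\nu$, a Borel \emph{probability} measure $\bar\nu$ on $X$ which has a polynomial tail around $x_0$ and whose lower density still controls $\underline\vartheta_s^{(\nu)}$. Fix $\alpha>0$ and $\delta>0$ small (and small relative to $(\beta-1)R_0$) for which the hypothesis of the corollary holds. For each dilation $t\in[1,\beta]$ consider the annuli $A_k^t:=B_{\beta^{k+1}tR_0}(x_0)\setminus B_{\beta^ktR_0}(x_0)$, on which $\nu(A_k^t)\le V(\beta^ktR_0)$ by the growth bound, form the probability measure
\[ \tilde\nu_t:=Z_t^{-1}\Big(\nu|_{B_{tR_0}(x_0)}+\sum_{k\ge0}\beta^{-k\alpha}\bigl(1+V(\beta^ktR_0)\bigr)^{-1}\,\nu|_{A_k^t}\Big) \]
(the normalisers $Z_t$ are bounded above and below uniformly in $t$: from above by $\nu(B_{\beta R_0}(x_0))+\sum_k\beta^{-k\alpha}<\infty$, since $\beta^{-k\alpha}(1+V(\beta^ktR_0))^{-1}\nu(A_k^t)\le\beta^{-k\alpha}$; from below by $\nu(B_{R_0}(x_0))>0$), and average:
\[ \bar\nu:=\tfrac1{\beta-1}\int_1^\beta\tilde\nu_t\,\di t. \]
Summing geometric tails shows each $\tilde\nu_t$, and hence $\bar\nu$, satisfies $1-\bar\nu(B_R(x_0))\lesssim R^{-\alpha}$ for $R\ge R_0$, which is the decay hypothesis of Theorem~\ref{thm:randstab}.

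The key step is the density comparison, and the annular boundary layers are the only real obstacle. Since the weights on $\tilde\nu_t$ decrease across annuli, $\tilde\nu_t(B_r(x))\ge Z_t^{-1}m_t(x,r)\,\nu(B_r(x))$, where $m_t(x,r)$ is the smallest weight among the at most three annuli of the $t$-grid meeting $B_r(x)$. Now fix $x$ with $d(x,x_0)>\beta R_0$: for every $t$ outside an exceptional set of measure $\lesssim\delta/d(x,x_0)$ (those $t$ for which $x$ lies within $\delta$ of a sphere of the $t$-grid), every ball $B_r(x)$ with $r<\delta$ sits inside the single annulus $A_{k(x,t)}^t$ containing $x$, so $m_t(x,r)=\beta^{-k(x,t)\alpha}\bigl(1+V(\beta^{k(x,t)}tR_0)\bigr)^{-1}\gtrsim d(x,x_0)^{-\alpha}\bigl(1+V(d(x,x_0))\bigr)^{-1}$ by $\beta^{k(x,t)}tR_0\le d(x,x_0)$ and monotonicity of $V$. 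Once $\delta$ is small the exceptional set has measure $\le\tfrac12(\beta-1)$, so averaging over $t$ and then taking $\inf_{r<\delta}$ gives
\[ \underline\vartheta_s^{(\bar\nu)}(x,\delta)\ \gtrsim\ d(x,x_0)^{-\alpha}\bigl(1+V(d(x,x_0))\bigr)^{-1}\,\underline\vartheta_s^{(\nu)}(x,\delta)\qquad\text{for }d(x,x_0)>\beta R_0, \]
and on the bounded region $B_{\beta R_0}(x_0)$ the weights are bounded below by a positive constant, so $\underline\vartheta_s^{(\bar\nu)}(x,\delta)\gtrsim\underline\vartheta_s^{(\nu)}(x,\delta)$ there. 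Averaging over dilations is exactly what makes this work without any regularity hypothesis on $V$: for generic $t$ the point $x$ lies in the interior of an annulus of the $t$-grid, so one never has to compare $V(d(x,x_0))$ with $V(d(x,x_0)+\delta)$.

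Finally, apply the conclusion of Theorem~\ref{thm:randstab} to $\bar\nu$: $\mu$ is $(p,s)$-quantizable as soon as $\int_X[1+d(x,x_0)^p]\underline\vartheta_s^{(\bar\nu)}(x,\delta)^{-p/s}\di\mu<\infty$, and by the comparison above (together with $[1+d(x,x_0)^p]\,d(x,x_0)^{\alpha p/s}\lesssim 1+d(x,x_0)^{p+\alpha p/s}$) the integrand is
\[ \lesssim\ \bigl(1+d(x,x_0)^{p+\alpha p/s}\bigr)\bigl(1+V(d(x,x_0))\bigr)^{p/s}\,\underline\vartheta_s^{(\nu)}(x,\delta)^{-p/s}. \]
Expanding this product, the top-order term $d(x,x_0)^{p+\alpha p/s}V(d(x,x_0))^{p/s}\underline\vartheta_s^{(\nu)}(x,\delta)^{-p/s}$ is precisely the leading part of the integrand in the corollary's hypothesis, while the remaining terms are dominated by $\int\underline\vartheta_s^{(\nu)}(x,\delta)^{-p/s}\di\mu$, $\int d(x,x_0)^p\di\mu$ and $\int d(x,x_0)^{p+\alpha p/s}\underline\vartheta_s^{(\nu)}(x,\delta)^{-p/s}\di\mu$ — all finite: the first and third by the hypothesis (splitting at a radius beyond which $V\ge1$; the degenerate case $V\equiv0$, where $\mu$ is forced to be essentially compactly supported, is immediate), the second since $\mu\in\M^p_+(X)$. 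Hence the integral is finite and the corollary follows; the only delicate point in making this rigorous is the boundary-layer estimate in the density comparison.
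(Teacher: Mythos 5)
Your proof is correct in substance and lands on the same reduction as the paper --- build a Borel probability measure out of $\nu$ with a polynomial tail around $x_0$ whose lower $s$-density dominates a multiple of $d(x,x_0)^{-\alpha}(1+V(d(x,x_0)))^{-1}\,\underline\vartheta_s^{(\nu)}(\cdot,\delta)$, then invoke Theorem~\ref{thm:randstab} --- but your construction differs at the key step. The paper avoids the boundary-layer problem entirely by using \emph{overlapping} annuli $A_k := B_{\beta^{(k+1)/2}R_0}(x_0)\setminus B_{\beta^{(k-1)/2}R_0}(x_0)$, which overlap with width at least $(\sqrt\beta-1)R_0$, so that every ball of radius $\delta\le\tfrac12(\sqrt\beta-1)R_0$ lies entirely inside a single annulus; it then weights the normalized restrictions $\nu|_{A_k}/\nu(A_k)$ geometrically and bounds $\nu(A_k)\le V(\beta^{(k-1)/2}R_0)\le V(d(x,x_0))$ at the end. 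You instead keep disjoint annuli and recover the same single-annulus property for generic grids by averaging over a continuum of dilations $t\in[1,\beta]$, discarding the exceptional $t$ (of measure $\lesssim\delta/d(x,x_0)$) for which $x$ sits within $\delta$ of a grid sphere. Your averaging device is a legitimate and somewhat more robust trick (it never requires comparing $V$ at nearby radii), but it buys nothing here that a half-power shift of the annuli does not, at the cost of measurability-in-$t$ technicalities, uniform two-sided bounds on $Z_t$, and the exceptional-set estimate; normalizing by $1+V(\beta^ktR_0)$ rather than by $\nu(A_k^t)$ is an inessential variant. One small imprecision in your final integrability step: the dichotomy ``$V\equiv0$'' versus ``there is a radius beyond which $V\ge1$'' is incomplete, since $V$ could be nonzero yet bounded above by a constant less than $1$; you should split instead at a radius $R_1$ beyond which $V\ge V(R_1)>0$ (which exists unless $V\equiv0$), after which the stray terms $\int d^{p+\alpha p/s}\underline\vartheta_s^{(\nu)}(\cdot,\delta)^{-p/s}\di\mu$ and $\int V^{p/s}\underline\vartheta_s^{(\nu)}(\cdot,\delta)^{-p/s}\di\mu$ are controlled by the hypothesis with constant $V(R_1)^{-p/s}$. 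This is a one-line fix (and the paper's own closing claim of ``equivalence'' of the two integral conditions glosses over the same point), so I do not regard it as a genuine gap.
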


{We prove these statements in Section \ref{sect:randquant}. In both statements, we can identify two main obstacles to $(p,s)$-quantizability: when the measure decays too slowly at infinity, and when the support or the domain itself is too thin as to cause the lower density to be too close to $0$. The aforementioned Example \ref{ex:selfsimcounter} is an example of the latter case, while the former case is illustrated e.g.~by the classical counterexample \cite[Ex.~6.4]{quantbook} to Zador's theorem. } 

{We also present in Section \ref{sect:examples} three example settings with different levels of generality: Ahlfors regular domains, doubling spaces and \emph{curvature-dimension spaces} in the sense of Lott--Sturm--Villani \cite{LottVillani, sturm, sturmii}. For each setting we apply Corollary \ref{cor:volgrowstab} to deduce tractable integrability conditions for $(p,s)$-quantizability, and deduce from Theorem \ref{thm:densbd} matching lower and upper bounds on quantization coefficients.}

\subsubsection{Zador's theorem for rectifiable measures}
Under the assumption of rectifiabillity, we {can further strengthen Theorem \ref{thm:densbd}}: in Section \ref{sect:zadorrect}, we show that Zador's theorem is indeed satisfied for $(p,m)$-quantizable $m$-rectifiable measures on $\R^d$, with the Lebesgue measure replaced by the $m$-dimensional Hausdorff measure {$\cH^m$}.
This answers the conjecture posed by Graf and Luschgy \cite[Rem.~13.13]{quantbook} in the positive, up to the assumption of $(p,m)$-quantizability. The complete statement is as follows:

\begin{theorem}[Zador's theorem, rectifiable measures on $\R^d$] \label{thm:zadorrect}
{Let $m, d \in \N$ with $m \leq d$, and let $\mu$ be a countably $m$-rectifiable finite Borel measure on $\R^d$ \emph{(not necessarily $\ll \cH^m$)} such that $\mu$ has finite $p$th moments for some $p \in [1,\infty)$.}
Then 
\begin{equation}\label{eq:zadorrectineq} 
\underline{Q}_{p,m}(\mu) \geq C_{p,m} \left( \int_{\R^d} \rho^{\frac{m}{m+p}} \di\cH^m \right)^{\frac{m+p}{mp}}, 
\end{equation}
where $\rho$ is the density of the absolutely continuous component of $\mu$ with respect to $\cH^m$. 

If moreover $\mu$ is $(p,m)$-quantizable, $Q_{p,m}(\mu)$ exists and coincides with the right-hand side. Consequently, if the right-hand side is finite and positive, then for any asymptotically optimal sequence $(S_N)_{N}$ of quantizers for $\mu$, we have
\[ \frac{1}{N} \sum_{x \in S_N} \delta_x \xrightharpoonup[N \to \infty]{} \frac{\rho^{\frac{m}{m+p}}}{ \int_{\R^d} \rho^{\frac{m}{m+p}} \di\cH^m } \cH^m. \]
\end{theorem}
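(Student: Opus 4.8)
The plan is to deduce the theorem from the classical Zador theorem on $\R^m$ \cite[Thm.~6.2]{quantbook} by exploiting the rectifiable structure, using the super- and sub-additivity of $\nu\mapsto\underline{Q}_{p,m}(\nu)^{p^\prime}$ and $\nu\mapsto\overline{Q}_{p,m}(\nu)^{p^\prime}$ under disjoint decompositions (Section \ref{sect:prel}) to localize, and the $(p,m)$-quantizability hypothesis to handle the infinitely many pieces. Write $Z:=\int_{\R^d}\rho^{m/(m+p)}\di\cH^m$; since $m/(m+p)=p^\prime/p$, the right-hand side of \eqref{eq:zadorrectineq} equals $C_{p,m}Z^{1/p^\prime}$, where $C_{p,m}=\lim_N N^{1/m}e_{N,p}(\cL^m\llcorner[0,1]^m)$ is the Euclidean Zador constant. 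The case $m=0$ is degenerate ($\mu$ is then atomic and $e_{N,p}(\mu)=0$ eventually), so I assume $m\ge1$ and write $\mu=\rho\,\cH^m\llcorner E$ with $E$ countably $m$-rectifiable and $\rho>0$ on $E$ (the set $\{\rho=0\}$ being $\mu$-null). The goal is then \eqref{eq:zadorrectineq} in general, and $Q_{p,m}(\mu)=C_{p,m}Z^{1/p^\prime}$ together with the empirical-measure limit under $(p,m)$-quantizability.

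First I would prove a decomposition lemma: for each $\varepsilon>0$ there is a countable Borel partition $\{A_k\}_k$ of $\cH^m$-almost all of $E$ into bounded sets, each carrying a bi-Lipschitz map $\Phi_k$ of distortion $\le1+\varepsilon$ onto a subset of an affine $m$-plane $\cong\R^m$ and on which $\rho$ oscillates by a factor $\le1+\varepsilon$. This follows by covering $\cH^m$-a.a.\ of $E$ by Lipschitz images of subsets of $\R^m$ (definition of countable $m$-rectifiability), subdividing at small scales into pieces on which the map is approximately affine with injective tangential derivative (Rademacher, discarding the $\cH^m$-null rank-deficient set), and intersecting with the level sets $\{(1+\varepsilon)^n\le\rho<(1+\varepsilon)^{n+1}\}$, $n\in\Z$. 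The uncovered remainder of $E$ is $\cH^m$-null, hence $\mu$-null, so $\mu=\sum_k\mu\llcorner A_k$. On a piece $A_k$, the push-forward $\tilde\mu_k=(\Phi_k)_\#(\mu\llcorner A_k)$ is a compactly supported, absolutely continuous measure on $\R^m$ whose density equals $\rho\circ\Phi_k^{-1}$ up to a factor $(1+\varepsilon)^{\pm m}$ (area formula), so $\int_{\R^m}\tilde\rho_k^{m/(m+p)}\di\cL^m=(1+o_\varepsilon(1))\int_{A_k}\rho^{m/(m+p)}\di\cH^m$ uniformly in $k$. Since a bi-Lipschitz map of distortion $L$ changes $e_{N,p}$ by a factor in $[L^{-1},L]$ and preserves $(p,m)$-quantizability, and since classical Zador on $\R^m$ applies to $\tilde\mu_k$ (compact support gives the moment condition) and yields $Q_{p,m}(\tilde\mu_k)=C_{p,m}(\int_{\R^m}\tilde\rho_k^{m/(m+p)}\di\cL^m)^{1/p^\prime}$, it follows that $Q_{p,m}(\mu\llcorner A_k)$ exists and $Q_{p,m}(\mu\llcorner A_k)^{p^\prime}=(1+o_\varepsilon(1))\,C_{p,m}^{p^\prime}\int_{A_k}\rho^{m/(m+p)}\di\cH^m$.

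To assemble the lower bound, I fix a finite subfamily $A_1,\dots,A_n$ and combine monotonicity with the finite super-additivity of $\underline{Q}_{p,m}(\cdot)^{p^\prime}$ under disjoint sums:
\[ \underline{Q}_{p,m}(\mu)^{p^\prime}\ \ge\ \sum_{k=1}^n\underline{Q}_{p,m}(\mu\llcorner A_k)^{p^\prime}\ \ge\ (1-o_\varepsilon(1))\,C_{p,m}^{p^\prime}\sum_{k=1}^n\int_{A_k}\rho^{m/(m+p)}\di\cH^m, \]
then let $n\to\infty$ and $\varepsilon\to0$ to get $\underline{Q}_{p,m}(\mu)^{p^\prime}\ge C_{p,m}^{p^\prime}Z$ (so also $\underline{Q}_{p,m}(\mu)=\infty$ when $Z=\infty$). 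Under $(p,m)$-quantizability each $\mu\llcorner A_k$ is $(p,m)$-quantizable, so the countable sub-additivity of $\overline{Q}_{p,m}(\cdot)^{p^\prime}$ valid for $(p,m)$-quantizable measures (Section \ref{sect:prel}) gives $\overline{Q}_{p,m}(\mu)^{p^\prime}\le\sum_k\overline{Q}_{p,m}(\mu\llcorner A_k)^{p^\prime}\le(1+o_\varepsilon(1))\,C_{p,m}^{p^\prime}Z$, hence $Q_{p,m}(\mu)=C_{p,m}Z^{1/p^\prime}$. For the asymptotic distribution, assume $0<Z<\infty$, take an asymptotically optimal $(S_N)_N$ and a finite Borel partition $\{B_j\}$ of $\R^d$ with $\mu(\partial B_j)=0$, and argue as in \cite[Thm.~7.5]{quantbook}: with $k_j^{(N)}=\#(S_N\cap B_j)$ and $Z_j=\int_{B_j}\rho^{m/(m+p)}\di\cH^m$, one has up to lower-order terms $e_p(\mu;S_N)^p\gtrsim\sum_j e_{k_j^{(N)},p}(\mu\llcorner B_j)^p$ (mass deep inside $B_j$ is served only by quantizer points near $B_j$), and inserting $Q_{p,m}(\mu\llcorner B_j)=C_{p,m}Z_j^{1/p^\prime}$ turns this into $N^{p/m}e_p(\mu;S_N)^p\gtrsim C_{p,m}^p\sum_j(k_j^{(N)}/N)^{-p/m}Z_j^{(m+p)/m}$; since the left side tends to $C_{p,m}^pZ^{(m+p)/m}$ and $\sum_j t_j^{-p/m}Z_j^{(m+p)/m}\ge Z^{(m+p)/m}$ for $t_j\ge0$, $\sum_j t_j\le1$, with equality only at $t_j=Z_j/Z$, strict convexity forces $k_j^{(N)}/N\to Z_j/Z$, and refining the partition yields the claimed weak convergence.

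The step I expect to be the main obstacle is the upper bound: first, making the decomposition lemma effective with all constants uniform in $\varepsilon$ (in particular the area-formula comparison of densities and the interaction between the bi-Lipschitz pieces and the level sets of $\rho$); and, more seriously, the countable sub-additivity of $\overline{Q}_{p,m}(\cdot)^{p^\prime}$ that allows summation over the infinitely many pieces — this is precisely where $(p,m)$-quantizability is indispensable and where the abstract results of Section \ref{sect:prel} do the work. By contrast, the distortion estimate for $e_{N,p}$ under bi-Lipschitz maps, the reduction of the flat model to the unit cube by scaling in space and in mass, and the partition-refinement argument for the empirical measures should be routine once those ingredients and classical Zador on $\R^m$ are in place.
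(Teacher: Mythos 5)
Your overall architecture is the same as the paper's: decompose the rectifiable set into countably many nearly-flat bi-Lipschitz pieces, apply classical Zador on $\R^m$ to each push-forward, reassemble with finite super-additivity for the lower bound and with countable sub-additivity plus $(p,m)$-quantizability for the upper bound, and derive the spatial distribution from the additivity-plus-convexity argument. However, there are two genuine gaps. First, you silently assume $\mu=\rho\,\cH^m\llcorner E$, but the theorem allows $\mu$ to have a component singular with respect to $\cH^m$ (this is why the statement speaks of "the density of the absolutely continuous component"): a countably $m$-rectifiable measure may charge an $\cH^m$-null subset of a Lipschitz image. For the lower bound this is harmless by monotonicity, but for the equality you must prove $Q_{p,m}(\mu^\perp)=0$, and this does not follow from your decomposition. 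The paper needs a separate argument here: Kneser's theorem gives $\cM^m(K)=\cH^m(K)=0$ for compact null subsets of single Lipschitz images, and a Minkowski-content/packing bound then forces the $\infty$-quantization coefficient (hence all $p$-coefficients) to vanish. Without something of this kind your proof only covers absolutely continuous $\mu$.

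Second, your lower bound relies on the claim that "a bi-Lipschitz map of distortion $L$ changes $e_{N,p}$ by a factor in $[L^{-1},L]$", but the bi-Lipschitz charts $\Phi_k$ are defined only on $A_k$, whereas quantizers for $\mu\llcorner A_k$ may be placed anywhere in $\R^d$ (the paper explicitly emphasizes this point). Bi-Lipschitz equivalence of the supports only controls the \emph{intrinsic} quantization error $e^{[\Phi_k(A_k)]}_{N,p}$; to dominate the ambient error $e^{[\R^d]}_{N,p}(\mu\llcorner A_k)$ from below you must be able to map an arbitrary quantizer $S\subset\R^d$ to a quantizer in $\R^m$ of comparable error, i.e.\ you need a Lipschitz extension of $\Phi_k$ (equivalently of $F^{-1}|_{F(K)}$) to all of $\R^d$. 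This is exactly where the paper invokes Kirszbraun's extension theorem, and it is the reason the result is claimed only for Euclidean (or Hilbert/Riemannian) ambient spaces; for the $1$-dimensional case the paper has to substitute McShane and a separate argument. Your decomposition lemma should therefore either produce $\Phi_k$ as restrictions of globally Lipschitz maps (e.g.\ compositions with orthogonal projections onto the approximating planes) or explicitly invoke Kirszbraun; as written, the step transferring the lower bound from $\R^m$ back to $\R^d$ does not go through. The remaining ingredients (level sets of $\rho$, reduction to the cube, the convexity argument for the empirical measures) are fine and match the paper's use of its Propositions on additivity and its Theorem on limiting spatial distributions.
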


{This result improves upon Theorem \ref{thm:densbd} in two ways: by the presence of the exact constant $C_{p,m}$ rather than lower and upper bounds, and by accounting for the singular component of $\mu$ (see Remark \ref{rem:densbdrect}). {In relation to the original conjecture of Graf and Luschgy, the theorem also allows for rectifiable measures with singular components, provides an explicit value of the quantization coefficient, and subsumes also the case $\int_{\R^d} \rho^{\frac{m}{m+p}} \di\cH^m = \infty$. } }

Theorem \ref{thm:zadorrect} applies in particular to measures supported on $C^1$ submanifolds with boundary or corners. We note however that unlike previous results on Riemannian manifolds, Theorem \ref{thm:zadorrect} also allows for points to be selected outside the submanifold on which the measure is supported. 
Though the quantization error depends a priori on the space from which quantizers are selected, the asymptotic rate remains the same in this case.

For ambient spaces other than $\R^d$, this statement should also hold if the ambient space is a Hilbert space or Riemannian manifold, but not for general metric spaces. 
For example, if we consider $X = (\R^d, \|\cdot\|_1)$, then Zador's theorem holds on $X$ but with a different constant in place of $C_{p,d}$ (see \cite[Thm.~6.2, Rem.~8.14]{quantbook}), hence Theorem \ref{thm:zadorrect} cannot also hold for $d$-rectifiable measures on $X$. 

Heuristically, the constant $C_{p,m}$ reflects the infinitesimal Euclidean structure of the approximate tangent spaces of rectifiable sets on $\R^d$. In the general metric setting, different tangent spaces will have different norms associated to them (cf.~\cite[Thm.~9]{kirch94}), hence the constant will also be a function to be included inside the integral. Compare the remark given in \cite[Sect.~2.2]{Gruber2004} relating to Riemannian vs.~Finslerian manifolds.

For the special case $m = 1$, however, all norms on $\R$ are equivalent, and we can indeed prove the same theorem on any Polish metric space:

\begin{theorem}[Zador's theorem, $1$-rectifiable measures] \label{thm:zadorrect1}
Let $X$ be a Polish metric space.
{Let $\mu$ be a countably $1$-rectifiable measure on $X$ with finite $p$th moments for some $p \in [1,\infty)$.}
Then
\begin{equation}\label{eq:zadorrectineq1} \underline{Q}_{p,1}(\mu) \geq C_{p,1} \left( \int_{X} \rho^{\frac{1}{1+p}} \di\cH^1 \right)^{\frac{p+1}{p}}, \end{equation}
where $\rho$ is the density of the absolutely continuous component of $\mu$ with respect to $\cH^1$. 

If moreover $\mu$ is $(p,1)$-quantizable, $Q_{p,1}(\mu)$ exists and coincides with the right-hand side. Consequently, if the right-hand side is finite and positive, then for any asymptotically optimal sequence $(S_N)_N$ of quantizers for $\mu$, we have
\[ \frac{1}{N} \sum_{x \in S_N} \delta_x \rightharpoonup \frac{\rho^{\frac{1}{1+p}}}{ \int_{X} \rho^{\frac{1}{1+p}} \di\cH^1 } \cH^1. \]
\end{theorem}

The proof of the theorem relies on the existence of arc length parametrizations for rectifiable curves. 
Both proofs employ classical Lipschitz extension theorems, namely the theorems of McShane and Kirszbraun, in order to map quantizers in the ambient domain back to $\R^m$ and thus reduce to the Euclidean case.

{We also remark that verifying the $(p,m)$-quantizability of rectifiable measures generally requires an approach specific to the support of the measure. This can be observed e.g.~for measures supported on $m$-dimensional embedded submanifolds of $\R^d$ with arbitrarily negative curvature. Even when the measure is compactly supported, for $m < d$ it does not follow that the measure is $(p,m)$-quantizable or even that the integral $\int_{\R^d} \rho^{\frac{m}{m+p}} \di\cH^m$ is finite; see Example \ref{ex:rectcounter} for a counterexample. {These examples thus illustrate that in general, the finiteness of the quantization coefficient as conjectured by Graf and Luschgy may not hold if the assumption of $(p,m)$-quantizability is not satisfied.} Section \ref{sect:psstabrect} discusses the $(p,m)$-quantizability of rectifiable measures in more detail, showing in particular that measures supported on compact sets with \emph{finite Minkowski content} are $(p,m)$-quantizable (Proposition \ref{prop:minconstab}).  }

\subsection{Historical notes}\label{sect:hist}

The quantization problem has been treated independently in various fields under multiple equivalent formulations. The bulk of the classical literature on the asymptotics of quantization falls under the scope of information theory, from which the name \emph{quantization} also originates, and dates back to the 1940s for $X = \R$ (\emph{scalar quantization}) \cite{bennett, PCM, pandit, lloyd} and the 1970s for $X = \R^d$ (\emph{vector quantization}) \cite{pierce, Gersho, zador, buckwise}. We refer to \cite{gersho92vector, grayneuhoff} for more detailed surveys of quantization in this context, and also to \cite[Sect.~3]{quantbook} for the different equivalent formulations of the quantization problem.

Independently, already in the 1950s Steinhaus \cite{steinhaus} introduced an equivalent formulation of the quantization problem on $\R^d$, in terms of sums of moments, and L. Fejes T\'oth \cite{Fejes-Toth:1959vj} treated the asymptotics of this formulation for $d = 2$, proving \eqref{eq:zadorasym0} with an explicit value for $C_{2,2}$ as the normalized moment of the regular hexagon. The latter result follows from Fejes T\'oth's more general \emph{moment-sum theorem} \cite[3.8]{Fejes-Toth:2023aa} (see also \cite{newman, Gruber:1999tm, Boroczky:2010}), which holds for any monotone function of the distance in place of the $p$th power, and implies the asymptotic optimality of the hexagonal lattice for the quantization of the unit square. 
This result has also been used to construct asymptotically optimal quantizers for non-uniform measures \cite{Su:1997aa}, and admits a stability version valid also on Riemannian $2$-manifolds \cite[Thm.~1]{Gruber2001}.
We refer to \cite{Gruber2001, Gruber2004} and \cite[8.3.6]{Fejes-Toth:2023aa} for a survey of the applications and generalizations of the moment-sum theorem, in particular for the approximation of convex bodies by polyhedra and the isoperimetric problem for polyhedra with a fixed maximum number of faces.

In light of the asymptotic optimality of the hexagonal lattice for $d = 2$, Gersho \cite{Gersho} conjectured the existence of lattice quantizers attaining the optimal constant $C_{2,d}$ also for $d \geq 3$; this conjecture remains open to this day. For $d = 1$ the conjecture holds trivially, with uniform interval partitions yielding optimal quantizers of $[0,1]$ for all $p$ and $N$, with the explicit formula $C_{p,1}^p = \frac{1}{2^p (p+1)}$ \cite[Ex.~5.5]{quantbook}. See \cite{barslo, dwyer, Choksi:2020aa} for partial results in dimension $3$, \cite{Gruber2004, Zhu:2020aa} for weaker results in more general settings, and \cite{Gersho, Conway:1999aa, cvt, cvt:2005} for numerics and further discussion. 

The original (incomplete) proofs of Zador's theorem as given in \cite{zador, buckwise}, as well as prior results for $d = 1$ \cite{bennett, pandit}, were based on a \emph{compander} approach, generating quantizers for nonuniform distributions as a nonlinear distortion of uniform quantizers and then optimizing the distortion, allowing also the limiting density of quantizers to be characterized directly. As remarked already by Gersho \cite[Sect.~IX]{Gersho}, this approach is not actually valid for $d \geq 2$: the limiting point densities for generic probability distributions $\rho \di x$ cannot be generated by such a distortion unless $\log \rho$ is a harmonic function, which rules out almost all applicable distributions in practice. The compander approach has nevertheless proven useful in more recent works on the dynamical optimization of quantizers by gradient flows, both for one-dimensional nonuniform measures \cite{Caglioti_2015} and for the stability of the hexagonal lattice for uniform measures in dimension $2$ \cite{Caglioti:2018, Iacobelli:2018tz}, motivating the study of the resulting \emph{ultrafast diffusion equations} in their own right \cite{Iacobelli:2019aa, Iacobelli:2019wz}.

The complete proof of Zador's theorem due to Graf and Luschgy \cite[Thm.~6.2]{quantbook} first shows that
\[ \lim_{N \to \infty} N^{1/d} e_{N,p}(\cL^d|_{[0,1]^d}) = \inf_{N \in \N} N^{1/d} e_{N,p}(\cL^d|_{[0,1]^d}) =: C_{p,d} > 0 \]
for the uniform measure on the unit cube, then treats finite convex combinations of cubes $\mu = \sum_{i=1}^m \lambda_i \cL^d|_{C_i}$, and finally approximates general densities $\rho \in L^1(\R^d)$ by such finite combinations, controlling the approximation error using Pierce's lemma. 
Recent proofs of the theorem for compactly supported measures on Riemannian manifolds \cite[Thm.~1]{Gruber2004}, \cite[Thm.~1.2]{discapprox}, \cite[Sect.~2]{iacasym} follow a similar strategy, taking cubes contained in sufficiently small coordinate charts on which the metric tensor is approximately constant. From this point, the theorem can be extended to measures of noncompact support \cite[Thm.~1.4]{iacasym}, \cite[Thm.~1.7]{covgrow} by making use of different upper bounds in place of Pierce's lemma, which does not hold on manifolds with negative curvature.

The first such Pierce-type upper bound on Riemannian manifolds was found by Iacobelli \cite[Thm.~3.1]{iacasym}, with an additional term involving the curvature of the manifold:
\begin{equation}\label{eq:iacmomentcond}
\left( N^{1/d} e_{N,p}(\mu) \right)^p \leq C \int_{M} \left[ 1 + d(x,x_0)^{p+\delta} + A_{x_0}(d(x,x_0))^p \right] \di \mu(x), 
\end{equation}
where $x_0$ is an arbitrary point on the manifold $M$, and the function $A_{x_0}(R)$ quantifies the magnitude of the differential of the exponential map $\exp_{x_0} \colon T_{x_0} M \to M$ on the sphere $\partial B_R(0) \subset T_{x_0} M$. If the sectional curvature of $M$ is lower bounded by some $-K^2 < 0$, this amounts to an exponential moment condition $\int_M e^{p K d(x,x_0)} \di\mu(x) < \infty$, which is shown to be sharp at least for the hyperbolic plane $\H^2$ \cite[Thm.~1.7]{iacasym}.

This upper bound was refined in \cite{covgrow} using a metric notion of \emph{covering growth}: a metric space $X$ is said to be of $O(f)$ covering growth of dimension $d$ around a point $x_0$ if, for any $k \in \N$ and $R > 0$, the geodesic sphere $\partial B_R(x_0) \subset X$ can be covered by $k^{d-1}$ balls of radius $\leq C f(R)/k$. 
If $X$ is a geodesic metric space of $O(f)$ covering growth of dimension $d$ around $x_0$, 
{ \cite[Thm.~1.6]{covgrow} shows that for any probability measure $\mu$ on $X$, the same upper bound as \eqref{eq:iacmomentcond} holds with $f$ in place of $A_{x_0}$:
\begin{equation}\label{eq:covcond}
\left( N^{1/d} e_{N,p}(\mu) \right)^p \leq C \int_{M} \left[ 1 + d(x,x_0)^{p+\delta} + f(d(x,x_0))^p \right] \di \mu(x).
\end{equation}
This upper bound extends Zador's theorem to measures on Riemannian manifolds for which the right-hand side is finite \cite[Thm.~1.7]{covgrow}. In particular, Riemannian manifolds with nonnegative Ricci curvature are shown to be of $O(R)$ covering growth \cite[Cor.~4.14]{covgrow}, and manifolds on which a group of polynomial growth acts by isometries also exhibit polynomial covering growth \cite[Prop.~1.8]{covgrow}. While these results were all presented in the scope of Riemannian manifolds, the Pierce-type upper bound and the estimates on covering growth do not involve the manifold structure of the space beyond its metric and volumetric properties, motivating the question of extending Zador's theorem to non-smooth settings with similar properties.
}

We also briefly survey the existing literature on asymptotic quantization for measures not of full dimension. Zador \cite{zador} introduced the notion of \emph{quantization dimensions} for measures, which are the upper resp.~lower limits of the ratio $\frac{\log N}{- \log e_{N,p}(\mu)}$ as $N \to \infty$: if the limit exists and equals some $s \in (0,\infty)$, then $e_{N,p}(\mu)$ decays on the order of $N^{1/s}$. 
Analogously to the Hausdorff dimension, the lower quantization dimension of $\mu$ is the unique $s$ such that $\underline Q_{p,t}(\mu) = \infty$ for $t < s$ and $\underline Q_{p,t}(\mu) = 0$ for $t > s$, and likewise for the upper quantization dimension. As presented in \cite[Sect.~11]{quantbook}, the upper/lower quantization dimensions of $\mu$ are sandwiched between the Hausdorff dimension of $\mu$ (i.e.~the smallest Hausdorff dimension of sets of full $\mu$-measure) and the upper/lower box-counting dimension of $\supp\mu$. See also \cite{Potz2001, Potz2003, quantnums, Kessebohmer:2015aa, Kesseb_hmer_2023} for more recent estimates on the quantization dimensions of measures.

The first treatment of the asymptotic quantization of Ahlfors regular measures, rectifiable curves and self-similar measures on $\R^d$ appears to be by Graf and Luschgy \cite[Sect.~12-14]{quantbook}, who first treated the Cantor measure three years earlier \cite{Graf:1997aa}. See also \cite{discapprox, Zhu:2020aa, lossy-2021} for the quantization of Ahlfors regular measures, and \cite{Graf:2000aa, Graf:2002aa, Zhu:2011aa, Kessebohmer:2015aa} for a further selection of literature on the quantization of self-similar measures.
We remark that for appropriate classes of self-similar sets such as the usual Cantor set, the quantization dimension of the Hausdorff measure restricted to the set is known to exist and coincide with the Hausdorff dimension of the set for all $p$, cf.~\cite[Sect.~14]{quantbook}. However, the limit $\lim_{N\to\infty} N^{1/s} e_{N,p}(\mu)$ itself does not exist for the Cantor measure $\mu$ at least for $p \in \{ 2, \infty\}$ \cite[Prop.~14.22, Rem.~14.23]{quantbook}.

For numerical applications and algorithms for quantization, we refer to \cite{cvt, Gruber2004, pagesintro, Brigant2019uo, Brigant2019vo, Merigot:2021}. We also note the recent application of asymptotic quantization to entropy-regularized optimal transport \cite{Eckstein2024}, in which the quantization dimension and coefficient both play a role in the convergence of the regularized cost to the classical transport cost.
We lastly cite further recent works on different formulations of the quantization problem which fall outside the scope of this paper, such as more general transport costs of the form $f(\|x-y\|)$ \cite{Delattre-2004, Gruber2004}, entropy constraints and penalizations instead of size constraints \cite{Gray:2002aa, Bouchitte:2002, Bouchitte:2011tp, Bourne:2015, Bourne:2021ue}, quantization by empirical measures (\emph{optimal matching}) \cite{Talagrand:1994aa, Graf:2002ww, Dereich:2013, Fournier:2015aa, Garcia-Trillos:2015wv, chevallier_2018, Merigot:2021, Quattr2024}, and the related problem of \emph{random matching} \cite{Ambrosio:2019wy, ambrosio2019finer, Ambrosio:2019us, Benedetto:2020aa, Benedetto:2021aa}.
For further background on these different formulations, we refer to \cite[Sect.~1.2]{covgrow}. 

\subsection{Organization}

In Section \ref{sect:prel}, we establish the notation and terminology we will use throughout the paper. We first list some elementary properties of the quantization error in Section \ref{sect:quanterr}, then define quantization coefficients and $(p,s)$-quantizability in Section \ref{sect:quantcoeff},  
and lastly provide in Section \ref{sect:addquant} a list of additivity properties for quantization coefficients, whose proofs are given in Appendix \ref{app:quantcoeffs}.

Section \ref{sect:densbds} concerns the asymptotics of quantization in metric measure spaces, in particular the proof of Theorem \ref{thm:densbd}. Based on existing concentration inequalities, recalled in Appendix \ref{app:conc}, we first derive rough estimates under uniform bounds on Hausdorff densities in Section \ref{sect:unifdens}, and deduce the theorem in Section \ref{sect:densbdproof}. {Section \ref{sect:quantdims} then presents the implications of Theorem \ref{thm:densbd} on quantization dimensions, including a self-similar counterexample to $(p,s)$-quantizability.}

{Section \ref{sect:psstab} is dedicated to sufficient conditions for $(p,s)$-quantizability. Section \ref{sect:randquant} contains the proofs of Theorem \ref{thm:randstab} and Corollary \ref{cor:volgrowstab}, and Section \ref{sect:examples} applies these results to the example settings of Ahlfors regular domains, doubling spaces and curvature-dimension spaces, obtaining in each case an explicit integrability condition and statement of Theorem \ref{thm:densbd}. }

Section \ref{sect:zadorrect} finally treats the quantization of rectifiable measures. Zador's theorem for $m$-rectifiable measures on $\R^d$, stated above as Theorem \ref{thm:zadorrect}, is proven in Section \ref{sect:zadorrectproof}, and Theorem \ref{thm:zadorrect1} for $1$-rectifiable measures on Polish spaces is proven in Section \ref{sect:zadorrect1proof}. {Section \ref{sect:psstabrect} finally discusses the $(p,m)$-quantizability of rectifiable measures and presents two $1$-rectifiable counterexamples. }

\section{Preliminary definitions and properties}\label{sect:prel}

Let $X$ be a Polish metric space. We denote by $C_b(X)$ the space of bounded continuous functions $X \to \R$, equipped with the $L^\infty$ norm, and the space of finite positive Borel measures on $X$ by $\M_+(X)$, equipped with the total variation norm $\|\mu\|_{TV} = \mu(X)$. We note the duality between $C_b(X)$ and $\M_+(X)$:
\[ \left| \int_X f(x) \di\mu(x) \right| \leq \|f\|_{\infty} \|\mu\|_{TV} \quad \text{for all } f \in C_b(X), \mu \in \M_+(X). \]
We denote the restriction of a measure $\mu \in \M_+(X)$ to a subset $A \subseteq X$ by $\mu|_A$, and the support of $\mu$ by $\supp\mu$.
We write $\mu \leq \nu$ if $\mu(A) \leq \nu(A)$ for every Borel set $A \subseteq X$, and denote absolute continuity resp.~mutual singularity by $\mu \ll \nu$ resp.~$\mu \bot \nu$.

Ulam's Theorem (cf.~\cite[7.1.4]{dudley}) states that for each $\mu \in \M_+(X)$, $A \subseteq X$ $\mu$-measurable and $\veps > 0$, there exists $K_\veps \subseteq A$ compact such that $\mu(A \setminus K_\veps) < \veps$. This also implies that finite Borel measures on Polish spaces are regular.

Given a Borel map $T \colon X \to Y$ between metric spaces, we denote the \emph{pushforward} of $\mu$ along $T$ by $T_\# \mu$, which is the Borel measure on $Y$ given by
\[ (T_\# \mu)(B) := \mu(T^{-1}(B)) \quad \text{for all } B \subseteq Y \text{ Borel}, \]
or equivalently, for any $f \in C_b(X)$ or otherwise nonnegative and Borel, 
\[ \int_Y f(y) \di(T_\# \mu)(y) = \int_X f(T(x)) \di\mu(x). \]
Consequently, $\|f\|_{L^p(T_\# \mu)} = \|f \circ T\|_{L^p(\mu)}$ for all $p \in [1,\infty]$. 

For $p < \infty$, we denote by $\M_+^p(X)$ the set of finite positive Borel measures $\mu \in \M_+(X)$ with \emph{finite $p$th moments}:
\[ \int_X d(x, x_0)^p \di \mu(x) < \infty \quad \text{for some } (\Leftrightarrow \text{all) } x_0 \in X. \]
For $p = \infty$, we instead define $\M_+^\infty(X)$ to be the set of compactly supported elements of $\M_+(X)$.

We denote the cardinality of a set $S \subseteq X$ by $\# S$, and denote by $\cS_N(X)$ the set of subsets $S \subseteq X$ with at most $N$ elements:
\[ \cS_N(X) := \{ S \subseteq X \mid \# S \leq N \}. \]
We denote the set distance function of $S$ by $d(x, S) := \inf_{a \in S} d(x, a)$, and the open tubular neighborhoods of $S$ by
\[ S^r := \{ x \in X \mid d(x, S) < r \} = \bigcup_{a \in S} B_r(a), \quad r > 0. \]

\subsection{Properties of the quantization error}\label{sect:quanterr}

We first introduce the quantization error and list a few basic properties. 

\begin{defin}[{Quantization error}]
Let $X$ be a Polish metric space, $p \in [1,\infty]$, $\mu \in \M_+^p(X)$. Given a subset $\emptyset \neq S \subset X$, the \emph{quantization error} of $\mu$ of order $p$ with respect to $S$ is defined as
\[ e_p(\mu; S) := \|d(\cdot, S)\|_{L^p(\mu)} = \left\{ \begin{matrix} \left( \int_X d(x, S)^p \di \mu(x) \right)^{1/p}, & p < \infty; \\ \max_{x \in \supp\mu} d(x,S), & p = \infty. \end{matrix} \right. \]
Given $N \in \N$, the \emph{$n$th quantization error} of $\mu$ of order $p$ is the infimum over all subsets of cardinality at most $N$:
\[ e_{N,p}(\mu) := \inf_{S \in \cS_N(X)} e_p(\mu; S). \]
{
For $p < \infty$, the notation $V_{N,p}(\mu)$ is also commonly used in place of $e_{N,p}(\mu)^p$. 
}
\end{defin}

\begin{notation*}
Given a subset $A \subset X$, we also denote $e_p(\mu|_A; S)$ by $e_p^{(\mu)}(A; S)$, and likewise for $e_{N,p}$. When the metric space $X$ from which quantizers are taken needs to be emphasized, we use the notation $e^{[X]}_{N,p}(\mu)$.
\end{notation*}

The quantization error of order $\infty$ evidently depends only on $\supp\mu$, and can be defined for arbitrary totally bounded subsets $A \subseteq X$:
\[ e_\infty(A; S) := \sup_{x \in A} d(x, S) = \inf\{ r > 0 \mid d(x, S) < r \ \forall x \in A \} = \inf\{r > 0 \mid A \subseteq S^r\}. \]
That is, $e_\infty(A; S)$ is the smallest radius $r$ for which the elements of $S$ define an $r$-cover of $A$, and $e_{N,\infty}(A)$ is the smallest $r$ for which there exists an $r$-cover of $A$ with at most $N$ elements. For this reason, $e_{N,\infty}(A)$ is also called the \emph{$N$th covering radius} of $A$ \cite[Sect.~10]{quantbook}. Observe also that $e_\infty(\bar A; S) = e_\infty(A; S)$ and the maximum is indeed attained on $\bar A$.

We note the following elementary list of properties of the quantization error, cf.~\cite[Lem.~4.14]{quantbook}:

\begin{prop}[Basic properties of $e_p$]\label{prop:enpprops}
Let $p \in [1,\infty]$, $\mu \in \M^p_+(X)$.
\begin{itemize}
\item[(i)] (Additivity) if $\mu = \sum_{i=1}^m \mu_i$, then for any $S \subseteq X$,
\begin{align*}
e_p(\mu; S)^p & = \sum_{i=1}^m e_p(\mu_i; S)^p \quad \text{for} \quad p < \infty; \qquad
e_\infty(\mu; S) = \max_{i=1}^m e_\infty(\mu_i; S).
\end{align*}
In particular, given $S_1, \ldots, S_m \subseteq X$, 
\begin{align*}
e_p(\mu; \bigcup_{i=1}^m S_i)^p & = \sum_{i=1}^m e_p(\mu_i; \bigcup_{i=1}^m S_i)^p \leq \sum_{i=1}^m e_p(\mu_i; S_i)^p \quad \text{for} \quad p < \infty; \\ 
e_\infty(\mu; \bigcup_{i=1}^m S_i) & = \max_{i=1}^m e_\infty(\mu_i; \bigcup_{i=1}^m S_i) \leq \max_{i=1}^m e_\infty(\mu_i; S_i). 
\end{align*}
Consequently, given $N, N_1, \ldots, N_m \in \N$ such that $N \geq \sum_{i=1}^m N_i$, we have
\begin{align*}
\sum_{i=1}^m e_{N,p}(\mu_i)^p & \leq e_{N,p}(\mu)^p \leq \sum_{i=1}^m e_{N_i,p}(\mu_i)^p \quad \text{for} \quad p < \infty; \\ 
\max_{i=1}^m e_{N,\infty}(\mu_i) & \leq e_{N,\infty}(\mu) \leq \max_{i=1}^m e_{N_i,\infty}(\mu_i).
\end{align*}
\item[(ii)] (Scaling) given $\lambda > 0$, then $e_p(\lambda\mu; S) = \lambda^{1/p} e_p(\mu; S)$ hence $e_{N,p}(\lambda\mu) = \lambda^{1/p} e_{N,p}(\mu)$.
\item[(iii)] (Monotony) given $\nu \leq \mu$, $e_p(\nu; S) \leq e_p(\mu; S)$ hence $e_{N,p}(\nu) \leq e_{N,p}(\mu)$.
\item[(iv)] (Order) given $q > p$, $e_p(\mu; S) \leq \mu(X)^{\frac{1}{p}-\frac{1}{q}} e_q(\mu; S)$ and likewise for $e_{N,p}(\mu)$.
\item[(v)] (Pushforwards) if $T \colon X \to Y$ is $(\alpha,\beta)$-bi-Lipschitz, $0 \leq \alpha \leq \beta < \infty$, then 
\[ \alpha e_p(\mu; S) \leq e_p(T_\# \mu; T(S)) \leq \beta e_p(\mu; S), \]
hence $e_{N,p}(T_\# \mu) \leq \beta e_{N,p}(\mu)$ and $\alpha e_{N,p}(\mu) \leq e_{N,p}^{[T(X)]}(T_\# \mu) \leq \beta e_{N,p}(\mu)$.
\end{itemize}
\end{prop}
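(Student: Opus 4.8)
The plan is to derive each of the five items from the corresponding elementary fact about the $L^p$ seminorm (or, for $p=\infty$, the supremum over the support) applied to the function $x \mapsto d(x,S)$, and then to pass to the infimum over $\cS_N(X)$. Throughout I would handle $p<\infty$ and $p=\infty$ in parallel, using that ${}^p\sum$ and $\|\cdot\|_{L^p(\mu)}$ specialize consistently to $\max$ and $\sup_{\supp\mu}$, and that $e_\infty(\mu;S)$ depends only on $\supp\mu$.

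For (i), I would write $\mu=\sum_i\mu_i$ and integrate $d(\cdot,S)^p$ to get $e_p(\mu;S)^p=\sum_i e_p(\mu_i;S)^p$ when $p<\infty$; for $p=\infty$ I would use $\supp\mu=\overline{\bigcup_i\supp\mu_i}$ together with continuity of $x\mapsto d(x,S)$ to get $e_\infty(\mu;S)=\max_i e_\infty(\mu_i;S)$. The ``in particular'' clause then combines two monotonicities: $S\subseteq S'$ implies $d(\cdot,S')\le d(\cdot,S)$, and $(a_i)_i\mapsto{}^p\sum_i a_i$ is nondecreasing in each coordinate; applying the first with $S_i\subseteq\bigcup_j S_j$ inside the additivity identity gives the displayed inequality. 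For the bounds on $e_{N,p}$: the lower bound follows by taking an arbitrary $S\in\cS_N(X)$, bounding $e_p(\mu_i;S)\ge e_{N,p}(\mu_i)$ coordinatewise, and infimizing over $S$; the upper bound follows by fixing $\veps>0$, choosing $\veps$-optimal $S_i\in\cS_{N_i}(X)$, setting $S=\bigcup_i S_i\in\cS_N(X)$, applying additivity, and letting $\veps\to0$ (legitimate since $m<\infty$).

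Items (ii), (iii), (iv) are one-line reductions, uniform in $S$, so that passing to $\inf_{S\in\cS_N(X)}$ at the end yields the statements for $e_{N,p}$: (ii) is $\int d(\cdot,S)^p\,\di(\lambda\mu)=\lambda\int d(\cdot,S)^p\,\di\mu$, with the observation that scaling does not change $\supp\mu$ in the case $p=\infty$; (iii) is monotonicity of the integral, respectively of the support, under $\nu\le\mu$; (iv) is Jensen's inequality for the probability measure $\mu/\mu(X)$ applied to $t\mapsto t^{q/p}$ (or, for $q=\infty$, the pointwise bound $d(\cdot,S)\le e_\infty(\mu;S)$ holding $\mu$-a.e.). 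For (v), the key pointwise estimate is that $(\alpha,\beta)$-bi-Lipschitzness of $T$ gives $\alpha\,d(x,S)\le d(T(x),T(S))\le\beta\,d(x,S)$ for every $x\in X$ and $S$, obtained by taking $\inf_{a\in S}$ in $\alpha\,d(x,a)\le d(T(x),T(a))\le\beta\,d(x,a)$. Combining with the pushforward identity $\int_Y f\,\di(T_\#\mu)=\int_X f\circ T\,\di\mu$ (and, for $p=\infty$, with $\supp(T_\#\mu)=\overline{T(\supp\mu)}$, valid since a bi-Lipschitz $T$ is a homeomorphism onto its image) yields $\alpha\,e_p(\mu;S)\le e_p(T_\#\mu;T(S))\le\beta\,e_p(\mu;S)$. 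The $e_{N,p}$ statements then follow by infimizing: $e_{N,p}(T_\#\mu)\le\inf_S e_p(T_\#\mu;T(S))\le\beta\,e_{N,p}(\mu)$ since $\#T(S)\le\#S$; and when quantizers are restricted to $T(X)$, every admissible $S'$ is of the form $T(S)$ with $\#S\le N$ (choose preimages), giving $\alpha\,e_{N,p}(\mu)\le e_{N,p}^{[T(X)]}(T_\#\mu)\le\beta\,e_{N,p}(\mu)$.

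None of these steps is genuinely difficult; the only points requiring care are the bookkeeping around infima that need not be attained (handled uniformly by $\veps$-optimal sets) and the $p=\infty$ case, where one must consistently replace integration by the supremum over the support and check that $\supp$ behaves well under restriction, countable sums, scaling, and bi-Lipschitz pushforward. The hard part, such as it is, will be the mild subtlety in (v) for $p=\infty$ of identifying $\supp(T_\#\mu)$ with $\overline{T(\supp\mu)}$ and confirming that the supremum of the continuous function $d(\cdot,T(S))$ over this set agrees with its supremum over $T(\supp\mu)$.
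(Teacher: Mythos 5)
Your proposal is correct and follows essentially the same route as the paper: (i)--(iii) from elementary properties of $L^p$ norms, (iv) from H\"older/Jensen (the paper uses H\"older for the probability-normalized measure, which is the same estimate as your Jensen step), and (v) from the pointwise bound $\alpha\, d(x,S) \le d(T(x),T(S)) \le \beta\, d(x,S)$ combined with $\|f\|_{L^p(T_\#\mu)} = \|f\circ T\|_{L^p(\mu)}$ and the observation that every finite $S'\subseteq T(X)$ is $T(S)$ for some $S$ of the same cardinality. The extra bookkeeping you supply ($\veps$-optimal sets, the behaviour of $\supp$ under sums and pushforwards for $p=\infty$) is all sound and merely makes explicit what the paper leaves implicit.
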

\begin{proof}\
\begin{itemize}
\item (i), (ii), (iii) follow from basic properties of $L^p$ norms.
\item (iv) is an application of H\"older's inequality: for $q < \infty$,
\[ \int_X d(\cdot, S)^p \di \mu \leq \left( \int_X 1 \di \mu \right)^{1-\frac{p}{q}} \left( \int_X (d(\cdot, S)^p)^{q/p} \di \mu \right)^{\frac{p}{q}} = \mu(X)^{1-\frac{p}{q}} \left( \int_X d(\cdot, S)^q \di \mu \right)^{\frac{p}{q}}, \]
and for $q = \infty$,
\[ \int_X d(\cdot, S)^p \di \mu \leq \left( \sup_{\supp \mu} d(\cdot, S) \right)^p \int_X 1 \di \mu = e_\infty(\mu; S)^p \mu(X). \]
\item (v): for each $x \in X$, we have $d(T(x), T(S)) = \inf_{a \in S} d(T(x), T(a))$ with
\[ \alpha d(x,S) = \alpha \inf_{a \in S} d(x, a) \leq \inf_{a \in S} d(T(x), T(a)) \leq \beta \inf_{a \in S} d(x, a) = \beta d(x, S). \]
The first statement then follows from the fact that $\|f\|_{L^p(T_\# \mu)} = \|f \circ T\|_{L^p(\mu)}$. The second statement follows by taking the infimum, noting that each finite $S^\prime \subseteq T(X)$ is the image of a set $S \subseteq X$ with $\# S = \# S^\prime$:
\[ e_{N,p}(T_\# \mu) \leq e_{N,p}^{[T(X)]}(T_\# \mu) = \inf_{\substack{ S \subseteq X \\ \# S \leq N}} e_p(T_\# \mu; T(S)) \in [\alpha,\beta] \inf_{\substack{ S \subseteq X \\ \# S \leq N}} e_p(\mu; S) = [\alpha,\beta] e_{N,p}(\mu). \qedhere \]
\end{itemize}
\end{proof}

In particular, when $\iota \colon X \to Y$ is an isometric embedding, enlarging the domain decreases the extrinsic quantization error $e_{N,p}^{[Y]}(\iota_\# \mu)$ but preserves the intrinsic quantization error $e_{N,p}^{[\iota(X)]}$. 

We also recall the basic definition of Voronoi cells and partitions, cf.~\cite[Sect.~1.1]{quantbook}.

\begin{defin}
Given a finite subset $\emptyset \neq S \subset X$, the \emph{Voronoi cell} associated to an element $a \in S$ is the closed set
\[ W(a|S) := \{ x \in X \mid d(x,a) = d(x,S) \}. \]
An indexed Borel partition $(P_a)_{a \in S}$ of $X$ is called a \emph{Voronoi partition} if each $P_a \subseteq W(a|S)$.
We also denote by
\[ S \mres A := \{a \in S \mid W(a|S) \cap A \neq \emptyset\} = \{ a \in S \mid \exists x \in A \ d(x,a) = d(x,S) \} \]
the set of all elements of $S$ whose Voronoi cells intersect $A$.
\end{defin}

Observe that $d(\cdot, S \mres A) = d(\cdot, S)$ on $A$, hence the operation $S \mres A$ removes all the redundant elements of $S$ that do not see $A$.

Proposition \ref{prop:enpprops} (i) implies in particular that, for any Voronoi partition $(P_a)_{a \in S}$ associated to $S$,
\[ e_p(\mu; S)^p = \sum_{i=1}^m e_p^{(\mu)}(P_a; S)^p = \sum_{a \in S} e_p^{(\mu)}(P_a; a)^p. \]
Thus for $p < \infty$, the quantization problem seeks to minimize the sum of the moments of each $P_a$ around $a$. This is the formulation of the quantization problem first proposed by Steinhaus \cite{steinhaus}.

For reference to further properties, such as characterizations and existence of minimizers, within the scope of Euclidean spaces, we refer to Graf and Luschgy \cite[Sect.~4]{quantbook}.
This paper will focus on the asymptotics of quantization as $N \to \infty$, and seek to avoid any reference to optimal quantizers which may not be known to exist for more general metric spaces.

As a direct generalization of \cite[Lem.~6.1]{quantbook}, we first note that the quantization error always converges to zero as $N \to \infty$ without any further assumptions on the measure $\mu$:

\begin{prop}[Convergence to zero]\label{prop:enpconv}
Let $\mu \in \M^p_+(X)$, $p \in [1,\infty]$. Then $e_{N,p}(\mu) \to 0$ as $N \to \infty$.
\end{prop}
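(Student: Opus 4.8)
The statement is that $e_{N,p}(\mu) \to 0$ as $N \to \infty$ for any $\mu \in \M^p_+(X)$, $p \in [1,\infty]$. The strategy is to reduce to compactly supported measures via the finite-moment (or compact-support, for $p=\infty$) hypothesis, cover the relevant compact set by finitely many small balls, and use the centers of those balls as quantizers, controlling the leftover tail by monotonicity (Proposition \ref{prop:enpprops}(iii)) and the $p$-additivity (Proposition \ref{prop:enpprops}(i)).

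\textbf{Step 1: Reduce to compact support.} Fix $\veps > 0$ and $x_0 \in X$. For $p < \infty$, since $\int_X d(x,x_0)^p \di\mu(x) < \infty$, choose $R$ large enough that $\mu(X \setminus B_R(x_0))$ is as small as we like, and in fact that $\int_{X \setminus B_R(x_0)} d(x,x_0)^p \di\mu \le \veps^p$; write $\mu = \mu|_{B_R(x_0)} + \mu|_{X \setminus B_R(x_0)} =: \mu' + \mu''$. By Proposition \ref{prop:enpprops}(i), for $N \ge N_1$ (the number of balls used below) and any choice of quantizer $S'$ for $\mu'$ together with $\{x_0\}$ for $\mu''$, we get $e_p(\mu; S' \cup \{x_0\}) \le e_p(\mu'; S') +_p e_p(\mu''; \{x_0\})$, and $e_p(\mu''; \{x_0\})^p = \int_{X \setminus B_R(x_0)} d(x,x_0)^p \di\mu \le \veps^p$. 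Then by Ulam's theorem, pick $K \subseteq B_R(x_0)$ compact with $\int_{B_R(x_0) \setminus K} d(x,x_0)^p \di\mu$ (hence $e_p(\mu|_{B_R \setminus K}; \{x_0\})^p$) at most $\veps^p$ as well; so it suffices to make $e_{N,p}(\mu|_K)$ small. For $p = \infty$ there is nothing to do in this step since $\supp\mu$ is already compact; set $K = \supp\mu$.

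\textbf{Step 2: Quantize the compact piece.} Cover $K$ by finitely many balls $B_\eta(a_1), \dots, B_\eta(a_n)$ of radius $\eta$ with $a_i \in K$, where $\eta$ is chosen so that $\eta \cdot \mu(K)^{1/p} \le \veps$ (and $\eta \le \veps$ when $p = \infty$); this is possible by compactness of $K$. Let $S = \{a_1, \dots, a_n\}$. Then $d(x, S) < \eta$ for every $x \in K$, so for $p < \infty$, $e_p(\mu|_K; S)^p = \int_K d(x,S)^p \di\mu \le \eta^p \mu(K) \le \veps^p$, and for $p = \infty$, $e_\infty(\mu|_K; S) \le \eta \le \veps$. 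Hence $e_{n,p}(\mu|_K) \le \veps$.

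\textbf{Step 3: Combine.} Take $N \ge n$. Using the quantizer $S \cup \{x_0\}$ of cardinality $\le n+1 \le N+1$ — or absorb $x_0$ into the count by taking $n+1$ balls — and the additivity bound from Step 1, we obtain $e_{N,p}(\mu) \le e_p(\mu|_K; S) +_p e_p(\mu|_{X \setminus K}; \{x_0\}) \le \veps +_p \veps +_p \veps \le 3^{1/p}\veps$ (or $\le \veps$ for $p=\infty$, reading $+_\infty$ as max). Since $\veps$ and the corresponding $N$ were arbitrary and $e_{N,p}(\mu)$ is nonincreasing in $N$, this gives $e_{N,p}(\mu) \to 0$.

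\textbf{Main obstacle.} The only real subtlety is the interplay between the moment condition and the split: one must choose $R$ and $K$ so that the \emph{tails} of the integrand $d(\cdot,x_0)^p$, not merely the masses, are small — this is exactly where $\mu \in \M^p_+(X)$ (rather than just $\mu \in \M_+(X)$) is used, and where the case $p = \infty$ degenerates harmlessly because compact support makes the tail empty. Everything else is bookkeeping with Proposition \ref{prop:enpprops}.
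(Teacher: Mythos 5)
Your proof is correct and follows essentially the same route as the paper's: split off a compactly supported piece, cover it by finitely many small balls and use the centers as quantizers, and control the tail via the finite $p$th moment together with the $p$-additivity of $e_p$. The only cosmetic difference is your intermediate truncation to $B_R(x_0)$ before invoking Ulam's theorem (which, if anything, makes the passage from small measure to small tail integral slightly more explicit than in the paper).
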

\begin{proof}\
\begin{itemize}
\item $p = \infty$: Set $K := \supp\mu$ compact. Let $\veps > 0$. Then there exists a finite set $S_\veps = \{x_i\}_{i=1}^{N_\veps}$ of points such that $\{B_\veps(x_i)\}_{i=1}^{N_\veps}$ is a cover for $K$. Thus $d(\cdot, S) < \veps$ on $K$, hence by compactness, $e_\infty(K; S) < \veps$ and $e_{N,\infty}(K) < \veps$ for each $N \geq N_\veps$.
\item $p < \infty$: Let $\veps > 0$. Fix $x_0 \in X$, so that $\int d(\cdot, x_0)^p \di \mu < \infty$. Then by Ulam's Theorem, there exists $K_\veps \subseteq X$ compact such that
\[ \int_{X \setminus K_\veps} d(\cdot, x_0)^p \di \mu < \frac{\veps^p}{2}. \]
By Proposition \ref{prop:enpprops} (i) and (iv) we have
\[ e_{N+1,p}(\mu)^p \leq e_{N,p}^{(\mu)}(K_\veps)^p + e_{1,p}^{(\mu)}(X \setminus K_\veps)^p \leq \mu(K_\veps) e_{N,\infty}(K_\veps)^p + \int_{X \setminus K_\veps} d(\cdot, x_0)^p \di \mu. \]
From the above argument, there exists $N_\veps \in \N$ for which $e_{N,\infty}(K_\veps)^p < \frac{\veps^p}{2 \mu(K_\veps)}$ whenever $N \geq N_\veps$. We then have $e_{N,p}(\mu) < \veps$ whenever $N \geq N_\veps+1$. \qedhere
\end{itemize}
\end{proof}

We refer to sequences of quantizers whose errors converge to $0$ as \emph{admissible sequences}:

\begin{defin}
An \emph{admissible (quantizing) sequence} for $\mu$ of order $p$ is a sequence of pairs $(N_k, S_k)_k$, to be denoted henceforth by $(S_k)_{N_k}$, such that $(N_k)_k$ is an increasing sequence of positive integers, each $S_k$ is a finite subset of $X$ with at most $N_k$ elements, and $e_p(\mu; S_k) \to 0$. 
\end{defin}

\begin{notation*}
Given an admissible sequence $\cS = (S_k)_{N_k}$ and a subset $A \subseteq X$, we denote by $\cS \cap A$ the sequence $(S_k \cap A)_{\#(S_k \cap A)}$, and likewise define $\cS \mres A$ in terms of $S_k \mres A$.
\end{notation*}

The following characterization of admissible sequences is adapted from \cite[Prop.~2.2]{Delattre-2004}:

\begin{prop}\label{prop:admisseq}
Let $\mu \in \M^p_+(X)$, $p \in [1,\infty]$. Given any sequence $(S_k)_k$ of closed subsets of $X$, the following are equivalent:
\begin{itemize}
\item[(i)] $e_p(\mu; S_k) \to 0$;
\item[(ii)] $d(\cdot, S_k) \to 0$ $\mu$-a.e.;
\item[(iii)] $d(\cdot, S_k) \to 0$ pointwise on $\supp\mu$;
\item[(iv)] $d(\cdot, S_k) \to 0$ uniformly on each $K \subseteq \supp\mu$ compact.
\end{itemize}
\end{prop}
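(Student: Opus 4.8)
The plan is to establish the cycle $(i) \Rightarrow (ii) \Rightarrow (iii) \Rightarrow (iv) \Rightarrow (i)$, handling the cases $p < \infty$ and $p = \infty$ in parallel where the argument differs. Throughout, set $f_k := d(\cdot, S_k)$, which is a sequence of $1$-Lipschitz functions on $X$, nonincreasing in nothing in particular but bounded on bounded sets; the key structural fact is that $f_k(x) \to 0$ is equivalent to: for every $\varepsilon > 0$ there is $k$ with $S_k \cap B_\varepsilon(x) \neq \emptyset$.

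For $(i) \Rightarrow (ii)$: when $p < \infty$, $\|f_k\|_{L^p(\mu)} \to 0$ gives a subsequence converging to $0$ $\mu$-a.e.; to upgrade to the full sequence, I would use that $e_p(\mu; S_k)$ is not assumed monotone, so instead argue directly — for each $\varepsilon > 0$, $\mu(\{f_k > \varepsilon\}) \leq \varepsilon^{-p} \|f_k\|_{L^p(\mu)}^p \to 0$, so $f_k \to 0$ in $\mu$-measure; combined with a diagonal/Borel–Cantelli argument over $\varepsilon = 1/n$ this would need a summable subsequence, which is the standard subtlety. Cleaner: show $(i) \Rightarrow (iv)$ and $(iv) \Rightarrow (ii) \Rightarrow (iii)$ trivially, then $(iii) \Rightarrow (i)$. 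Actually the most economical route is $(i) \Rightarrow (iii) \Rightarrow (iv) \Rightarrow (i)$ with $(iii) \Leftrightarrow (ii)$ separately. For $(iii) \Rightarrow (ii)$: $\mu$ is concentrated on $\supp\mu$ (as $\mu(X \setminus \supp\mu) = 0$ by definition of support for Borel measures on a second-countable space), so pointwise convergence on $\supp\mu$ is convergence $\mu$-a.e. For $(ii) \Rightarrow (iii)$: if $x \in \supp\mu$ but $f_k(x) \not\to 0$, there is $\varepsilon > 0$ with $f_k(x) > \varepsilon$ along a subsequence; then by the Lipschitz bound $f_k > \varepsilon/2$ on $B_{\varepsilon/2}(x)$ along that subsequence, and $\mu(B_{\varepsilon/2}(x)) > 0$ since $x \in \supp\mu$, contradicting $(ii)$ via Fatou/dominated convergence on that ball (which requires $\mu(B_{\varepsilon/2}(x)) < \infty$, true since $\mu$ is finite).

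For $(iii) \Rightarrow (iv)$: this is Dini-type. Given $K \subseteq \supp\mu$ compact and $\varepsilon > 0$, I would like to cover $K$ by finitely many balls on which $f_k$ is eventually small, but the $f_k$ are not monotone, so a direct Dini argument fails. Instead: suppose $(iv)$ fails, so there is $\varepsilon > 0$, a subsequence $k_j$, and points $x_j \in K$ with $f_{k_j}(x_j) > \varepsilon$; by compactness $x_j \to x_\infty \in K \subseteq \supp\mu$ along a further subsequence, and by $1$-Lipschitzness $f_{k_j}(x_\infty) \geq f_{k_j}(x_j) - d(x_j, x_\infty) > \varepsilon/2$ for $j$ large, contradicting $f_{k_j}(x_\infty) \to 0$ from $(iii)$. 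For $(iv) \Rightarrow (i)$: when $p = \infty$ this is immediate taking $K = \supp\mu$ (compact since $\mu \in \M^\infty_+$). When $p < \infty$, fix $x_0 \in X$; by Ulam's theorem choose $K_\varepsilon \subseteq X$ compact with $\int_{X \setminus K_\varepsilon} d(\cdot, x_0)^p \,\di\mu < \varepsilon^p$, and note $K_\varepsilon \cap \supp\mu$ is compact inside $\supp\mu$; then
\[
\|f_k\|_{L^p(\mu)}^p = \int_{K_\varepsilon} f_k^p \,\di\mu + \int_{X \setminus K_\varepsilon} f_k^p \,\di\mu \leq \mu(X) \sup_{K_\varepsilon \cap \supp\mu} f_k^p + 2^{p-1}\!\int_{X \setminus K_\varepsilon}\!\!\big(d(\cdot,x_0)^p + d(x_0, S_k)^p\big)\di\mu,
\]
where I used $f_k(x)^p = d(x,S_k)^p \leq 2^{p-1}(d(x,x_0)^p + d(x_0,S_k)^p)$ and the restriction of the integral to $\supp\mu$ in the first term. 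The first term $\to 0$ by $(iv)$; in the second, $\int_{X\setminus K_\varepsilon} d(\cdot,x_0)^p\,\di\mu < \varepsilon^p$ by choice of $K_\varepsilon$, while $d(x_0, S_k) = f_k(x_0)$ stays bounded — indeed once $(iv)$ holds we may assume (enlarging $K_\varepsilon$ to contain a fixed point of $\supp\mu$ near $x_0$, or noting $f_k$ bounded since $S_k$ eventually meets any fixed ball around $\supp\mu$) that $d(x_0,S_k) \leq M$ uniformly, giving a bound $C\varepsilon^p$ for the tail. Letting $k \to \infty$ then $\varepsilon \to 0$ yields $(i)$.

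**Main obstacle.** The genuine difficulty is the passage to $(iv)$, i.e. turning pointwise convergence into local uniform convergence \emph{without monotonicity} of the $f_k$ — the compactness-plus-Lipschitz argument above is what replaces Dini's theorem. A secondary technical point is controlling the tail term $\int_{X \setminus K_\varepsilon} f_k^p\,\di\mu$ in $(iv)\Rightarrow(i)$ uniformly in $k$: this needs the uniform bound on $d(x_0, S_k)$, which follows because $(iv)$ (applied to any fixed compact $K \subseteq \supp\mu$) forces $S_k$ to eventually intersect a bounded neighborhood of $\supp\mu$, hence $d(x_0, S_k)$ is bounded.
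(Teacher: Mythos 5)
Your cycle as written does not close: you establish $(ii)\Leftrightarrow(iii)$, $(iii)\Rightarrow(iv)$ and $(iv)\Rightarrow(i)$, but no implication \emph{out of} $(i)$ is ever proven. The discussion of $(i)\Rightarrow(ii)$ stalls exactly where you say it does ($L^p$ convergence gives a.e.\ convergence only along a subsequence), and the route $(i)\Rightarrow(iii)$ that you then declare ``most economical'' is announced but never argued. The repair is already contained in your own toolbox: the Lipschitz-propagation trick you use for $(ii)\Rightarrow(iii)$ gives $(i)\Rightarrow(iii)$ directly. If $x\in\supp\mu$ and $d(x,S_{k_j})>\veps$ along a subsequence, then $d(\cdot,S_{k_j})>\veps/2$ on $B_{\veps/2}(x)$, hence for $p<\infty$
\[ e_p(\mu;S_{k_j})^p \;\geq\; (\veps/2)^p\,\mu\bigl(B_{\veps/2}(x)\bigr) \;>\;0, \]
contradicting $(i)$; for $p=\infty$ one has $e_\infty(\mu;S_{k_j})\geq d(x,S_{k_j})>\veps$ outright. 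With that line inserted the equivalence is complete. Two smaller points in your $(iv)\Rightarrow(i)$: the tail term contains $d(x_0,S_k)^p\,\mu(X\setminus K_\veps)$, which is controlled by $\mu(X\setminus K_\veps)$ and not by the moment integral, so you must choose $K_\veps$ by Ulam's theorem for both $\mu$ and $d(\cdot,x_0)^p\mu$ simultaneously; and the uniform bound on $d(x_0,S_k)$ is cleanest obtained by applying $(iii)$ at one fixed point of $\supp\mu$ (assuming $\mu\neq 0$, the degenerate case being trivial).

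Once repaired, your route is genuinely different from the paper's and in one respect more elementary. The paper proves $(i)\Rightarrow(iv)$ by Arzel\`a--Ascoli applied to the equi-Lipschitz family $(d(\cdot,S_k))_k$ on $K$ (every subsequence has a uniformly convergent sub-subsequence whose limit must vanish by dominated convergence), then $(iv)\Rightarrow(iii)\Rightarrow(ii)$ trivially, and $(ii)\Rightarrow(i)$ by dominated convergence with the dominating function $C+d(\cdot,x_0)$, where $C$ bounds $d(x_0,S_k)$. Your sequential-compactness-plus-Lipschitz contradiction for $(iii)\Rightarrow(iv)$ replaces Arzel\`a--Ascoli by a shorter direct argument; conversely, the paper's dominated-convergence proof of $(ii)\Rightarrow(i)$ is slimmer than your Ulam-based splitting and sidesteps the tail bookkeeping noted above. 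Either combination yields a valid proof.
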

\begin{proof}
We prove equivalence only for $p < \infty$; for $p = \infty$, observe instead that $e_\infty(\mu; S_k) \to 0$ implies $e_q(\mu; S_k) \to 0$ for $q < \infty$ arbitrary, and conversely (iv) implies $e_\infty(\mu; S_k) \to 0$ with the choice $K = \supp\mu$.
\begin{itemize}
\item (i) $\Rightarrow$ (iv): Let $K \subseteq \supp\mu$. By Ulam's theorem, we can assume wlog that $\mu(K) > 0$ by taking unions of $K$ with a compact subset of large measure. We apply the Arzel\`a-Ascoli theorem to the sequence of functions $(d(\cdot, S_k))_k$. Equicontinuity follows from the fact that each function is $1$-Lipschitz. For uniform boundedness, observe that for each $k \in \N$,
\begin{align*} 
\sup_{x \in K} d(x, S_k) & = \mu(K)^{-1/p} \sup_{x \in K} \left( \int_K d(x, S_k)^p \di \mu(y) \right)^{1/p}
\\ & \leq \mu(K)^{-1/p} \sup_{x \in K} \left( \int_K \left[ d(x,y) + d(y, S_k) \right]^p \di\mu(y) \right)^{1/p} 
\\ & \leq \mu(K)^{-1/p} \left( \diam(K) \mu(K)^{1/p} + e_p(\mu; S_k) \right),
\end{align*}
where the last inequality follows from Minkowski's inequality, and the final expression is uniformly bounded in $k$ since $e_p(\mu; S_k) \to 0$.

Thus by Arzel\`a-Ascoli, every subsequence of $(d(\cdot, S_k))_k$ must admit a further subsequence which converges uniformly on $K$, say $d(\cdot, S_{k_l}) \to f \in C^0(K)$. Then $f$ must be continuous and nonnegative, and by dominated convergence,
\[ \int_K f(x)^p \di \mu(x) = \lim_{l \to \infty} \int_K d(x, S_{k_l})^p \di \mu(x) = 0, \]
so $f$ must be identically zero on $K$. This shows that $d(\cdot, S_k)$ converges uniformly to $0$ on $K$, since every subsequence has a further subsequence converging to $0$.
\item (iv) $\Rightarrow$ (iii) $\Rightarrow$ (ii): Trivial.
\item (ii) $\Rightarrow$ (i): Take $x_0 \in X$ such that $d(x_0, S_k) \to 0$, in particular $d(x_0, S_k) \leq C$ for some constant $C > 0$. Then by the triangle inequality,
\[ d(\cdot, S_k) \leq d(x_0, S_k) + d(\cdot, x_0) \leq C + d(\cdot, x_0), \]
where the right-hand side is $L^p$-integrable with respect to $\mu$ since $\mu \in \M^p_+(X)$. Thus by dominated convergence, $e_p(\mu; S_k) = \|d(\cdot, S_k)\|_{L^p(X;\mu)} \to 0$ as well. \qedhere
\end{itemize}
\end{proof}

The same argument indeed applies for any uniformly Lipschitz family of functions in place of $(d(\cdot, S_k))_k$. This characterization implies that the admissibility of a sequence of sets does not depend on the order $p \in [1,\infty]$: the same sequence $(S_k)_{N_k}$ is admissible for any $q \in [1,\infty]$ for which $\mu \in \M^q_+(X)$.

\subsection{Quantization coefficients and $(p,s)$-quantizability}\label{sect:quantcoeff}

We quantify the rate of convergence of the quantization error to zero in terms of \emph{quantization coefficients}:

\begin{defin}[{Quantization coefficients}]
Let $X$ be a Polish metric space, $p \in [1, \infty]$, and $\mu \in \M^p_+(X)$. For $s \in (0,\infty)$, the \emph{lower} resp.~\emph{upper quantization coefficient} of $\mu$ of order $p$ and dimension $s$ is given by
\[ \underline Q_{p,s}(\mu) := \liminf_{N \to \infty} N^{1/s} e_{N,p}(\mu); \quad \overline Q_{p,s}(\mu) := \limsup_{N \to \infty} N^{1/s} e_{N,p}(\mu). \]
If the lower and upper limits coincide, the limit is called the \emph{quantization coefficient} and denoted by $Q_{p,s}(\mu)$. 
{For $p = \infty$, we define $\underline Q_{\infty,s}(A)$ and $\overline Q_{\infty,s}(A)$ likewise for $A \subseteq X$ compact.}

Given an admissible quantizing sequence $\cS = (S_k)_{N_k}$, we likewise define
\[ \underline Q_{p,s}(\mu; \cS) := \liminf_{k \to \infty} N_k^{1/s} e_p(\mu; S_k); \quad \overline Q_{p,s}(\mu; \cS) := \limsup_{k \to \infty} N_k^{1/s} e_p(\mu; S_k). \]
We say that $\cS$ is \emph{asymptotically optimal} if $Q_{p,s}(\mu; \cS) = \underline Q_{p,s}(\mu)$. 
\end{defin}

\begin{remark*}
Asymptotically optimal sequences always exist: taking a subsequence $(N_k)_k$ such that $\underline Q_{p,s}(\mu) = \lim_{k \to \infty} N_k^{1/s} e_{N_k,p}(\mu)$, for each $k$ there exists $S_k \subseteq X$ with $\# S_k \leq N_k$ such that $e_p(\mu; S_k) \leq e_{N_k,p}(\mu) + 1/k$.

Observe also that for $A \subseteq X$ and $\cS = (S_k)_{N_k}$ arbitrary,
\[ Q_{p,s}^{(\mu)}(A; \cS \mres A) = \lim_{k \to \infty} \#(S_k \mres A)^{1/s} e_p^{(\mu)}(A; S_k \mres A) \leq \lim_{k \to \infty} N_k^{1/s} e_p^{(\mu)}(A; S_k) = Q_{p,s}^{(\mu)}(A; \cS), \]
where the inequality holds both for lower and upper limits. In other words, extraneous points that do not see $A$ only serve to make the sequence of quantizers less efficient.
\end{remark*}

The properties given in Proposition \ref{prop:enpprops} (ii)--(v) carry over directly to quantization coefficients: 
{
\begin{prop}\label{prop:coeffprops}
Let $\mu \in \M^p_+(X)$, $p \in [1,\infty]$. Let $\cS = (S_k)_{N_k}$ be an admissible sequence for $\mu$. 
\begin{itemize}
\item[(i)] If $T \colon X \to Y$ is $L$-Lipschitz, then 
\[ \underline Q_{p,s}(T_\# \mu; T(\cS)) \leq L \underline Q_{p,s}(\mu; \cS), \quad \overline Q_{p,s}(T_\# \mu; T(\cS)) \leq L \overline Q_{p,s}(\mu; \cS). \]
\item[(ii)] If $\nu \in \M^p_+(X)$ with $\nu \leq \lambda \mu$, then
\[ \underline Q_{p,s}(\nu; \cS) \leq \lambda^{1/p} \underline Q_{p,s}(\mu; \cS), \quad \overline Q_{p,s}(\nu; \cS) \leq \lambda^{1/p} \overline Q_{p,s}(\mu; \cS). \]
\item[(iii)] Given $1 \leq q < p$,
\[ \underline Q_{q,s}(\mu; \cS) \leq \mu(X)^{\frac{1}{q}-\frac{1}{p}} \underline Q_{p,s}(\mu; \cS), \quad \overline Q_{q,s}(\mu; \cS) \leq \mu(X)^{\frac{1}{q}-\frac{1}{p}} \overline Q_{p,s}(\mu; \cS). \]
\end{itemize}
The same inequalities hold also for $\underline{Q}_{p,s}(\mu)$ and $\overline{Q}_{p,s}(\mu)$.
\end{prop}
\begin{proof}
Follows directly by taking the $\liminf$ resp.~$\limsup$ of the inequalities
\[ e_p(T_\# \mu; T(S_k)) \leq L e_p(\mu; S_k); \quad e_p(\nu; S_k) \leq \lambda^{1/p} e_p(\mu; S_k); \quad e_q(\mu; S_k) \leq \mu(X)^{\frac1q - \frac1p} e_p(\mu; S_k) \]
and the analogous inequalities for $e_{N,p}$.
\end{proof}
}

The additivity property in Proposition \ref{prop:enpprops} (i) will instead lead to different subadditivity and superadditivity properties, which we will present below. 

We also introduce the following notion, which captures the role that Pierce's lemma and similar integral conditions play in proofs of Zador's theorem:

\begin{defin}[{$(p,s)$-quantizability}]
Let $p \in [1,\infty)$, $s \in (0,\infty)$.
\begin{itemize}
\item A measure $\mu \in \M^p_+(X)$ is \emph{$(p,s)$-quantizable} if $\overline{Q}_{p,s}(\mu_k) \to 0$ for any sequence of measures $\mu_k \leq \mu$ such that $\mu_k(X) \to 0$.
\item A Borel set $A \subseteq X$ is \emph{$(p,s)$-quantizable} with respect to a Borel measure $\nu$ if $\nu|_A \in \M^p_+(X)$ and is $(p,s)$-quantizable.
\end{itemize}
\end{defin}

This condition is not relevant for $p = \infty$ since, for any measure $\mu$ with $\overline{Q}_{\infty,s}(\mu) > 0$, we can take $\mu_k := \mu/k$ in which case $\mu_k \to 0$ but each $\overline{Q}_{\infty,s}(\mu_k) = \overline{Q}_{\infty,s}(\mu)$. 
{Likewise, a measure $\mu$ with $\overline{Q}_{p,s}(\mu) = \infty$ is trivially not $(p,s)$-quantizable.} 
In the case $p < \infty$, $(p,s)$-quantizability will yield many stronger properties such as the stability of quantization coefficients along monotonically increasing sequences of measures, summarized in Proposition \ref{prop:qpscont}, which we will employ in Sections \ref{sect:densbds} and \ref{sect:zadorrect}. 

{
From Proposition \ref{prop:coeffprops} we deduce the following basic properties of $(p,s)$-quantizability:
}

\begin{prop}[{Properties of $(p,s)$-quantizability}]\label{prop:psstabprops}
Let $\mu \in \M^p_+(X)$, $p \in [1,\infty)$, $s \in (0,\infty)$.
\begin{itemize}
\item[(i)] If $\mu \in \M^q_+(X)$ for $p < q \leq \infty$ and $\overline{Q}_{q,s}(\mu) < \infty$, then $\mu$ is $(p,s)$-quantizable.
\item[(ii)] If $\mu$ is $(p,s)$-quantizable and $\nu \leq \lambda \mu$, then $\nu$ is also $(p,s)$-quantizable. 
\item[(iii)] If $\mu$ is $(p,s)$-quantizable and $T \colon X \to Y$ is a Lipschitz map, $T_\# \mu$ is also $(p,s)$-quantizable.
\end{itemize}
\end{prop}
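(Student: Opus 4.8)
For part (i): the plan is to bound $\overline{Q}_{p,s}(\mu_k)$ in terms of $\overline{Q}_{q,s}(\mu_k)$ using the order-monotonicity of quantization errors. By Proposition \ref{prop:enpprops}(iv), for any admissible sequence $\cS$ we have $e_p(\mu_k; S) \leq \mu_k(X)^{1/p - 1/q} e_q(\mu_k; S)$, which immediately gives $\overline{Q}_{p,s}(\mu_k) \leq \mu_k(X)^{1/p-1/q} \overline{Q}_{q,s}(\mu_k)$. Now for any $\mu_k \leq \mu$ with $\mu_k(X) \to 0$, monotonicity (Proposition \ref{prop:enpprops}(iii)) gives $\overline{Q}_{q,s}(\mu_k) \leq \overline{Q}_{q,s}(\mu) < \infty$ by hypothesis (note $\mu_k \in \M^q_+(X)$ since $\mu_k \leq \mu \in \M^q_+(X)$). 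Hence $\overline{Q}_{p,s}(\mu_k) \leq \mu_k(X)^{1/p-1/q} \overline{Q}_{q,s}(\mu) \to 0$ since $1/p - 1/q > 0$. The case $q = \infty$ is handled identically, reading $\mu_k(X)^{1/p-1/q}$ as $\mu_k(X)^{1/p}$. This is entirely routine; there is no real obstacle here.

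For part (ii): let $L := \Lip(T)$ (assuming $L > 0$; if $L = 0$ then $T$ is constant, $T_\#\mu$ is a point mass, and the claim is trivial). Given any sequence $\nu_k \leq T_\#\mu$ with $\nu_k(X) \to 0$, I want to produce a corresponding sequence of submeasures of $\mu$ whose total masses tend to zero, transfer the quantization bound back via $T$, and conclude. The natural candidate is a ``disintegration-type'' construction: since $\nu_k \leq T_\#\mu$, one can find a Borel function $f_k \colon Y \to [0,1]$ with $\nu_k = f_k \cdot (T_\#\mu)$ (Radon--Nikodym), and then set $\mu_k := (f_k \circ T) \cdot \mu \leq \mu$. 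By the pushforward identity for integrals, $\mu_k(X) = \int_X f_k(T(x)) \di\mu(x) = \int_Y f_k \di(T_\#\mu) = \nu_k(X) \to 0$, and moreover $T_\#\mu_k = \nu_k$. Then by Proposition \ref{prop:enpprops}(v) (or the Lipschitz pushforward bound for quantization coefficients noted just after the definition of $Q_{p,s}$), $\overline{Q}_{p,s}(\nu_k) = \overline{Q}_{p,s}(T_\#\mu_k) \leq L \cdot \overline{Q}_{p,s}(\mu_k)$, and the right-hand side tends to $0$ because $\mu$ is $(p,s)$-quantizable and $\mu_k \leq \mu$ with $\mu_k(X) \to 0$. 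Hence $T_\#\mu$ is $(p,s)$-quantizable. One should also check $T_\#\mu \in \M^p_+(X)$: this follows since $\int_Y d(y,T(x_0))^p \di(T_\#\mu)(y) = \int_X d(T(x),T(x_0))^p \di\mu(x) \leq L^p \int_X d(x,x_0)^p \di\mu(x) < \infty$.

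The only mild subtlety — and the step I would be most careful about — is the measurability and existence of the factorization $\nu_k = (f_k \circ T)\cdot\mu$: one needs $\nu_k \ll T_\#\mu$ (immediate from $\nu_k \leq T_\#\mu$) to invoke Radon--Nikodym on $Y$, then pull back the density along $T$, which is legitimate since $T$ is Borel. Alternatively, to avoid Radon--Nikodym entirely, one can argue more softly: it suffices to show that for \emph{every} admissible quantizing sequence, the lim sup vanishes, and one can instead directly estimate, for any finite $S \subseteq X$, $e_p(\nu_k; T(S)) \leq$ (something controlled by $e_p(\mu_k; S)$ for a suitable $\mu_k \leq \mu$); but the cleanest route remains the density pullback. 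Either way, this is a short verification rather than a genuine difficulty, so I expect part (ii) to go through without complication once the factorization is set up.
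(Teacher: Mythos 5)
Your argument is correct and follows essentially the same route as the paper: part (i) via the order inequality $\overline{Q}_{p,s}(\mu_k) \leq \mu_k(X)^{1/p-1/q}\,\overline{Q}_{q,s}(\mu)$, and part (ii) via the Radon--Nikodym density of $\nu_k$ with respect to $T_\#\mu$ pulled back along $T$ to produce $\mu_k \leq \mu$ with $T_\#\mu_k = \nu_k$ and $\mu_k(X) = \nu_k(Y) \to 0$. The measurability point you flag is handled in the paper exactly as you suggest, so there is nothing to add.
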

\begin{proof}\
\begin{itemize}
\item[(i)] Suppose $\mu_k \to 0$, $\mu_k \leq \mu$. {Then by Proposition \ref{prop:coeffprops} (iii), } % We then have
\[ \overline{Q}_{p,s}(\mu_k) \leq \mu_k(X)^{\frac{1}{p} - \frac{1}{q}} \overline{Q}_{q,s}(\mu_k) \leq \mu_k(X)^{\frac{1}{p} - \frac{1}{q}} \overline{Q}_{q,s}(\mu). \]
If $\overline{Q}_{q,s}(\mu) < \infty$, then $\mu_k(X) \to 0$ implies that $\overline{Q}_{p,s}(\mu_k) \to 0$.
\item[(ii)] Given any sequence of measures $\nu_k \leq \nu \leq \lambda \mu$ such that $\nu_k(X) \to 0$, the measures $\mu_k := \lambda^{-1} \nu_k \leq \mu$ also converge to $0$. Proposition \ref{prop:coeffprops} (ii) and the $(p,s)$-quantizability of $\mu$ then implies 
\[ \overline Q_{p,s}(\nu_k) \leq \lambda^{1/p} \overline Q_{p,s}(\mu_k) \to 0. \]
\item[(iii)] Set $\nu := T_\# \mu$, and let $\nu_k \to 0$ with $\nu_k \leq \nu$. 
{Let $\rho_k \in L^1(Y; \nu)$ be the Radon-Nikodym derivative of $\nu_k$ with respect to $\nu$. Then each $\rho_k \leq 1$ $\nu$-a.e., which implies that $\rho_k \circ T \leq 1$ $\mu$-a.e.~since $\{\rho_k \circ T > 1\} = T^{-1}(\{\rho_k > 1\})$. Setting $\mu_k := (\rho_k \circ T) \mu$, we then have that each $\mu_k \leq \mu$, % noting that $\rho_k \circ T \in L^1(X; \mu)$ since $\int_X |\rho_k \circ T| \di \mu = \int_Y |\rho_k| \di(T_\# \mu)$, 
and moreover} $\nu_k = T_\# \mu_k$: indeed, given an arbitrary bounded continuous function $f \colon Y \to \R$,
\begin{align*} 
\int_Y f(y) \di(T_\# \mu_k)(y) & = \int_X f(T(x)) \di\mu_k(x) = \int_X f(T(x)) \rho_k(T(x)) \di\mu(x) 
\\ & = \int_X f(y) \rho_k(y) \di(T_\# \mu)(y) = \int_Y f(y) \di\nu_k(y). 
\end{align*}
Consequently, $\mu_k(X) = \nu_k(Y) \to 0$, and by the assumption that $\mu$ is $(p,s)$-quantizable,
\[ \overline{Q}_{p,s}(\nu_k) = \overline{Q}_{p,s}(T_\# \mu_k) \leq \Lip(T) \overline{Q}_{p,s}(\mu_k) \to 0. \]
Thus $\overline{Q}_{p,s}(\nu_k) \to 0$ as well. \qedhere
\end{itemize}
\end{proof}

In particular, $1$-Lipschitz inclusion maps $\iota \colon X \to Y$ preserve the property of $(p,s)$-quantizability, such as in the case of Riemannian manifolds embedded into larger Euclidean spaces. Hence in order to demonstrate that a measure supported on a {lower-dimensional submanifold} of $\R^d$ is $(p,s)$-quantizable {as a measure on $\R^d$}, it suffices to show that the measure is $(p,s)$-quantizable {as a measure on the Riemannian manifold}.

We give a list of sufficient conditions for $(p,s)$-quantizability on various domains in Section \ref{sect:psstab}. For now, we note that $(p,s)$-quantizability is guaranteed by the existence of Pierce-type integral upper bounds on quantization coefficients:

\begin{prop}\label{prop:intubstab}
Let $p \in [1,\infty)$, $s \in (0,\infty)$, $\mu \in \M^p_+(X)$. Suppose there exists a nonnegative measurable function $F \colon X \to [0,\infty]$ such that $\int F \di \mu < \infty$ and the following integral upper bound
\[ \overline{Q}_{p,s}(\nu)^p \leq \int_X F \di \nu \]
holds for all Borel measures $\nu \leq \mu$. Then $\mu$ is $(p,s)$-quantizable.
\end{prop}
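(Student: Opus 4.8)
The plan is to verify the definition of $(p,s)$-quantizability directly, using the hypothesized integral upper bound together with dominated convergence. Let $(\mu_k)_k$ be any sequence of Borel measures with $\mu_k \leq \mu$ and $\mu_k(X) \to 0$; we must show $\overline{Q}_{p,s}(\mu_k) \to 0$. Since each $\mu_k \leq \mu$, the hypothesis applies with $\nu = \mu_k$, giving
\[ \overline{Q}_{p,s}(\mu_k)^p \leq \int_X F \di \mu_k. \]
So it suffices to prove that $\int_X F \di \mu_k \to 0$.

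The key step is to establish this convergence of integrals from the two facts $\mu_k \leq \mu$ and $\mu_k(X) \to 0$, given that $\int_X F \di \mu < \infty$. Write $\rho_k \in L^1(X;\mu)$ for the Radon--Nikodym density of $\mu_k$ with respect to $\mu$; then $0 \leq \rho_k \leq 1$ $\mu$-a.e., and $\int_X \rho_k \di\mu = \mu_k(X) \to 0$, so $\rho_k \to 0$ in $L^1(\mu)$, hence along a subsequence $\mu$-a.e. But passing to subsequences is not enough since we want the full sequence; instead I would argue by contradiction with a subsequence: if $\int_X F \di\mu_k \not\to 0$, extract a subsequence along which $\int_X F \di\mu_{k_j} \geq \varepsilon$ for all $j$, and a further subsequence along which $\rho_{k_j} \to 0$ $\mu$-a.e. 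Then $F \rho_{k_j} \to 0$ $\mu$-a.e. with $0 \leq F\rho_{k_j} \leq F \in L^1(\mu)$, so dominated convergence gives $\int_X F\rho_{k_j}\di\mu = \int_X F \di\mu_{k_j} \to 0$, contradicting the lower bound $\varepsilon$. This forces $\int_X F \di\mu_k \to 0$, and therefore $\overline{Q}_{p,s}(\mu_k) \to 0$, so $\mu$ is $(p,s)$-quantizable.

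I do not anticipate any serious obstacle here; the only mild subtlety is that $F$ is allowed to take the value $+\infty$, but the hypothesis $\int_X F \di\mu < \infty$ forces $F < \infty$ $\mu$-a.e., so this causes no difficulty and the dominated convergence argument goes through verbatim. An alternative, slicker packaging avoids subsequences entirely: the measures $F\,\di\mu_k$ are all dominated by the fixed finite measure $F\,\di\mu$ and are uniformly absolutely continuous with respect to it, and $\mu_k \to 0$ setwise-dominatedly; one can invoke the generalized dominated convergence theorem for measures, or simply note that for the finite measure $\lambda := F\mu$ one has $\int_X F\di\mu_k = \int_X \rho_k \di\lambda$ and $\rho_k \to 0$ in $\lambda$-measure (since $\lambda \ll \mu$ and $\rho_k \to 0$ in $\mu$-measure, as $\|\rho_k\|_{L^1(\mu)} \to 0$ with $0\le\rho_k\le 1$), with $0 \le \rho_k \le 1$, whence $\int_X \rho_k\di\lambda \to 0$ by bounded convergence in measure. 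Either route is routine; I would present the subsequence-and-contradiction version as the cleanest self-contained argument.
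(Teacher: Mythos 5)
Your argument is correct and matches the paper's proof essentially verbatim: both pass to the Radon--Nikodym densities $\rho_k = \di\mu_k/\di\mu$, note $0 \le \rho_k \le 1$ with $\rho_k \to 0$ in $L^1(\mu)$ hence in $\mu$-measure, and conclude $\int_X F\rho_k \di\mu \to 0$ by dominated convergence along subsequences. Your explicit contradiction-with-a-subsubsequence step is just a slightly more careful write-up of the same point the paper handles tersely.
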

\begin{proof}
Let $\mu_n \to 0$ with $\mu_n \leq \mu$, and consider the Radon-Nikodym derivatives $\rho_n := \frac{\di\mu_n}{\di\mu}$. Then each $0 \leq \rho_n \leq 1$ $\mu$-a.e., with $\rho_n \to 0$ in $L^1(X; \mu)$ thus in $\mu$-measure. By assumption, we have that
\[ 0 \leq \overline{Q}_{p,s}(\mu_n)^p \leq \int_X F \di \mu_n = \int_X F \rho_n \di \mu. \]
Since $0 \leq F \rho_n \leq F \in L^1(X; \mu)$, and $\rho_n \to 0$ in $\mu$-measure (thus $\mu$-a.e.~along a subsequence), the right-hand side converges to $0$ by dominated convergence. Thus $\overline{Q}_{p,s}(\mu_n) \to 0$ as well.
\end{proof}

\begin{remark*}
The above statement is also applicable for bounds of the form $\overline{Q}_{p,s}(\nu)^q \leq \int_X F \di \nu$ with $q \geq p$, since we only require the bound to hold for measures $\nu \leq \mu$.
\end{remark*}

In particular, by Pierce's lemma, every measure $\mu \in \M^{p+\delta}_+(\R^d)$ is $(p,d)$-quantizable with the choice $F(x) = C (1 + \|x\|^{p+\delta})$. 
More generally, we deduce in Proposition \ref{prop:covstab} that the covering growth condition given in \cite{covgrow} implies $(p,s)$-quantizability.

It is then natural to ask what kinds of domains admit such integral upper bounds and how the growth of the domain affects the integrand. To this end the random quantizer estimate given in Theorem \ref{thm:randstab} and the subsequent Corollary \ref{cor:volgrowstab} provide general sufficient conditions at the level of metric measure spaces, which though less precise than the covering growth upper bound do not require the space to be geodesically connected.

\subsection{Additivity of quantization coefficients}\label{sect:addquant}
We finally list a few general properties of quantization coefficients whose proofs are given in Appendix \ref{app:quantcoeffs}.
In Appendix \ref{app:subadd}, we prove the following subadditivity statement, which generalizes \cite[Lem.~2.5]{covgrow}:

\begin{prop}[Finite subadditivity]\label{prop:subadd}
Let $p \in [1,\infty]$, $\mu_1, \mu_2 \in \M^p_+(X)$. Set $\mu := \mu_1 + \mu_2$, and let $s \in (0,\infty)$. 

Then for $\frac{1}{p^\prime} := \frac{1}{p} + \frac{1}{s}$,
\begin{align*}
\overline Q_{p,s}(\mu)^{p^\prime} \leq \overline Q_{p,s}(\mu_1)^{p^\prime} + \overline Q_{p,s}(\mu_2)^{p^\prime}; \quad
\underline Q_{p,s}(\mu)^{p^\prime} \leq \underline Q_{p,s}(\mu_1)^{p^\prime} + \overline Q_{p,s}(\mu_2)^{p^\prime}.
\end{align*}
\end{prop}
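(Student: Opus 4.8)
The plan is to exploit the near-optimal allocation of quantizer points between the two pieces $\mu_1$ and $\mu_2$, using Proposition \ref{prop:enpprops}~(i) as the main combinatorial tool. Fix $\veps > 0$. For each $N$, choose sets $S^{(1)}_N, S^{(2)}_N$ with $\#S^{(i)}_N \leq N$ and $e_p(\mu_i; S^{(i)}_N) \leq e_{N,p}(\mu_i) + \veps/N$ (or simply infimizing sequences), and for integers $N_1, N_2$ with $N_1 + N_2 \leq N$ consider the union $S = S^{(1)}_{N_1} \cup S^{(2)}_{N_2}$, which has at most $N$ points. By additivity,
\[ e_{N,p}(\mu) \leq e_p(\mu; S) = e_p(\mu_1; S) +_p e_p(\mu_2; S) \leq e_p(\mu_1; S^{(1)}_{N_1}) +_p e_p(\mu_2; S^{(2)}_{N_2}). \]
Now I would split $N$ as $N_1 = \lceil \lambda N \rceil$, $N_2 = \lceil (1-\lambda) N \rceil$ for a parameter $\lambda \in (0,1)$ (taking a little care with the ceilings, which cost only a lower-order correction), so that $N_1^{1/s} \sim \lambda^{-1/s}$... wait — more precisely $N^{1/s} e_{N_1,p}(\mu_1) = (N/N_1)^{1/s} \cdot N_1^{1/s} e_{N_1,p}(\mu_1) \to \lambda^{-1/s} \overline Q_{p,s}(\mu_1)$ along a suitable subsequence, and similarly for the second term with $(1-\lambda)^{-1/s}$.

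The key computation is then the optimization over $\lambda$. For $p < \infty$, raising to the $p$th power and using the definition of $+_p$, one gets
\[ \overline Q_{p,s}(\mu)^p \leq \lambda^{-p/s} \overline Q_{p,s}(\mu_1)^p + (1-\lambda)^{-p/s} \overline Q_{p,s}(\mu_2)^p \]
(modulo the limsup bookkeeping). Minimizing the right-hand side over $\lambda \in (0,1)$: setting $a = \overline Q_{p,s}(\mu_1)^p$, $b = \overline Q_{p,s}(\mu_2)^p$ and $t = p/s$, the minimum of $\lambda^{-t} a + (1-\lambda)^{-t} b$ is attained at $\lambda = a^{1/(t+1)}/(a^{1/(t+1)} + b^{1/(t+1)})$, and the minimum value works out to $(a^{1/(t+1)} + b^{1/(t+1)})^{t+1}$. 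Since $\frac{1}{t+1} = \frac{1}{p/s + 1} = \frac{s}{s+p} = \frac{p'}{p}$ (using $\frac{1}{p'} = \frac1p + \frac1s$), this says exactly $\overline Q_{p,s}(\mu)^{p'} \leq a^{p'/p} + b^{p'/p} = \overline Q_{p,s}(\mu_1)^{p'} + \overline Q_{p,s}(\mu_2)^{p'}$, which is the claim. The case $p = \infty$ is handled by the analogous (simpler) split with $+_\infty = \max$, giving $\overline Q_{\infty,s}(\mu) \leq \max(\lambda^{-1/s}\overline Q_{\infty,s}(\mu_1), (1-\lambda)^{-1/s}\overline Q_{\infty,s}(\mu_2))$, optimized by balancing the two arguments; here $p' = s$ and one checks the formula reduces correctly.

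For the second (mixed) inequality, the argument is the same but I would choose the subsequence realizing $\underline Q_{p,s}(\mu_1) = \lim N_k^{1/s} e_{N_k,p}(\mu_1)$ for the first piece and bound the second piece by its limsup $\overline Q_{p,s}(\mu_2)$ along that same subsequence; along $N = \lceil N_k/\lambda\rceil$ one then gets $\underline Q_{p,s}(\mu) \leq \liminf_N N^{1/s} e_{N,p}(\mu) \leq \lambda^{-1/s}\underline Q_{p,s}(\mu_1) +_p (1-\lambda)^{-1/s}\overline Q_{p,s}(\mu_2)$, and the same $\lambda$-optimization finishes it. The main obstacle I anticipate is purely technical bookkeeping: making the passage from the discrete choice of $N_1, N_2$ (with ceilings) to the $\lambda$-scaling rigorous, in particular handling the cases where one of $\overline Q_{p,s}(\mu_i)$ vanishes or is infinite (then one sends $\lambda \to 0$ or $1$ appropriately, or the inequality is trivial), and checking that limsup is indeed subadditive-compatible with the $+_p$ operation — i.e. that $\limsup(x_N +_p y_N) \leq (\limsup x_N) +_p (\limsup y_N)$, which holds since $+_p$ is continuous and monotone in each argument. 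None of these steps is deep, but they need to be spelled out carefully; the genuine content is the elementary optimization identity $\min_{\lambda}(\lambda^{-t}a + (1-\lambda)^{-t}b) = (a^{1/(t+1)}+b^{1/(t+1)})^{t+1}$, which is exactly what converts the naive $p$-power subadditivity into the sharp $p'$-power statement.
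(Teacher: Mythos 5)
Your proposal is correct and follows essentially the same route as the paper: split $N$ into proportions $N_i \approx t_i N$, use the additivity of $e_p$ from Proposition \ref{prop:enpprops}~(i), handle the $\liminf$/$\limsup$ bookkeeping (the paper's Lemma \ref{lem:psumlim}), and then optimize over the proportion, your explicit two-term minimization being exactly the content of the paper's Lemma \ref{lem:optpsum}. The only nitpick is that you should use floors rather than ceilings (so that $N_1 + N_2 \leq N$ genuinely holds), and the subadditivity of $\limsup$ with respect to $+_p$ deserves the short explicit verification the paper gives rather than an appeal to continuity and monotonicity alone.
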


The proof employs the same basic argument as in proofs of Zador's theorem, e.g.~\cite[Lem.~6.5]{quantbook} and \cite[Lem.~5.1]{discapprox}, quantizing each $\mu_i$ by $N_i$ points according to fixed proportions $\frac{N_i}{N_1+N_2}$ chosen optimally. 

As a consequence, we deduce the following stability statement:

\begin{prop}[Stability of quantization coefficients]\label{prop:qpscont}
Let $p \in [1,\infty)$, $s \in (0,\infty)$. 

Let $\mu \in \M^p_+(X)$ be $(p,s)$-quantizable, and let $(\mu_k)_k$ be a sequence of measures converging to $\mu$ from below; i.e.~each $\mu_k \leq \mu$ and $(\mu - \mu_k)(X) \to 0$. Then
\[ \underline Q_{p,s}(\mu) = \lim_{k \to \infty} \underline Q_{p,s}(\mu_k) \quad \text{and} \quad \overline Q_{p,s}(\mu) = \lim_{k \to \infty} \overline Q_{p,s}(\mu_k). \]
In particular, for any $A \subseteq X$ Borel,
\[ \underline{Q}_{p,s}^{(\mu)}(A) = \sup_{\substack{K \subseteq A \text{ compact}}} \underline{Q}_{p,s}^{(\mu)}(K); \quad \overline{Q}_{p,s}^{(\mu)}(A) = \sup_{\substack{K \subseteq A \text{ compact}}} \overline{Q}_{p,s}^{(\mu)}(K). \]
\end{prop}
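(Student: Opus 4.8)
The strategy is to combine the finite subadditivity of Proposition~\ref{prop:subadd}, applied to the decomposition $\mu = \mu_k + (\mu - \mu_k)$, with the definition of $(p,s)$-quantizability, which forces $\overline{Q}_{p,s}(\mu - \mu_k) \to 0$ since $(\mu - \mu_k)(X) \to 0$ and $\mu - \mu_k \leq \mu$. Write $\tfrac{1}{p'} = \tfrac1p + \tfrac1s$. From Proposition~\ref{prop:subadd} with $\mu_1 = \mu_k$, $\mu_2 = \mu - \mu_k$ we get
\[ \underline{Q}_{p,s}(\mu)^{p'} \leq \underline{Q}_{p,s}(\mu_k)^{p'} + \overline{Q}_{p,s}(\mu - \mu_k)^{p'}, \qquad \overline{Q}_{p,s}(\mu)^{p'} \leq \overline{Q}_{p,s}(\mu_k)^{p'} + \overline{Q}_{p,s}(\mu - \mu_k)^{p'}. \]
In the reverse direction, monotonicity (Proposition~\ref{prop:enpprops}(iii), carried over to quantization coefficients) gives $\underline{Q}_{p,s}(\mu_k) \leq \underline{Q}_{p,s}(\mu)$ and $\overline{Q}_{p,s}(\mu_k) \leq \overline{Q}_{p,s}(\mu)$ for every $k$, since $\mu_k \leq \mu$.

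\textbf{Passing to the limit.} Let $\varepsilon_k := \overline{Q}_{p,s}(\mu - \mu_k)^{p'}$, so $\varepsilon_k \to 0$ by $(p,s)$-quantizability. The subadditivity bound reads $\underline{Q}_{p,s}(\mu)^{p'} - \varepsilon_k \leq \underline{Q}_{p,s}(\mu_k)^{p'} \leq \underline{Q}_{p,s}(\mu)^{p'}$ (the upper bound from monotonicity), and similarly for $\overline{Q}$. Taking $k \to \infty$ squeezes $\underline{Q}_{p,s}(\mu_k)^{p'} \to \underline{Q}_{p,s}(\mu)^{p'}$ and $\overline{Q}_{p,s}(\mu_k)^{p'} \to \overline{Q}_{p,s}(\mu)^{p'}$; since $t \mapsto t^{1/p'}$ is continuous on $[0,\infty]$, the claimed limits follow. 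One small point to address: if $\underline{Q}_{p,s}(\mu) = \infty$ (or $\overline{Q}_{p,s}(\mu) = \infty$), the subadditivity inequality still forces $\underline{Q}_{p,s}(\mu_k) \to \infty$, because $\underline{Q}_{p,s}(\mu_k)^{p'} \geq \underline{Q}_{p,s}(\mu)^{p'} - \varepsilon_k$ with a finite $\varepsilon_k$; so the statement holds with the obvious conventions in $[0,\infty]$.

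\textbf{The compact-exhaustion corollary.} For the second assertion, fix a Borel set $A \subseteq X$ and apply the first part with $\mu$ replaced by $\mu|_A$, which is $(p,s)$-quantizable because $\mu$ is and $\mu|_A \leq \mu$. By Ulam's theorem (inner regularity on Polish spaces), choose compact sets $K_k \subseteq A$ with $\mu(A \setminus K_k) \to 0$; enlarging if necessary we may take $K_1 \subseteq K_2 \subseteq \cdots$, so $\mu_k := \mu|_{K_k}$ increases to $\mu|_A$ from below in total variation. The first part yields $\underline{Q}_{p,s}^{(\mu)}(K_k) \to \underline{Q}_{p,s}^{(\mu)}(A)$ and $\overline{Q}_{p,s}^{(\mu)}(K_k) \to \overline{Q}_{p,s}^{(\mu)}(A)$, while monotonicity gives $\underline{Q}_{p,s}^{(\mu)}(K) \leq \underline{Q}_{p,s}^{(\mu)}(A)$ for every compact $K \subseteq A$; combining these two facts identifies the supremum over compact subsets with the value on $A$, and likewise for $\overline{Q}$.

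\textbf{Main obstacle.} There is no serious analytic difficulty here — the proof is essentially bookkeeping on top of Proposition~\ref{prop:subadd}. The only points requiring a little care are (a) handling the possibly infinite values of the quantization coefficients consistently, which is why it is cleanest to work with the $p'$-th powers and only take $1/p'$-th roots at the end, and (b) arranging the compact sets $K_k$ to be nested so that $\mu|_{K_k}$ genuinely converges to $\mu|_A$ from below; this is immediate from inner regularity by replacing $K_k$ with $K_1 \cup \cdots \cup K_k$.
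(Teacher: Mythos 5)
Your proposal is correct and follows essentially the same route as the paper: sandwich $\underline Q_{p,s}(\mu_k)^{p'}$ (and $\overline Q_{p,s}(\mu_k)^{p'}$) between $\underline Q_{p,s}(\mu)^{p'} - \overline Q_{p,s}(\mu-\mu_k)^{p'}$ and $\underline Q_{p,s}(\mu)^{p'}$ via Proposition~\ref{prop:subadd} and monotonicity, use $(p,s)$-quantizability to kill the error term, then deduce the compact-exhaustion statement from Ulam's theorem applied to $\mu|_A$. The only cosmetic difference is that you nest the compact sets $K_k$, which is harmless but unnecessary since the first part only requires $\mu(A\setminus K_k)\to 0$, not monotonicity of the $K_k$.
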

\begin{proof}
Proposition \ref{prop:subadd} implies that
\[ \underline Q_{p,s}(\mu_k)^{p^\prime} \leq \underline Q_{p,s}(\mu)^{p^\prime} \leq \underline Q_{p,s}(\mu_k)^{p^\prime} + \overline Q_{p,s}(\mu - \mu_k)^{p^\prime}, \]
and likewise for the upper quantization coefficients. By the assumption of $(p,s)$-quantizability, we have $\overline Q_{p,s}(\mu - \mu_k) \to 0$, from which the first statement follows.

For the second statement, for each $k \in \N$, Ulam's theorem yields a compact subset $K_k \subseteq A$ such that $\mu(A \setminus K_k) < \frac{1}{k}$; setting $\mu_k := \mu|_{K_k}$ with $\mu|_A$ in place of $\mu$ then yields the second statement.
\end{proof}

In particular, if $\mu = \rho \nu$ for some (possibly infinite) Borel measure $\nu$, the statement holds for any $\mu_k = \rho_k \nu$ such that $\rho_k \to \rho$ $\nu$-a.e.~from below. The statement also follows when $(\rho_k)_k$ is not necessarily dominated by $\rho$, but by another $(p,s)$-quantizable density $\sigma \geq \rho$, but the above assumptions will be sufficient for our purposes.

{Proposition \ref{prop:subadd} also implies that the sum of finitely many $(p,s)$-quantizable measures is also $(p,s)$-quantizable; this however does not generalize to countable sums. }

In Appendix \ref{app:supadd}, we obtain the following converse superadditivity statement to Proposition \ref{prop:subadd} for measures with disjoint compact support, which also characterizes the limiting spatial distribution of quantizers:

\begin{prop}[Finite superadditivity, compact case] \label{prop:compsupadd}
Let $p \in [1,\infty]$, $\mu \in \M^p_+(X)$, $s \in (0,\infty)$. Set $\frac{1}{p^\prime} := \frac{1}{p} + \frac{1}{s}$.

Let $K_1, \ldots, K_m \subseteq X$ be disjoint compact sets. Let $\cS = (S_k)_{N_k}$ be an asymptotically optimal sequence of quantizers for $\mu$, and consider the sequence of proportions $(v_k)_k \subset [0,1]^m$ given by
\[ v_{k,i} := \frac{\#(S_k \mres K_i)}{N_k}, \quad i = 1, \ldots, m; \ k \in \N. \]
Then for any limit point $\bar v \in [0,1]^m$ of the sequence $(v_k)_k$, the following inequalities hold:
\begin{align*} 
\underline{Q}_{p,s}(\mu)^p & \geq \sum_{i=1}^m \bar v_i^{-p/s} \underline{Q}_{p,s}^{(\mu)}(K_i; \cS \mres K_i)^p \geq \left( \sum_{i=1}^m \underline{Q}_{p,s}^{(\mu)}(K_i)^{p^\prime} \right)^{p/p^\prime} \quad \text{for} \quad p < \infty, \\ 
\underline{Q}_{\infty,s}(\mu) & \geq \max_{i=1}^m \bar v_i^{-1/s} \underline{Q}_{\infty,s}^{(\mu)}(K_i; \cS \mres K_i) \geq \left( \sum_{i=1}^m \underline{Q}_{\infty,s}^{(\mu)}(K_i)^s \right)^{1/s} \quad \text{for} \quad p = \infty,
\end{align*}
where by convention {$\bar v_i^{-1/s} \underline{Q}_{p,s}^{(\mu)}(K_i; \cS \mres K_i) = 0$ whenever $\underline{Q}_{p,s}^{(\mu)}(K_i; \cS \mres K_i) = 0$, even if $\bar v_i = 0$}.
\end{prop}

In combination with Proposition \ref{prop:subadd}, we obtain equality and determine the unique limit point $\bar v$:

\begin{theorem}[Limiting spatial distribution] \label{thm:stats}
Let $p \in [1,\infty)$, $s \in (0,\infty)$, and set $\frac{1}{p^\prime} := \frac{1}{p} + \frac{1}{s}$. 

Let $\mu \in \M^p_+(X)$ be $(p,s)$-quantizable with $0 < \underline Q_{p,s}(\mu) < \infty$. Let moreover $A \subseteq X$ Borel such that $Q^{(\mu)}_{p,s}(A)$ exists and $Q^{(\mu)}_{p,s}(\partial A) = 0$. Then 
\[ \underline Q_{p,s}(\mu)^{p^\prime} = Q_{p,s}^{(\mu)}(A)^{p^\prime} + \underline Q_{p,s}^{(\mu)}(A^c)^{p^\prime}, \]
and for any asymptotically optimal sequence $\cS = (S_k)_{N_k}$ for $\mu$, we have
\[ \lim_{k \to \infty} \frac{\# (S_k \cap A)}{N_k} = \frac{Q^{(\mu)}_{p,s}(A)^{p^\prime}}{\underline Q_{p,s}(\mu)^{p^\prime}}; \quad \lim_{k \to \infty} \frac{\# (S_k \cap A^c)}{N_k} = \frac{\underline Q^{(\mu)}_{p,s}(A^c)^{p^\prime}}{\underline Q_{p,s}(\mu)^{p^\prime}}. \]
\end{theorem}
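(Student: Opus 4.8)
The plan is to derive everything from three results already established: finite subadditivity (Proposition~\ref{prop:subadd}), the compact superadditivity bound (Proposition~\ref{prop:compsupadd}), and stability along monotone sequences (Proposition~\ref{prop:qpscont}). Throughout write $a := Q_{p,s}^{(\mu)}(A)$, $b := \underline Q_{p,s}^{(\mu)}(A^c)$, $Q := \underline Q_{p,s}(\mu)$ and $\alpha := a^{p'}/Q^{p'}$, where $\tfrac1{p'}=\tfrac1p+\tfrac1s$.

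I would first establish the additivity identity $Q^{p'} = a^{p'} + b^{p'}$ and simultaneously reduce $A$ and $A^c$ to their interiors. Applying monotonicity and Proposition~\ref{prop:subadd} to $\mu|_A = \mu|_{\inter A} + \mu|_{\partial A \cap A}$ and using $Q_{p,s}^{(\mu)}(\partial A) = 0$ gives $\underline Q_{p,s}^{(\mu)}(\inter A) = a$, and likewise $\underline Q_{p,s}^{(\mu)}(\inter(A^c)) = b$. By Proposition~\ref{prop:qpscont}, $a = \sup_K \underline Q_{p,s}^{(\mu)}(K)$ over compact $K \subseteq \inter A$; since $\underline Q_{p,s}^{(\mu)}(K) = \underline Q_{p,s}^{(\mu)}(K \cap \supp\mu)$ and $K\cap\supp\mu$ is compact, the supremum is unchanged over compact $K \subseteq \inter A \cap \supp\mu$ (similarly for $b$). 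Now "$\le$" in the identity is Proposition~\ref{prop:subadd} applied to $\mu = \mu|_{A^c} + \mu|_A$ together with $\overline Q_{p,s}^{(\mu)}(A) = a$; and "$\ge$" follows by applying the final inequality of Proposition~\ref{prop:compsupadd} to disjoint compact $K \subseteq \inter A$, $L \subseteq \inter(A^c)$ — which yields $Q^{p'} \ge \underline Q_{p,s}^{(\mu)}(K)^{p'} + \underline Q_{p,s}^{(\mu)}(L)^{p'}$ — and taking suprema over $K$ and over $L$.

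For the spatial distribution, fix an asymptotically optimal sequence $\cS = (S_k)_{N_k}$; since $Q>0$ one has $\#S_k/N_k\to1$, so we may and do assume $\# S_k = N_k$, whence $\#(S_k \cap A) + \#(S_k \cap A^c) = N_k$. Given $\delta > 0$, I would pick compact $K_\delta \subseteq \inter A \cap \supp\mu$ and $L_\delta \subseteq \inter(A^c) \cap \supp\mu$ with $\underline Q_{p,s}^{(\mu)}(K_\delta)^{p'} > a^{p'} - \delta$ and $\underline Q_{p,s}^{(\mu)}(L_\delta)^{p'} > b^{p'} - \delta$. The crucial point is that $S_k \mres K_\delta \subseteq A$ and $S_k \mres L_\delta \subseteq A^c$ for $k$ large: by Proposition~\ref{prop:admisseq}(iv), $d(\cdot, S_k) \to 0$ uniformly on $K_\delta \subseteq \supp\mu$, and since $\dist(K_\delta, X \setminus A) > 0$, once $\sup_{K_\delta} d(\cdot,S_k)$ falls below that distance no point of $S_k$ lying outside $A$ can have its Voronoi cell meet $K_\delta$ (symmetrically for $L_\delta$, using $\dist(L_\delta, A) > 0$). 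Hence $\#(S_k \mres K_\delta) \le \#(S_k \cap A) \le N_k - \#(S_k \mres L_\delta)$ for large $k$.

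To finish, let $\gamma$ be any subsequential limit of $\#(S_k \cap A)/N_k$; along that subsequence I can further extract so that $\#(S_k \mres K_\delta)/N_k \to \bar v_1 \le \gamma$ and $\#(S_k \mres L_\delta)/N_k \to \bar v_2 \le 1 - \gamma$. Proposition~\ref{prop:compsupadd} applied to $\{K_\delta, L_\delta\}$ (using $\underline Q_{p,s}^{(\mu)}(K_\delta; \cS \mres K_\delta) \ge \underline Q_{p,s}^{(\mu)}(K_\delta)$ and the stated convention for vanishing summands) then gives, since $t \mapsto t^{-p/s}$ is nonincreasing,
\[ Q^p \;\ge\; \bar v_1^{-p/s}\,\underline Q_{p,s}^{(\mu)}(K_\delta)^p + \bar v_2^{-p/s}\,\underline Q_{p,s}^{(\mu)}(L_\delta)^p \;\ge\; \gamma^{-p/s}\,\underline Q_{p,s}^{(\mu)}(K_\delta)^p + (1-\gamma)^{-p/s}\,\underline Q_{p,s}^{(\mu)}(L_\delta)^p. \]
Letting $\delta \to 0$ with $\gamma$ fixed, so $\underline Q_{p,s}^{(\mu)}(K_\delta) \to a$ and $\underline Q_{p,s}^{(\mu)}(L_\delta) \to b$, this becomes $Q^p \ge \gamma^{-p/s} a^p + (1-\gamma)^{-p/s} b^p$. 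Since $w \mapsto w^{-p/s} a^p + (1-w)^{-p/s} b^p$ attains its minimum $(a^{p'}+b^{p'})^{p/p'}$ on $(0,1)$ uniquely at $w = a^{p'}/(a^{p'}+b^{p'})$, and $a^{p'}+b^{p'} = Q^{p'}$ from the first part, this forces $\gamma = a^{p'}/Q^{p'} = \alpha$ (the cases $a=0$ or $b=0$ — not both, since $Q>0$ — I would check by hand). As every subsequential limit equals $\alpha$, the limit exists: $\#(S_k \cap A)/N_k \to \alpha$, and $\#(S_k \cap A^c)/N_k \to 1 - \alpha = b^{p'}/Q^{p'}$. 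The hard part is the localization in the previous paragraph — that the Voronoi cells of points of $S_k$ clustered near $K_\delta$ eventually lie in $A$, which is what bridges the $\mres$-counts governed by Proposition~\ref{prop:compsupadd} and the literal counts $\#(S_k \cap A)$ in the statement — together with the bookkeeping of keeping $K_\delta,L_\delta$ inside $\supp\mu$ while their quantization coefficients stay close to $a$ and $b$, and the degenerate/zero cases against the convention in Proposition~\ref{prop:compsupadd}.
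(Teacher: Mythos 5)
Your proposal is correct and follows essentially the same route as the paper's proof (of Theorem \ref{thm:finadd}): subadditivity for the upper bound, reduction to $\inter A$ and $\inter(A^c)$ via $Q^{(\mu)}_{p,s}(\partial A)=0$, compact exhaustion via $(p,s)$-quantizability and Proposition \ref{prop:qpscont}, the same $\veps$-neighborhood localization of $S_k \mres K$ inside $A$ via Proposition \ref{prop:admisseq}, and the uniqueness of the minimizer in Lemma \ref{lem:optpsum} to pin down the limiting proportions. The only differences are cosmetic: you prove the additivity identity separately before the sequence argument rather than extracting both from one chain of inequalities, and you are slightly more careful than the paper in intersecting the compact sets with $\supp\mu$.
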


We also prove this statement in Appendix \ref{app:supadd}.
This statement generalizes the spatial distribution result of Graf and Luschgy \cite[Thm.~7.5]{quantbook} to any setting in which Zador's theorem is shown to hold, such as Riemannian manifolds under covering growth assumptions, or rectifiable measures which we will discuss below. The theorem also imposes no requirement for $\mu$ to be absolutely continuous, unlike \cite[Thm.~7.5]{quantbook}.

In the more general case of nonexistent quantization coefficients, we deduce two more statements which we will apply in the following sections:

\begin{prop}\label{prop:cntadd}
Let $p \in [1,\infty]$, $\mu \in \M^p_+(X)$, $s \in (0,\infty)$, $p^\prime := \frac{sp}{s+p}$. Let $\nu$ be a Borel measure on $X$, $(A_k)_{k=1}^\infty$ be a countable collection of disjoint Borel subsets of $X$, and set $A = \bigsqcup_{k=1}^\infty A_k$.
\begin{itemize}
\item[(i)] Suppose each $A_k$ satisfies $\underline{Q}_{p,s}^{(\mu)}(A_k)^{p^\prime} \geq \nu(A_k)$. 
Then likewise $\underline{Q}_{p,s}^{(\mu)}(A)^{p^\prime} \geq \nu(A)$.
\item[(ii)] Suppose each $A_k$ satisfies $\overline{Q}_{p,s}^{(\mu)}(A_k)^{p^\prime} \leq \nu(A_k)$, and $A$ is $(p,s)$-quantizable with respect to $\mu$ (hence necessarily $p < \infty$). 
Then likewise $\overline{Q}_{p,s}^{(\mu)}(A)^{p^\prime} \leq \nu(A)$. 
\end{itemize}
\end{prop}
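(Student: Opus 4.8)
The plan is to prove the two statements by reducing the countable case to a finite one: for each $n$, the finite collection $A_1, \dots, A_n$ together with the ``tail'' $T_n := A \setminus \bigsqcup_{k=1}^n A_k = \bigsqcup_{k>n} A_k$ forms a partition of $A$, so we may apply the finite (super/sub)additivity results of Section \ref{sect:addquant} and then let $n \to \infty$. The key point in both parts is that $\nu(A) = \sum_{k=1}^\infty \nu(A_k)$ by countable additivity of the measure $\nu$, and that the tail $\nu(T_n) \to 0$.

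\textbf{Part (i).} Here I would first reduce to compact sets: by Proposition \ref{prop:qpscont} applied to the measure $\mu|_A$, it suffices to prove $\underline Q^{(\mu)}_{p,s}(K)^{p'} \geq \nu(K)$ for compact $K \subseteq A$ — wait, that is not quite the right reduction since $\nu$ is a different measure. Instead I would argue directly: fix $n$, and note that $A_1, \dots, A_n$ are disjoint Borel sets whose union $B_n := \bigsqcup_{k=1}^n A_k \subseteq A$ satisfies $\mu|_{A_k} \leq \mu|_{B_n} \leq \mu|_A$. Applying the superadditivity bound (the second inequality in Proposition \ref{prop:compsupadd}, after passing to compact exhaustions via Proposition \ref{prop:qpscont}, or a direct finite-superadditivity argument for Borel sets), one gets
\[ \underline Q^{(\mu)}_{p,s}(B_n)^{p'} \geq {}^{p'}\!\sum_{k=1}^n \underline Q^{(\mu)}_{p,s}(A_k)^{p'} = \sum_{k=1}^n \underline Q^{(\mu)}_{p,s}(A_k)^{p'/p'} \geq \sum_{k=1}^n \nu(A_k), \]
where I use that the $p'$-sum raised to the $p'$ power is an ordinary sum. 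Since $B_n \subseteq A$ gives $\mu|_{B_n} \leq \mu|_A$ and hence by monotonicity (Proposition \ref{prop:enpprops}(iii)) $\underline Q^{(\mu)}_{p,s}(B_n) \leq \underline Q^{(\mu)}_{p,s}(A)$, we obtain $\underline Q^{(\mu)}_{p,s}(A)^{p'} \geq \sum_{k=1}^n \nu(A_k)$ for every $n$. Letting $n \to \infty$ and using $\sum_k \nu(A_k) = \nu(A)$ finishes Part (i). I should double-check whether Proposition \ref{prop:compsupadd} requires the $K_i$ to be compact — if so, I first pick compact $K_k \subseteq A_k$ with $\mu(A_k \setminus K_k)$ small, apply the compact superadditivity, and then take suprema over compacta using Proposition \ref{prop:qpscont}; the term $\nu(A_k)$ should be handled by inner regularity of $\nu$ on Borel sets or simply by noting $\underline Q^{(\mu)}_{p,s}(A_k) \geq \sup_K \underline Q^{(\mu)}_{p,s}(K) \geq \sup_K \nu(K)^{1/p'}$ — but this requires $\nu(K) \to \nu(A_k)$, i.e. inner regularity of $\nu$, which holds for locally finite Radon measures on Polish spaces.

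\textbf{Part (ii).} Here $p < \infty$ and $A$ is $(p,s)$-quantizable with respect to $\mu$, so by definition $\mu|_A \in \M^p_+(X)$ and $\overline Q_{p,s}(\rho) \to 0$ whenever $\rho \leq \mu|_A$ and $\rho(X) \to 0$; in particular $\overline Q^{(\mu)}_{p,s}(T_n) = \overline Q_{p,s}(\mu|_{T_n}) \to 0$ since $\mu(T_n) \to 0$. Now apply finite subadditivity (Proposition \ref{prop:subadd}, iterated $n$ times) to the partition $\mu|_A = \mu|_{A_1} + \dots + \mu|_{A_n} + \mu|_{T_n}$:
\[ \overline Q^{(\mu)}_{p,s}(A)^{p'} \leq \sum_{k=1}^n \overline Q^{(\mu)}_{p,s}(A_k)^{p'} + \overline Q^{(\mu)}_{p,s}(T_n)^{p'} \leq \sum_{k=1}^n \nu(A_k) + \overline Q^{(\mu)}_{p,s}(T_n)^{p'}. \]
Letting $n \to \infty$, the last term vanishes by $(p,s)$-quantizability and $\sum_{k=1}^n \nu(A_k) \to \nu(A)$, yielding $\overline Q^{(\mu)}_{p,s}(A)^{p'} \leq \nu(A)$.

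\textbf{Main obstacle.} The routine parts are the algebraic manipulation of $p'$-sums and the limit passages. The one genuine subtlety I anticipate is in Part (i): verifying that the finite superadditivity of Proposition \ref{prop:compsupadd} — which is stated for disjoint \emph{compact} sets — can be transferred to disjoint \emph{Borel} sets, and in particular that the lower bound $\nu(A_k)$ survives the compact approximation. This needs inner regularity of $\nu$ (valid since $\nu$ is a locally finite Radon measure on a Polish space, so $\nu(A_k) = \sup\{\nu(K) : K \subseteq A_k \text{ compact}\}$ at least when $\nu(A_k) < \infty$; the case $\nu(A_k) = \infty$ forces $\underline Q^{(\mu)}_{p,s}(A_k) = \infty$ and the bound is trivial) combined with Proposition \ref{prop:qpscont} to pass from compact subsets back to $A_k$ and to $A$. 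Everything else is bookkeeping.
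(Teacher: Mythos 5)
Your proposal matches the paper's proof: part (ii) is identical (finite subadditivity over $A_1,\dots,A_n$ plus the tail $A\setminus B_n$, whose contribution vanishes by $(p,s)$-quantizability), and for part (i) the paper carries out exactly the compact approximation you sketch, choosing $K_k\subseteq A_k$ with $\nu(A_k\setminus K_k)<2^{-k}\veps$ by inner regularity of $\nu$ and applying Proposition \ref{prop:compsupadd} to the finite family $(K_k)_{k\leq m}$ before letting $m\to\infty$ and $\veps\to 0$. Of the two fixes you hedge between in (i), only the inner-regularity-of-$\nu$ route works, since Proposition \ref{prop:qpscont} requires $(p,s)$-quantizability, which is not assumed in part (i).
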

\begin{proof}
We can assume wlog that each $\nu(A_k) < \infty$; otherwise, both statements hold trivially.
\begin{itemize}
\item Fix $\veps > 0$. For each $k \in \N$, since $\nu(A_k) < \infty$, we can take $K_k \subseteq A_k$ compact such that $\nu(A_k \setminus K_k) < 2^{-k} \veps$. Given $m \in \N$, applying Proposition \ref{prop:compsupadd} to $\mu|_A$ and the disjoint compact sets $(K_i)_{i=1}^m$, we obtain
\[ \underline{Q}_{p,s}^{(\mu)}(A)^{p^\prime} \geq \sum_{k=1}^m \underline{Q}_{p,s}^{(\mu)}(K_k)^{p^\prime} \geq \sum_{k=1}^m \nu(K_k) \geq \sum_{k=1}^m \nu(A_k) - \veps \sum_{k=1}^\infty 2^{-k} \geq \nu\left( \bigsqcup_{k=1}^m A_k \right) - \veps. \]
Letting $m \to \infty$ and then $\veps \to 0$ yields the desired inequality.
\item 
Given $m \in \N$, considering the finite union $B_m := \bigsqcup_{k=1}^m A_k$, we can apply Proposition \ref{prop:subadd} to obtain
\[ \overline Q_{p,s}^{(\mu)}(A)^{p^\prime} \leq \sum_{k=1}^m \overline Q_{p,s}^{(\mu)}(A_k)^{p^\prime} + \overline Q_{p,s}^{(\mu)}(A \setminus B_m)^{p^\prime} \leq \sum_{k=1}^\infty \nu(A_k) + \overline Q_{p,s}^{(\mu)}(A \setminus B_m)^{p^\prime}. \]
The statement then follows from the assumption of $(p,s)$-quantizability, noting that the measures $\mu|_{A \setminus B_m}$ are dominated by $\mu|_A$ and converge monotonically to $0$ as $m \to \infty$. \qedhere
\end{itemize}
\end{proof}

\begin{prop}\label{prop:genzadorineq}
 Let $p \in [1,\infty]$, $s \in (0,\infty)$, $p^\prime := \frac{sp}{s+p}$. Let $\nu$ be a Borel measure on $X$, $\mu = \rho\nu \in \M_+^p(X)$, $f, g \colon X \to [0,\infty]$ Borel. % $\nu$-measurable.
\begin{itemize}
\item[(i)] Suppose $\underline{Q}_{p,s}^{(\nu)}(A)^{p^\prime} \geq \int_A f \di \nu$ for arbitrary $A \subseteq X$ Borel such that $\nu|_A \in \M^p_+(X)$. 
Then
\[ \underline{Q}_{p,s}(\mu)^{p^\prime} \geq \int_X \rho^{\frac{s}{s+p}} f \di \nu. \]
\item[(ii)] Suppose $\overline{Q}_{p,s}^{(\nu)}(A)^{p^\prime} \leq \int_A g \di \nu$ for arbitrary $A \subseteq X$ Borel and $(p,s)$-quantizable with respect to $\nu$, and $\mu$ is also $(p,s)$-quantizable.
Then
\[ \overline{Q}_{p,s}(\mu)^{p^\prime} \leq \int_X \rho^{\frac{s}{s+p}} g \di \nu. \]
\end{itemize}
\end{prop}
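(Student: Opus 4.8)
The plan is to decompose $X$ along the level sets of the density $\rho$, reducing both inequalities to the situation where $\rho$ is almost constant on each piece; there the scaling property of the quantization error turns the hypothesis for $\nu$ into the asserted bound carrying the weight $\rho^{s/(s+p)}$, and the pieces are then recombined using the countable super-/subadditivity of Proposition~\ref{prop:cntadd}.

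In detail, fix $\veps>0$, put $c_k:=(1+\veps)^k$ and $A_k:=\{x\in X:c_k\le\rho(x)<c_{k+1}\}$ for $k\in\Z$ (which we reindex as a sequence). Since $\mu=\rho\nu$ is finite we have $\nu(\{\rho=\infty\})=0$, and with the convention $0^{s/(s+p)}=0$ the integrands $\rho^{s/(s+p)}f$ and $\rho^{s/(s+p)}g$ are supported in $A:=\bigsqcup_k A_k$; moreover $\mu=\mu|_A$, so it suffices to estimate $\underline Q_{p,s}(\mu|_A)$ and $\overline Q_{p,s}(\mu|_A)$. On each piece we have the sandwich $c_k\,\nu|_{A_k}\le\mu|_{A_k}\le c_{k+1}\,\nu|_{A_k}$; in particular $\nu|_{A_k}\le c_k^{-1}\mu\in\M^p_+(X)$, so $\nu|_{A_k}\in\M^p_+(X)$, and if $\mu$ is $(p,s)$-quantizable then so is $\nu|_{A_k}$, since $(p,s)$-quantizability passes to any measure dominated by a multiple of $\mu$. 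Hence the hypothesis on $\nu$ in (i) resp. (ii) is applicable with $A=A_k$.

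For (i), combining monotonicity and scaling of quantization coefficients (Proposition~\ref{prop:enpprops}(ii)--(iii)) with the sandwich and the hypothesis gives
\[ \underline Q_{p,s}(\mu|_{A_k})^{p'}\ \ge\ c_k^{p'/p}\,\underline Q_{p,s}(\nu|_{A_k})^{p'}\ \ge\ c_k^{p'/p}\int_{A_k} f\di\nu\ \ge\ (1+\veps)^{-p'/p}\int_{A_k}\rho^{p'/p}f\di\nu, \]
using $p'/p=s/(s+p)>0$ and $c_k>\rho/(1+\veps)$ on $A_k$. Summing these over $k$ by Proposition~\ref{prop:cntadd}(i) (applied with the Borel measure $(1+\veps)^{-p'/p}\rho^{p'/p}f\,\nu$ in the role of its ``$\nu$''), and recalling $\mu=\mu|_A$, yields $\underline Q_{p,s}(\mu)^{p'}\ge(1+\veps)^{-p'/p}\int_X\rho^{p'/p}f\di\nu$; letting $\veps\to0$ and recalling $p'/p=s/(s+p)$ proves (i). For (ii) the mirror-image estimate reads $\overline Q_{p,s}(\mu|_{A_k})^{p'}\le c_{k+1}^{p'/p}\overline Q_{p,s}(\nu|_{A_k})^{p'}\le(1+\veps)^{p'/p}\int_{A_k}\rho^{p'/p}g\di\nu$ (now using $c_{k+1}\le(1+\veps)\rho$ on $A_k$, and the hypothesis, valid since each $A_k$ is $(p,s)$-quantizable with respect to $\nu$); since $\mu|_A=\mu$ is $(p,s)$-quantizable, Proposition~\ref{prop:cntadd}(ii) applies and gives $\overline Q_{p,s}(\mu)^{p'}\le(1+\veps)^{p'/p}\int_X\rho^{p'/p}g\di\nu$, whence (ii) on letting $\veps\to0$. (For $p=\infty$ one has $p'/p=0$, and no limiting argument is needed.)

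I do not expect a real obstacle; the argument is essentially bookkeeping around one point that must be gotten exactly right, namely the exponent: the scaling rule $e_p(c\,\mu;S)=c^{1/p}e_p(\mu;S)$, once raised to the power $p'=sp/(s+p)$, produces precisely $c^{p'/p}=c^{s/(s+p)}$, matching the power of $\rho$ in the claim. After that, the only nontrivial check is that the integrability hypothesis in (i) and the $(p,s)$-quantizability hypothesis in (ii) truly descend from $\mu$ to each level set $A_k$, which is immediate from the sandwich $c_k\nu|_{A_k}\le\mu|_{A_k}\le c_{k+1}\nu|_{A_k}$ and the stability of $(p,s)$-quantizability under domination.
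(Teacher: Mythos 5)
Your proof is correct and follows essentially the same route as the paper's: partition $X$ into countably many disjoint Borel sets on which $\rho$ is (nearly) constant, transfer the hypothesis from $\nu|_{A_k}$ to $\mu|_{A_k}$ by scaling and monotonicity, and recombine with Proposition \ref{prop:cntadd}. The only (cosmetic) difference is that the paper first treats exact simple densities and then approximates general $\rho$ from below (invoking Proposition \ref{prop:qpscont} for part (ii)), whereas your multiplicative $(1+\veps)$-grid handles general $\rho$ directly and dispenses with that final approximation step.
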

\begin{proof}
First suppose $\rho$ is a simple function, say $\rho = \sum_{k=1}^\infty \lambda_k \One_{A_k}$ for $(A_k)_{k=1}^\infty$ disjoint Borel subsets of $X$ and $(\lambda_k)_{k=1}^\infty \subset (0,\infty)$. Note that each $\nu|_{A_k} \in \M^p_+(X)$ since $\nu|_{A_k} \leq \lambda_k^{-1} \mu$, and if $\mu$ is $(p,s)$-quantizable, so is each $\nu|_{A_k}$. Assumption (i) then implies
\[ \underline{Q}_{p,s}^{(\mu)}(A_k)^{p^\prime} = \lambda_k^{\frac{p^\prime}{p}} \underline{Q}_{p,s}^{(\nu)}(A_k)^{p^\prime} \geq \lambda_k^{\frac{p^\prime}{p}} \int_{A_k} f \di \nu = \int_{A_k} \rho^{\frac{s}{s+p}} f \di \nu, \]
and likewise for (ii). Applying Proposition \ref{prop:cntadd} (i) with the measure $A \mapsto \int_A \rho^{\frac{s}{s+p}} f \di \nu$, we obtain
\[ \underline{Q}_{p,s}(\mu)^{p^\prime} = \underline{Q}_{p,s}^{(\mu)}\left(\bigsqcup_{k=1}^\infty A_k \right)^{p^\prime} \geq \int_{\bigsqcup_{k=1}^\infty A_k} \rho^{\frac{s}{s+p}} f \di \nu = \int_X \rho^{\frac{s}{s+p}} f \di \nu, \]
and likewise for (ii). The statement for general $\rho \in L^1(X; \nu)$ follows likewise by approximating $\rho$ from below by simple functions, invoking Proposition \ref{prop:qpscont} for (ii).
\end{proof}

Indeed, approximating Borel sets from below by compact sets, it suffices for the assumptions to hold only for $A$ compact.

\section{Density estimates on metric measure spaces}\label{sect:densbds}

We now consider a given metric measure space $(X, \nu)$ where $\nu$ is a fixed locally finite measure on $X$. 
We first note the following measure theoretic observations, which ensure that the statement of Theorem \ref{thm:densbd} is sensible:

\begin{remark*}
Any locally finite measure $\nu$ on a separable metric space $X$ is $\sigma$-finite: for each $x \in X$, there exists $r_x > 0$ such that $\nu(B_{r_x}(x)) < \infty$, and the open cover $\{B_{r_x}(x)\}_{x \in X}$ admits a countable subcover. Every measure $\mu$ on $X$ can thus be decomposed as $\mu = \rho \nu + \mu^\bot$ with $\mu^\bot \bot \nu$.
\end{remark*}

\begin{remark*}[Hausdorff densities are measurable]
For fixed $\delta > 0$, $\underline{\vartheta}^{(\nu)}_s(\cdot, \delta)$ is upper semi-continuous: given $x_n \to x$, for arbitrary $r < \delta$ and $\alpha > 1$, we have that $B_r(x) \supseteq B_{\alpha^{-1} r}(x_n)$ for $n$ sufficiently large, hence
\[ \frac{\nu(B_r(x))}{\omega_s r^s} \geq \frac{\nu(B_{\alpha^{-1} r}(x_n))}{\omega_s r^s} \geq \alpha^{-s} \frac{\nu(B_{\alpha^{-1} r}(x_n))}{\omega_s (\alpha^{-1} r)^s} \geq \alpha^{-s} \underline{\vartheta}^{(\nu)}_s(x_n, \delta), \]
and taking the $\limsup$ of the right-hand side, minimizing the left-hand side over $r$ and letting $\alpha \to 1^-$ yields that $\underline{\vartheta}^{(\nu)}_s(x, \delta) \geq \limsup_{n\to\infty} \underline{\vartheta}^{(\nu)}_s(x_n, \delta)$. 

The lower semi-continuity of $\overline{\vartheta}^{(\nu)}_s(\cdot, \delta)$ follows analogously by applying instead the inclusion $B_r(x) \subseteq B_{\alpha r}(x_n)$. Since the family of Borel functions is closed under monotone limits, it follows that both Hausdorff densities and their approximate versions are Borel functions.
\end{remark*}

\begin{remark*}
If $\underline{\vartheta}_s^{(\nu)}(x) > 0$, then $\underline{\vartheta}_s^{(\nu)}(x, \delta) > 0$ for all $\delta > 0$. Taking a minimizing sequence $(r_k)_k$ such that $\underline{\vartheta}_s^{(\nu)}(x, \delta) = \lim_{k \to \infty} \frac{\nu(B_{r_k}(x))}{\omega_s r_k^s}$, either 
\begin{itemize}
\item each $r_k \geq \delta_0 > 0$: then $\underline{\vartheta}_s^{(\nu)}(x, \delta) = \lim_{k \to \infty} \frac{\nu(B_{r_k}(x))}{\omega_s r_k^s} \geq \frac{\nu(B_{\delta_0}(x))}{\omega_s \delta^s} > 0$ since $x \in \supp\nu$;
\item $r_k \to 0$ along a subsequence: then for arbitrary $\delta^\prime < \delta$, eventually $r_k < \delta^\prime$ hence $\frac{\nu(B_{r_k}(x))}{\omega_s r_k^s} \geq \underline{\vartheta}_s^{(\nu)}(x, \delta^\prime)$, which implies that $\underline{\vartheta}_s^{(\nu)}(x, \delta) \geq \underline{\vartheta}_s^{(\nu)}(x, \delta^\prime)$. Letting $\delta^\prime \searrow 0$ yields $\underline{\vartheta}_s^{(\nu)}(x, \delta) \geq \underline{\vartheta}_s^{(\nu)}(x) > 0$.
\end{itemize}
\end{remark*}

We now proceed with the proof of Theorem \ref{thm:densbd}, first for measures of the form $\mu := \nu|_A$ and then for general measures. 

The proof starts from preexisting estimates, presented in Appendix \ref{app:conc}, which control the quantization errors of sets on which concentration inequalities of the form $\nu(B_r(x)) > \alpha r^s$ or $\nu(B_r(x)) < \beta r^s$ hold, i.e.~sets of the form $\{ \underline\vartheta_s(x, \delta) > \alpha \}$ resp.~$\{ \overline\vartheta_s(x, \delta) < \beta \}$. 

These estimates are then refined further and further using the general properties given in Appendix \ref{app:quantcoeffs}, first by letting $\delta \to 0$, then proving the theorem for measures of the form $\mu = \nu|_A$ by partitioning $A$ into countably many subsets on which $\underline\vartheta_s$ resp.~$\overline\vartheta_s$ is approximately constant, then generalizing to $\mu \ll \nu$ using Proposition \ref{prop:genzadorineq}.

\subsection{Estimates for uniformly bounded densities}\label{sect:unifdens}

We first cover the case when the Hausdorff densities of $\nu$ are uniformly bounded on $A$. 
The following statement is obtained from the concentration inequalities given in Lemmas \ref{lem:concub} and \ref{lem:conclb}:

\begin{lemma}\label{lem:densbdd}
Let $p \in [1,\infty)$, $s \in (0,\infty)$. Set $p^\prime := \frac{sp}{s+p}$, $C_1 := 2^{-p^\prime} \omega_s^{-\frac{p}{s+p}} \left( \frac{s}{s+p} \right)^{\frac{s}{s+p}}$ and $C_2 := 2^{p^\prime} \omega_s^{-\frac{p}{s+p}}$.

Let $A \subseteq X$ be Borel such that $\nu|_A \in \M^p_+(X)$ (i.e.~$\int_A d(\cdot, x_0)^p \di \nu < \infty$ for some $x_0 \in X$), $\vartheta \in (0,\infty)$.
\begin{itemize}
\item[(i)] If $\overline\vartheta_s(x) < \vartheta$ for $\nu$-a.e.~$x \in A$, then
\[ \underline{Q}^{(\nu)}_{p,s}(A)^{p^\prime} \geq C_1 \vartheta^{-\frac{p}{s+p}} \nu(A). \]
\item[(ii)] If $\underline\vartheta_s(x) > \vartheta$ for $\nu$-a.e.~$x \in A$, and $A$ is $(p,s)$-quantizable with respect to $\nu$, then
\[ \overline{Q}^{(\nu)}_{p,s}(A)^{p^\prime} \leq C_2 \vartheta^{-\frac{p}{s+p}} \nu(A). \]
\end{itemize}
\end{lemma}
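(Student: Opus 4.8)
The plan is to deduce both inequalities from the $\delta$-level concentration estimates of Lemmas~\ref{lem:concub} and~\ref{lem:conclb} — which I will take to establish (i) and (ii) respectively under the \emph{stronger} hypothesis that the \emph{approximate} density satisfies the corresponding bound uniformly on $A$ for some fixed $\delta > 0$ — by exhausting $A$ with a countable disjoint Borel decomposition and reassembling via the additivity property of Proposition~\ref{prop:cntadd}. I will use repeatedly, from the remarks preceding this lemma, that $\delta \mapsto \overline{\vartheta}^{(\nu)}_s(x,\delta)$ is nonincreasing with $\overline{\vartheta}^{(\nu)}_s(x) = \lim_{\delta\to0}\overline{\vartheta}^{(\nu)}_s(x,\delta)$, that $\delta \mapsto \underline{\vartheta}^{(\nu)}_s(x,\delta)$ is nondecreasing with $\underline{\vartheta}^{(\nu)}_s(x) = \lim_{\delta\to0}\underline{\vartheta}^{(\nu)}_s(x,\delta)$, and that all four functions are Borel.

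Since the Hausdorff densities are Borel, I first discard from $A$ the $\nu$-null Borel set on which the almost-everywhere hypothesis fails; this changes neither the measure $\nu|_A$ nor the value $\nu(A)$, so I may assume the relevant inequality holds at \emph{every} point of $A$. For part (i), set $A_n := \{x\in A : \overline{\vartheta}^{(\nu)}_s(x,1/n) < \vartheta\}$: these are Borel, increasing in $n$ by monotonicity of $\overline{\vartheta}^{(\nu)}_s(x,\cdot)$, and $\bigcup_n A_n = A$ because $\overline{\vartheta}^{(\nu)}_s(x) < \vartheta$ on $A$. Putting $B_n := A_n \setminus A_{n-1}$ (with $A_0 := \emptyset$) gives a disjoint Borel decomposition $A = \bigsqcup_n B_n$, and on each $B_n \subseteq A_n$ the uniform bound $\overline{\vartheta}^{(\nu)}_s(\cdot,1/n) < \vartheta$ holds, while $\nu|_{B_n} \leq \nu|_A \in \M^p_+(X)$, hence $\nu|_{B_n} \in \M^p_+(X)$. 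Thus Lemma~\ref{lem:concub} applies to each $B_n$ and yields $\underline{Q}^{(\nu)}_{p,s}(B_n)^{p^\prime} \geq C_1 \vartheta^{-\frac{p}{s+p}} \nu(B_n)$. Applying Proposition~\ref{prop:cntadd}(i) with the auxiliary Borel measure $E \mapsto C_1 \vartheta^{-\frac{p}{s+p}} \nu(E)$ then gives $\underline{Q}^{(\nu)}_{p,s}(A)^{p^\prime} \geq C_1 \vartheta^{-\frac{p}{s+p}} \nu(A)$, which is (i).

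Part (ii) is symmetric: set $A_n := \{x\in A : \underline{\vartheta}^{(\nu)}_s(x,1/n) > \vartheta\}$, Borel and increasing in $n$ by monotonicity of $\underline{\vartheta}^{(\nu)}_s(x,\cdot)$, with $\bigcup_n A_n = A$ since $\underline{\vartheta}^{(\nu)}_s(x) > \vartheta$ on $A$; again write $B_n := A_n \setminus A_{n-1}$, so $A = \bigsqcup_n B_n$ with $\underline{\vartheta}^{(\nu)}_s(\cdot,1/n) > \vartheta$ on $B_n$. Each $\nu|_{B_n} \leq \nu|_A$ lies in $\M^p_+(X)$ and is $(p,s)$-quantizable, since $\nu|_A$ is and $(p,s)$-quantizability passes to smaller measures. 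Hence Lemma~\ref{lem:conclb} applies to each $B_n$ and gives $\overline{Q}^{(\nu)}_{p,s}(B_n)^{p^\prime} \leq C_2 \vartheta^{-\frac{p}{s+p}} \nu(B_n)$. Since $A = \bigsqcup_n B_n$ is itself $(p,s)$-quantizable with respect to $\nu$, Proposition~\ref{prop:cntadd}(ii) applies with the auxiliary measure $E \mapsto C_2 \vartheta^{-\frac{p}{s+p}} \nu(E)$ and yields $\overline{Q}^{(\nu)}_{p,s}(A)^{p^\prime} \leq C_2 \vartheta^{-\frac{p}{s+p}} \nu(A)$, which is (ii).

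The genuinely delicate content is already isolated in the appendix: Lemmas~\ref{lem:concub} and~\ref{lem:conclb} must convert a uniform density bound on a set into a non-asymptotic estimate on its quantization errors — the lower bound by a volume/packing argument (few balls of radius $r$ cannot cover much $\nu$-mass once $\nu(B_r(x)) < \vartheta\omega_s r^s$), and the upper bound by exhibiting an explicit near-optimal net whose cardinality is controlled because disjoint balls of radius $\sim\veps$ centred in the set each carry $\nu$-mass $\gtrsim \vartheta\omega_s\veps^s$; the constants $C_1$ and $C_2$ come out of those computations. Granting that, the remaining work — passing from the almost-everywhere condition on the true Hausdorff density to the uniform condition on the approximate density via the exhaustion $A = \bigsqcup_n B_n$, verifying the inheritance of $\M^p_+(X)$-membership and of $(p,s)$-quantizability by the pieces, and summing with Proposition~\ref{prop:cntadd} — is the routine bookkeeping carried out above.
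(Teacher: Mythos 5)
Your proposal is correct in structure and reaches the stated conclusion, but it routes the reduction from the a.e.\ density bound to the uniform approximate-density bound differently from the paper. The paper does not partition $A$: for (i) it sets $\overline A_\delta := \{x \in A : \overline\vartheta_s(x,\delta)<\vartheta\}$, notes that these sets increase to $A$ as $\delta\to0$, applies the uniform case to each $\overline A_\delta$, and concludes using only the monotonicity $\underline{Q}^{(\nu)}_{p,s}(A)\ge\underline{Q}^{(\nu)}_{p,s}(\overline A_\delta)$ together with continuity of $\nu$ from below; for (ii) it applies the uniform case to the analogous increasing family and invokes Proposition~\ref{prop:qpscont} (stability under monotone approximation from below), which is where $(p,s)$-quantizability enters. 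Your countable disjoint decomposition $A=\bigsqcup_n B_n$ reassembled by Proposition~\ref{prop:cntadd} achieves the same thing; it is slightly heavier machinery for (i), where nothing beyond monotonicity of $\underline{Q}$ is needed, but it is a valid alternative and treats the two parts more symmetrically.

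Two points need repair. First, your citations of the appendix lemmas are swapped throughout: Lemma~\ref{lem:conclb} is the \emph{lower} bound on quantization coefficients and is what proves part (i), while Lemma~\ref{lem:concub} is the \emph{upper} bound and proves part (ii); your closing description of which argument does what is correct, only the labels are crossed. Second, and more substantively, you ``take'' the appendix lemmas to already establish the uniform-approximate-density versions, but that is not quite what they state, and the missing translation is precisely where the constants arise. Lemma~\ref{lem:conclb} assumes $\nu(A\cap B_r(y))\le\vartheta r^s$ for \emph{all} centers $y\in X$, whereas $\overline\vartheta_s(\cdot,\delta)<\vartheta$ on $A$ only controls balls centered \emph{in} $A$; the paper bridges this by observing that any $B_r(y)$ meeting $A$ is contained in $B_{2r}(x)$ for some $x\in A$ with $d(x,y)<r$, which costs a factor $2^s$ and produces the $2^{-p^\prime}$ in $C_1$. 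Likewise Lemma~\ref{lem:concub} is stated for compact sets, so one must first reduce each piece to compact subsets via Proposition~\ref{prop:qpscont}. Neither step is difficult, but they should appear explicitly rather than be absorbed into the citation. (A small further slip: your stated monotonicity directions for $\delta\mapsto\overline\vartheta_s(x,\delta)$ and $\delta\mapsto\underline\vartheta_s(x,\delta)$ are reversed --- the sup over $r<\delta$ is nondecreasing in $\delta$ and the inf is nonincreasing --- although the consequences you draw, namely that the sets $A_n$ increase with $n$, are correct.)
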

\begin{proof}
Since subsets of zero $\nu$-measure do not affect either side of the above inequalities, we can assume wlog that the given inequalities hold for all $x \in A$.

We first prove the statements under the assumption that the density inequalities hold with $\vartheta_s(x, \delta)$ in place of $\vartheta_s(x)$. For the lower bound, note that if $\overline\vartheta_s(\cdot, \delta) < \vartheta$ on $A$, this implies that 
\[ \nu(A \cap B_r(y)) \leq 2^s \omega_s \vartheta r^s \quad \text{for all } y \in X,\ r < \frac{\delta}{2}. \]
Indeed, either $d(y, A) \geq r$, in which case $\nu(A \cap B_r(y)) = \nu(\emptyset) = 0$, or there exists $x \in A$ such that $d(x,y) < r$, in which case $\nu(A \cap B_r(y)) \leq \nu(B_{2r}(x)) \leq \vartheta \omega_s (2r)^s$.
Lemma \ref{lem:conclb} then implies that
\[ \underline{Q}^{(\nu)}_{p,s}(A)^{p^\prime} \geq \left( \frac{s}{s+p} \right)^{\frac{s}{s+p}} (2^s \omega_s \vartheta)^{-\frac{p}{s+p}} \nu(A) = C_1 \vartheta^{-\frac{p}{s+p}} \nu(A). \]
For the upper bound, by Proposition \ref{prop:qpscont} we can assume wlog that $A$ is compact. If $\overline\vartheta_s(\cdot, \delta) \leq \vartheta$ on $A$, $\nu(B_r(x)) \geq \vartheta \omega_s r^s$ for all $x \in A, r < \delta$. 
Lemma \ref{lem:concub} then implies
\[ \overline{Q}^{(\nu)}_{p,s}(A)^{p^\prime} \leq 2^{\frac{sp}{s+p}} \omega_s^{-\frac{p}{s+p}} \vartheta^{-\frac{p}{s+p}} \nu(A) = C_2 \vartheta^{-\frac{p}{s+p}} \nu(A). \]

Now for the general case, for arbitrary $\delta > 0$ we can set 
\[ \overline A_\delta := \{ x \in A \mid \overline\vartheta_s(x, \delta) > \vartheta \}, \quad \underline A_\delta := \{ x \in A \mid \underline\vartheta_s(x, \delta) < \vartheta \}, \]
so that $A = \bigcup_{\delta > 0} \overline A_\delta = \bigcup_{\delta > 0} \underline A_\delta$ as a monotone limit, with
\begin{align*}
 \underline{Q}^{(\nu)}_{p,s}(A)^{p^\prime} \geq \underline{Q}^{(\nu)}_{p,s}(\overline A_\delta)^{p^\prime} & \geq C_1 \vartheta^{-\frac{p}{s+p}} \nu(\overline A_\delta); \\
 \overline{Q}^{(\nu)}_{p,s}(\underline A_\delta)^{p^\prime} & \leq C_2 \vartheta^{-\frac{p}{s+p}} \nu(\underline A_\delta) \leq C_2 \vartheta^{-\frac{p}{s+p}} \nu(A). 
\end{align*}
We can then take $\delta \to 0$, apply $\nu(\overline A_\delta) \nearrow \nu(A)$ in the first inequality, and invoke Proposition \ref{prop:qpscont} under the assumption of $(p,s)$-quantizability in the second inequality, obtaining the desired statements.
\end{proof}

We also deduce the following edge cases:

\begin{lemma}\label{lem:a0inf}
Let $p \in [1,\infty)$ and $A \subseteq X$ be Borel with $\nu|_A \in \M^p_+(X)$.
\begin{itemize}
\item[(i)] Suppose $\nu(A) > 0$ and $\overline\vartheta_s(x) = 0$ for $\nu$-a.e.~$x \in A$. 
Then $\underline{Q}_{p,s}^{(\nu)}(A) = \infty$.
\item[(ii)] Suppose $A$ is $(p,s)$-quantizable with respect to $\nu$ and $\underline\vartheta_s(x) = \infty$ for $\nu$-a.e.~$x \in A$. 
Then $\overline{Q}_{p,s}^{(\nu)}(A) = 0$.
\end{itemize}
\end{lemma}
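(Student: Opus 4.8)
The plan is to obtain each edge case as a limiting consequence of Lemma \ref{lem:densbdd}, by pushing the threshold $\vartheta$ to $0$ (for (i)) or to $\infty$ (for (ii)). For part (i): fix $\vartheta > 0$ arbitrary. Since $\overline\vartheta_s(x) = 0 < \vartheta$ for $\nu$-a.e.\ $x \in A$, Lemma \ref{lem:densbdd}(i) applies verbatim to the whole set $A$ and yields $\underline Q_{p,s}^{(\nu)}(A)^{p^\prime} \geq C_1 \vartheta^{-p/(s+p)} \nu(A)$. Because $\nu(A) > 0$ and $C_1 > 0$ are fixed while the exponent $-p/(s+p) < 0$, the right-hand side tends to $+\infty$ as $\vartheta \to 0^+$; hence $\underline Q_{p,s}^{(\nu)}(A) = \infty$.

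For part (ii): fix $\vartheta > 0$ arbitrary. Since $\underline\vartheta_s(x) = \infty > \vartheta$ for $\nu$-a.e.\ $x \in A$, and $A$ is assumed $(p,s)$-quantizable with respect to $\nu$, Lemma \ref{lem:densbdd}(ii) gives $\overline Q_{p,s}^{(\nu)}(A)^{p^\prime} \leq C_2 \vartheta^{-p/(s+p)} \nu(A)$. Here $\nu(A) < \infty$ since $\nu|_A \in \M^p_+(X)$ is finite, and $C_2 < \infty$, so letting $\vartheta \to \infty$ forces the right-hand side to $0$; hence $\overline Q_{p,s}^{(\nu)}(A) = 0$.

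There is essentially no obstacle here: both statements are immediate once one notices that the constant-threshold bounds of Lemma \ref{lem:densbdd} hold for \emph{every} positive $\vartheta$ under the hypothesis that the Hausdorff density is identically $0$ (resp.\ $\infty$) a.e.\ on $A$, and the dependence on $\vartheta$ in those bounds is a negative power that can be sent to either extreme. The only minor point to check is that the finiteness $\nu(A) < \infty$ used in part (ii) is indeed guaranteed — this follows from $\nu|_A \in \M^p_+(X)$, since elements of $\M^p_+(X)$ are finite measures by definition. One could alternatively argue (i) directly: if $\overline\vartheta_s < \infty$-free, i.e.\ $\overline\vartheta_s = 0$ a.e., then in particular $\overline\vartheta_s^{(\nu)}(x,\delta) \to 0$, so the covering estimate behind Lemma \ref{lem:conclb} degenerates and produces an unbounded lower bound; but the cleanest route is simply to invoke Lemma \ref{lem:densbdd} with $\vartheta \to 0$ or $\vartheta \to \infty$ as above.
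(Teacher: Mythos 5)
Your proof is correct and is essentially identical to the paper's own argument: both apply Lemma \ref{lem:densbdd} with an arbitrary threshold $\vartheta$ and then send $\vartheta \to 0^+$ (for (i)) or $\vartheta \to \infty$ (for (ii)), using $\nu(A) > 0$ resp. $\nu(A) < \infty$ to conclude. No gaps.
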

\begin{proof}
Let $\vartheta \in (0,\infty)$ be arbitrary. 
\begin{itemize}
\item[(i)] We have that $\overline\vartheta_s(x) < \vartheta$ for $\nu$-a.e.~$x \in A$, hence by Lemma \ref{lem:densbdd} (i), $\underline{Q}^{(\nu)}_{p,s}(A)^{p^\prime} \geq C_1 \vartheta^{-\frac{p}{s+p}} \nu(A)$.

Since $\nu(A) > 0$, letting $\vartheta \to 0^+$ then shows that $\underline{Q}_{p,s}^{(\nu)}(A) = \infty$.
\item[(ii)] We have that $\overline\vartheta_s(x) > \vartheta$ for $\nu$-a.e.~$x \in A$, hence by Lemma \ref{lem:densbdd} (ii), $\overline{Q}^{(\nu)}_{p,s}(A)^{p^\prime} \leq C_1 \vartheta^{-\frac{p}{s+p}} \nu(A)$.

Since $\nu(A) < \infty$, letting $\vartheta \to \infty$ then shows that $\overline{Q}_{p,s}^{(\nu)}(A) = 0$. \qedhere
\end{itemize}
\end{proof}

{The cases (i) and (ii) correspond to sets of dimension larger resp.~smaller than $s$, and also imply lower resp.~upper bounds on the quantization dimension; this is made precise in Corollary \ref{cor:quantdims}.}

\subsection{Proof of Theorem \ref{thm:densbd}}\label{sect:densbdproof}

Using the above inequalities and the additivity properties presented in Section \ref{sect:addquant}, we first prove Theorem \ref{thm:densbd} for the case $\mu = \nu|_A$.

\begin{lemma}\label{lem:nua}
Let $A \subseteq X$ be Borel with $\nu|_A \in \M^p_+(X)$.
Then for each $p \in [1,\infty]$,
\[ \underline{Q}_{p,s}^{(\nu)}(A)^{p^\prime} \geq C_1 \int_A \overline{\vartheta}_s(x)^{-\frac{p}{s+p}} \di\nu(x), \]
and if $A$ is $(p,s)$-quantizable with respect to $\nu$,
\[ \overline{Q}_{p,s}^{(\nu)}(A)^{p^\prime} \leq C_2 \int_A \underline{\vartheta}_s(x)^{-\frac{p}{s+p}} \di\nu(x). \]
\end{lemma}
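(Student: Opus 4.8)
The plan is to reduce to the uniformly-bounded case already established in Lemma \ref{lem:densbdd} by slicing $A$ into countably many pieces on which $\overline{\vartheta}_s$ (resp. $\underline{\vartheta}_s$) is pinched between consecutive powers of a fixed base $\lambda > 1$, and then glue the pieces back together using the countable additivity statement Proposition \ref{prop:cntadd}. Concretely, for the lower bound I would first dispose of the set $\{\overline{\vartheta}_s = 0\}$ (if it has positive $\nu$-measure then $\underline{Q}_{p,s}^{(\nu)}(A) = \infty$ by Lemma \ref{lem:a0inf}(i), and the claimed integral is also $+\infty$ since the integrand is $+\infty$ there, so the inequality is trivial) and the set $\{\overline{\vartheta}_s = \infty\}$, which contributes $0$ to the integrand and may be discarded. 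On the remaining set, fix $\lambda > 1$ and set $A_k := \{x \in A : \lambda^k \le \overline{\vartheta}_s(x) < \lambda^{k+1}\}$ for $k \in \Z$, so that $A = \bigsqcup_k A_k$ up to a $\nu$-null set. On $A_k$ we have $\overline{\vartheta}_s < \lambda^{k+1}$ $\nu$-a.e., so Lemma \ref{lem:densbdd}(i) gives $\underline{Q}_{p,s}^{(\nu)}(A_k)^{p^\prime} \ge C_1 \lambda^{-(k+1)\frac{p}{s+p}} \nu(A_k) \ge C_1 \lambda^{-\frac{p}{s+p}} \int_{A_k} \overline{\vartheta}_s^{-\frac{p}{s+p}} \di\nu$, where the last step uses $\overline{\vartheta}_s \ge \lambda^k$ on $A_k$. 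Proposition \ref{prop:cntadd}(i), applied with the measure $B \mapsto C_1 \lambda^{-\frac{p}{s+p}} \int_B \overline{\vartheta}_s^{-\frac{p}{s+p}} \di\nu$, then yields $\underline{Q}_{p,s}^{(\nu)}(A)^{p^\prime} \ge C_1 \lambda^{-\frac{p}{s+p}} \int_A \overline{\vartheta}_s^{-\frac{p}{s+p}} \di\nu$, and letting $\lambda \to 1^+$ removes the spurious factor.

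For the upper bound the argument is symmetric but requires the $(p,s)$-quantizability hypothesis. First note $\{\underline{\vartheta}_s = \infty\}$ contributes $0$ to the integrand and has $\overline{Q}_{p,s}^{(\nu)} = 0$ by Lemma \ref{lem:a0inf}(ii), so it can be absorbed; and on the complement $\underline{\vartheta}_s > 0$ $\nu$-a.e. (if $\underline{\vartheta}_s = 0$ on a set of positive measure the integrand is $+\infty$ there and the bound is vacuous — this is consistent with the hypothesis $\underline{\vartheta}_s^{(\nu)} > 0$ $\mu$-a.e. that appears in Theorem \ref{thm:densbd}, though as stated the lemma allows the RHS to be $+\infty$). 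With $\lambda > 1$ fixed, set $A_k := \{x \in A : \lambda^k < \underline{\vartheta}_s(x) \le \lambda^{k+1}\}$; each $A_k$ is $(p,s)$-quantizable with respect to $\nu$ since $\nu|_{A_k} \le \nu|_A$, and Lemma \ref{lem:densbdd}(ii) gives $\overline{Q}_{p,s}^{(\nu)}(A_k)^{p^\prime} \le C_2 \lambda^{-k\frac{p}{s+p}} \nu(A_k) \le C_2 \lambda^{\frac{p}{s+p}} \int_{A_k} \underline{\vartheta}_s^{-\frac{p}{s+p}} \di\nu$. Proposition \ref{prop:cntadd}(ii) then assembles the countable union (here the $(p,s)$-quantizability of $A$ is exactly what lets us pass from finite to countable subadditivity and control the tail), and letting $\lambda \to 1^+$ finishes.

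The only real subtlety — and the step I would be most careful about — is the bookkeeping at the boundary values $0$ and $\infty$ of the densities: one must check that the exceptional sets genuinely contribute trivially to both sides, and in particular that the conclusion is stated so that when $\int_A \underline{\vartheta}_s^{-p/(s+p)}\di\nu = \infty$ the upper bound is vacuously true (so no positivity of $\underline{\vartheta}_s$ need be assumed in the lemma itself, only in the theorem). Everything else is a routine dyadic decomposition plus an application of the already-established countable additivity and the uniform-density estimates; no new analytic input is needed beyond Lemmas \ref{lem:densbdd} and \ref{lem:a0inf} and Proposition \ref{prop:cntadd}.
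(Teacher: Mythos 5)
Your proposal is correct and follows essentially the same route as the paper: handle the degenerate density values via Lemma \ref{lem:a0inf}, slice $A$ into level sets where the density lies between consecutive powers of $\lambda>1$, apply Lemma \ref{lem:densbdd} on each slice, reassemble with Proposition \ref{prop:cntadd} (using $(p,s)$-quantizability for the upper bound), and let $\lambda\to 1^+$. Your explicit bookkeeping at the values $0$ and $\infty$ matches the paper's treatment in substance.
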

\begin{proof}
By Lemma \ref{lem:a0inf}, we can assume wlog that $0 < \underline{\vartheta}_s(x) \leq \overline{\vartheta}_s(x) < \infty$ for all $x \in A$. Indeed, if the set $\overline A_0 := \{x \in A \mid \overline{\vartheta}_s(x) = 0\}$ has nonzero measure, then both integrals on the right-hand side are infinite, and we also have $\underline{Q}_{p,s}^{(\nu)}(A) \geq \underline{Q}_{p,s}^{(\nu)}(\overline A_0) = \infty$.
Likewise, setting $\underline A_\infty := \{x \in A \mid \underline{\vartheta}_s(x) = \infty\}$, $\overline{Q}_{p,s}^{(\nu)}(\underline A_\infty) = 0$ so $\overline{Q}_{p,s}^{(\nu)}(A) = \overline{Q}_{p,s}^{(\nu)}(A \setminus \underline A_\infty)$ by Proposition \ref{prop:subadd}. 

We first prove the lower bound. Fix $\alpha > 1$ and consider the measurable sets
\[ \overline A_l := \{ x \in A \mid \alpha^{l-1} \leq \overline\vartheta_s(x) < \alpha^l \}, \quad l \in \Z, \]
so that $A = \bigsqcup_{l \in \Z} \overline A_l$. Applying Lemma \ref{lem:densbdd} (i) with $\vartheta = \alpha^l$ and using that $\overline\vartheta_s \geq \alpha^{l-1}$ on $\overline A_l$, we obtain
\[ \underline{Q}^{(\nu)}_{p,s}(\overline A_l)^{p^\prime} \geq C_1 (\alpha^l)^{-\frac{p}{s+p}} \nu(\overline A_l) = C_1 \alpha^{-\frac{p}{s+p}} (\alpha^{l-1})^{-\frac{p}{s+p}} \nu(\overline A_l) \geq C_1 \alpha^{-\frac{p}{s+p}} \int_{\overline A_l} \overline{\vartheta}_s(x)^{-\frac{p}{s+p}} \di\nu(x). \]
Applying Proposition \ref{prop:cntadd} to the countable disjoint union $A = \bigsqcup_{l \in \Z} \overline A_l$ yields
\[ \underline{Q}^{(\nu)}_{p,s}(A)^{p^\prime} \geq C_1 \alpha^{-\frac{p}{s+p}} \int_{A} \overline{\vartheta}_s(x)^{-\frac{p}{s+p}} \di\nu(x). \]
Letting $\alpha \to 1^-$ yields the lower bound.
Now for the upper bound, again fix $\alpha > 1$ and similarly define
\[ \underline A_l := \{ x \in A \mid \alpha^{l-1} < \underline\vartheta_s(x) \leq \alpha^l \}, \quad l \in \Z, \]
so that $A = \bigsqcup_{l \in \Z} \underline A_l$. By Lemma \ref{lem:densbdd} (ii), we have
\[ \overline{Q}_{p,s}^{(\nu)}(\underline A_l)^{\frac{sp}{s+p}} \leq C_1 (\alpha^{l-1})^{-\frac{p}{s+p}} \nu(\underline A_l) \leq C_1 \alpha^{\frac{p}{s+p}} \int_{\underline A_l} \underline{\vartheta}_s(x)^{-\frac{p}{s+p}} \di\nu(x). \]
Then Proposition \ref{prop:cntadd} and the $(p,s)$-quantizability of $A$ likewise yields
\[ \overline{Q}_{p,s}^{(\nu)}(A)^{\frac{sp}{s+p}} \leq C_1 \alpha^{\frac{p}{s+p}} \int_A \underline{\vartheta}_s(x)^{-\frac{p}{s+p}} \di\nu(x), \]
which gives the desired upper bound after letting $\alpha \to 1^-$.
\end{proof}

This proves Theorem \ref{thm:densbd} for $\mu := \nu|_A$; we now prove the general case.

\begin{proof}[Proof of Theorem \ref{thm:densbd}]
We have $\mu = \rho\nu + \mu_s$ where $\rho \in L^1(X; \nu)$ and $\mu_s \bot \nu$. Then {by Proposition \ref{prop:coeffprops} (ii) and \ref{prop:subadd},}
\begin{align*}
\underline{Q}_{p,s}(\mu)^{p^\prime} & \geq \underline{Q}_{p,s}(\rho\nu)^{p^\prime}; \\
\overline{Q}_{p,s}(\mu)^{p^\prime} & \leq \overline{Q}_{p,s}(\rho\nu)^{p^\prime} + \overline{Q}_{p,s}(\mu_s)^{p^\prime}. 
\end{align*}
We argue separately for the absolutely continuous and singular components.
\begin{itemize}
\item (a.c.~component) By Lemma \ref{lem:nua}, Theorem \ref{thm:densbd} holds for arbitrary restrictions $\nu|_A \in \M^p_+(X)$. The statement for $\mu = \rho\nu$ then follows from Proposition \ref{prop:genzadorineq}, with the choices $f = C_1 \overline{\vartheta}_s(x)^{-\frac{p}{s+p}}$ and $g = C_2 \underline{\vartheta}_s(x)^{-\frac{p}{s+p}}$.

\item (singular component) We now show that, under the assumptions that $\mu$ (hence $\mu_s$) is $(p,s)$-quantizable and $\underline{\vartheta}_s > 0$ $\mu$-a.e., we have $\overline{Q}_{p,s}(\mu_s) = 0$. Since $\mu_s \bot \nu$, there exists $A \subseteq X$ Borel such that $\mu_s(X \setminus A) = \nu(A) = 0$. Assume wlog that $\underline{\vartheta}_s(x) > 0$ for all $x \in A$. We prove $\overline{Q}_{p,s}(\mu_s) = 0$ by approximating $A$ from below and applying the assumption that $\mu$, thus $\mu_s$, is $(p,s)$-quantizable.

Given $\vartheta, \delta > 0$ write
\begin{align*} 
A_{\vartheta, \delta} &:= \{ x \in A \mid \underline{\vartheta}_s(x, \delta) > \vartheta \}; \quad
A_{\vartheta} := \{ x \in A \mid \underline{\vartheta}_s(x) > \vartheta \}.
\end{align*}
By construction, each $A_{\vartheta} = \bigcup_{\delta > 0} A_{\vartheta,\delta}$ and $A = \bigcup_{\vartheta > 0} A_\vartheta$ with both unions being monotone limits. Let $\vartheta, \delta > 0$ and take $K \subseteq A_{\vartheta,\delta}$ compact. Then Lemma \ref{lem:conclb} for $p = \infty$ yields
\[ \overline{Q}_{\infty,s}(K)^{s} \leq 2^{s} \vartheta^{-1} \nu(K) = 0, \]
thus also $\overline{Q}_{p,s}^{(\mu)}(K) = 0$. Then apply Proposition \ref{prop:qpscont} to deduce first that each $\overline{Q}_{p,s}^{(\mu)}(A_{\vartheta,\delta}) = 0$, and then $\overline{Q}_{p,s}(\mu_s) = \overline{Q}_{p,s}^{(\mu)}(A) = 0$. \qedhere
\end{itemize}
\end{proof}

{
\subsection{Applications to quantization dimensions}\label{sect:quantdims}

With the choice $\nu = \mu$, we can use Theorem \ref{thm:densbd} to deduce qualitative statements about the quantization dimensions of $\mu$. We first review the definitions of local, Hausdorff and packing dimensions of measures, following \cite[Sect.~10.1]{falc97}:

\begin{recall*}[Dimensions of measures]
The \emph{lower} resp.~\emph{upper local dimension} of a finite Borel measure $\mu$ on a metric space $X$ at a point $x \in X$ is
\begin{align*}
\underline\dim_{loc} \mu(x) := \liminf_{r \to 0} \frac{\log \mu(B_r(x))}{\log r} 
; \quad % & = \sup\{s \mid \overline \vartheta^{(\mu)}_s(x) = 0\} = \inf\{s \mid \overline \vartheta^{(\mu)}_s(x) = \infty \}; \\
\overline\dim_{loc} \mu(x) := \limsup_{r \to 0} \frac{\log \mu(B_r(x))}{\log r} 
. % & = \sup\{s \mid \underline \vartheta^{(\mu)}_s(x) = 0\} = \inf\{s \mid \underline \vartheta^{(\mu)}_s(x) = \infty \}.
\end{align*}
Local dimensions control Hausdorff densities as such (note the interchange of upper and lower limits):
\begin{align*}
\underline\dim_{loc} \mu(x) > s & \implies \overline \vartheta^{(\mu)}_s(x) = 0; \quad\,\,\, \overline\dim_{loc} \mu(x) > s \implies \underline \vartheta^{(\mu)}_s(x) = 0; \\
\underline\dim_{loc} \mu(x) < s & \implies \overline \vartheta^{(\mu)}_s(x) = \infty; \quad \overline\dim_{loc} \mu(x) < s \implies \underline \vartheta^{(\mu)}_s(x) = \infty.
\end{align*}
These follow as in \cite[Prop.~2.3]{falc97}: along any subsequence $(r_k)_k$ converging to zero, we have
\[ \frac{\log \mu(B_{r_k}(x))}{\log r_k} > s + \varepsilon \quad \bigl( < s - \varepsilon \bigr) \quad \implies \quad \lim_{k \to \infty} \frac{\mu(B_{r_k}(x))}{r_k^s} = 0 \quad \bigl( = \infty \bigr). \]

The \emph{lower} resp.~\emph{upper Hausdorff} and \emph{packing dimensions} of $\mu$ are the essential infimum resp.~supremum of the local dimensions of $\mu$:
\begin{align*}
\dim_H \mu & = \sup\{s \mid \underline\dim_{loc} \mu \geq s \ \mu\text{-a.e.} \} 
, \quad % = \sup\{s \mid \overline \vartheta^{(\mu)}_s = 0 \ \mu\text{-a.e.} \}
% \\ & =  \inf\{s \mid \mu(\{\underline\dim_{loc} \mu < s\}) > 0  \} = \inf\{s \mid \mu(\{\overline \vartheta^{(\mu)}_s = \infty\}) > 0 \}, \\
\dim_P \mu = \sup\{s \mid \overline\dim_{loc} \mu \geq s \ \mu\text{-a.e.} \} 
; \\ % = \sup\{s \mid \underline \vartheta^{(\mu)}_s = 0 \ \mu\text{-a.e.} \}
% \\ & =  \inf\{s \mid \mu(\{\overline\dim_{loc} \mu < s\}) > 0  \} = \inf\{s \mid \mu(\{\underline \vartheta^{(\mu)}_s = \infty\}) > 0 \}; \\
\dim_H^\ast \mu & = \inf\{s \mid \underline\dim_{loc} \mu \leq s \ \mu\text{-a.e.} \} 
, \quad\ % = \inf\{s \mid \overline \vartheta^{(\mu)}_s = \infty \ \mu\text{-a.e.} \}
% \\ & =  \sup\{s \mid \mu(\{\underline\dim_{loc} \mu < s\}) > 0  \} = \sup\{s \mid \mu(\{\overline \vartheta^{(\mu)}_s = 0\}) > 0 \}, \\
\dim_P^\ast \mu = \inf\{s \mid \overline\dim_{loc} \mu \leq s \ \mu\text{-a.e.} \} 
. % = \inf\{s \mid \underline \vartheta^{(\mu)}_s = \infty \ \mu\text{-a.e.} \}
% \\ & =  \sup\{s \mid \mu(\{\overline\dim_{loc} \mu < s\}) > 0  \} = \sup\{s \mid \mu(\{\underline \vartheta^{(\mu)}_s = 0\}) > 0 \}.
\end{align*}
On $\R^d$, these notions can be defined alternatively as the minimal resp.~maximal Hausdorff and packing dimensions of sets with positive $\mu$-measure, hence the nomenclature. 

We also note the precise definitions of \emph{lower} resp.~\emph{upper quantization dimensions} of $\mu$ of order $p$, cf.~\cite[Sect.~11.1]{quantbook} and \cite{Potz2001}:
\begin{align*}
\underline D_p(\mu) & = \liminf_{N \to \infty} \; \frac{\log N}{-\log e_{N,p}(\mu)} 
 = \sup\{s \mid \underline Q_{p,s}(\mu) = \infty\} = \inf\{s \mid \underline Q_{p,s}(\mu) = 0\}; \\
\overline D_p(\mu) & = \limsup_{N \to \infty} \frac{\log N}{-\log e_{N,p}(\mu)} 
= \sup\{s \mid \overline Q_{p,s}(\mu) = \infty\} = \inf\{s \mid \overline Q_{p,s}(\mu) = 0\}.
\end{align*}
The equivalences follow as in \cite[Prop.~11.3]{quantbook}. 
\end{recall*}

The following statements then follow directly from Theorem \ref{thm:densbd} (specifically from Lemma \ref{lem:a0inf}):

\begin{corol}\label{cor:quantdims}
Let $\mu \in \M^p_+(X)$, $p < \infty$. Then
\begin{itemize}
\item[(i)] $\underline D_p(\mu) \geq \dim_H^\ast \mu$;
\item[(ii)] $\underline D_p(\mu|_A) \geq \dim_H \mu$ for any $A \subseteq X$ such that $\mu(A) > 0$;
\item[(iii)] If $\mu$ is $(p,s)$-quantizable for some $s > 0$ (necessarily $s \geq \overline D_p(\mu)$), then either $s \leq \dim_P^\ast \mu$ or $\overline Q_{p,s}(\mu) = 0$; 
\item[(iv)] If $\mu$ is $(p,s)$-quantizable for some $s > 0$, then either $s \leq \dim_P \mu$ or $\overline Q_{p,s}(\mu|_A) = 0$ for some $A \subseteq X$ such that $\mu(A) > 0$.
\end{itemize}
\end{corol}
\begin{proof}\
\begin{itemize}
\item[(i)] For any $s < \dim_H^\ast \mu$, the set $A = \{\underline\dim_{loc} \mu > s\}$ has positive $\mu$-measure. Then $\overline \vartheta^{(\mu)}_s = 0$ on $A$. Applying Theorem \ref{thm:densbd} with $\nu = \mu$ then yields
\[ \underline Q_{p,s}(\mu)^{p^\prime} \geq C_1 \int_X \overline \vartheta^{(\mu)}_s(x)^{-\frac{p}{s+p}} \di\mu(x) \geq C_1 \int_A \overline \vartheta^{(\mu)}_s(x)^{-\frac{p}{s+p}} \di\mu(x) = \infty, \]
so that $\underline Q_{p,s}(\mu) = \infty$ and $s \leq \underline D_p \mu$. 
\item[(ii)] For any $s < \dim_H \mu$, $\underline\dim_{loc} \mu > s$ hence $\overline \vartheta^{(\mu)}_s = 0$ $\mu$-a.e. Then by Theorem \ref{thm:densbd}, any subset $A \subseteq X$ with $\mu(A) > 0$ satisfies
\[ \underline Q_{p,s}^{(\mu)}(A)^{p^\prime} \geq C_1 \int_A \overline \vartheta^{(\mu)}_s(x)^{-\frac{p}{s+p}} \di\mu(x) = \infty, \]
so that $s \leq \underline D_p (\mu|_A)$.
\item[(iii)] Suppose $s > \dim_P^\ast \mu$, so that $\overline\dim_{loc} \mu < s$ hence $\overline \vartheta^{(\mu)}_{s} = \infty$ $\mu$-a.e. Then {by the $(p,s)$-quantizability of $\mu$}, Theorem \ref{thm:densbd} implies
\[ \overline Q_{p,s}(\mu)^{p^\prime} \leq C_1 \int_X \underline \vartheta^{(\mu)}_s(x)^{-\frac{p}{s+p}} \di\mu(x) = 0. \]
\item[(iv)] Suppose $s > \dim_P \mu$, so that the set $A = \{\underline\dim_{loc} \mu > s\}$ has positive $\mu$-measure. Then $\overline \vartheta^{(\mu)}_s(x) = \infty$ for each $x \in A$, hence Theorem \ref{thm:densbd} yields
\[ \overline Q_{p,s}(\mu|_A)^{p^\prime} \leq C_1 \int_A \underline \vartheta^{(\mu)}_s(x)^{-\frac{p}{s+p}} \di\mu(x) = 0. \qedhere \]
\end{itemize}
\end{proof}

The inequality (i) is known for compactly supported measures on $\R^d$ \cite[Thm.~1]{Potz2001}, see also \cite[Thm.~11.6]{quantbook}. The lower bound in Theorem \ref{thm:densbd} is thus a quantitative version of this inequality, generalized to arbitrary spaces and made more precise by the possibility to choose $\nu \neq \mu$. In particular, on any set $A$ of finite Hausdorff measure, for $\nu = \cH^s|_A$ we have that $\overline \vartheta^{(\nu)}_s(x) \leq 1$  for $\cH^m$-a.e.~$x \in A$ (cf.~\cite[2.10.19]{Federer1996}), so that
\[ \underline Q_{p,s}(\mu|_A)^{p^\prime} \geq C_1 \int_A \rho^{\frac{s}{p+s}} \di \cH^m \quad \text{for} \quad \mu|_A = \rho \cH^s|_A + \mu^\bot. \]
Inequalities (iii) and (iv) instead give partial converses to the existing bound $\overline D_p(\mu) \geq \dim_P^\ast \mu$, known for compactly supported measures on $\R^d$ \cite[Thm.~1]{Potz2001}, and imply restrictions on the notion of $(p,s)$-quantizability: either $\overline D_p(\mu) = \dim_P^\ast \mu$ or $\mu$ is only $(p,s)$-quantizable in the trivial case $\overline Q_{p,s}(\mu) = 0$. We remark also that the sufficient conditions (A1) and (A2) given in \cite[Thm.~1.2]{Potz2001}, which imply $\overline D_p(\mu) = \dim_P^\ast \mu$ resp.~$\overline Q_{p,s}(\mu) < \infty$, are both special cases of Theorem \ref{thm:randstab} hence guarantee $(p,s)$-quantizability.

This motivates the following self-similar counterexample, which illustrates that for $s < d$, $(p,s)$-quantizability may fail even for compactly supported measures on $\R^d$:

\begin{example}[Self-similar counterexample]\label{ex:selfsimcounter}
Let $\mu$ be a non-uniform self-similar measure on $[0,1]$, given by 
\[ \mu = (1-\lambda) (F_0)_\# \mu + \lambda (F_1)_\# \mu, \quad 0 < \lambda < \frac12, \]
where $F_0 \colon [0,1] \to [0,\frac12]$ and $F_1 \colon [0,1] \to [\frac12,1]$ are the similarity transformations $t \mapsto \frac12 t$ and $t \mapsto \frac12 (t+1)$. Such measures are considered in \cite[Prop.~10.4]{falc97}, which shows that
\[ \dim_H \mu = \dim_H^\ast \mu = \dim_P \mu = \dim_P^\ast \mu = - \frac{\lambda \log \lambda + (1-\lambda) \log (1-\lambda)}{\log 2} =: D_0, \] 
which lies strictly between $0$ (attained at $\lambda \in \{0,1\}$ in which case $\mu = \delta_\lambda$) and $1$ (attained at $\lambda = \frac12$ in which case $\mu = \cL^1|_{[0,1]}$). Moreover, there is a Borel set $E$ with Hausdorff dimension $D_0$ such that $\mu(E^c) = 0$. In particular, $\mu \bot \cH^s$ for $s > D_0$. 

On the other hand, since the maps $F_i$ satisfy the \emph{open set condition} with $U = (0,1)$ (i.e.~the images $F_i(U)$ are disjoint and contained in $U$), \cite[Thm.~3.1]{Graf:2000aa} implies that the quantization dimension $D_p$ of $\mu$ exists and solves an implicit equation, and moreover
\[ 0 < \underline{Q}_{p,D_p}(\mu) \leq \overline{Q}_{p,D_p}(\mu) < \infty. \] 
Moreover, by \cite[Lem.~14.16]{quantbook} $D_p$ is strictly increasing with $p$ and converges as $p \to \infty$ to the box-counting dimension of $\supp\mu$, which is $1$ in this case. 

Hence by Corollary \ref{cor:quantdims} (i) and (iii), $\mu$ cannot be $(p,s)$-quantizable at the exact dimension $s = D_p$, since $D_p > \dim_H^\ast \mu = \dim_P^\ast \mu$ and $\overline{Q}_{p,D_p}(\mu) > 0$. Theorem \ref{thm:densbd} does not provide any quantitative information for the exact dimension $s = D_p$: taking $\nu = \cH^{D_p}$ gives only a vacuous lower bound $\underline Q_{p,D_p}(\mu) \geq 0$ since $\cH^{D_p} \bot \mu$, and likewise for the choice $\nu = \mu$ since $\vartheta^{(\mu)}_s = \infty$ $\mu$-a.e.~for $s > D_0$. Hence $\mu$ is also a counterexample to Lemma \ref{lem:a0inf} (ii), as $\overline{Q}_{p,D_p}(\mu) \neq 0$.
\end{example}
}

\section{Sufficient conditions for $(p,s)$-quantizability}\label{sect:psstab}

{
We now prove various integrability conditions for $(p,s)$-quantizability, especially the random quantizer bounds Theorem \ref{thm:randstab} and Corollary \ref{cor:volgrowstab}, and apply them to different classes of domains. 
}

{
We first note that geodesic spaces with $O(f)$ \emph{covering growth}, as defined in \cite{covgrow}, admit the following integral condition for $(p,s)$-quantizability: 
}

\begin{prop}\label{prop:covstab}
Let $X$ be a geodesic space with $s$-dimensional $O(f)$ covering growth around a point $x_0 \in X$, and $p \in [1,\infty)$. Then any measure $\mu \in \M^p_+(X)$, such that
\[ \int_X \left[ d(x,x_0)^{p+\delta} + f(d(x,x_0))^p \right] \di \mu(x) < \infty \]
for some $\delta > 0$, is $(p,s)$-quantizable.
\end{prop}
\begin{proof}
By \cite[Thm.~1.6]{covgrow}, there exists $C > 0$ such that
\[ \overline{Q}_{p,s}(\mu)^p \leq \int_X \left[ 1 + d(x,x_0)^{p+\delta} + f(d(x,x_0))^p \right] \di \mu(x) \]
holds for arbitrary $\mu \in \M^p_+(X)$. The statement then follows from Proposition \ref{prop:intubstab}.
\end{proof}

{This subsumes all existing integrability conditions on $\R^d$ and Riemannian manifolds presented in Section \ref{sect:hist}.}

\subsection{Random quantizer estimates}\label{sect:randquant}

We prove Theorem \ref{thm:randstab} and deduce Corollary \ref{cor:volgrowstab} by a random quantizer argument, estimating the quantization error of $\mu$ by sampling $N$ points independently from another measure $\nu$.
For clarity, we give the general form of random quantizer arguments below:

\begin{lemma}\label{lem:randquant}
Let $X$ be a Polish space, $p \in [1,\infty)$, $s \in (0,\infty)$. Let $\nu$ be a Borel probability measure on $X$ and $\mu \in \M^p_+(X)$. Then for all $N \in \N$, the following upper bound holds:
\[ N^{p/s} e_{N,p}(\mu)^p \leq \int_X F_{\nu, N}(x) \di \mu(x), \]
where
\[ F_{\nu, N}(x) := p \int_0^\infty N^{p/s} \left[1 - \nu(B_r(x))\right]^N r^{p-1} \di r. \]
\end{lemma}
\begin{proof}
Denoting by $\nu^N$ the $N$-fold product measure $\nu \otimes \ldots \otimes \nu$, we can apply Tonelli's Theorem to bound
\begin{align*}
N^{p/s} e_{N,p}(\mu)^p & \leq \int_{X^N} N^{p/s} e_p(\mu; S)^p \di \nu^N(S)
 = \int_X N^{p/s} \left[ \int_{X^N} d(x,  S)^p \di \nu^N(S) \right] \di \mu(x) =: \int_X F_{\nu, N}(x) \di \mu(x).
\end{align*}
where by slight abuse of notation we use $S$ to denote both $N$-tuples of points $(y_1, \ldots, y_N)$ and the associated sets $\{y_1, \ldots, y_N\}$.
For fixed $x \in X$, we apply the layer cake representation to $F_{\nu, N}(x)$:
\begin{align*}
 F_{\nu, N}(x) & = N^{p/s} \int_{X^N} d(x,  S)^p \di \nu^N(S)
 \\ & = N^{p/s} p \int_0^\infty r^{p-1} \nu^N(\{S \mid d(x, S) \geq r\}) \di r
 \\ & = p \int_0^\infty r^{p-1} N^{p/s} \nu^N(\{ (y_1, \ldots, y_N) \mid \min_{i=1}^N d(x, y_i) \geq r\}) \di r
 \\ & = p \int_0^\infty r^{p-1} N^{p/s} \left( 1 - \nu(B_r(x)) \right)^N \di r,
\end{align*}
noting that
\[ \{ (y_1, \ldots, y_N) \mid \min_{i=1}^N d(x, y_i) \geq r\} = \prod_{i=1}^N \{ y_i \mid d(x, y_i) \geq r \} = \left[ X \setminus B_r(x) \right]^N. \qedhere \]
\end{proof}

The main difficulty lies in choosing the measure $\nu$ appropriately in order to control the functions $F_{\nu, N}(x)$ independently of $N$. If there exists a positive measurable function $F_\nu$ which dominates each $F_{\nu,N}$, then $\int F_\nu \di \mu < \infty$ is sufficient to ensure the $(p,s)$-quantizability of $\mu$; {in that case, Fatou's lemma in fact implies the tighter bound} % indeed, one can conclude more precisely that
\[ \overline Q_{p,s}(\mu)^p \leq \int_X \limsup_{N \to \infty} F_{\nu,N}(x) \di \mu(x). \]
To that end, we note the following identities:
\begin{itemize}
\item For $t \in \R$ arbitrary, $1 - t \leq e^{-t}$ hence $(1 - t)^N \leq e^{-N t}$.
\item For $A > 0$ arbitrary,
\[ p \int_0^\infty r^{p-1} e^{-A r^s} \di r \underset{t = A r^s}{=} \frac{p}{s} A^{-p/s} \int_0^\infty t^{\frac{p}{s}-1} e^{-t} \di t = \frac{p}{s} \Gamma\left(\frac{p}{s}\right) A^{-p/s} =: C_1 A^{-p/s}. \]
\item For $A > 0$ arbitrary,
\[ \sup_{M > 0} M^{p/s} e^{-AM} \underset{t = A M}{=} A^{-p/s} \sup_{t > 0} t^{p/s} e^{-t} =: C_2 A^{-p/s}, \]
noting that $-t+\frac{p}{s} \log t$ is a concave function and attains its maximum at $t = \frac{p}{s}$.
\end{itemize}
We now use these identities to control $F_{\nu, N}$ uniformly in $N$ under the assumptions of Theorem \ref{thm:randstab}.

\begin{proof}[Proof of Theorem \ref{thm:randstab}]
Let $\mu \in \M^p_+(X)$, $N \in \N$ and $\delta > 0$ be arbitrary. If $\underline{\vartheta}_s^{(\nu)}(x, \delta) = 0$ on a set of nonzero $\mu$-measure, the upper bound is vacuously true, so suppose $\underline{\vartheta}_s^{(\nu)}(x, \delta) > 0$ for $\mu$-a.e.~$x \in X$. Without loss of generality, we can also take $R_0$ arbitrarily large so that $\beta R_0 > 1$.

Fix $x \in X$ such that $\underline{\vartheta}_s^{(\nu)}(x, \delta) > 0$.
We bound $N^{p/s} \left( 1 - \nu(B_r(x)) \right)^N$ uniformly in $N$ by considering the following three cases separately:
\begin{itemize}
\item[(i)] $r \leq \delta$: Since $\nu(B_r(x)) \geq \underline{\vartheta}_s^{(\nu)}(x, \delta) r^s$, we have
\[ \left( 1 - \nu(B_r(x)) \right)^N \leq \left( 1 - \underline{\vartheta}_s^{(\nu)}(x, \delta) r^s \right)^N \leq \exp\left( - N \underline{\vartheta}_s^{(\nu)}(x, \delta) r^s\right). \]
Integrating,
\[ p \int_0^\delta r^{p-1} \left( 1 - \nu(B_r(x)) \right)^N \di r \leq p \int_0^\infty r^{p-1} \exp\left( - N \underline{\vartheta}_s^{(\nu)}(x, \delta) r^s\right) \di r = C_1 N^{-p/s} \underline{\vartheta}_s^{(\nu)}(x, \delta)^{-p/s}. \]
\item[(ii)] $\delta \leq r \leq R_0 + d(x, x_0)$: We apply $\nu(B_r(x)) \geq \nu(B_\delta(x)) \geq \underline{\vartheta}_s^{(\nu)}(x, \delta) \delta^s$ to obtain
\[ N^{p/s} \left( 1 - \nu(B_r(x)) \right)^N \leq N^{p/s} \left( 1 - \underline{\vartheta}_s^{(\nu)}(x, \delta) \delta^s \right)^N \leq N^{p/s} \exp\left( - N \underline{\vartheta}_s^{(\nu)}(x, \delta) \delta^s\right) \leq C_2 \underline{\vartheta}_s^{(\nu)}(x, \delta)^{-p/s} \delta^{-p}. \]
Integrating,
\begin{align*} 
p \int_\delta^{R_0+d(x,x_0)} r^{p-1} N^{p/s} \left( 1 - \nu(B_r(x)) \right)^N \di r 
& \leq C_2 \underline{\vartheta}_s^{(\nu)}(x, \delta)^{-p/s} \delta^p \cdot p \int_\delta^{R_0+d(x,x_0)} r^{p-1} \di r 
\\ & = C_2 \underline{\vartheta}_s^{(\nu)}(x, \delta)^{-p/s} \left[ \delta^{-p} (R_0+d(x,x_0))^p - 1 \right].
\end{align*}
\item[(iii)] $r \geq R_0 + d(x,x_0)$: In this case, $B_r(x) \supseteq B_{r - d(x,x_0)}(x_0)$ hence
\[ \left( 1 - \nu(B_r(x)) \right)^N \leq \left( 1 - \nu(B_{r - d(x,x_0)}(x_0)) \right)^N \leq \left[ \beta (r - d(x,x_0)) \right]^{-N \alpha}. \]
Integrating over $r$, for every $N > p/\alpha$ we have
\begin{align*} 
p \int_{R_0+d(x,x_0)}^\infty r^{p-1} \left( 1 - \nu(B_r(x)) \right)^N \di r 
& \leq p \int_{R_0+d(x,x_0)}^\infty r^{p-1} \left[ \beta (r - d(x,x_0)) \right]^{-N \alpha} \di r
\\ & = \frac{p}{\beta^{p-1}} \int_{\beta R_0}^\infty (t+d(x,x_0))^{p-1} t^{-N \alpha} \di t
\\ & \leq \frac{2^{p-2} p}{\beta^{p-1}} \int_{\beta R_0}^\infty (t^{p-1} +d(x,x_0))^{p-1}) t^{-N \alpha} \di t
\\ & \leq \frac{2^{p-2} p}{\beta^{p-1}} \left[ \frac{(\beta R_0)^{p-N \alpha}}{N \alpha - p} + d(x,x_0)^{p-1} \frac{(\beta R_0)^{1-N \alpha}}{N \alpha - 1} \right]
\\ & \leq C_3 \left[ 1 + d(x,x_0)^{p-1} \right] (\beta R_0)^{-N \alpha}
\end{align*}
for some $C_3 > 0$ dependent on $p$, $\alpha$, $\beta$ and $R_0$. Multiplying by $N^{p/s}$ and taking the supremum over $N$ yields
\[ p N^{p/s} \int_{R_0+d(x,x_0)}^\infty r^{p-1} \left( 1 - \nu(B_r(x)) \right)^N \di r \leq C_3^\prime \left[ 1 + d(x,x_0)^{p-1} \right], \]
where we use the assumption that $\beta R_0 > 1$ and note that $\sup_{N \in \N} N^{p/s} (\beta R_0)^{-N \alpha} = C (\alpha \log(\beta R_0))^{-p/s}$.
\end{itemize}
Summing up these three cases, we obtain
\begin{align*}
  F_{\nu,N}(x) & = p \left[ \int_0^\delta + \int_\delta^{R_0+d(x,x_0)} + \int_{R_0+d(x,x_0)}^\infty \right] r^{p-1} N^{p/s} \left( 1 - \nu(B_r(x)) \right)^N \di r
 \\ & \leq (C_1 - C_2) \underline{\vartheta}_s^{(\nu)}(x, \delta)^{-p/s} + C_2 \underline{\vartheta}_s^{(\nu)}(x, \delta)^{-p/s} \delta^{-p} (R_0 + d(x, x_0))^p + C_3 (1 + d(x,x_0)^{p-1})
 \\ & \leq C (1+\delta^{-p}) (1 + d(x,x_0)^p) \underline{\vartheta}_s^{(\nu)}(x, \delta)^{-p/s} + 2 C_3 (1 + d(x,x_0)^p).
\end{align*}
Integrating with respect to $x$ yields the desired inequality, {from which the sufficient condition for $(p,s)$-quantizability follows by Proposition \ref{prop:intubstab}}.
\end{proof}

In the proof of Theorem \ref{thm:randstab}, it is the mid-range regime $\delta \leq r \leq R_0 + d(x, x_0)$ that gives rise to the largest imprecision in the upper bound. For particular domains, one may choose $\delta$ dependent on $x$ in order to eliminate this case:

\begin{example}[Pierce's lemma on $\R$]
Proofs of Pierce's lemma on the real line (cf.~\cite[Lem.~6.6]{quantbook}) employ a Pareto distribution $\nu$ on $[1,\infty)$ given by the density $\rho_\nu(t) = \beta t^{-\beta-1}$, $\beta := \alpha/p$, $\alpha > 0$ arbitrary. This measure satisfies the power law decay condition with $x_0 = 1$ and $R_0 = 0$, and moreover, for any $x \in (1,\infty)$ and $r < x-1$, 
\[ \frac{\nu(B_r(x))}{2r} = \frac{1}{2r} \int_{x-r}^{x+r} \beta t^{-\beta-1} \di t \geq \beta x^{-\beta-1} \]
by the convexity of $\rho_\nu$. Thus instead of picking a fixed $\delta$ for each $x$, one may pick $\delta(x) = x-1 = R_0 + d(x,1)$, so that
\[ \underline{\vartheta}_1^{(\nu)}(x, \delta(x))^{-p/1} \geq (\beta x^{-\beta-1})^{-p} = \beta^{-p} x^{p+\alpha}. \] % corrected typo
Arguing similarly to the above proof while excluding case (ii) yields the same upper bound as in Pierce's lemma, while case (ii) would yield an extraneous $x^{2p+\alpha}$ term if one were instead to fix $\delta$ independently of $x$.
\end{example} 

We now prove Corollary \ref{cor:volgrowstab} by constructing an appropriately decaying probability measure $\hat\nu$ out of a {locally finite} measure $\nu$:

{
\begin{proof}[Proof of Corollary \ref{cor:volgrowstab}]
We cover $X$ with overlapping annuli:
\[ A_0 := B_{2 R_0}(x_0); \qquad
 A_k := B_{\frac{k+3}{2} R_0}(x_0) \setminus B_{\frac{k+1}{2} R_0}(x_0), \quad k \geq 1.
\]
Since $x_0 \in \supp\nu$, we have that $0 < \nu(A_0) < \infty$. 
For each $k$, we set $\hat \nu_k := \frac{1}{\nu(A_k)} \nu|_{A_k}$ if $\nu(A_k) > 0$, and $\hat \nu_k := \hat \nu_0$ if $\nu(A_k) = 0$. 
We then define the probability measure
\[ \hat \nu := \sum_{k=0}^\infty \lambda_k \hat \nu_k, \qquad \lambda_k := \frac{1}{(k+1)^\alpha} - \frac{1}{(k+2)^\alpha}, \]
noting that $\sum_{k=0}^\infty \lambda_k = 1$. 
\begin{itemize}
\item Let $R \geq 2 R_0$. Then for the minimal integer $l \geq 1$ such that $R < \frac{l+3}{2} R_0$, we have that $B_R(x_0) \supseteq \bigcup_{k=0}^{l-1} A_k$.
Thus $\hat\nu_k(B_R(x_0)) = 1$ for all $k \leq l-1$, and
\[ 1 - \hat \nu(B_R(x_0)) = \sum_{k=0}^\infty \lambda_k (1 - \hat\nu_k(B_R(x_0))) \leq \sum_{k=l}^\infty \lambda_k = (l+1)^{-\alpha} \leq \left(\frac{R}{R_0}\right)^{-\alpha}, \]
where we apply that $\frac{R}{R_0} < \frac{l+3}{2} \leq l+1$ since $l \geq 1$. 
\item Since the integrability condition only becomes more and more general as $\delta \to 0$, we can assume wlog that $\delta \leq R_0/4$. 

Then for any $x \in X$ with $d(x,x_0) = R$, either $R < \frac74 R_0$ so that $B_\delta(x) \subseteq A_0$, or there exists a positive integer $k$ such that 
\[ (R - \delta, R + \delta) \subseteq \left[ \frac{k+1}{2} R_0, \frac{k+3}{2} R_0 \right) \quad \implies \quad B_\delta(x) \subseteq A_k, \]
since the intervals on the right-hand side intersect on intervals of width $R_0/2$.

In the former case, we have that
\[ \underline \vartheta^{(\hat \nu)}_s(x,\delta) \geq \frac{\lambda_0}{\nu(A_0)} \underline \vartheta^{(\nu)}_s(x,\delta) =: C_0 \underline \vartheta^{(\nu)}_s(x,\delta), \]
while for the latter case, either $\nu(A_k) = 0$ in which case $\underline \vartheta^{(\hat \nu)}_s(x,\delta) = \underline \vartheta^{(\nu)}_s(x,\delta) = 0$, or 
\[ \underline \vartheta^{(\hat \nu)}_s(x,\delta) \geq \frac{\lambda_k}{\nu(A_k)} \underline \vartheta^{(\nu)}_s(x,\delta), \]
with
\begin{align*}
\nu(A_k) & = \nu\left(B_{\frac{k+3}{2} R_0}(x_0) \setminus B_{\frac{k+1}{2} R_0}(x_0)\right) \leq V\left(\frac{k+1}{2} R_0 \right) \leq V(R); \\ 
\lambda_k & = \int_{k+1}^{k+2} \frac{\alpha}{t^{\alpha+1}} \di t \geq \frac{\alpha}{(k+2)^{\alpha+1}} \geq \alpha \left(\frac{3R}{R_0} \right)^{-\alpha-1}
\end{align*} 
noting that $R \geq \frac{k+1}{2} R_0 \geq \frac{k+2}{3} R_0$.
\item Putting everything together, we have
\begin{align*}
& \quad \left[ 1 + d(x,x_0)^p \right] \underline \vartheta^{(\hat \nu)}_s(x,\delta)^{-p/s}
\\ & \leq C \left[ 1 + d(x,x_0)^p \right] \max\left\{1, V(d(x,x_0))^{p/s} d(x,x_0)^{(\alpha+1)p/s}  \right\} \underline \vartheta^{(\nu)}_s(x,\delta)^{-p/s}
\\ & \leq C \left[ 1 + d(x,x_0)^p \right] \left[ 1 + V(d(x,x_0))^{p/s} d(x,x_0)^{(\alpha+1)p/s} \right] \underline \vartheta^{(\nu)}_s(x,\delta)^{-p/s},
\end{align*}
so that the $\mu$-integrability of the last expression implies the $\mu$-integrability of the first, hence by Theorem \ref{thm:randstab} applied to $\hat\nu$, the $(p,s)$-quantizability of $\mu$. \qedhere
\end{itemize}
\end{proof}
}

We can interpret $\hat\nu$ as a mixture distribution, which first draws a random radius from a {power law} distribution, and then samples from an annulus around that radius according to $\nu$. 

For the special case $\nu = \cL^d$ on $\R^d$, this recovers Pierce's lemma only for exponents $q > p+\frac{p}{d}$. We also compare this bound with the covering growth condition for nonnegative vs.~negative Ricci curvature below in Example \ref{ex:cdkn}. 

{The following statement for compactly supported measures, which is analogous to existing results such as \cite[Thm.~1.2]{Potz2001} and \cite[Thm.~4.2, 4.3]{lossy-2021}, also follows directly from Theorem \ref{thm:randstab}:}

\begin{corol}\label{cor:compstab}
Let $\mu \in \M^\infty_+(X)$, and suppose there exists a Borel probability measure $\nu$ on $X$ and $s, \alpha, \delta > 0$ such that
\[ \int_X \underline \vartheta^{(\nu)}_s(x, \delta)^{-\alpha} \di \mu(x) < \infty. \]
Then $\mu$ is $(p,s)$-quantizable for any $p \in [1,\infty)$.
\end{corol}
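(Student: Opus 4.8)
The plan is to derive the statement from Theorem \ref{thm:randstab}. Since $\mu\in\M^\infty_+(X)$, its support $K$ is compact, so there are $x_0\in X$ and $R>0$ with $K\subseteq B_R(x_0)$, and the weight $1+d(x,x_0)^p$ appearing in Theorem \ref{thm:randstab} is bounded by $1+R^p$ on $K$. Moreover, replacing $\nu$ by its normalized restriction to $B_{R+\delta}(x_0)$ — whose total mass is positive because, by the hypothesis, $g:=\underline\vartheta^{(\nu)}_s(\cdot,\delta)>0$ $\mu$-a.e., hence $\nu(B_\delta(x))>0$ for some $x\in K$ — one may assume without loss of generality that $\nu$ is compactly supported: this only multiplies $\underline\vartheta^{(\nu)}_s(x,\delta)$ on $K$ by a fixed positive constant, keeps the hypothesis true, and makes the decay condition of Theorem \ref{thm:randstab} trivially satisfied. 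Thus it suffices to exhibit, for the prescribed $p$, a compactly supported Borel probability measure $\hat\nu$ on $X$ and some $\delta'>0$ with $\int_X\underline\vartheta^{(\hat\nu)}_s(x,\delta')^{-p/s}\,\di\mu(x)<\infty$; Theorem \ref{thm:randstab} then gives that $\mu$ is $(p,s)$-quantizable.

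When $p\le s\alpha$ one simply takes $\hat\nu=\nu$, $\delta'=\delta$: since $g>0$ $\mu$-a.e., the elementary bound $g^{-p/s}\le 1+g^{-\alpha}$ (valid whenever $p/s\le\alpha$) shows the integral is dominated by $\mu(X)+\int_X g^{-\alpha}\,\di\mu<\infty$. For larger $p$ the exponents no longer match and the given $\nu$ need not suffice, so the auxiliary measure must be improved. Stratify $K$ by the dyadic level sets $A_n:=\{x\in K:2^{-n}<g(x)\le 2^{-n+1}\}$ for $n\in\Z$; the hypothesis is equivalent to $\sum_n 2^{n\alpha}\mu(A_n)<\infty$, and every $x\in A_n$ satisfies $\nu(B_r(x))>2^{-n}\omega_s r^s$ for all $r<\delta$. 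One then builds $\hat\nu$ as a weighted mixture $\sum_n w_n\hat\nu_n$, where $\hat\nu_n$ is a renormalization of $\nu$ restricted to a $\delta$-neighbourhood (or a finite union of $\delta$-balls covering $A_n$, using compactness of $K$) so that $\underline\vartheta^{(\hat\nu_n)}_s(\cdot,\delta')$ is bounded below on $A_n$ in terms of $2^{-n}$ and the relevant $\nu$-mass, and the weights $w_n$ are chosen to decay slowly enough — balancing the summability of $2^{n\alpha}\mu(A_n)$ against the loss incurred in $\underline\vartheta^{(\hat\nu)}_s(x,\delta')^{-p/s}$ on each $A_n$ through a H\"older/interpolation step. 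This yields $\int_X\underline\vartheta^{(\hat\nu)}_s(x,\delta')^{-p/s}\,\di\mu(x)<\infty$ and the conclusion follows as above.

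The main obstacle is this last construction for $p>s\alpha$: one must choose the local pieces $\hat\nu_n$ and the weights $w_n$ so that the new lower density is $(-p/s)$-integrable against $\mu$ for the prescribed $p$, even though $\mu$ is only known a priori to be ``essentially at most $s$-dimensional'' through integrability with the single exponent $\alpha$. Getting the $\nu$-masses (equivalently the covering numbers of the $A_n$) to enter the estimate favourably, rather than being bounded crudely by $\nu(X)=1$, is where the argument has to be run with care; here the compactness of $\supp\mu$ and the upper semicontinuity of $\underline\vartheta^{(\nu)}_s(\cdot,\delta)$ noted in Section \ref{sect:densbds} are the tools that make the choice of weights possible.
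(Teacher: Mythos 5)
Your reduction to Theorem \ref{thm:randstab} --- bounding $1+d(x,x_0)^p$ on the compact support, replacing $\nu$ by its normalized restriction to a ball containing a $\delta$-neighbourhood of $\supp\mu$ so that the decay condition becomes vacuous while $\underline\vartheta^{(\nu)}_s(\cdot,\delta)$ only increases on $\supp\mu$, and the elementary bound $g^{-p/s}\le 1+g^{-\alpha}$ when $p\le s\alpha$ --- is exactly the paper's argument; the paper gives no further proof beyond ``direct consequence of Theorem \ref{thm:randstab}''. The genuine gap is the case $p>s\alpha$, which you rightly single out as the main obstacle but only sketch. That sketch cannot be completed: the constraint $\hat\nu(X)=1$ defeats any reweighting. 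Concretely, take $s=1$ and let $X$ consist of a base point together with blocks $X_k$ of $M_k=\lceil 2^{3k/2}\rceil$ points, mutually at distance $\epsilon_k=2^{-k}$ and at distance $10\cdot 2^{-k}$ from the base point (so $X$ is compact), and let $\mu=\nu$ give each block mass $2^{-k}$ spread uniformly over its points. For $x\in X_k$ one has $B_r(x)=\{x\}$ for $r\le\epsilon_k$, whence $\underline\vartheta^{(\nu)}_1(x,\delta)\asymp \nu(\{x\})/\epsilon_k\asymp 2^{-3k/2}$, and $\int\underline\vartheta^{(\nu)}_1(\cdot,\delta)^{-\alpha}\di\mu\asymp\sum_k 2^{-k(1-3\alpha/2)}<\infty$ for $\alpha<2/3$. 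Yet $\mu$ is not $(2,1)$-quantizable: the tails $\mu_j:=\mu|_{\bigcup_{k\ge j}X_k}$ have mass $\to 0$, while any set of $N=\lfloor M_k/2\rfloor$ points leaves at least half of $X_k$ at distance $\ge \epsilon_k/2$, so $N\,e_{N,2}(\mu_j)\gtrsim M_k\cdot 2^{-k/2}\cdot 2^{-k}=\Theta(1)$ and $\overline Q_{2,1}(\mu_j)\not\to 0$. Moreover, for \emph{any} probability measure $\hat\nu$ and any $\delta'>0$, $\underline\vartheta^{(\hat\nu)}_1(x,\delta')\le \hat\nu(\{x\})/(2\min(\epsilon_k,\delta'))$ on $X_k$, and Jensen gives $M_k^{-1}\sum_{x\in X_k}\hat\nu(\{x\})^{-p}\ge (\hat\nu(X_k)/M_k)^{-p}\ge M_k^{p}$, so the contribution of $X_k$ to $\int\underline\vartheta^{(\hat\nu)}_1(\cdot,\delta')^{-p}\di\mu$ is $\gtrsim 2^{-k}\epsilon_k^{p}M_k^{p}=2^{p}\,2^{k(p/2-1)}$, which is not summable for any $p\ge 2$. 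So no auxiliary measure can restore the integrability you need, and indeed the conclusion itself fails for $p>s\alpha$.

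What you have uncovered is a quantifier slip in the statement rather than a defect in your own first part: the corollary is a direct consequence of Theorem \ref{thm:randstab} only when $\alpha\ge p/s$, i.e.\ the honest conclusion from ``some $\alpha>0$'' is $(p,s)$-quantizability for $1\le p\le s\alpha$, and the conclusion ``for all $p\in[1,\infty)$'' requires the hypothesis to hold for every $\alpha$ (equivalently, for $\alpha=p/s$ for each $p$). This is harmless for the uses made of the corollary in the paper --- Ahlfors regular and doubling settings, where $\underline\vartheta^{(\nu)}_s(\cdot,\delta)$ is uniformly bounded below on $\supp\mu$ and the integral is finite for every exponent --- but it means you should abandon the mixture construction for $p>s\alpha$ and instead record the corrected range of validity.
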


In particular, if $\nu(B_r(x)) \geq \lambda r^s$ for $x \in \supp\mu$ and $r < \delta$, $\underline \vartheta^{(\nu)}_s(\cdot, \delta) \geq \lambda$ uniformly and the above condition is automatically satisfied; {this can also be obtained directly from Lemma \ref{lem:concub}.}

{{Conversely, if a measure $\mu$ supported on a compact set $K$ fails to be $(p,s)$-quantizable, $K$ admits no probability measure $\nu$ such that $\underline \vartheta^{(\nu)}_s(\cdot, \delta)$ is uniformly lower bounded on $K$. This holds in particular when $\cH^s(K) = \infty$. Consider e.g.~the measure $\mu$ constructed in Example \ref{ex:selfsimcounter}, which is supported on the $1$-dimensional set $K = [0,1]$ but has quantization dimension $s = D_p(\mu)$ strictly less than $1$. If there existed a measure $\nu$ supported on $K$ such that $\underline \vartheta^{(\nu)}_s(x, \delta) \geq \lambda > 0$ for all $x \in K$, by \cite[Prop.~2.2]{falc97} it would follow that $\cH^s(K) < \infty$, which is impossible since $\dim_H K = 1 > s$. }}

{

\subsection{Example settings}\label{sect:examples}

We now present a few example settings of metric measure spaces $(X, \nu)$ for which the Hausdorff densities of $\nu$ can be controlled, allowing for a more precise statement of Theorem \ref{thm:densbd}. 

\begin{example}[Ahlfors regular domains]
Suppose a metric space $X$ admits an Ahlfors regular measure $\nu$ of dimension $s > 0$, i.e.~there exist constants $\alpha, \beta, \delta > 0$ such that 
\[ \alpha r^s \leq \nu(B_r(x)) \leq \beta r^s \quad \text{for all} \quad x \in \supp\nu, \ 0 < r < \delta. \]
This implies directly that
\[ \alpha \leq \underline \vartheta^{(\nu)}(\cdot, \delta) \leq \underline \vartheta^{(\nu)} \leq \overline \vartheta^{(\nu)} \leq \overline \vartheta^{(\nu)}(\cdot, \delta) \leq \beta \]
uniformly on $\supp\nu$. This in fact implies that $\nu$ is sandwiched between multiples of $\cH^s$ (cf.~\cite[Prop.~2.2]{falc97}), hence $\cH^s$ itself is Ahlfors regular and Ahlfors regularity is inherent to the domain $X$.

Taking a suitable volume function $V$ for $\nu$, as defined in Corollary \ref{cor:volgrowstab}, we can conclude the following: any measure $\mu \in \M^p_+(X)$ which satisfies the higher integrability condition
\begin{equation}\label{eq:ahlcond} \int_X d(x,x_0)^{p+p/s+\alpha} V(d(x,x_0))^{p/s} \di\mu(x) < \infty \end{equation}
for some $\alpha > 0$ is $(p,s)$-quantizable, and satisfies
the following weak version of Zador's theorem:
\[ C_1 \beta^{-\frac{p}{s+p}} \int \rho^{\frac{s}{s+p}} \di \nu \leq \underline Q_{p,s}(\mu)^{p^\prime} \leq \overline Q_{p,s}(\mu)^{p^\prime} \leq C_2 \alpha^{-\frac{p}{s+p}} \int \rho^{\frac{s}{s+p}} \di \nu, \]
where $\rho$ is the density of the a.c.~component of $\mu$ with respect to $\nu$. 
In particular, any measure on an Ahlfors regular domain satisfying \eqref{eq:ahlcond} has quantization dimension at most $s$, with equality unless it is singular to $\cH^s$.
\end{example}

This is a significant improvement over existing results by Graf and Luschgy \cite[Sect.~12]{quantbook}, restated in Lemmas \ref{lem:conclb} and \ref{lem:concub}, which only capture the case of $\mu$ itself being Ahlfors regular and compactly supported. We note that while the integrability condition can still be improved by alternative constructions in Corollary \ref{cor:volgrowstab}, the volume function is still necessary and cannot be controlled by Ahlfors regularity alone. Indeed the model spaces $\bS^d$, $\R^d$ and $\H^d$ are all Ahlfors regular, but have constant, polynomial and exponential volume growth respectively.

Even more generally, we can consider measures on doubling spaces:

\begin{example}[Doubling spaces]\label{ex:doubling}
Let $(X,\nu)$ be a metric measure space, admitting constants $C, R_0 > 0$ such that $0 < \nu(B_{2r}(x)) \leq C \nu(B_r(x)) < \infty$ for all $x \in \supp\nu$ and $r \in (0,R_0]$. 

For $s = \log_2 C$ the \emph{doubling dimension} of $(X,\nu)$, the following volume comparison property holds:
\begin{equation}\label{eq:doubcomp} \frac{\nu(B_R(x))}{\omega_s R^s} \leq C \frac{\nu(B_r(x))}{\omega_s r^s} \quad \text{for all} \quad x \in X, \ 0 < r \leq R \leq R_0. \end{equation}
Indeed, taking $k \in \N$ such that $2^{k-1} r \leq R < 2^k r$, we can bound
\[ \nu(B_R(x)) \leq \nu(B_{2 \cdot 2^{k-1} r}(x)) \leq C^k \nu(B_r(x)) = C \left(\frac{2^{k-1}r}{r}\right)^s \nu(B_r(x)) \leq C \left(\frac{R}{r}\right)^s \nu(B_r(x)). \]
Then for each $\delta \leq R_0$, \eqref{eq:doubcomp} implies
\begin{equation}\label{eq:doublb} \frac{\nu(B_\delta(x))}{\omega_s \delta^s} \leq C \inf_{r < \delta} \frac{\nu(B_r(x))}{\omega_s r^s} = C \underline \vartheta_s(x,\delta), \end{equation}
thus $\underline \vartheta_s(x,\delta) > 0$ for each $x \in \supp\nu$, and taking $\delta \to 0$ then yields
\[ \overline \vartheta_s(x) = \limsup_{\delta \to 0} \frac{\nu(B_\delta(x))}{\omega_s \delta^s} \leq C \lim_{\delta \to 0} \inf_{r < \delta} \frac{\nu(B_r(x))}{\omega_s r^s} = C \underline \vartheta_s(x). \]
It also follows from \eqref{eq:doublb} that $\underline \vartheta_s(x,\delta)$ is uniformly lower bounded on each $K \subseteq \supp\nu$ compact:
\[ \inf_{x \in K} \underline \vartheta_s(x,\delta) \geq \frac{1}{C \omega_s \delta^s} \inf_{x \in K} \nu(B_\delta(x)) > 0, \]
noting that any minimizing sequence for the latter infimum admits a convergent subsequence $x_k \to \bar x \in K$, such that $B_{\delta}(x_k) \supseteq B_{\delta/2}(\bar x)$ for $k$ sufficiently large, with $\bar x \in \supp\nu$ hence $\nu(B_{\delta/2}(\bar x)) > 0$. 

Then likewise applying Corollary \ref{cor:volgrowstab} and Theorem \ref{thm:densbd}, we conclude that any measure $\mu \in \M^p_+(X)$ which is supported on $\supp\nu$ and 
satisfies the higher integrability condition
\begin{equation}\label{eq:doubcond} \int_X \left[1 + d(x,x_0)^{p+p/s+\alpha} V(d(x,x_0))^{p/s} \right] \underline \vartheta_s(x,\delta)^{-p/s} \di\mu(x) < \infty \end{equation}
for some $\alpha, \delta > 0$ is $(p,s)$-quantizable, and satisfies
\[ C_1 C^{-\frac{p}{s+p}} \int \rho^{\frac{s}{s+p}} \underline \vartheta_s(x)^{-\frac{p}{s+p}} \di \nu \leq \underline Q_{p,s}(\mu)^{p^\prime} \leq \overline Q_{p,s}(\mu)^{p^\prime} \leq C_2 \int \rho^{\frac{s}{s+p}} \underline \vartheta_s(x)^{-\frac{p}{s+p}} \di \nu. \]
In particular, any compactly supported measure on $\supp\nu$ is $(p,s)$-quantizable and has upper quantization dimension at most $s$. 

Note that usually $s$ is an imprecise upper bound on the Hausdorff dimension of $\nu$ (e.g.~when $R_0$ is taken too large), in which case $\underline \vartheta_s = \infty$ $\nu$-a.e. We can still conclude that any compactly supported measure on $\supp\nu$ is $(p,s)$-quantizable, and every $(p,s)$-quantizable measure on $X$ has vanishing $s$-dimensional quantization coefficients, hence upper quantization dimension at most $s$.

When the doubling property does hold for all radii, we can strengthen \eqref{eq:doubcond} even further: fixing $x_0 \in \supp\nu$ and $\delta > 0$, for any $x \in B_R(x_0)$ and $0 < r < \delta$ we have
\[ \frac{\nu(B_r(x))}{\omega_s r^s} \geq \frac{\nu(B_{R+\delta}(x))}{C \omega_s(R+\delta)^s} \geq \frac{\nu(B_{\delta}(x_0))}{C \omega_s(R+\delta)^s}, \]
since $B_{\delta}(x_0) \subseteq B_{R+\delta}(x)$ (cf.~\cite[Lem.~3.8]{covgrow} for a similar bound). Thus $\underline \vartheta_s(x, \delta) \geq C^\prime (d(x,x_0)+1)^s$ for $C^\prime > 0$ depending on $x_0$ and $\delta$. Likewise
\[ V(R) = \nu(B_{R+R_0}(x_0) \setminus B_R(x_0)) \leq C \frac{\nu(B_\delta(x_0))}{\omega_s \delta^s} R^s. \]
Putting these together, we obtain the integrability condition
\begin{equation}\label{eq:doubcond2} \int_X \left[1 + d(x,x_0)^{2p+p/s+\alpha} \right] (1 + d(x,x_0))^p \di\mu(x) < \infty,\end{equation}
which is equivalent to the integrability of $d(x,x_0)^{3p+p/s+\alpha}$.
\end{example}

Compare \eqref{eq:doubcomp} with the \emph{Bishop--Gromov volume comparison theorem} for Riemannian manifolds. 
This theorem also holds on \emph{curvature-dimension spaces} in the sense of Lott--Sturm--Villani \cite{LottVillani, sturm, sturmii}. On such spaces, we can conclude the following:

\begin{example}[Curvature-dimension spaces]\label{ex:cdkn}
Let $(X,\nu)$ be a metric measure space, which satisfies the curvature-dimension condition of Lott--Sturm--Villani with curvature lower bound $\kappa \in \R$ and dimension upper bound $s \in [1,\infty)$. This condition generalizes Riemannian manifolds with dimension at most $s$ and Ricci curvature $\mathrm{Ric} \geq (s-1) \kappa g$.

The Bishop--Gromov theorem on $(X,\nu)$ \cite[Thm.~2.3]{sturmii} states that
\begin{equation}\label{eq:bgcomp} \frac{\nu(B_R(x))}{\vol_\kappa^s(R)} \leq \frac{\nu(B_r(x))}{\vol_\kappa^s(r)} \quad \text{for all } x \in \supp\nu, \ 0 < r \leq R \leq \diam(X), \end{equation}
where $\vol_\kappa^s(R)$ is (at least for integer $s$) is the volume of balls of radius $R$ in the model $s$-dimensional Riemannian manifold $M_\kappa^s$ with constant sectional curvature $\kappa$. For small $r$, $\frac{\vol_\kappa^s(r)}{\omega_s r^s} \to 1$ for all $\kappa$, while for large $R$, $\vol_\kappa^s(R) = O(R^s)$ for $\kappa \geq 0$ and $\vol_\kappa^s(R) = O(e^{s \sqrt{-\kappa} R})$ for $\kappa < 0$. Moreover, for $\kappa > 0$ we have that $\diam(X) \leq \diam(M_\kappa)$ and that $\supp\nu$ is compact.

From this it follows that the $s$-dimensional Hausdorff density of $\nu$ exists and is positive on $\supp\nu$:
\[ \vartheta^{(\nu)}_s(x) = \lim_{r \to 0} \frac{\nu(B_r(x))}{\omega_s r^s} = \lim_{r \to 0} \frac{\nu(B_r(x))}{\vol_\kappa^s(r)} = \sup_{r > 0} \frac{\nu(B_r(x))}{\vol_K^s(r)}. \]
As above, $\underline \vartheta^{(\nu)}_s(x, \delta)$ is uniformly lower bounded on compact subsets. Indeed since the Bishop-Gromov inequality holds for arbitrary $R = d(x,x_0)$, arguing as above we obtain
\begin{equation}\label{eq:bgdenslb} \underline \vartheta_s(x, \delta) \geq \frac{\nu(B_{R+\delta}(x))}{\vol_\kappa^s(R+\delta)} \geq \frac{\nu(B_{\delta}(x_0))}{\vol_\kappa^s(R+\delta)} \geq C \left\{ \begin{matrix} R^s, & \kappa \geq 0; \\ e^{s \sqrt{-\kappa} R}, & \kappa < 0. \end{matrix} \right. \end{equation}
Furthermore, arguing as in \cite[Lem.~4.9, 4.10]{covgrow}, $\nu$ satisfies
\[ \nu(B_{R+R_0}(x_0) \setminus B_R(x_0)) \leq \frac{C_1+C_2 \sqrt{-\kappa} R}{R} \nu(B_{R+R_0}(x_0)) \leq \frac{C_1+C_2 \sqrt{-\kappa} R}{R} C \vol_\kappa^s(R+R_0). \]
We can then apply Corollary \ref{cor:volgrowstab} with
\[ V(R) := \left\{ \begin{matrix} C_0 R^{s-1}, & \kappa \geq 0 \\ C_\kappa e^{s \sqrt{-\kappa} R}, & \kappa < 0 \end{matrix} \right. \]
in order to obtain the $(p,s)$-quantizability conditions
\begin{align*}
(\kappa = 0) \qquad & \int_X \underbrace{d(x,x_0)^{p+(\alpha+1)p/s} d(x,x_0)^{(s-1)p/s} d(x,x_0)^{s p/s}}_{d(x,x_0)^{3p+\alpha p/s}} \di\mu(x) < \infty; \\
(\kappa < 0) \qquad & \int_X d(x,x_0)^{p+(\alpha+1)p/s} e^{2 \sqrt{-\kappa} p d(x,x_0)} \di\mu(x) < \infty.
\end{align*}
In either case, Theorem \ref{thm:densbd} implies that 
\[ C_1 \int \rho^{\frac{s}{s+p}} \vartheta_s(x)^{-\frac{p}{s+p}} \di \nu \leq \underline Q_{p,s}(\mu)^{p^\prime} \leq \overline Q_{p,s}(\mu)^{p^\prime} \leq C_2 \int \rho^{\frac{s}{s+p}} \vartheta_s(x)^{-\frac{p}{s+p}} \di \nu. \]
We contrast these conditions with the covering growth estimates given in \cite[Sect.~4.2]{covgrow}, which are presented for Riemannian manifolds but depend solely on the Bishop--Gromov theorem, hence in principle extend also to curvature-dimension spaces. These estimates, which are based on controlling the volumes of thin annuli, give instead the following integrability conditions:
\begin{align*}
(\kappa = 0) \qquad & \int_X d(x,x_0)^{p+\alpha} \di\mu(x) < \infty; &
(\kappa < 0) \qquad \int_X e^{\frac{5\sqrt{-\kappa}}{2} p d(x,x_0)} \di\mu(x) < \infty.
\end{align*}
This is more general for $\kappa = 0$ but less general for $\kappa < 0$. Combining the two estimates, $(p,s)$-quantizability is guaranteed by the integrability of $d(x,x_0)^{p+\alpha}$ for $\kappa = 0$ resp.~$e^{(2+\veps) \sqrt{-\kappa} p d(x,x_0)}$ for $\kappa < 0$. If moreover $\nu$ is known to satisfy $\nu(B_r(x)) \geq \vartheta r^s$ for all $x \in X$ and $r < \delta$, the bound \eqref{eq:bgdenslb} will not be necessary, relaxing the latter to $e^{(1+\veps) \sqrt{-\kappa} p d(x,x_0)}$.
\end{example}
}

\section{Zador's theorem for rectifiable measures}\label{sect:zadorrect}

We now apply the general formalism given in Section \ref{sect:quantcoeff} in order to extend Zador's theorem to rectifiable measures. 
We first review a few definitions and facts from geometric measure theory.

We use the following convention for the $s$-dimensional Hausdorff measure $\cH^s$, $s > 0$:
\[ \cH^s(A) := \sup_{\delta > 0} \cH^s_\delta(A); \quad \cH^s_\delta(A) := \inf\left\{ \sum_{i=1}^\infty \omega_s \left(\frac{\diam B_i}{2}\right)^s \mid A \subseteq \bigcup_{i=1}^\infty B_i, \ \diam B_i < \delta \right\}. \]
The $\omega_s$ factor in the definition ensures that for integer $m \geq 1$, $\cH^m$ coincides exactly with the Lebesgue measure $\cL^m$ on $\R^m$.
Given $F \colon X \to Y$ $\lambda$-Lipschitz and $A \subseteq X$ $\cH^s$-measurable, the image $F(A)$ is also $\cH^s$-measurable with $\cH^s(F(A)) \leq \lambda^s \cH^s(A)$, and likewise if $F$ is $(\alpha,\beta)$-bi-Lipschitz, $\alpha^s \cH^s(A) \leq \cH^s(F(A)) \leq \beta^s \cH^s(A)$. 

{We fix the following nomenclature for rectifiable sets and measures on metric spaces, cf.~\cite{kirch94, Ambrosio:2000aa}:}

\begin{defin}[Rectifiable sets and measures]
{
Let $m$ be a positive integer. 

A subset $E$ of a metric space $X$ is said to be 
\begin{itemize}
\item \emph{$m$-rectifiable} if $E = F(A)$ for a Borel set $A \subseteq \R^m$ and a Lipschitz map $F \colon A \to X$; 
\item \emph{countably $m$-rectifiable} if 
$\cH^m\left(E \setminus \bigcup_{i=1}^\infty F_i(A_i) \right) = 0$
for a countable family of nonempty Borel subsets $A_i \subseteq \R^m$ and Lipschitz maps $F_i \colon A_i \to X$.
\end{itemize}
A Borel measure $\mu$ on $X$ is said to be \emph{(countably) $m$-rectifiable} if likewise
$\mu\left(X \setminus \bigcup_{i=1}^\infty F_i(A_i) \right) = 0$
for a countable family of nonempty Borel subsets $A_i \subseteq \R^m$ and Lipschitz maps $F_i \colon A_i \to X$.
}
\end{defin}

{Note that contrary to usual convention, we do not require rectifiable measures to be absolutely continuous with respect to $\cH^m$.
Observe that $\cH^m$ restricted to sets of the form $\bigcup_{i=1}^\infty F_i(A_i)$ is always $\sigma$-finite, hence the Radon-Nikodym theorem is applicable to the absolutely continuous components of $m$-rectifiable measures. 
}

When $X$ is a Banach space, in particular $\R^d$, we can assume wlog that each Lipschitz map $F_i$ is defined on the entirety of $\R^m$. Otherwise, one may always embed $X$ isometrically into a Banach space $\bar X$:

\begin{recall*}[Kuratowski embedding]
Let $X$ be a metric space, $x_0 \in X$ arbitrary. Then the map $\Phi \colon X \to C_b(X)$ given by $\Phi(x)(y) := d(x,y) - d(x_0,y)$ is an isometric embedding.
When $X$ is separable, fixing a countable dense subset $\{y_k\}_k \subset X$, the map $\phi \colon X \to \ell^\infty$ given by $\phi(x)_k := \Phi(x)(y_k)$ is likewise an isometric embedding.
\end{recall*}

\begin{recall*}[Lipschitz extension theorems]
The McShane extension theorem states that, given an arbitrary subset $A$ of a metric space $X$, any $L$-Lipschitz map $f \colon A \to \R$ admits an $L$-Lipschitz extension $\bar f \colon X \to \R$, given e.g.~by
\[ \bar f(x) = \inf \{ f(y) + L d(x,y) \mid y \in A \}, \quad x \in X. \]
From this it follows that $L$-Lipschitz maps $A \to \R^d$ extend to $\sqrt{d} L$-Lipschitz maps, and $L$-Lipschitz maps $A \to \ell^\infty$ extend to $L$-Lipschitz maps.

The Kirszbraun extension theorem \cite{Kirszbraun1934}, \cite[Thm.~2.10.43]{Federer1996} states that, if $X$ and $Y$ are Hilbert spaces, any $L$-Lipschitz map $f \colon A \to Y$ defined on an arbitrary subset $A \subset X$ admits an $L$-Lipschitz extension $\bar f \colon X \to Y$. Kirszbraun's theorem also generalizes to geodesic spaces of bounded sectional curvature \cite{Lang:1997aa}.

If instead $X$ and $Y$ are Banach spaces and $\dim X = m < \infty$, there exists a dimensional constant $\alpha_m$ such that any $L$-Lipschitz map $f \colon A \to Y$ with $A \subset X$ admits an $\alpha_m L$-Lipschitz extension $\bar f \colon X \to Y$; cf.~\cite{Johnson:1986aa}.
\end{recall*}

The existence of Lipschitz extensions will guarantee that arbitrary points on $X$, not necessarily on a Lipschitz image, can be mapped back to $\R^m$ in order to reduce to Zador's theorem on $\R^m$. For the absolutely continuous case, we will also employ the following partitioning property:

\begin{recall*}[Bi-Lipschitz parametrization]
Given $E \subseteq X$ $m$-rectifiable and $\lambda > 1$ arbitrary, there exist a countable family of compact sets $K_i \subset \R^m$, norms $\rho_i$ on $\R^m$, and $(\lambda^{-1},\lambda)$-bi-Lipschitz maps $F_i \colon (K_i, \rho_i) \to E$ such that the images $F_i(K_i)$ are pairwise disjoint and cover $E$ $\cH^m$-a.e. 

When $m = 1$ or $X = \R^d$ equipped with the Euclidean norm (or by extension, a Riemannian manifold), each $\rho_i$ can be assumed wlog to be the Euclidean norm, the former case following simply from the fact that all norms on $\R$ are isometric. 
The Euclidean case is proven in \cite[Lem.~3.2.18]{Federer1996}; the metric case can be deduced from \cite[Lem.~4]{kirch94} with the same argument as in \cite[Lem.~4.1]{AmbKir00}.
\end{recall*}

In order to obtain equality in Zador's theorem, we will need each $\rho_i$ to be the Euclidean norm and the statement to hold for arbitrary $\lambda$. For general metric spaces, since all norms on $\R^m$ are equivalent, it still holds that $F_i \colon K_i \to E$ are $(\lambda_0^{-1}, \lambda_0)$-bi-Lipschitz for \emph{some} $\lambda_0 > 1$, but this will only yield lower and upper bounds on quantization coefficients. This is the main obstacle to extending Zador's theorem to more general domains. We note that the Euclidean result does indeed hold for metric measure spaces satisfying Riemannian curvature-dimension conditions \cite{Mondino2019, Brue:2021aa}.

{
\begin{remark}[Existence of densities]\label{rem:densbdrect}
If $E$ is countably $m$-rectifiable with $0 < \cH^s(E) < \infty$, the $m$-dimensional Hausdorff density of $\cH^m|_E$ exists and equals $1$ for $\cH^m$-a.e.~$x \in E$; cf.~\cite[Thm.~16.2]{Mattila_1995} for $X = \R^d$ and \cite[Thm.~9]{kirch94} for the general case. 

Thus already from Theorem \ref{thm:densbd}, we can deduce that any $(p,m)$-quantizable $m$-rectifiable measure $\mu \ll \cH^m$ with $\mu(E^c) = 0$ satisfies
\[ C_1 \int_E \rho^{\frac{m}{m+p}} \di\cH^m \leq \underline Q_{p,m}(\mu)^{\frac{m+p}{mp}} \leq \overline Q_{p,m}(\mu)^{\frac{m+p}{mp}} \leq C_2 \int_E \rho^{\frac{m}{m+p}} \di\cH^m. \]
We can further eliminate the condition $\cH^m(E) < \infty$ by subdividing $E$ into countably many subsets with finite measure and invoking Proposition \ref{prop:cntadd}, so that the above weak form of Zador's theorem holds for a.c.~rectifiable measures on arbitrary metric spaces. Theorems \ref{thm:zadorrect} and \ref{thm:zadorrect1} improve on this result by demonstrating equality and allowing for singular components, but at the expense of restrictions on the ambient space or the dimension $m$.
\end{remark}
}

We now proceed with the proofs of Zador's theorem for $m$-rectifiable measures on $\R^d$ and $1$-rectifiable measures on metric spaces. In both settings, the absolutely continuous case will follow from the application of Proposition \ref{prop:cntadd} to bi-Lipschitz parametrizations and reduction to Zador's theorem on $\R^m$, while the singular case will require different arguments specific to the setting. 

Given an $m$-rectifiable measure $\mu = \rho \cH^m + \mu^\bot$ on $X$, $\mu^\bot \bot \cH^m$, we set
\[ \F_{p,m}[\mu] := \left( \int_X \rho^{\frac{m}{m+p}} \di\cH^m \right)^{\frac{m+p}{mp}}. \]
By construction, the $p^\prime = \frac{mp}{m+p}$th power of $\F_{p,m}$ is countably subadditive, since the function $t \mapsto t^{\frac{m}{m+p}}$ is subadditive on $[0,\infty]$, and additive for mutually singular measures. 
We note that $\F_{p,m}$ is preserved along isometric embeddings; more generally, we have the following statement:

\begin{lemma}\label{lem:fpmpush}
Let $F \colon X \to Y$ be $(\alpha, \beta)$-bi-Lipschitz, $0 < \alpha \leq \beta < \infty$. Then for any finite $m$-rectifiable Borel measure $\nu$ on $X$,
\[ \alpha \F_{p,m}[\nu] \leq \F_{p,m}[F_\# \nu] \leq \beta \F_{p,m}[\nu]. \]
\end{lemma}
\begin{proof}
Observe first that $F$ preserves the Hahn-Jordan decomposition: if $\nu$ decomposes as $\nu = \nu_{ac} + \nu_s$, $\nu_{ac} \ll \cH^m$ and $\nu_s \bot \cH^m$, then $F_\# \nu_{ac} \ll \cH^m$ and $F_\# \nu_s \bot \cH^m$ as well. Indeed for any $B \subseteq Y$ such that $\cH^m(B) = 0$, $\cH^m(F^{-1}(B)) \leq \alpha^m \cH^m(B) = 0$ thus $(F_\# \nu_{ac})(B) = \nu_{ac}(F^{-1}(B)) = 0$. Conversely, if $\nu_s(X \setminus A) = \cH^m(A) = 0$ for some $A \subseteq X$, then $\cH^m(F(A)) \leq \beta^m \cH^m(A) = 0$ and $(F_\# \nu_s)(Y \setminus F(A)) = \nu_s(X \setminus F^{-1}(F(A))) = 0$.

We can then assume wlog that both $\nu$ and $\mu := F_\# \nu$ are absolutely continuous with respect to $\cH^m$. First consider the case $\nu = \cH^m|_A$; we show that $\beta^{-m} \cH^m|_{F(A)} \leq \mu \leq \alpha^{-m} \cH^m|_{F(A)}$. Indeed for $B \subseteq Y$ Borel, setting $C = F^{-1}(B) \cap A \subseteq X$, we have that $F(C) = B \cap F(A)$ and
\[ \mu(B) = \nu(F^{-1}(B)) = \cH^m(C) \in [\beta^{-m}, \alpha^{-m}] \cH^m(F(C)). \]
Then denoting by $\rho$ the density of $\mu$, we have that $\beta^{-m} \One_{F(A)} \leq \rho \leq \alpha^{-m} \One_{F(A)}$, hence
\[ \F_{p,m}[\mu]^{p^\prime} = \int_Y \rho^{-\frac{p}{m+p}} \di\mu \leq \beta^{\frac{mp}{m+p}} \int_Y \di\mu = \beta^{p^\prime} \int_X \di\nu = \beta^{p^\prime} \F_{p,m}[\nu]^{p^\prime} \]
and likewise for the converse inequality.

We can then deduce the statement for the case $\nu = \sum_{i=1}^\infty \lambda_i \cH^m|_{A_i}$ with $(A_i)_i$ disjoint, noting that the images $F(A_i)$ are also disjoint by the injectivity of $F$, and observing that $\F_{p,m}[\nu]^{p^\prime} = \sum_{i=1}^\infty \lambda_i^{\frac{m}{m+p}} \F_{p,m}[\cH^m|_{A_i}]^{p^\prime}$ and likewise for $\mu$. 

The statement for general $\nu \ll \cH^m$ then follows by approximating $\nu$ from below by measures of the above form, noting that for any monotonically convergent sequence $\nu_k \nearrow \nu$, $F_\# \nu_k \nearrow F_\# \nu$ as well and the convergence of $\F_{p,m}$ follows from the monotone convergence theorem.
\end{proof}

\subsection{Proof of Theorem \ref{thm:zadorrect}}\label{sect:zadorrectproof}

We now prove Zador's theorem for $m$-rectifiable measures on $\R^d$.
We first consider measures $\mu \ll \cH^m$ supported on the image of a single bi-Lipschitz map $F \colon K \to \R^d$. 

\begin{lemma}\label{lem:zadorrectineq}
Let $K \subset \R^m$ be compact, $F \colon \R^m \to \R^d$ be Lipschitz and $(\lambda^{-1}, \lambda)$-bi-Lipschitz on $K$. 

Let $\mu \ll \cH^m$ be supported on $F(K)$. Then 
\[ \lambda^{-2} C_{p,m} \F_{p,m}[\mu] \leq \underline Q_{p,m}(\mu) \leq \overline Q_{p,m}(\mu) \leq \lambda^2 C_{p,m} \F_{p,m}[\mu]. \]
\end{lemma}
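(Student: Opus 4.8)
The plan is to reduce the statement to Zador's theorem on $\R^m$ by transporting $\mu$ through the bi-Lipschitz map $F$, handling the distortion of both the quantization coefficient and of $\F_{p,m}$ simultaneously. Set $\tilde K := K$ equipped with the Euclidean norm and note $F|_K \colon K \to F(K)$ is $(\lambda^{-1},\lambda)$-bi-Lipschitz with Lipschitz inverse $G := (F|_K)^{-1} \colon F(K) \to K$, which by the McShane extension theorem (applied coordinatewise, picking up a $\sqrt d$ factor) extends to a $\sqrt d\,\lambda$-Lipschitz map $\bar G \colon \R^d \to \R^m$. First I would push $\mu$ forward to $\tilde\mu := G_\# \mu = \bar G_\# \mu$, a finite measure on $\R^m$ supported on $K$, absolutely continuous with respect to $\cL^m = \cH^m$ (since $G$ is bi-Lipschitz onto its image, it preserves $\cH^m$-negligibility in both directions, exactly as in the proof of Lemma \ref{lem:fpmpush}).

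The key chain of inequalities then proceeds as follows. For the upper bound: any quantizer $\tilde S \subset \R^m$ for $\tilde\mu$ gives a quantizer $F(\tilde S)$ for $\mu$ with $e_p(\mu; F(\tilde S)) \le \lambda\, e_p(\tilde\mu; \tilde S)$ on the image (using that $F$ is $\lambda$-Lipschitz and that, after restricting $\tilde S$ to $K$ via Voronoi cells as in the $\mres$ remark, distances are controlled), hence $\overline Q_{p,m}(\mu) \le \lambda\, \overline Q_{p,m}(\tilde\mu)$. Zador's theorem on $\R^m$ (\cite[Thm. 6.2]{quantbook}) gives $\overline Q_{p,m}(\tilde\mu) = C_{p,m} \F_{p,m}[\tilde\mu]$ — here I should remark that $\tilde\mu \in \M^p_+(\R^m)$ automatically satisfies the needed integrability since it is compactly supported. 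Finally Lemma \ref{lem:fpmpush} applied to the bi-Lipschitz map $G \colon F(K) \to K$ gives $\F_{p,m}[\tilde\mu] \le \lambda\, \F_{p,m}[\mu]$, so altogether $\overline Q_{p,m}(\mu) \le \lambda^2 C_{p,m} \F_{p,m}[\mu]$. For the lower bound: going the other direction, a quantizer $S \subset \R^d$ for $\mu$ is mapped by $\bar G$ to a quantizer $\bar G(S) \subset \R^m$ for $\tilde\mu$ of the same cardinality, with $e_p(\tilde\mu; \bar G(S)) = e_p(\mu; \bar G^{-1}(\cdot)\text{-pullback}) \le \lambda\, e_p(\mu; S)$ — more carefully, for $x \in F(K)$ one has $d(\bar G(x), \bar G(S)) = d(G(x), \bar G(S)) \le \lambda\, d(x, S)$ since $G$ is the restriction of the $\lambda$-Lipschitz $\bar G$ (I will double-check the Lipschitz constant bookkeeping here, as the McShane $\sqrt d$ factor must not actually enter because $\bar G$ agrees with the genuinely $\lambda$-Lipschitz map $G$ on the relevant set $F(K)$; the extension is only needed so that arbitrary points of $S \subset \R^d$ have images). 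This yields $\underline Q_{p,m}(\tilde\mu) \le \lambda\, \underline Q_{p,m}(\mu)$, and combined with $\underline Q_{p,m}(\tilde\mu) = C_{p,m}\F_{p,m}[\tilde\mu]$ and $\F_{p,m}[\mu] \le \lambda\,\F_{p,m}[\tilde\mu]$ (the other half of Lemma \ref{lem:fpmpush}) gives $\underline Q_{p,m}(\mu) \ge \lambda^{-2} C_{p,m}\F_{p,m}[\mu]$.

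The main obstacle I anticipate is the careful bookkeeping of the Lipschitz constants, specifically disentangling the genuine distortion $\lambda$ from artifacts of the Lipschitz extension. The point to get right is that when estimating $e_p$ one only integrates over $\supp\mu \subseteq F(K)$, where the extended map $\bar G$ coincides with the honestly $\lambda$-Lipschitz bijection $G$; the extension is invoked purely to have $\bar G$ defined on all quantizer points in $\R^d$, and since enlarging the quantizer set can only decrease $e_{N,p}$, no loss is incurred. A secondary subtlety is confirming that $\tilde\mu = G_\#\mu$ indeed lands in $\M^p_+(\R^m)$ and is absolutely continuous — but compact support settles the moment condition, and the bi-Lipschitz invariance of $\cH^m$-null sets (already used in Lemma \ref{lem:fpmpush}) settles absolute continuity. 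One also needs $\F_{p,m}[\mu] < \infty$ or the possibility that it is infinite handled gracefully; since $\mu$ is a finite measure supported on the image of a bi-Lipschitz map and $t \mapsto t^{m/(m+p)}$ is concave, Jensen's inequality bounds $\F_{p,m}[\mu]$ by a power of $\cH^m(F(K))^{1/p'}\mu(\R^d)^{1/p}$, so it is finite; alternatively all three quantities in the claimed inequality are simultaneously finite or infinite, so the bound is meaningful in either case.
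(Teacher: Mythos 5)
Your overall strategy is the paper's: transport $\mu$ to a compactly supported measure on $\R^m$, apply the Euclidean Zador theorem, and convert $\F_{p,m}$ back via Lemma \ref{lem:fpmpush}. But the Lipschitz-extension bookkeeping — which you correctly identify as the crux — is where the argument breaks, in both directions.

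For the upper bound you push quantizers $\tilde S \subset \R^m$ of $\tilde\mu$ forward by $F$ itself and claim $e_p(\mu; F(\tilde S)) \le \lambda\, e_p(\tilde\mu;\tilde S)$ "using that $F$ is $\lambda$-Lipschitz". It is not: $F$ is only $\lambda$-Lipschitz \emph{on} $K$, and the nearest quantizer point $a \in \tilde S$ to a given $x \in K$ need not lie in $K$, so the relevant bound is $|F(x)-F(a)| \le \Lip(F)\,|x-a|$ with the uncontrolled global constant $\Lip(F)$. The $\mres$ operation does not repair this — $S \mres K$ keeps points of $S$ whose Voronoi cells meet $K$, but those points still live off $K$ — and snapping $\tilde S$ onto $K$ costs a factor $2$. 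The paper's fix is the Kirszbraun extension theorem: extend $F|_K$ to a globally $\lambda$-Lipschitz $G\colon\R^m\to\R^d$ (possible because source and target are Hilbert spaces, with \emph{no} dimensional loss), check that $\mu = G_\#\nu$, and conclude $\overline Q_{p,m}(\mu) \le \lambda\,\overline Q_{p,m}(\nu)$. The alternative of working with the intrinsic error $e^{[K]}_{N,p}$ so that $F|_K$ suffices would require $Q^{[K]}_{p,m}(\tilde\mu)=Q^{[\R^m]}_{p,m}(\tilde\mu)$, which the paper only establishes for $m=1$ (Lemma \ref{lem:zador1}); it is not available here.

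For the lower bound, your claim that "the McShane $\sqrt d$ factor must not actually enter because $\bar G$ agrees with $G$ on $F(K)$" is incorrect. The estimate needed is $d(\bar G(x), \bar G(S)) \le L\, d(x,S)$ for $x \in F(K)$ and $S \subset \R^d$ \emph{arbitrary}: the nearest point $a \in S$ lies outside $F(K)$ in general, so $|\bar G(x)-\bar G(a)|$ is controlled only by the Lipschitz constant of the \emph{extension}, which for coordinatewise McShane into $\R^m$ is $\sqrt m\,\lambda$. This dimensional factor survives into the final inequality and is fatal to the application in Theorem \ref{thm:zadorrect}, where one must let $\lambda \to 1^+$ to get the sharp constant $C_{p,m}$. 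Again Kirszbraun, applied to $F^{-1}|_{F(K)}\colon F(K)\to\R^m$, yields a genuinely $\lambda$-Lipschitz $H\colon\R^d\to\R^m$ and closes the gap. Your peripheral observations (compact support gives the moment condition, bi-Lipschitz maps preserve $\cH^m$-null sets, $\F_{p,m}[\mu]<\infty$) are all fine.
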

\begin{proof}
Since $F$ is bi-Lipschitz on $K$, $F \colon K \to F(K)$ is invertible and $F^{-1} \colon F(K) \to K$ is also $(\lambda^{-1}, \lambda)$-bi-Lipschitz. By the Kirszbraun extension theorem, there exist $\lambda$-Lipschitz extensions $G \colon \R^m \to \R^d$ and $H \colon \R^d \to \R^m$ of $F|_K$ and $F^{-1}|_{F(K)}$ respectively. 

Consider the measure $\nu = H_\# \mu$ on $\R^m$. Then $\mu = G_\# \nu$: since the map $G \circ H$ restricts to the identity on $F(K)$, for each $A \subseteq \R^d$ we have
\[ (G_\# \nu)(A) = ((G \circ H)_\# \mu)(A) = \mu((G \circ H)^{-1}(A)) = \mu((G \circ H)^{-1}(A) \cap F(K)) = \mu(A \cap F(K)) = \mu(A). \]
Thus $\overline Q_{p,m}(\mu) \leq \lambda \overline Q_{p,m}(\nu)$ and $\underline Q_{p,m}(\nu) \leq \lambda \underline Q_{p,m}(\mu)$ {by Proposition \ref{prop:coeffprops} (i)}.
Moreover, $\supp\nu \subseteq H(\supp\mu) \subseteq H(F(K)) = K$ compact, thus by Zador's theorem, 
\[ Q_{p,m}(\nu) = C_{p,m} \F_{p,m}[\nu] \quad \implies \quad \lambda^{-1} C_{p,m} \F_{p,m}[\nu] \leq {\underline{Q}}_{p,m}(\mu) \leq \overline{{Q}}_{p,m}(\mu) \leq \lambda C_{p,m} \F_{p,m}[\nu]. \]
The statement then follows from Proposition \ref{prop:psstabprops} (iii) and Lemma \ref{lem:fpmpush}.
\end{proof}

Here Lipschitz extensions play the following role: the extension $G \colon \R^m \to \R^d$ allows us to upper bound $\overline Q_{p,m}(\mu)$ by mapping quantizers on $\R^m$ onto $\R^d$, and the extension $H \colon \R^d \to \R^m$ yields the converse lower bound by projecting arbitrary quantizers of $\mu$ on $\R^d$ down to $\R^m$. 
For $1$-rectifiable measures on metric spaces, we will still have the latter extension by McShane, but not the former extension.

The singular case instead follows from the connection between Minkowski contents and $\infty$-quantization coefficients, cf.~Appendix \ref{app:conc}:

\begin{lemma}\label{lem:rectsing}
Let $\mu \in \M^p_+(\R^d)$ be countably $m$-rectifiable and $(p,m)$-quantizable, such that $\mu \bot \cH^m$. Then 
\[ Q_{p,m}(\mu) = 0. \]
\end{lemma}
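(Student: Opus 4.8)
The idea is that a singular $m$-rectifiable measure lives, off an $\cH^m$-null set, on the \emph{critical set} of the Lipschitz maps defining it, and that the image of such a critical set has zero $m$-dimensional (upper) Minkowski content $\overline{\mathcal M}^m$; via the relation between Minkowski content and covering radii in Appendix~\ref{app:conc}, this forces the $m$-dimensional quantization coefficient to vanish. So the first step is to reduce to a single Lipschitz image on which the defining map is everywhere critical. Write $\mu(\R^d\setminus\bigcup_i F_i(A_i))=0$ with $A_i\subseteq\R^m$ Borel and $F_i$ Lipschitz, extended to $F_i\colon\R^m\to\R^d$ by Kirszbraun's theorem. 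Since $\mu\bot\cH^m$, $\mu$ is concentrated on a Borel set $S$ with $\cH^m(S)=0$; replacing each $A_i$ by $A_i\cap F_i^{-1}(S)\cap Q$ with $Q$ running over a countable cover of $\R^m$ by bounded cubes, we may assume every $A_i$ is bounded and $\cH^m(F_i(A_i))=0$. Disjointifying the sets $F_i(A_i)$ and applying Proposition~\ref{prop:cntadd}(ii) with the zero measure --- legitimate since every Borel set is $(p,m)$-quantizable with respect to the $(p,m)$-quantizable $\mu$ --- it then suffices to prove $\overline Q_{p,m}^{(\mu)}(F(A))=0$ for a fixed pair $F=F_i$, $A=A_i$ with $\cH^m(F(A))=0$ and $A$ bounded.

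Second, I would locate the mass on a critical set. By the area formula $\int_A J_m\mathrm dF\,\di\cL^m=\int_{\R^d}\#(F^{-1}\{y\}\cap A)\,\di\cH^m(y)$, whose right-hand side is an integral over the $\cH^m$-null set $F(A)$ and hence vanishes, so $J_m\mathrm dF=0$ at $\cL^m$-a.e.\ point of $A$; with Rademacher's theorem, $A$ agrees up to a $\cL^m$-null set with $A^\circ:=\{x\in A:\mathrm dF_x\text{ exists},\ \mathrm{rank}\,\mathrm dF_x\le m-1\}$. The crux is then the claim that \emph{$F$ maps every compact subset of $A^\circ$, and every compact $\cL^m$-null subset of $\R^m$, onto a set of zero $m$-dimensional Minkowski content.} For a compact $\cL^m$-null $L\subseteq\R^m$ this is immediate: its $\varepsilon$-neighbourhoods decrease to $L$, so $\omega_m\varepsilon^m\mathcal N(L,\varepsilon)\asymp\cL^m(L^\varepsilon)\to\cL^m(L)=0$ (with $\mathcal N(\cdot,\varepsilon)$ the minimal number of radius-$\varepsilon$ balls needed), and covering numbers grow by at most $\Lip(F)^m$ under $F$. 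For $L\subseteq A^\circ$ one uses a Lusin/Egorov uniformisation: decompose $A^\circ$ into countably many pieces on which $\mathrm dF$ has a fixed rank $j\le m-1$, the plane $(\ker\mathrm dF_x)^\perp$ stays within $\eta$ of a fixed $j$-plane, and the differentiability modulus $|F(x')-F(x)-\mathrm dF_x(x'-x)|\le\omega(|x'-x|)\,|x'-x|$ with $\omega(t)\downarrow0$ is uniform; on such a piece a radius-$r$ ball maps into the $\omega(r)r$-neighbourhood of a $j$-dimensional ellipsoid of diameter $\lesssim r$, so $\mathcal N(F(\text{piece}),C\omega(r)r)\lesssim_m r^{-m}\omega(r)^{-j}$, whence $\bigl(\omega(r)r\bigr)^m\mathcal N(F(\text{piece}),C\omega(r)r)\lesssim_m\omega(r)^{m-j}\to0$ as $r\to0$; the $\cL^m$-null residues of the Egorov exhaustion fall under the first case.

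Third, the conclusion. By the claim $\mu|_{F(A)}$ is concentrated on a countable union of compact sets $L_n$ with $\overline{\mathcal M}^m(L_n)=0$, and since $\varepsilon^m\mathcal N(L_n,\varepsilon)\to0$ and $e_{N,\infty}(L_n)=\inf\{\varepsilon:\mathcal N(L_n,\varepsilon)\le N\}$, this gives $N^{1/m}e_{N,\infty}(L_n)\to0$, i.e.\ $\overline Q_{\infty,m}(L_n)=0$ (cf.\ Appendix~\ref{app:conc}). Hence $\overline Q_{p,m}(\mu|_{L_n})\le\mu(L_n)^{1/p}\,\overline Q_{\infty,m}(L_n)=0$ by Proposition~\ref{prop:enpprops}(iv). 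Disjointifying the $L_n$ and invoking Proposition~\ref{prop:cntadd}(ii) once more yields $\overline Q_{p,m}(\mu|_{F(A)})=0$, and the first reduction gives $\overline Q_{p,m}(\mu)=0$; as $\underline Q_{p,m}(\mu)\ge0$, the limit $Q_{p,m}(\mu)$ exists and equals $0$. (For $m=0$ the statement is vacuous, since $\mu\bot\cH^0$ forces $\mu=0$.)

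The main obstacle is the claim in the second step: upgrading the classical Sard-type identity $\cH^m(F(\{J_m\mathrm dF=0\}))=0$ to the assertion that this image has zero $m$-dimensional \emph{Minkowski} content. For $d>m$ this is a genuine strengthening --- a compact $\cH^m$-null subset of $\R^d$ can have positive $\overline{\mathcal M}^m$ --- and it is precisely here that the rank bound together with the Egorov/Lusin uniformisation are needed to control covering numbers at a \emph{uniform} rate; the remaining ingredients are routine applications of the additivity results of Section~\ref{sect:addquant} and the elementary bound $e_{N,p}\le\mu(X)^{1/p}e_{N,\infty}$.
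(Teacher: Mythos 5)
Your route is genuinely different from the paper's: where the paper disposes of the key point in one line by citing Kneser's theorem (any compact $E\subseteq F(K)$ is itself $F(K')$ for $K'=F^{-1}(E)\cap K$ compact, so $\overline\cM^m(E)=\cH^m(E)=0$), you reprove the relevant special case from scratch via a quantitative Sard/Egorov argument. The analytic core of your claim is correct: on an Egorov piece of fixed rank $j\le m-1$ with uniform differentiability modulus $\omega$, the image of a radius-$r$ ball is covered by $O(\omega(r)^{-j})$ balls of radius $O(\omega(r)r)$, which does force the $m$-dimensional covering quantity $r^m N(F(P);r)$ to vanish and hence $\overline{Q}_{\infty,m}(F(P))=0$. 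But there is a genuine gap in the assembly step, at ``by the claim $\mu|_{F(A)}$ is concentrated on a countable union of compact sets $L_n$ with $\overline\cM^m(L_n)=0$.'' Your claim only covers images of compact subsets of $A^\circ$ and of \emph{compact} $\cL^m$-null sets, so to produce the $L_n$ you must exhaust the source: $A=\bigcup_n K_n\cup R$ with each $K_n$ compact of one of the two admissible types and $\cL^m(R)=0$ (the Egorov/Lusin residue and the leftover of the inner approximation are unavoidably non-compact null Borel sets). But $\cL^m(R)=0$ only yields $\cH^m(F(R))=0$, and $\mu\bot\cH^m$ means precisely that $\mu$ may charge $\cH^m$-null sets --- $\mu$ could sit entirely on $F(R)$. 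So the source-side exhaustion gives no control on $\mu\big(F(A)\setminus\bigcup_n L_n\big)$, and the subsequent application of Proposition~\ref{prop:cntadd}(ii) is unjustified.

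The gap is fillable with tools you already invoke, but the reduction must be run on the \emph{target} side. By Proposition~\ref{prop:qpscont} it suffices to treat a compact $E\subseteq F(A)$; then $K:=\overline{F^{-1}(E)\cap A}$ is compact with $F(K)=E$, and rerunning the area formula on $K$ (the multiplicity function is supported on the $\cH^m$-null set $E$) gives $J_m\mathrm dF=0$ a.e.\ on $K$. Write $K=\bigcup_j G_j\cup N'$ with $G_j$ compact Egorov pieces and $N'$ Borel null, and disjointify \emph{in the image}: $E_j:=F(G_j)\setminus\bigcup_{j'<j}F(G_{j'})$ and $E_\infty:=E\setminus\bigcup_jF(G_j)$. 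Your claim handles each $E_j\subseteq F(G_j)$. For $E_\infty$, apply Proposition~\ref{prop:qpscont} once more to reduce to a compact $E''\subseteq E_\infty$; since $E''$ is disjoint from every $F(G_j)$, the compact set $F^{-1}(E'')\cap K$ lies in $N'$, hence is a compact null set with image $E''$, and case (a) of your claim applies. Without this disjointification $F^{-1}(E'')\cap K$ may meet the $G_j$ and need not be null, which is exactly where your write-up breaks down. (Two minor further points: the scales $C\omega(r)r$ at which you bound covering numbers must be converted to all scales $\varepsilon\to0$, which requires taking $\omega$ continuous and strictly positive; and the sets $F_i(A_i)$ for Borel $A_i$ are only analytic, a measurability issue that disappears once you reduce to compact sets as above.)
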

\begin{proof}
First suppose $\mu(\R^d \setminus F(K)) = 0$ with $F \colon \R^m \to \R^d$ Lipschitz, $K \subset \R^m$ compact. By assumption, there exists $B \subseteq F(K)$ Borel such that $\mu(\R^d \setminus B) = 0$ and $\cH^m(B) = 0$. 

Let $E$ be an arbitrary compact subset of $B \subseteq F(K)$. Then by Kneser's Theorem \cite{Kneser:1955aa}, \cite[Thm.~3.2.39]{Federer1996}, we have that $\cM^m(E) = \cH^m(E) = 0$. Applying Lemma \ref{lem:packub} with the locally finite measure $\nu = \cL^d$, which satisfies $\cL^d(B_r(x)) = \omega_d r^d$ for all $x$ and $r$, then yields
\[ \overline{Q}_{\infty,m}(E)^m \leq \frac{2^m \omega_{d-m}}{\omega_d} \overline\cM^m(E) = 0. \]
In particular, $\overline{Q}^{(\mu)}_{p,m}(E) \leq \mu(E)^{1/p} \overline{Q}_{\infty,m}(E) = 0$ {by Proposition \ref{prop:coeffprops} (iii). This implies that} $Q_{p,m}(\mu) = Q_{p,m}^{(\mu)}(B) = 0$ by Proposition \ref{prop:qpscont}.

If instead $\mu(\R^d \setminus F(\R^m)) = 0$ for some $F \colon \R^m \to \R^d$ Lipschitz, we can simply restrict $\mu$ to $F(\overline{B_n(0)})$ for each $n \in \N$ and invoke Proposition \ref{prop:qpscont} again. 

{
Finally, if $\mu$ is countably $m$-rectifiable, say $\mu(\R^d \setminus \bigcup_{i=1}^\infty F_i(\R^m)) = 0$, then we can write 
\[ \mu = \sum_{i=1}^\infty \mu_i \quad \text{where} \quad \mu_i := \mu|_{F_i(\R^m) \setminus \bigcup_{j < i} F_j(\R^m)}. \]
Then each $Q_{p,m}(\mu_i) = 0$ by the above argument, and since $\mu$ is $(p,m)$-quantizable, Proposition \ref{prop:cntadd} (ii) implies that
\[ \overline Q_{p,m}(\mu)^{p^\prime} \leq \sum_{i=1}^\infty \overline Q_{p,m}(\mu_i)^{p^\prime} = 0. \qedhere \]
}
\end{proof}

The general case then follows by taking bi-Lipschitz parametrizations and applying Proposition \ref{prop:cntadd}:

\begin{proof}[Proof of Theorem \ref{thm:zadorrect}]
Let $\mu \in \M^p_+(\R^d)$ be $m$-rectifiable. By Lemma \ref{lem:rectsing}, we can assume wlog that $\mu \ll \cH^m$ with density $\rho$.

Let $E \subset \R^d$ be $\cH^m$-measurable and {countably} $m$-rectifiable such that $\mu(\R^d \setminus E) = 0$. 
Let $\lambda > 1$. By \cite[Lem.~3.2.18]{Federer1996}, there exist countably many Lipschitz maps $F_i \colon \R^m \to \R^d$ and compact subsets $K_i \subset \R^d$ such that each $F_i$ is $(\lambda^{-1}, \lambda)$-bi-Lipschitz on $K_i$, and the sets $E_i := F_i(K_i)$ are disjoint and cover $E$ $\cH^m$-a.e., thus also $\mu$-a.e. 

We can then write $\mu = \sum_{i=1}^\infty \mu|_{E_i}$. By Lemma \ref{lem:zadorrectineq}, we have
\[ \lambda^{-2} C_{p,m} \F_{p,m}[\mu|_{E_i}] \leq {\underline{Q}}_{p,m}^{(\mu)}(E_i) \leq \overline{{Q}}_{p,m}^{(\mu)}(E_i) \leq \lambda^2 C_{p,m} \F_{p,m}[\mu|_{E_i}]. \]
Then applying Proposition \ref{prop:cntadd} with
\[ \nu_1(A) := \lambda^{-2p^\prime} \F_{p,m}[\mu|_A]^{p^\prime} = \lambda^{-2p^\prime} \int_A \rho^{\frac{m}{m+p}} \di \cH^m \text{ and } \nu_2(A) := \lambda^{2p^\prime} \F_{p,m}[\mu|_A]^{p^\prime} = \lambda^{2p^\prime} \int_A \rho^{\frac{m}{m+p}} \di \cH^m, \]
we obtain that 
$\underline Q_{p,m}(\mu) \geq \lambda^{-2} C_{p,m} \F_{p,m}[\mu]$,
and if $\mu$ is $(p,m)$-quantizable, 
$\overline Q_{p,m}(\mu) \leq \lambda^2 C_{p,m} \F_{p,m}[\mu]$. 
We conclude by letting $\lambda \to 1^+$, and deducing the weak convergence statement directly from Theorem \ref{thm:stats}.
\end{proof}

\subsection{Proof of Theorem \ref{thm:zadorrect1}}\label{sect:zadorrect1proof}

We now prove Zador's theorem for $1$-rectifiable measures on Polish spaces.

We prove the singular case by recourse to arc length parametrizations of rectifiable curves. We first review the properties of rectifiable curves on metric spaces; for reference, see e.g.~\cite[Sect.~1.1]{AGS05}.

\begin{recall*}[Lipschitz curves]
Given a Lipschitz curve $\gamma \colon (a,b) \to X$, $-\infty \leq a < b \leq \infty$, the \emph{metric derivative}
\[ |\dot\gamma|(t) := \lim_{t^\prime \to t} \frac{d(\gamma(t^\prime), \gamma(t))}{|t^\prime - t|} \]
exists for $\cL^1$-a.e.~$t \in [a,b]$, with $|\dot\gamma| \in L^\infty[a,b]$, being upper bounded by $\Lip(\gamma)$. We note that $|\dot\gamma|$ coincides also with the metric differential as defined by Kirchheim \cite{kirch94}.

Any such curve admits an arc length reparametrization $s \colon [0,L] \to X$, $L = \int_a^b |\dot\gamma|(t) \di t \leq \infty$, such that $|\dot s| = 1$ $\cL^1$-a.e.~and $\gamma = s \circ \varphi$ for some increasing homeomorphism $\varphi \colon [a,b] \to [0,L]$. The curve $s$ satisfies
\[ d(s(t_0), s(t_1)) \leq \int_{t_0}^{t_1} |\dot s|(t) \di t = t_1 - t_0 \quad \text{for all } 0 \leq t_0 \leq t_1 \leq L, \]
and is thus $1$-Lipschitz.
\end{recall*}

Note that $1$-rectifiable subsets of $X$ are defined instead by images of Borel sets, not intervals; we remedy this by embedding $X$ into a Banach space $\bar X$.  

\begin{lemma}\label{lem:rectsing1}
Suppose $\mu \in \M^p_+(X)$ is $1$-rectifiable and $(p,1)$-quantizable, such that $\mu \bot \cH^1$. Then $Q_{p,1}(\mu) = 0$. 
\end{lemma}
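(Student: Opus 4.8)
The plan is to follow the skeleton of the proof of Lemma~\ref{lem:rectsing}, replacing the reference measure $\cL^d$ by an arc length parametrization of a rectifiable curve. First I would reduce to a single piece. Since $\mu$ is $1$-rectifiable there are countably many Borel sets $A_i\subseteq\R$ and Lipschitz maps $F_i\colon A_i\to X$ with $\mu(X\setminus\bigcup_iF_i(A_i))=0$, and since $\mu\bot\cH^1$ there is a Borel set $B$ with $\mu(X\setminus B)=0$ and $\cH^1(B)=0$, which after intersecting with $\bigcup_iF_i(A_i)$ we may assume is contained in it. Disjointifying, $B=\bigsqcup_iC_i$ with $C_i\subseteq F_i(A_i)$ Borel; each $\mu|_{C_i}\le\mu$ and $\mu|_B=\mu$ are $(p,1)$-quantizable, so by Proposition~\ref{prop:cntadd}(ii) applied with the zero measure it suffices to prove $\overline{Q}_{p,1}^{(\mu)}(C_i)=0$ for every $i$ (whence $Q_{p,1}(\mu)=Q_{p,1}^{(\mu)}(B)=0$). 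Fixing $i$ and writing $F\colon A\to X$ for a $\lambda$-Lipschitz map, $C\subseteq F(A)$ Borel with $\cH^1(C)=0$: the measures $\mu|_{C\cap F(A\cap[-n,n])}$ increase to $\mu|_C$ and $\overline{Q}_{p,1}^{(\mu)}(C)=\sup\{\overline{Q}_{p,1}^{(\mu)}(K):K\subseteq C\text{ compact}\}$, both by Proposition~\ref{prop:qpscont}; and for $K$ compact, Proposition~\ref{prop:enpprops}(iv) gives $\overline{Q}_{p,1}^{(\mu)}(K)\le\mu(K)^{1/p}\,\overline{Q}_{\infty,1}^{[X]}(K)$. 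So everything reduces to showing $N\,e_{N,\infty}^{[X]}(K)\to0$ for a compact set $K\subseteq F(A)$ with $\cH^1(K)=0$ and $A\subseteq\R$ bounded.

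Next I would pass to a curve. Embed $X$ isometrically into $\ell^\infty$ via the Kuratowski embedding $\phi$; then $\overline{Q}_{\infty,1}^{[X]}(K)=\overline{Q}_{\infty,1}^{[\phi(X)]}(\phi(K))$, since an isometric embedding preserves the intrinsic covering radius. The map $\phi\circ F\colon A\to\ell^\infty$ is $\lambda$-Lipschitz, so by the McShane extension theorem it extends to a $\lambda$-Lipschitz map $G$ on a bounded interval $[a,b]\supseteq A$. Let $\gamma\colon[0,L]\to\ell^\infty$ be its arc length reparametrization, so $\gamma$ is $1$-Lipschitz with $|\dot\gamma|=1$ $\cL^1$-a.e.\ and $G=\gamma\circ\varphi$ for a continuous nondecreasing surjection $\varphi\colon[a,b]\to[0,L]$ (not necessarily injective, which is harmless here). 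Writing $A':=F^{-1}(K)\cap A$, so that $F(A')=K$, we get $\phi(K)=\gamma(\varphi(A'))\subseteq\gamma([0,L])$; set $Z:=\gamma^{-1}(\phi(K))$, which is compact.

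The crux, and the step I expect to be the main obstacle, is the claim $\cL^1(Z)=0$. I would obtain it from the area formula for $1$-Lipschitz curves into metric spaces (Kirchheim~\cite{kirch94}): for Borel $g\ge0$, $\int_0^Lg(\gamma(t))\,|\dot\gamma|(t)\di t=\int g(y)\,\#\gamma^{-1}(y)\,\di\cH^1(y)$. Taking $g=\One_{\phi(K)}$ and using $|\dot\gamma|=1$ a.e.\ gives $\cL^1(Z)=\int_{\phi(K)}\#\gamma^{-1}(y)\,\di\cH^1(y)=0$, because $\cH^1(\phi(K))=\cH^1(K)=0$ makes the integral of a nonnegative function over a null set vanish. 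This is precisely where arc length parametrization is essential: for a generic Lipschitz parametrization the same computation only yields $|\dot G|=0$ a.e.\ on $G^{-1}(\phi(K))$, which does not force that set to be Lebesgue-null. Some care is also needed that $Z$ (rather than $\varphi(A')$) is the object we control, which is why we passed to the closed — hence compact — preimage.

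Finally, the covering estimate. Fix $\veps>0$; since $Z$ is compact with $\cL^1(Z)=0$, finitely many open intervals $I_1,\dots,I_M$ cover $Z$ with $\sum_j|I_j|<\veps$, and I set $\ell_j:=|I_j\cap[0,L]|$. For any $r>0$, subdividing each $I_j\cap[0,L]$ into $\lceil\ell_j/(2r)\rceil$ equal subintervals of length $\le2r$ and using that $\gamma$ is $1$-Lipschitz, the image of each subinterval lies in the ball of radius $r$ about the image of its midpoint; hence $\phi(K)=\gamma(Z)$ is covered by fewer than $M+\veps/(2r)$ balls of radius $r$ with centres in $\ell^\infty$, so $e_{N,\infty}^{[\ell^\infty]}(\phi(K))\le\veps/(2(N-M))$ for every integer $N>M$. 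Replacing each ball centre by a point of $\phi(K)\subseteq\phi(X)$ within distance $r$, at the cost of doubling the radius, transfers the bound to $\phi(X)$, giving $\overline{Q}_{\infty,1}^{[X]}(K)=\overline{Q}_{\infty,1}^{[\phi(X)]}(\phi(K))\le\limsup_{N\to\infty}N\veps/(N-M)=\veps$. Letting $\veps\to0$ yields $\overline{Q}_{\infty,1}^{[X]}(K)=0$, which by the reductions of the first paragraph completes the proof (the degenerate case $L=0$ being trivial).
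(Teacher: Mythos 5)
Your proof is correct, and the overall strategy coincides with the paper's: reduce via Proposition \ref{prop:cntadd}(ii) and Proposition \ref{prop:qpscont} to a compact $\cH^1$-null subset $K$ of a single Lipschitz image, embed $X$ isometrically into a Banach space, extend by McShane, pass to the arc length reparametrization, and exploit $|\dot\gamma|=1$ a.e.\ through the area formula. The difference is in the endgame. The paper pushes $\cL^1|_{[0,L]}$ forward to a measure $\nu=s_\#\cL^1$ on the ambient space, checks the concentration bound $\nu(B_r(x))\ge r$, uses the area formula to conclude $\nu\ll\cH^1$ (hence $\nu(\iota(K))=0$), and then invokes the packing upper bound of Lemma \ref{lem:packub} to get $\overline{Q}_{\infty,1}(K)\le 2\nu(\iota(K))=0$. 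You instead apply the area formula on the domain side to show that the compact preimage $Z=\gamma^{-1}(\phi(K))$ is Lebesgue-null, and then build the covers by hand, subdividing a small open cover of $Z$ into subintervals of length $2r$ and pushing forward along the $1$-Lipschitz curve. Your route avoids both the concentration inequality and Lemma \ref{lem:packub} entirely, at the cost of redoing an elementary covering count; the paper's route is more modular, since the same packing lemma is reused for the $m$-dimensional singular case (Lemma \ref{lem:rectsing}) and elsewhere. Your bookkeeping (the $M+\veps/(2r)$ count, the factor-of-two loss when recentering balls on $\phi(K)$, the non-injectivity of $\varphi$, the degenerate case $L=0$) is all sound.
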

\begin{proof}
{Applying Proposition \ref{prop:cntadd} (ii) as in Lemma \ref{lem:rectsing}, it suffices to prove the statement % for $\mu$ singly $1$-rectifiable. Suppose
under the assumption that} $\mu(X \setminus \gamma(A)) = 0$ for some nonempty $A \subset \R$ Borel, $\gamma \colon A \to X$ Lipschitz. Subdividing $\R$ into intervals, we can also assume wlog that $A \subseteq [a,b]$ for some $-\infty < a < b < \infty$. 

Along an isometric embedding $\iota \colon X \to \bar X$ Banach, $\iota \circ \gamma$ admits a Lipschitz extension $\bar \gamma \colon [a,b] \to \bar X$. Take an arc length parametrization $s \colon [0,L] \to \bar X$ of $\bar \gamma$. If $L = 0$, then $\bar\gamma([a,b]) \supseteq \gamma(A)$ is a singleton, $\mu$ is a Dirac measure and $Q_{p,1}(\mu) = 0$, so we can assume wlog that $L > 0$.
Consider the finite Borel measure $\nu = s_\# (\cL^1|_{[0,L]})$ on $\bar X$. For any $x = s(t) \in s([0,L])$ and $0 < r \leq L/2$, we have
\[ \nu(B_r(x)) := \cL^1([0,L] \cap \bar s^{-1}(B_r(x))) \geq \cL^1([0,L] \cap (t-r, t+r)) \geq r, \]
since $ s$ is $1$-Lipschitz and the interval $(t-r,t+r) \cap [0,L]$ has length at least $r$.

Moreover, by the area formula (cf.~\cite[Thm.~7]{kirch94}), for any $B \subseteq \bar X$ Borel,
\[ \nu(B) = \int_{ s^{-1}(B)} \di t = \int_{ s^{-1}(B)} |\dot s|(t) \di t = \int_\Gamma \#( s^{-1}(x) \cap s^{-1}(B)) \di \cH^1(x) = \int_B \#( s^{-1}(x)) \di \cH^1(x). \]
Since $\nu(\bar X) = L < \infty$, it follows from the above that $\#( s^{-1}(x)) < \infty$ $\cH^1$-a.e., and therefore $\nu \ll \cH^1$.

By assumption, there exists $B \subseteq \gamma(A)$ Borel such that $\mu(X \setminus B) = \cH^1(B) = 0$. Then $\cH^1(\iota(B)) = 0$ thus $\nu(\iota(B)) = 0$ as well. Let $K \subseteq B$ be an arbitrary compact subset. Then Lemma \ref{lem:packub}, applied to $\nu$ with $s=m=1$, implies that
\[ \overline{Q}^{[X]}_{\infty,1}(K) \leq \overline{Q}^{[K]}_{\infty,1}(K) = \overline{Q}^{[\iota(K)]}_{\infty,1}(\iota(K)) \leq 2 \nu(\iota(K)) = 0, \]
thus $\overline{Q}^{(\mu)}_{p,m}(K) \leq \mu(K)^{1/p} \overline{Q}_{\infty,m}(K) = 0$ {by Proposition \ref{prop:coeffprops} (iii)}. Proposition \ref{prop:qpscont} then implies that $Q_{p,m}(\mu) = Q_{p,m}^{(\mu)}(B) = 0$ as well.
\end{proof}

\begin{remark*}
This argument does not extend to the $m$-dimensional case: constructing such a measure $\nu$ from a general Lipschitz map $F \colon \R^m \to \R^d$ instead of an arc length parametrization
will not always yield a measure a.c.~with respect to $\cH^m$, and the implication $\mu \bot \cH^m \implies \mu \bot \nu$ will not hold in general. For example, a constant map $F \colon \R^m \to \{x_0\} \subset \R^d$ will yield $\nu = C \delta_{x_0}$. In this case, we only have $\nu \ll \cH^1|_\Gamma$ thanks to the existence of arc length parametrizations, for which $|\dot s| = 1$ $\cL^1$-a.e.

For general $m$-rectifiable subsets $B \subseteq X$, we still have that $\underline\vartheta^{(\cH^m)}_m(x) > 0$ for $\cH^m$-a.e.~$x \in B$ by Preiss' theorem, but this does not provide any information for singular measures supported on sets of zero $\cH^m$-measure. This is what necessitates the application of Kneser's theorem in Lemma \ref{lem:rectsing}.
\end{remark*}

The argument for the absolutely continuous case is analogous to the previous section, taking countably many disjoint images of bi-Lipschitz maps $F_i \colon K_i \to X$. In this case, we can invoke the McShane extension theorem for $F_i^{-1} \colon F_i(K_i) \to K_i \subset \R$, but $F_i$ itself cannot be extended to a map $\R \to X$ in general. To make up for this, we will need the following technical statement about restricting the quantization problem on $\R$ to the support of the measure, which seems to be new and unique to dimension $1$ (or $2$, using hexagonal partitions):

\begin{lemma}\label{lem:zador1}
Let $K \subset \R$ be compact with $\cL^1(K) > 0$, $\nu \ll \cL^1|_K$ supported on $K$. Then the quantization coefficient $Q^{[K]}_{p,1}(\nu)$ of $\nu$ on the metric space $X = K$, i.e.
\[ Q^{[K]}_{p,1}(\nu) = \lim_{N \to \infty} N^p e^{[K]}_{N,p}(\nu), \quad e^{[K]}_{N,p}(\nu) := \inf \{ e_p(\nu; S) \mid S \subseteq K, \ \# S \leq N\}, \] 
also exists and coincides with that on $\R$:
\[ Q^{[K]}_{p,1}(\nu) = Q^{[\R]}_{p,1}(\nu) = C_{p,1} \F_{p,1}[\nu]. \]
\end{lemma}
\begin{proof}
The inequality $\underline Q^{[K]}_{p,1}(\nu) \geq Q^{[\R]}_{p,1}(\nu)$ is trivial; it suffices to prove that $\overline Q^{[K]}_{p,1}(\nu) \leq Q^{[\R]}_{p,1}(\nu)$. 

We prove the statement for $\nu = \cL^1|_K$. The general case follows by applying the same argument to $\nu = \cL^1|_{K^\prime}$ for $K^\prime \subseteq K$ compact, then generalizing to $\cL^1|_B$ for $B \subseteq K$ Borel by Proposition \ref{prop:qpscont}, and finally deducing the statement for general $\nu \ll \cL^1|_K$ using Proposition \ref{prop:genzadorineq}, noting that any measure $\nu$ supported on $K \subset \R$ compact is $(p,1)$-quantizable on $K$ since $\overline Q^{[K]}_{\infty,1}(K) \leq C \cL^1(K) < \infty$ by Lemma \ref{lem:packub} applied to $\cL^1$.

Up to scaling, we may also assume wlog that $K \subseteq [0,1]$. For each $N \in \N$, let $\cI_N = \{[\frac{k-1}{N}, \frac{k}{N}]\}_{k = 1}^N$ be the uniform interval partition of $[0,1]$ into $N$ subintervals, and set
\[ \cI_N(K) := \{ I \in \cI_N \mid |K \cap I| > 0 \}. \]
Since every such $K \cap I$ is compact and nonempty, we can take quantizers of the following form:
\[ S_N := \{ a_I \mid I \in \cI_N(K) \}; \quad a_I \in \argmin_{a \in K \cap I} \int_I |x-a|^p \di x. \]
The key observation is to control $\int_I |x - a_I|^p \di x$ based on $\frac{|K \cap I|}{|I|}$. Note firstly that $K \subseteq \bigcup \cI_N(K) \subseteq K^{1/N}$, with $\# S_N = \# \cI_N(K) = N |\bigcup \cI_N(K)|$, hence
\[ |K| \leq \frac{\# S_N}{N} \leq |K^{1/N}|. \]
Since $K$ is compact, $|K^{1/N}| \searrow |K|$ as $N \to \infty$, thus $\frac{\# S_N}{N} \to |K|$ as well.

For the unit interval $[0,1]$, the function $[0,1] \ni a \mapsto \int_0^1 |x-a|^p \di x$ is evidently continuous, and attains a maximum of $\frac{1}{p+1}$ at $a \in \{0,1\}$ and the minimum $\frac{1}{2^p(p+1)} = C_{p,1}^p$ (cf.~\cite[Ex.~5.5]{quantbook}) at $a = \frac{1}{2}$. For any interval $I = [a,b]$ with midpoint $m_I := \frac{a+b}{2}$, rescaling then gives
\[ C_{p,1}^p |I|^{p+1} = \int_I |x-m_I|^p \di x \leq \max_{a \in I} \int_I |x-a|^p \di x = \frac{1}{p+1} |I|^{p+1}, \]
and for any $\veps > 0$, there exists $\delta > 0$ (wlog $\delta < \frac{1}{2}$) such that 
\[ |a - m_I| < \delta |I| \quad \implies \quad \int_I |x-a|^p \di x < (C_{p,1}+\veps)^p |I|^{p+1}. \]
Now for each $I \in \cI_N(K)$, since $B_{\delta |I|}(m_I) \subset I$ has measure $2 \delta |I|$, $K \cap B_{\delta |I|}(m_I)$ must be nonempty whenever $|K \cap I| > (1-2\delta) |I|$. In that case,
\[ \int_{K \cap I} |x-a_I|^p \di x \leq \int_I |x-a_I|^p \di x = \min_{a \in K \cap I} \int_I |x-a|^p \di x < (C_{p,1}+\veps)^p |I|^{p+1}, \]
and otherwise,
\[ \int_{K \cap I} |x-a_I|^p \di x \leq \max_{a \in I} \int_I |x-a|^p \di x = \frac{1}{p+1} |I|^{p+1}. \]
Note also that $|I| = \frac{1}{N}$ for each $I \in \cI_N(K)$.
Defining $\rho_N := \sum_{I \in \cI_N(K)} \frac{|K \cap I|}{|I|} \One_I$, we can then bound
\begin{align*} 
e_p(\nu; S_N)^p & \leq \sum_{I \in \cI_N(K)} \int_{K \cap I} |x-a_I|^p \di x 
\leq (C_{p,1}+\veps)^p N^{-p} \Big( \sum_{\substack{I \in \cI_N(K) \\ \frac{|K \cap I|}{|I|} > 1 - 2\delta }}  |I| \Big) + \frac{1}{p+1} \Big( \sum_{\substack{I \in \cI_N(K) \\ \frac{|K \cap I|}{|I|} \leq 1 - 2\delta }}  |I| \Big)
\\ & \leq (C_{p,1}+\veps)^p N^{-p} |K^{1/N} \cap \{ \rho_N > 1 - 2\delta\}| + \frac{1}{p+1} N^{-p} |K^{1/N} \cap \{ \rho_N \leq 1 - 2\delta\}|.
\end{align*}
We now show that $\rho_N \to \One_K$ in measure. On $L^1[0,1]$, we can define the local averaging operators
\[ J_N f := \sum_{I \in \cI} \left( \frac{1}{|I|} \int_I f \di x \right) \One_I, \quad f \in L^1[0,1], \]
so that $\rho_N = J_N (\One_K)$. Since $\|J_N f\|_{L^1} \leq \|f\|_{L^1}$ uniformly in $N$ and $f$, and $J_N f \to f$ uniformly for $f \in C[0,1]$, it follows that $J_N f \to f$ in $L^1$, thus in measure, for each $f \in L^1[0,1]$, in particular for $f = \One_K$. 

This implies that $|K \cap \{ \rho_N > 1 - 2 \delta\}| \to |K|$ as $N \to \infty$, and since $|K^{1/N} \setminus K| \to 0$ as well, we deduce
\begin{align*} 
\lim_{N \to \infty} (\# S_N)^p e_p(\nu; S_N)^p & \leq \left( \lim_{N \to \infty} \frac{\# S_N}{N} \right)^p \left[ (C_{p,1}+\veps)^p |K| \right] = (C_{p,1}+\veps)^p |K|^{p+1}. 
\end{align*}
We can then let $\veps \to 0$ in the right-hand side. We now conclude by taking appropriate $N_k \in \N$ such that $\# S_{N_k} \sim k$ for each $k \in \N$. For each $\veps > 0$, we have $|K^{1/N}| \leq |K| + \veps$ thus $|K| N \leq \# S_N \leq (|K|+\veps) N$ for each $N$ sufficiently large. Taking $N_k = \lfloor \frac{k}{|K|+\veps} \rfloor$ then yields $\# S_{N_k} \leq k$ for $k$ sufficiently large, with 
\[ \lim_{k \to \infty} \frac{\# S_{N_k}}{k} = \lim_{k \to \infty} \frac{\# S_{N_k}}{N_k} \lim_{k \to \infty} \frac{N_k}{k} = \frac{|K|}{|K|+\veps}. \]
Then for each $k$, $S_{N_k} \subset K$ is a feasible set of quantizers with at most $k$ elements, yielding
\[ \overline Q^{[K]}_{p,1}(\nu) \leq \lim_{k \to \infty} k^p e_p(\nu; S_{N_k})^p \leq \left( \lim_{k \to \infty} \frac{\# S_{N_k}}{k} \right)^p \lim_{k \to \infty} (\# S_{N_k})^p e_p(\nu; S_{N_k})^p \leq C_{p,1}^p |K| (|K|+\veps)^p. \]
Letting $\veps \to 0$ again finally gives the desired inequality. \qedhere
\end{proof}

We can then deduce the following analogue of Lemma \ref{lem:zadorrectineq}:

\begin{lemma}\label{lem:zadorrectineq1}
Let $K \subset \R$ be compact, $F \colon K \to X$ $(\lambda^{-1}, \lambda)$-bi-Lipschitz, $\mu \ll \cH^1$ supported on $F(K)$. Then % $\mu$ is $(p,1)$-quantizable, and
\[ \lambda^{-2} C_{p,1} \F_{p,1}[\mu] \leq \underline Q_{p,1}(\mu) \leq \overline Q_{p,1}(\mu) \leq \lambda^2 C_{p,1} \F_{p,1}[\mu]. \]
\end{lemma}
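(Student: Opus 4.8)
The plan is to transcribe the proof of Lemma~\ref{lem:zadorrectineq} into the metric setting, the one structural change being that $F\colon K\to X$ admits no Lipschitz extension to $\R\to X$ in general: on the ``upper bound side'' we therefore cannot transport quantizers from $\R$ onto $X$ and must instead keep them inside $K$ and invoke Lemma~\ref{lem:zador1}, while on the ``lower bound side'' the real-valued map $F^{-1}\colon F(K)\to\R$ still extends by McShane's theorem. Concretely, let $H\colon X\to\R$ be the $\lambda$-Lipschitz extension of $F^{-1}\colon F(K)\to\R$ given by McShane, so $H=F^{-1}$ on $F(K)$, and set $\nu:=H_\#\mu$, a finite Borel measure on $X$. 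Since $\mu$ is concentrated on $F(K)$ and $H|_{F(K)}=F^{-1}$ is a $(\lambda^{-1},\lambda)$-bi-Lipschitz bijection onto $K$, one checks that $\nu$ is supported on $K$, that $\nu\ll\cL^1|_K=\cH^1|_K$ (as $\cH^1$-null subsets of $K$ have $\cH^1$-null $F$-images, which are $\mu$-null because $\mu\ll\cH^1|_{F(K)}$), and that $\mu=F_\#\nu$ (because $F\circ H=\mathrm{id}$ on $F(K)\supseteq\supp\mu$). We may assume $\cL^1(K)>0$, since otherwise $\nu$ and hence $\mu=F_\#\nu$ vanish and the claim is trivial; note also that $\mu$, being supported on the compact set $F(K)$, lies in $\M^\infty_+(X)\subseteq\M^p_+(X)$.

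For the upper bound, Proposition~\ref{prop:enpprops}(v) applied to the $(\lambda^{-1},\lambda)$-bi-Lipschitz map $F\colon K\to X$ gives $e^{[X]}_{N,p}(\mu)=e^{[X]}_{N,p}(F_\#\nu)\le\lambda\,e^{[K]}_{N,p}(\nu)$, hence $\overline Q_{p,1}(\mu)\le\lambda\,\overline Q^{[K]}_{p,1}(\nu)$. Since $\nu\ll\cL^1|_K$ is supported on the compact set $K$ with $\cL^1(K)>0$, Lemma~\ref{lem:zador1} applies and yields $\overline Q^{[K]}_{p,1}(\nu)=Q^{[K]}_{p,1}(\nu)=C_{p,1}\F_{p,1}[\nu]$. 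Finally, Lemma~\ref{lem:fpmpush} applied to $F\colon K\to X$ with $\mu=F_\#\nu$ gives $\lambda^{-1}\F_{p,1}[\mu]\le\F_{p,1}[\nu]\le\lambda\,\F_{p,1}[\mu]$; combining these yields $\overline Q_{p,1}(\mu)\le\lambda^2 C_{p,1}\F_{p,1}[\mu]$.

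For the lower bound, I use the same $H\colon X\to\R$. Given any finite $S\subseteq X$, the image $H(S)\subseteq\R$ satisfies $\#H(S)\le\#S$ and $d(H(x),H(S))\le\lambda\,d(x,S)$ for every $x$, so by the pushforward identity $\|f\|_{L^p(H_\#\mu)}=\|f\circ H\|_{L^p(\mu)}$ we get $e^{[\R]}_p(\nu;H(S))=\|d(H(\cdot),H(S))\|_{L^p(\mu)}\le\lambda\,e_p(\mu;S)$; taking infima over $S$ gives $e^{[\R]}_{N,p}(\nu)\le\lambda\,e^{[X]}_{N,p}(\mu)$, whence $\underline Q_{p,1}(\mu)\ge\lambda^{-1}\,\underline Q^{[\R]}_{p,1}(\nu)$. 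By Zador's theorem on $\R$ applied to the compactly supported measure $\nu$ (equivalently, the $Q^{[\R]}$ part of Lemma~\ref{lem:zador1}), $\underline Q^{[\R]}_{p,1}(\nu)=C_{p,1}\F_{p,1}[\nu]$, and using the same application of Lemma~\ref{lem:fpmpush} as above, $\F_{p,1}[\nu]\ge\lambda^{-1}\F_{p,1}[\mu]$; hence $\underline Q_{p,1}(\mu)\ge\lambda^{-2}C_{p,1}\F_{p,1}[\mu]$, completing the chain of inequalities.

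Once Lemma~\ref{lem:zador1} is in hand, the argument is essentially bookkeeping, and I expect no real obstacle. The single genuinely delicate ingredient is exactly Lemma~\ref{lem:zador1}: it is what lets us quantize $\nu$ by points of $K$ itself at the optimal asymptotic rate, and so compensates for the absence of a Lipschitz extension of $F$ that is available in the Euclidean case. The only other points requiring care are the measure-theoretic chain $\mu\ll\cH^1|_{F(K)}\Rightarrow\nu\ll\cL^1|_K$ together with $\mu=F_\#\nu$, and the degenerate case $\cL^1(K)=0$, both disposed of at the outset.
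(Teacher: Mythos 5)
Your proposal is correct and follows essentially the same route as the paper: McShane extension $H$ of $F^{-1}|_{F(K)}$, the pushforward $\nu = H_\#\mu$ with $\mu = F_\#\nu$, Lemma~\ref{lem:zador1} to quantize $\nu$ by points of $K$ at the optimal rate, and Lemma~\ref{lem:fpmpush} to compare $\F_{p,1}[\nu]$ with $\F_{p,1}[\mu]$. Your explicit verification that $\nu \ll \cL^1|_K$ (needed as a hypothesis of Lemma~\ref{lem:zador1}) is a welcome detail that the paper's write-up leaves implicit.
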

\begin{proof}
We can assume that $\cL^1(K) > 0$ since otherwise $\cH^1(F(K)) = 0$ hence $\mu = 0$.
Since $F$ is bi-Lipschitz on $K$, $F \colon K \to F(K)$ is invertible and $F^{-1}|_{F(K)} \colon F(K) \to K \subset \R$ is also $(\lambda^{-1}, \lambda)$-bi-Lipschitz. By the McShane extension theorem, there exists a $\lambda$-Lipschitz extension $H \colon X \to \R$ of $F^{-1}|_{F(K)}$, so that $H$ sends $F(K)$ to $K$ and the composition $F \circ H$ is well-defined and equal to the identity on $F(K)$. Then the measure $\nu := H_\# \mu$ is supported on $K$, with $\mu = F_\# (\nu|_K)$. Indeed, for any $B \subseteq X$ Borel,
\[ \nu(F^{-1}(B)) = \mu(H^{-1}(F^{-1}(B))) = \mu(F(K) \cap (F \circ H)^{-1}(B)) = \mu(F(K) \cap B) = \mu(B) \]
since for any $x \in F(K)$, $(F \circ H)(x) \in B$ iff $x \in B$. 

{By Proposition \ref{prop:coeffprops} (i), we have that} $Q^{[\R]}_{p,1}(\nu) \leq \lambda \underline Q_{p,1}(\mu)$ {since} $H \colon X \to \R$ is $\lambda$-Lipschitz, and conversely $\overline{Q}_{p,1}(\mu) \leq \lambda Q^{[K]}_{p,1}(\nu)$ since $F \colon K \to X$ is $\lambda$-Lipschitz. Since $\supp\nu \subseteq K$, Lemma \ref{lem:zador1} implies that
\[ Q^{[K]}_{p,1}(\nu) = Q^{[\R]}_{p,1}(\nu) = C_{p,1} \F_{p,1}[\nu]. \]
The desired statement then follows from Lemma \ref{lem:fpmpush}.
\end{proof}

Zador's theorem for the absolutely continuous case follows again from bi-Lipschitz parametrizations.

\begin{proof}[Proof of Theorem \ref{thm:zadorrect1}]
Let $\mu \in \M^p_+(X)$ be countably $1$-rectifiable, wlog $\mu \ll \cH^1$ by Lemma \ref{lem:rectsing1}.

Let $E \subset X$ be $\cH^1$-measurable and $1$-rectifiable such that $\mu(X \setminus E) = 0$. 
Then again for $\lambda > 1$ arbitrary, there exist countably many compact subsets $K_i \subset \R$ and $(\lambda^{-1},\lambda)$-bi-Lipschitz maps $F_i \colon K_i \to X$ such that the sets $E_i := F_i(K_i)$ are disjoint and cover $E$ $\cH^1$-a.e., so that $\mu = \sum_{i=1}^\infty \mu|_{E_i}$. 
On each $E_i$, Lemma \ref{lem:zadorrectineq1} yields
\[ \lambda^{-2} C_{p,1} \F_{p,1}[\mu|_{E_i}] \leq {\underline{Q}}_{p,1}^{(\mu)}(E_i) \leq \overline{{Q}}_{p,1}^{(\mu)}(E_i) \leq \lambda^2 C_{p,1} \F_{p,1}[\mu|_{E_i}]. \]
From this, the statement follows analogously from Proposition \ref{prop:cntadd} and Theorem \ref{thm:stats}.
\end{proof}

\subsection{Conditions for $(p,m)$-quantizability of rectifiable measures}\label{sect:psstabrect}

{
Unlike the settings presented in Section \ref{sect:examples}, it is difficult to formulate general conditions for the $(p,m)$-quantizability of rectifiable measures. In this section we present a few domain-specific conditions as well as counterexamples.

Suppose $\mu(X \setminus E) = 0$ for $E$ countably $m$-rectifiable. Since the $m$-dimensional density of $\nu = \cH^m|_E$ exists and equals $1$ for $\cH^m$-a.e.~$x \in E$ (cf.~\cite[Thm.~9]{kirch94} on metric spaces), the lower densities $\underline\vartheta^{(\nu)}_m(\cdot, \delta)$ must also be positive for arbitrary $\delta > 0$. 
Hence in principle, for measures $\mu \ll \cH^m$ one could deduce $(p,m)$-quantizability from Corollary \ref{cor:volgrowstab}, but this would not be useful in general due to the intractability of controlling lower densities. 
Nonetheless, if $(E, \cH^m)$ falls under any of the example categories treated in Section \ref{sect:examples} (e.g.~if $E$ is uniformly rectifiable hence by definition Ahlfors regular), the same conditions presented there for $(p,m)$-quantizability on $E$ even for singular measures will carry over to the ambient space $X$ thanks to Proposition \ref{prop:psstabprops} (iii). 

Otherwise, one general condition we can impose on $E$ compact is the finiteness of its upper Minkowski content, defined with respect to a higher-dimensional measure $\nu$.
We recall the definition of Minkowski contents on metric measure spaces:

\begin{defin}\label{def:mincon}
Let $(X, \nu)$ be a metric measure space, $0 < m \leq s < \infty$. The \emph{$m$-dimensional lower} and \emph{upper Minkowski contents} of a compact set $A \subseteq X$ are
\[ \underline \cM^m_{(\nu)}(A) := \liminf_{r \to 0^+} \frac{\nu(A^r)}{\omega_{s-m} r^{s-m}}, \quad \overline \cM^m_{(\nu)}(A) := \limsup_{r \to 0^+} \frac{\nu(A^r)}{\omega_{s-m} r^{s-m}}. \]
If the limit exists, it is denoted by $\cM^m_{(\nu)}$.
\end{defin}

This corresponds to the usual definition of the Minkowski content $\cM^m$ on $\R^d$ when $s = d$, $\nu = \cL^d$ and $m$ is an integer.

\begin{remark*}
Since $\nu$ is locally finite, $\nu(A^r) < \infty$ for $r$ sufficiently small. Indeed, for each $x \in X$, there exists $r_x > 0$ such that $\nu(B_{r_x}(x)) < \infty$, and the open cover $\{B_{r_x}(x)\}_{x \in A}$ of $A$ admits a finite open subcover $\{B_{r_i}(x_i)\}_{i=1}^m$. Then by the Lebesgue number lemma, there exists $\delta > 0$ such that $A^\delta \subseteq \bigcup_{i=1}^m B_{r_i}(x_i)$, which has finite $\nu$-measure. 

In particular, for $m = s$, since $A = \bigcap_{r > 0} A^r$ as a monotone limit, the limit $\lim_{r \to 0^+} \nu(A^r)$ exists and coincides with $\nu(A)$. 
\end{remark*}
}

{We deduce that any measure supported on a compact set with finite upper Minkowski content is $(p,m)$-quantizable, as long as the $s$-dimensional density of $\nu$ is bounded from below:}

\begin{prop}\label{prop:minconstab}
Let $(X, \nu)$ be a metric measure space, $E \subseteq X$ compact.
Suppose that there exist dimensions $0 < m \leq s$ such that $\overline\cM^m_{(\nu)}(E) < \infty$ and constants $\delta,\lambda > 0$ such that $\underline \vartheta^{(\nu)}_s(\cdot, \delta) \geq \lambda > 0$ on $E$.

Then any measure $\mu \in \M^\infty_+(X)$ supported on $E$ is $(p,m)$-quantizable for all $p < \infty$.
\end{prop}
\begin{proof}
Applying Lemma \ref{lem:packub} to $E$ yields
\[ \overline{Q}_{\infty,m}(E)^m \leq \frac{2^m \omega_{s-m}}{\lambda} \overline\cM^m_{(\nu)}(E) < \infty. \]
The desired statement then follows from Proposition \ref{prop:psstabprops} (i).
\end{proof}

{In particular, this statement holds for any measure on $\R^d$ or on any of the example spaces discussed in Section \ref{sect:examples}, whose support is compact and has finite $m$-dimensional upper Minkowski content.}

\begin{remark*}
A compact set $E \subset \R^d$ is called \emph{Minkowski $m$-regular} if $\cM^m(E) = \cH^m(E)$.
Kneser's Theorem \cite{Kneser:1955aa}, \cite[Thm.~3.2.39]{Federer1996} states that $m$-rectifiable sets of the form $F(K)$, $K \subset \R^m$ compact, $F \colon \R^m \to \R^d$ Lipschitz, are Minkowski $m$-regular. Ambrosio, Fusco and Pallara \cite[Thm.~2.106]{AFP00} also prove that if $E$ is a {countably $m$-rectifiable compact} subset of $\R^d$, and moreover there exists a Radon measure $\nu \ll \cH^m$ on $\R^d$ and constants $\lambda, \delta > 0$ such that
\[ \nu(B_r(x)) \geq \lambda r^m \quad \text{for all } x \in E \text{ and } r < \delta, \]
then $E$ is again Minkowski $m$-regular. {Note that this is stronger than the condition on $\nu$ in Proposition \ref{prop:minconstab}, and is also a sufficient condition for Corollary \ref{cor:compstab} to yield $(p,m)$-quantizability. }
\end{remark*}

{
For non-compact support, we can also apply Corollary \ref{cor:volgrowstab} under very domain-specific assumptions. 

\begin{example} 
Let $E = \bigcup_{i=1}^\infty F_i(\R^m)$ for a countable family of Lipschitz maps $F_i \colon \R^m \to X$. Then without loss of generality, we can write $E = \bigcup_{j=1}^\infty G_j([0,1]^m)$ with each $G_j$ $1$-Lipschitz. 

Suppose that every ball in $X$ intersects only finitely many sets $E_j := G_j([0,1]^m)$. Then the measure
\[ \nu := \sum_{j=1}^\infty (G_j)_\# (\cL^m|_{[0,1]^m}) \]
is locally finite (n.b. $\nu \neq \cH^m|_E$). For any $x \in E$ and $r < 1$, say $x = G_j(y)$ with $y \in [0,1]^m$, we have
\[ \nu(B_r(x)) \geq \cL^m([0,1]^m \cap G_j^{-1}(B_r(x))) \geq \cL^m([0,1]^m \cap B_r(y)) \geq 2^{-m} \omega_m r^m. \]
Fixing $x_0 \in X$ and $R_0 > 0$, defining the counting function
\[ N(R) := \#\{j \in \N \mid E_j \cap B_R(x_0) \neq \varnothing\} = \#\{j \in \N \mid [0,1]^m \cap G_j^{-1}(B_R(x_0)) \neq \varnothing\}, \]
we have that
\[ \nu(B_{R+R_0}(x_0) \setminus B_R(x_0)) \leq \nu(B_{R+R_0}(x_0)) = \sum_{j=1}^\infty \cL^m([0,1]^m \cap G_j^{-1}(B_{R+R_0}(x_0))) \leq N(R+R_0). \]
Then taking $V(R) := N(R+R_0)$, Corollary \ref{cor:volgrowstab} implies that any measure $\mu$ such that $\mu(X \setminus E) = 0$, which satisfies
\[ \int_X \left[ 1+d(x,x_0)^p + d(x,x_0)^{p+(\alpha+1) p/m} V(d(x,x_0))^{p/m} \right] \di\mu(x) < \infty \]
for some $\alpha > 0$, is $(p,m)$-quantizable on $X$. 
\end{example}

The local finiteness assumption on $E$ is fulfilled e.g.~by locally finite unions of submanifolds, but is intractable for general countably rectifiable sets especially when the maps $F_i$ are obtained from Lipschitz extension theorems. In the case $E = F(\R^m)$, one sufficient condition for local finiteness is that $F$ be a proper map, so that $F^{-1}(B_R(x_0))$ is always contained in a finite union of cubes. 
}

{For measures defined on a single $m$-rectifiable set $E = F(\R^m)$,}
we can also formulate the following Pierce-type integral condition:

\begin{prop}\label{prop:singrectpierce}
Let $F \colon \R^m \to X$ be a Lipschitz map, $\mu \in \M^p_+(X)$ such that $\mu(X \setminus F(\R^m)) = 0$. Assume that $\mu = \rho \cH^m$ and
\[ \int_{\R^m} \left(1 + |x|^{p+\delta} \right) JF(x) \rho(F(x)) \di x < \infty \quad \text{for some } \delta > 0, \]
where $JF$ is the Jacobian of $F$ in the sense of Kirchheim \cite{kirch94}.
Then $\mu$ is $(p,m)$-quantizable.
\end{prop}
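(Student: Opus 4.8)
The plan is to transport the problem to $\R^m$ along $F$ and then invoke Pierce's lemma. First I would introduce the finite nonnegative Borel measure $\tilde\mu$ on $\R^m$ defined by $\di\tilde\mu(x) := JF(x)\,\rho(F(x))\,\di x$; this makes sense since $JF$ is measurable in the sense of Kirchheim \cite{kirch94}, $\rho \circ F$ is Borel, and the hypothesis
\[ \int_{\R^m}\bigl(1 + |x|^{p+\delta}\bigr)\,JF(x)\,\rho(F(x))\,\di x < \infty \]
says exactly that $\tilde\mu$ has finite total mass and finite $(p+\delta)$-th moment about the origin, i.e. $\tilde\mu \in \M^{p+\delta}_+(\R^m)$. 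By Pierce's lemma (cf. the discussion following Proposition \ref{prop:intubstab}), every element of $\M^{p+\delta}_+(\R^m)$ is $(p,m)$-quantizable, so $\tilde\mu$ is $(p,m)$-quantizable.

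Next I would push $\tilde\mu$ forward along $F$: by Proposition \ref{prop:psstabprops} (ii), the Lipschitz map $F$ preserves $(p,m)$-quantizability, so $F_\#\tilde\mu$ is $(p,m)$-quantizable. The crucial comparison is $\mu \leq F_\#\tilde\mu$. To see it, apply Kirchheim's area formula \cite[Thm. 7]{kirch94} to the nonnegative measurable integrand $x \mapsto \One_A(F(x))\,\rho(F(x))$ for an arbitrary Borel set $A \subseteq X$, obtaining
\[ (F_\#\tilde\mu)(A) = \int_{F^{-1}(A)} JF(x)\,\rho(F(x))\,\di x = \int_A \rho(y)\,\#F^{-1}(y)\,\di\cH^m(y). \]
Since $\mu = \rho\cH^m$ and $\mu(X \setminus F(\R^m)) = 0$, the density $\rho$ vanishes $\cH^m$-a.e. on $X \setminus F(\R^m)$, so $\#F^{-1}(y) \geq 1$ for $\cH^m$-a.e. $y \in \{\rho > 0\}$; hence $\rho\cdot\#F^{-1} \geq \rho$ as $\cH^m$-densities, which is precisely $F_\#\tilde\mu \geq \mu$.

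Finally, recalling the observation in Section \ref{sect:quantcoeff} that any measure dominated by a $(p,s)$-quantizable measure is itself $(p,s)$-quantizable, the bound $\mu \leq F_\#\tilde\mu$ yields the $(p,m)$-quantizability of $\mu$. The only step requiring genuine care is the area-formula computation giving $F_\#\tilde\mu \geq \mu$: one must be comfortable with the metric Jacobian $JF$ and the multiplicity function $\#F^{-1}(y)$ for a merely Lipschitz, possibly non-injective $F$ into a general metric space, but since we only need the inequality (not equality) $F_\#\tilde\mu \geq \mu$, the possible vanishing of $JF$ on a set of positive measure or the failure of injectivity of $F$ causes no difficulty.
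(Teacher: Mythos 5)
Your proof is correct and follows essentially the same route as the paper: define the measure on $\R^m$ with density $JF\cdot(\rho\circ F)$, apply Pierce's lemma, push forward along $F$ using Proposition \ref{prop:psstabprops}~(ii), and use Kirchheim's area formula to obtain $\mu \leq F_\#\tilde\mu$, concluding by monotonicity of $(p,m)$-quantizability. Your explicit remark that the integrability hypothesis is exactly the statement $\tilde\mu \in \M^{p+\delta}_+(\R^m)$ required by Pierce's lemma is a small but welcome clarification over the paper's more compressed phrasing.
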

\begin{proof}
Consider the a.c.~measure $\nu$ on $\R^m$ with density $JF(x) \rho(F(x))$, which lies in $L^1(\R^m)$ by assumption. By Pierce's lemma, $\nu$ is $(p,m)$-quantizable, thus $F_\# \nu$ is also $(p,m)$-quantizable by Proposition \ref{prop:psstabprops} (iii).

Now by the area formula \cite[Thm.~7]{kirch94}, for $A \subseteq X$ Borel,
\begin{align*}
(F_\# \nu)(A) = \nu(F^{-1}(A)) & = \int_{F^{-1}(A)} JF(x) \rho(F(x)) \di x 
\\ & = \int_X \left[ \sum_{x \in F^{-1}(y)} \rho(F(x)) \One_{F^{-1}(A)}(x) \right] \di \cH^m(y) 
\\ & = \int_X \#(F^{-1}(y)) \rho(y) \One_A(y) \di\cH^m(y)
\\ & \geq \int_{A \cap F(\R^m)} \rho(y) \di\cH^m(y) = \mu(A). 
\end{align*}
Thus $\mu \leq F_\# \nu$, and $\mu$ is also $(p,m)$-quantizable by Proposition \ref{prop:psstabprops} (ii).
\end{proof}

{
We leave open the possibility of obtaining further conditions for the $(p,m)$-quantizability of rectifiable measures under different assumptions.
}

{
Lastly, we give two $1$-rectifiable counterexamples to $(p,1)$-quantizability, in the vein of \cite[Ex.~6.4]{quantbook} and \cite[Thm.~1.7]{iacasym}. Indeed \cite[Ex.~6.4]{quantbook} is itself a discrete, hence trivially $1$-rectifiable measure, but can be modified to be absolutely continuous with respect to $\cH^1$. We construct two $1$-rectifiable measures $\mu \ll \cH^1$, which both satisfy $Q_{p,1}(\mu) = \infty$ hence cannot be $(p,1)$-quantizable: one with non-compact support which still satisfies $\int \rho^{\frac{1}{p+1}} \di\cH^1 < \infty$, and one with compact support for which instead $\int \rho^{\frac{1}{p+1}} \di\cH^1 = \infty$. This is in contrast to the case $s = d$, where the finiteness of higher moments implies the finiteness of $\int \rho^{\frac{d}{p+d}} \di x$ \cite[Rem.~6.3]{quantbook}. {These examples thus also present counterexamples to the original conjecture of Graf and Luschgy \cite[Rem.~13.13]{quantbook}, which posited the finiteness of the quantization coefficient.}

\begin{example}[$1$-rectifiable counterexamples]\label{ex:rectcounter}
Fix $p \in [1,\infty)$. Closely following \cite[Ex.~6.4]{quantbook}, for $k \geq 2$ we set $R_k := 3 \cdot 2^{k-1}$ and $\alpha_k := \frac{1}{2^{kp} k \log^2 k}$, and we consider the $1$-rectifiable measure $\mu \ll \cH^1$ on $\R^2$ given by
\[ \mu = \sum_{k=2}^\infty \alpha_k \frac{1}{\cH^1(\Gamma_k)} \cH^1|_{\Gamma_k}, \]
where $\Gamma_k$ is an arc on the circle $\partial B_{R_k}(0)$ with radius $R_k$, chosen so that $\cH^1(\Gamma_k) \leq \alpha_k$.
Under these assumptions,
\[ \int |x|^p \di\mu(x) = \sum_{k=2}^\infty \alpha_k R_k^p = \sum_{k=2}^\infty \frac{3}{k \log^2 k} < \infty, \]
and for $\rho = \sum_{k=2}^\infty \alpha_k \frac{1}{\cH^1(\Gamma_k)} \One_{\Gamma_k}$ the density of $\mu$ wrt $\cH^1$,
\[ \int \rho^{\frac{1}{p+1}} \di \cH^1 = \sum_{k=2}^\infty \alpha_k^{\frac{1}{p+1}} \cH^1(\Gamma_k)^{\frac{p}{p+1}} \leq \sum_{k=2}^\infty \alpha_k < \infty. \]
Now take an arbitrary $S \subset \R^2$ with $\# S = N$. Define the set of indices
\[ I = \left\{k \geq 2 \mid S \cap \left(B_{2^{k+1}}(0) \setminus B_{2^k}(0) \right) = \varnothing \right\}, \]
so that for any $k \in I$ and $x \in \partial B_{R_k}(0)$,
\[ d(x, S) \geq d(x, \left(B_{2^{k+1}}(0) \setminus B_{2^k}(0) \right)^c) = \min\{|R_k - 2^{k+1}|, |R_k - 2^k|\} = 2^{k-1}. \]
Therefore
\begin{align*}
\int d(x,S)^p \di \mu(x) & = \sum_{k=2}^\infty \alpha_k \frac{1}{\cH^1(\Gamma_k)} \int_{\Gamma_k} d(x,S)^p \di \cH^1(x) 
\\ & \geq \sum_{k \in I} \alpha_k 2^{(k-1)p} \geq \sum_{k = N+2} \alpha_k 2^{(k-1)p} = 2^{-p} \sum_{k = N+2} \frac{1}{k \log^2 k},
\end{align*}
where the second inequality follows from the fact that $\alpha_k 2^{(k-1)p} = \frac{2^p}{k \log^2 k}$ is decreasing. Indeed, $\# I^c \leq N$ by the pigeonhole principle, hence $\#([1,N] \cap I) \geq \#([N+1,\infty) \setminus I)$, and swapping the former set with the latter can only decrease the sum. We observe finally that the last sum is lower bounded by $\frac{1}{\log(N+2)}$ as in \cite[Ex.~6.4]{quantbook}, so that $Q_{p,1}(\mu) = \infty$.

We can also modify this counterexample to produce a $1$-rectifiable measure on $\R^3$ with compact support, for which the integral in Theorem \ref{thm:zadorrect} is infinite. This differs from the case of absolutely continuous measures on $\R^d$, for which the finiteness of higher moments implies the finiteness of the integral functional in Zador's theorem.

Set instead $R_k \equiv 1$ and $\alpha_k = 2^{-kp}$, and let the sets $\Gamma_k$ now be disjoint rectifiable curves on the sphere $\partial B_1(0)$ with length $2^k$. This is possible now that we take the ambient space to be $\R^3$ instead of $\R^2$. Then defining $\mu \ll \cH^1$ as above, $\mu$ is compactly supported hence all the moments of $\mu$ are finite, but 
\[ \int \rho^{\frac{1}{p+1}} \di \cH^1 = \sum_{k=1}^\infty 2^{-\frac{kp}{p+1}} 2^{\frac{kp}{p+1}} = \infty. \] 
Taking $\mu_k := \alpha_k \frac{1}{\cH^1(\Gamma_k)} \cH^1|_{\Gamma_k}$ also gives a direct counterexample to the $(p,1)$-quantizability of $\mu$, since $\mu_k(\R^3) = \alpha_k \to 0$ while 
\[ Q_{p,1}(\mu_k) = C_{p,1} \alpha_k^{\frac{1}{p+1}} \cH^1(\Gamma_k)^{\frac{p}{p+1}}  = C_{p,1} 2^{-\frac{kp}{p+1}} 2^{\frac{kp}{p+1}} \not\to 0. \]
This is also a counterexample to Proposition \ref{prop:minconstab}, since the $1$-dimensional Minkowski content of $E = \bigcup_{k=1}^\infty \Gamma_k$ is infinite by construction.
% Note that for both of these examples, unlike the self-similar Example \ref{ex:selfsimcounter}, $(p,1)$-quantizability fails simply because $Q_{p,1}(\mu) = \infty$. 
Moreover, since $\cH^1(E) = \infty$, there exists no probability measure $\nu$ on $E$ such that $\underline \vartheta^{(\nu)}_1(\cdot, \delta)$ is uniformly lower bounded on $E$, cf. Corollary \ref{cor:compstab}.
\end{example}
}

\appendix

\section{Proofs of additivity properties}\label{app:quantcoeffs}

{
Here we prove Propositions \ref{prop:subadd} and \ref{prop:compsupadd} and deduce Theorem \ref{thm:stats}.

For convenience and consistency between the cases $p < \infty$ and $p = \infty$, we introduce the following notation for \emph{$p$-sums}: given a tuple or sequence $(a_i)_{i=1}^m$ of nonnegative real numbers, $m \leq \infty$, we write
\[ {}^p \sum_{i=1}^m a_i := \| (a_i)_i \|_{\ell^p} = \left\{ \begin{matrix} \left(\sum_{i=1}^m a_i^p\right)^{1/p}, & p < \infty; \\ \sup_{i=1}^m a_i, & p = \infty. \end{matrix} \right. \]
For $m = 2$, we write $a_1 +_p a_2$ in place of ${}^p \sum_{i=1}^2 a_i$. We will use this notation mainly for Proposition \ref{prop:enpprops} (i), which can be written compactly as
\[ \mu = \sum_{i=1}^m \mu_i \implies e_p(\mu; S) = {}^p \sum_{i=1}^m e_p(\mu_i; S); \qquad N \geq \sum_{i=1}^m N_i \implies e_{N,p}(\mu) \leq {}^p \sum_{i=1}^m e_{N_i,p}(\mu_i). \]
}

Propositions \ref{prop:subadd} and \ref{prop:compsupadd} both rely on the minimization of functionals of the form
\[ F_\alpha[\beta] := {}^p \sum_{i=1}^m \beta_i^{-1/s} \alpha_i, \quad \alpha, \beta \in [0,\infty)^m, \]
where by convention we set $\beta_i^{-1/s} \alpha_i \equiv 0$ for each index $i$ such that $\alpha_i = 0$. We write $\alpha \ll \beta$ if $\alpha_i = 0$ for each index $i$ such that $\beta_i = 0$.

The following lemma generalizes \cite[Lem.~6.8]{quantbook} to the case $p \leq \infty$:

\begin{lemma}\label{lem:optpsum}
Let $p \in [1,\infty]$, $s \in (0,\infty)$, and set $\frac{1}{p^\prime} := \frac{1}{p} + \frac{1}{s}$. Then for any tuple $\alpha = (\alpha_i)_{i=1}^m \in [0,\infty)^m$ of nonnegative real numbers, 
\[ \min \left\{ F_\alpha[\beta] \mid \beta = (\beta_i)_i \in [0,1]^m, \sum_{i=1}^m \beta_i \leq 1 \right\} = {}^{p^\prime} \sum_{i=1}^m \alpha_i. \]
If $\alpha$ is not identically zero, the unique minimizer is given by
\[ \bar \beta_i := \frac{\alpha_i^{p^\prime}}{\sum_{i=1}^m \alpha_i^{p^\prime}}; \quad i = 1, \ldots, m. \]
\end{lemma}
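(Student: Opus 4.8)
The plan is to reduce the problem to the case $p < \infty$ first, and then exploit a Hölder-duality/Lagrange structure. For the case $p = \infty$, the objective is $F_\alpha[\beta] = \max_i \beta_i^{-1/s}\alpha_i$ (with the convention on zero entries), and $p' = s$; one observes that minimizing the maximum forces all the nonzero terms $\beta_i^{-1/s}\alpha_i$ to be equal — if one term strictly dominated, one could shift a little mass from a $\beta_j$ with smaller value onto $\beta_i$ and strictly decrease the maximum, so at the optimum $\beta_i^{-1/s}\alpha_i = c$ for all $i$ with $\alpha_i > 0$, giving $\beta_i = (\alpha_i/c)^s$, and the constraint $\sum_i \beta_i = 1$ (saturated, since shrinking the support only helps) forces $c^s = \sum_i \alpha_i^s$, i.e. $c = {}^{s}\!\sum_i \alpha_i = {}^{p'}\!\sum_i\alpha_i$, with $\bar\beta_i = \alpha_i^s / \sum_j\alpha_j^s$. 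This matches the claimed formula since $p' = s$ when $p = \infty$.

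For $p < \infty$, I would first discard the indices with $\alpha_i = 0$ (they contribute nothing to the objective and only waste budget, so at any minimizer we may take $\beta_i = 0$ there), and assume all $\alpha_i > 0$. Then $F_\alpha[\beta]^p = \sum_i \beta_i^{-p/s}\alpha_i^p$ is a strictly convex function of $\beta$ on the open simplex (each summand is strictly convex in $\beta_i$ since $-p/s < 0$), and it blows up as any $\beta_i \to 0^+$ while $\alpha_i > 0$, so the minimum over $\{\beta \in [0,1]^m : \sum\beta_i \le 1\}$ is attained at a unique interior point with $\sum_i\beta_i = 1$. The first-order (Lagrange) condition $\frac{\partial}{\partial\beta_i}\bigl(\sum_j\beta_j^{-p/s}\alpha_j^p\bigr) = \lambda$ gives $-\tfrac{p}{s}\beta_i^{-p/s-1}\alpha_i^p = \lambda$, hence $\beta_i^{p/s + 1} \propto \alpha_i^p$, i.e. $\beta_i \propto \alpha_i^{ps/(p+s)} = \alpha_i^{p'}$; normalizing by $\sum_i\beta_i = 1$ yields $\bar\beta_i = \alpha_i^{p'}/\sum_j\alpha_j^{p'}$.

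Finally I would substitute $\bar\beta$ back into the objective:
\[ F_\alpha[\bar\beta]^p = \sum_i \Bigl(\tfrac{\alpha_i^{p'}}{\textstyle\sum_j\alpha_j^{p'}}\Bigr)^{-p/s}\alpha_i^p = \Bigl(\textstyle\sum_j\alpha_j^{p'}\Bigr)^{p/s}\sum_i \alpha_i^{p - p p'/s}, \]
and check that $p - pp'/s = p'$ (using $\tfrac{1}{p'} = \tfrac1p + \tfrac1s$, so $\tfrac{p'}{s} = 1 - \tfrac{p'}{p}$ and $p - \tfrac{pp'}{s} = p - p + p' = p'$), whence $F_\alpha[\bar\beta]^p = \bigl(\sum_j\alpha_j^{p'}\bigr)^{p/s + 1} = \bigl(\sum_j\alpha_j^{p'}\bigr)^{p/p'}$, i.e. $F_\alpha[\bar\beta] = \bigl(\sum_j\alpha_j^{p'}\bigr)^{1/p'} = {}^{p'}\!\sum_i\alpha_i$, as claimed. (Alternatively, one can avoid calculus entirely and get the lower bound $F_\alpha[\beta] \ge {}^{p'}\!\sum_i\alpha_i$ directly from Hölder's inequality with exponents $\tfrac{s+p}{s}$ and $\tfrac{s+p}{p}$ applied to $\alpha_i^{p'} = (\beta_i^{-p/s}\alpha_i^p)^{p'/p}\cdot\beta_i^{p'/s}$ and $\sum\beta_i \le 1$; this is probably the cleanest route and sidesteps boundary/differentiability bookkeeping.) The only mild obstacle is handling the degenerate entries $\alpha_i = 0$ and the boundary of the feasible set carefully so that existence and uniqueness of the minimizer are not compromised; once the zero entries are removed this is routine.
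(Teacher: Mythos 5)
Your argument is correct, but your primary route differs from the paper's. The paper proves the lower bound and the equality characterization in one stroke, uniformly in $p \in [1,\infty]$: writing $\gamma_i := \beta_i^{-1/s}\alpha_i$ so that $\alpha_i = \gamma_i\beta_i^{1/s}$, it applies the generalized H\"older inequality for $p$-sums,
\[ {}^{p^\prime}\sum_i \gamma_i\beta_i^{1/s} \leq \Bigl({}^{p}\sum_i \gamma_i\Bigr)\Bigl(\sum_i\beta_i\Bigr)^{1/s} \leq F_\alpha[\beta], \]
and then reads off the minimizer from the cases of equality (treating $p<\infty$ and $p=\infty$ separately only at that last step). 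Your parenthetical remark --- applying H\"older with exponents $\tfrac{s+p}{s}$, $\tfrac{s+p}{p}$ to $\alpha_i^{p^\prime} = (\beta_i^{-p/s}\alpha_i^p)^{p^\prime/p}\beta_i^{p^\prime/s}$ --- is essentially this proof. What you actually carry out instead is a Lagrange-multiplier/strict-convexity argument for $p<\infty$ plus a separate equalization (exchange) argument for $p=\infty$; both are sound, and your algebra checks out ($\beta_i\propto\alpha_i^{ps/(p+s)}=\alpha_i^{p^\prime}$, $p-pp^\prime/s=p^\prime$, $p/s+1=p/p^\prime$). The trade-off is that your route costs some bookkeeping the H\"older route avoids: for uniqueness you must show that any minimizer (not merely some minimizer) puts zero mass on indices with $\alpha_i=0$, since the objective is flat in those directions and strict convexity only applies to the reduced problem; and in the $p=\infty$ case the exchange argument needs existence of a minimizer (lower semicontinuity on the compact simplex) and care when several indices attain the maximum. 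You flag these points as routine, and they are, but the H\"older computation sidesteps them entirely while also delivering the identification of the unique optimal $\bar\beta$ from the equality conditions.
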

\begin{proof}
The first equality is trivial for $\alpha_i \equiv 0$, so assume wlog that $\alpha$ is not identically zero. Note also that the sum on the left-hand side is infinite if $\alpha_i > 0$ for any $i$ such that $\beta_i = 0$, so we can assume wlog that $\alpha \ll \beta$.

Set $\gamma_i := \beta_i^{-1/s} \alpha_i$. By the convention used, we always have that $\gamma_i < \infty$ and $\alpha_i = \gamma_i \beta_i^{1/s}$.
Then by the generalized H\"older inequality,
\begin{align*} 
{}^{p^\prime} \sum_{i=1}^m \alpha_i 
= {}^{p^\prime} \sum_{i=1}^m \gamma_i \beta_i^{1/s} 
\leq \left( {}^p \sum_{i=1}^m \gamma_i \right) \left( {}^s \sum_{i=1}^m \beta_i^{1/s} \right)
= \left( {}^p \sum_{i=1}^m \gamma_i \right) \left( \sum_{i=1}^m \beta_i \right)^{1/s}
\leq {}^p \sum_{i=1}^m \gamma_i.
\end{align*}
Since the left-hand side is positive by assumption on $\alpha$, the second inequality is strict unless $\sum_{i=1}^m \beta_i = 1$. For $p < \infty$, the first inequality is attained iff
\[ \gamma_i^p = C (\beta_i^{1/s})^s = C \beta_i \]
for some constant $C > 0$, which implies that 
\[ \beta_i^{1+p/s} = C^{-1} \gamma_i^p \beta_i^{p/s} = C^{-1} \alpha_i^p \quad \iff \quad \beta_i^{1/p+1/s} = C^{-1/p} \alpha_i \quad \iff \quad \beta_i = C^{-p^\prime/p} \alpha_i^{p^\prime}, \]
thus $\beta_i \equiv \bar \beta_i$ by normalization. 

Instead for $p = \infty$, we have $p^\prime = s$ and the first inequality reduces to
\[ \sum_{i=1}^m \gamma_i^s \beta_i \leq \left( \max_{i=1}^m \gamma_i \right)^s \sum_{i=1}^m \beta_i. \]
This inequality is attained iff $\gamma_i = \max_{j=1}^m \gamma_j =: C$ for each $i$ such that $\beta_i > 0$. This itself is equivalent to the condition that $\alpha_i = C \beta^{1/s}$ for each $i$: either $\beta_i = 0$ in which case $\alpha_i = \gamma_i \beta_i^{1/s} = 0$, or $\beta_i > 0$ in which case $\gamma_i = C$ hence $\alpha_i = \gamma_i \beta_i^{1/s}$). Thus the first inequality is again attained iff $\beta_i = C \alpha_i^s$ for each $i = 1, \ldots, M$.
\end{proof}

We also note the following continuity properties of $p$-sums:

\begin{lemma}\label{lem:psumlim}
Let $(a_n)_n$, $(b_n)_n$ be nonnegative real sequences. Then for $p \in [1,\infty]$,
\begin{align*}
\liminf_{n\to\infty} a_n +_p \liminf_{n\to\infty} b_n & \leq \liminf_{n\to\infty} (a_n +_p b_n) \leq \liminf_{n\to\infty} a_n +_p \limsup_{n\to\infty} b_n; \\
\liminf_{n\to\infty} a_n +_p \limsup_{n\to\infty} b_n & \leq \limsup_{n\to\infty} (a_n +_p b_n) \leq
\limsup_{n\to\infty} a_n +_p \limsup_{n\to\infty} b_n.
\end{align*}
Moreover, for $p = \infty$, the last inequality is an equality.
\end{lemma}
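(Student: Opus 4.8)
The plan is to reduce everything to two elementary facts about the binary operation $(x,y) \mapsto x +_p y$, viewed as a map $[0,\infty]^2 \to [0,\infty]$: it is \emph{continuous} (for $p < \infty$ it is the restriction of the $\ell^p$-norm on $\R^2$, extended by $\infty +_p t = \infty$; for $p = \infty$ it is simply $\max$), and it is \emph{nondecreasing in each variable separately}. I would first observe that if $\liminf_n a_n = \infty$ or $\liminf_n b_n = \infty$ then $a_n +_p b_n \to \infty$ and all four inequalities collapse to identities, so throughout one may freely work in $[0,\infty]$ without keeping track of which of the limits involved are finite.

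For the two \emph{outer} inequalities I would argue by a monotone sandwich. Writing $\underline{a}_n := \inf_{m \ge n} a_m$, $\overline{a}_n := \sup_{m \ge n} a_m$, and similarly for $b$, monotonicity of $+_p$ gives
\[ \underline{a}_n +_p \underline{b}_n \ \le\ a_n +_p b_n \ \le\ \overline{a}_n +_p \overline{b}_n \qquad \text{for every } n. \]
The leftmost expression is nondecreasing in $n$ and the rightmost nonincreasing; letting $n \to \infty$ and using continuity of $+_p$ together with $\underline{a}_n \nearrow \liminf_n a_n$, $\overline{a}_n \searrow \limsup_n a_n$ (and likewise for $b$) yields $\liminf_n a_n +_p \liminf_n b_n \le \liminf_n (a_n +_p b_n)$ and $\limsup_n (a_n +_p b_n) \le \limsup_n a_n +_p \limsup_n b_n$.

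For the two \emph{middle} inequalities I would extract subsequences. To prove $\liminf_n (a_n +_p b_n) \le \liminf_n a_n +_p \limsup_n b_n$, pick $(n_k)$ with $a_{n_k} \to \liminf_n a_n$, pass to a further subsequence along which $b_{n_k} \to \ell$ for some $\ell \le \limsup_n b_n$, and use continuity and monotonicity of $+_p$ to get $\liminf_n(a_n +_p b_n) \le \lim_k (a_{n_k} +_p b_{n_k}) = \liminf_n a_n +_p \ell \le \liminf_n a_n +_p \limsup_n b_n$. The inequality $\liminf_n a_n +_p \limsup_n b_n \le \limsup_n (a_n +_p b_n)$ is symmetric: choose $(n_k)$ with $b_{n_k} \to \limsup_n b_n$, pass to a further subsequence with $a_{n_k} \to \ell' \ge \liminf_n a_n$, and note $\limsup_n(a_n +_p b_n) \ge \lim_k(a_{n_k} +_p b_{n_k}) = \ell' +_p \limsup_n b_n \ge \liminf_n a_n +_p \limsup_n b_n$. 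Finally, for $p = \infty$ the reverse of the last (outer) inequality is immediate, since $\max(a_n,b_n) \ge a_n$ and $\max(a_n,b_n) \ge b_n$ for every $n$ force $\limsup_n \max(a_n,b_n) \ge \max\big(\limsup_n a_n, \limsup_n b_n\big)$. The whole argument is routine; the only points needing brief care are the bookkeeping of possibly infinite $\liminf$/$\limsup$ values and the verification that $+_p$ really is continuous up to the boundary of $[0,\infty]^2$ and at the endpoint $p = \infty$, which I would dispose of at the outset.
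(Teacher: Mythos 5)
Your proof is correct and follows essentially the same elementary route as the paper's: the paper handles $p<\infty$ in one line by invoking the standard super-/subadditivity of $\liminf$ and $\limsup$ applied to the $p$th powers, and then treats $p=\infty$ via the identity $\limsup_n \max\{a_n,b_n\} = \max\{\limsup_n a_n, \limsup_n b_n\}$ together with a subsequence extraction for the middle inequality, whereas you give a single uniform argument for all $p$ (monotone sandwich for the two outer inequalities, subsequence extraction for the two middle ones, and the trivial reverse inequality to upgrade the last one to an equality when $p=\infty$). Both arguments are complete; yours is marginally more self-contained, at the cost of having to verify the continuity and monotonicity of $+_p$ on $[0,\infty]^2$ up front, which you correctly flag and which does hold.
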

\begin{proof}
For $p < \infty$, the inequalities follow from the super-/subadditivity of $\liminf$ and $\limsup$ applied to the $p$th powers. 
Take $p = \infty$. Then
\begin{align*}
 \limsup_{n \to \infty} \max\{a_n, b_n\} & = \lim_{n_0 \to \infty} \sup_{n \geq n_0} \max\{ a_n, b_n \} 
 = \lim_{n_0 \to \infty} \max\{ \sup_{n \geq n_0} a_n, \sup_{n \geq n_0} b_n \} 
 \\ & = \max\{ \lim_{n_0 \to \infty} \sup_{n \geq n_0} a_n, \lim_{n_0 \to \infty} \sup_{n \geq n_0} b_n \} 
 \\ & = \max\{ \limsup_{n \to \infty} a_n, \limsup_{n \to \infty} b_n \},
\end{align*}
noting the continuity of $\max \colon \R \times \R \to \R$. This proves the last inequality as an equality, immediately implying also the second-to-last inequality.
The first inequality
\[ \max\{ \liminf_{n\to\infty} a_n, \liminf_{n\to\infty} b_n \} \leq \liminf_{n\to\infty} \max\{a_n, b_n\} \]
also follows simply by the monotonicity of the $\liminf$. 

It remains to show the second inequality. 
Take a subsequence $(a_{n_k})_k$ such that $\lim_{k\to\infty} a_{n_k} = \liminf_{n\to\infty} a_n$. 
Then applying the last inequality,
\begin{align*}
 \liminf_{n\to\infty} \max\{a_n, b_n\} & \leq \limsup_{k \to \infty} \max\{a_{n_k}, b_{n_k}\} 
\\ & = \max\{ \lim_{k\to\infty} a_{n_k}, \limsup_{k\to\infty} b_{n_k} \}
 \leq \max\{ \liminf_{n\to\infty} a_n, \limsup_{n\to\infty} b_n \}. \qedhere
\end{align*}
\end{proof}

\subsection{Finite subadditivity}\label{app:subadd}

We now prove Propositions \ref{prop:subadd} and \ref{prop:compsupadd}.
We first prove the following preliminary bound:

\begin{lemma}\label{lem:subadd}
Let $p \in [1,\infty]$, $\mu_1, \mu_2 \in \M^p_+(X)$. Set $\mu := \mu_1 + \mu_2$, and let $s \in (0,\infty)$. Then for any choice of proportions $t_1, t_2 \in (0,1)$ such that $t_1 + t_2 = 1$, 
\begin{align*}
\overline Q_{p,s}(\mu) & \leq t_1^{-1/s} \overline Q_{p,s}(\mu_1) +_p t_2^{-1/s} \overline Q_{p,s}(\mu_2); \\
\underline Q_{p,s}(\mu) & \leq t_1^{-1/s} \underline Q_{p,s}(\mu_1) +_p t_2^{-1/s} \overline Q_{p,s}(\mu_2).
\end{align*}
\end{lemma}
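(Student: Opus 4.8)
The plan is to run, at the level of a single $N$, the standard ``proportional allocation'' argument used in proofs of Zador's theorem, and then pass to the limit. Fix once and for all a sequence $(\eta_N)_{N\in\N}$ of positive reals with $N^{1/s}\eta_N\to 0$ as $N\to\infty$ (this fast decay is needed since $s$ may be $<1$). Given $N\in\N$ and positive integers $N_1,N_2$ with $N_1+N_2\le N$, choose $S_i\in\cS_{N_i}(X)$ with $e_p(\mu_i;S_i)\le e_{N_i,p}(\mu_i)+\eta_N$ and set $S:=S_1\cup S_2\in\cS_N(X)$. By the additivity and set-monotonicity of the quantization error (Proposition~\ref{prop:enpprops}~(i)) and the triangle inequality for $+_p$ (Minkowski in $\ell^p$),
\[ e_{N,p}(\mu)\le e_p(\mu;S)=e_p(\mu_1;S)+_p e_p(\mu_2;S)\le e_p(\mu_1;S_1)+_p e_p(\mu_2;S_2)\le e_{N_1,p}(\mu_1)+_p e_{N_2,p}(\mu_2)+2\eta_N. \]
Multiplying by $N^{1/s}$ and using positive homogeneity of $+_p$ gives the key estimate
\[ N^{1/s}e_{N,p}(\mu)\le (N/N_1)^{1/s}\bigl(N_1^{1/s}e_{N_1,p}(\mu_1)\bigr)+_p(N/N_2)^{1/s}\bigl(N_2^{1/s}e_{N_2,p}(\mu_2)\bigr)+2N^{1/s}\eta_N. \]

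For the bound on $\overline Q_{p,s}(\mu)$, I would apply this with $N_1:=\lceil t_1N\rceil$, $N_2:=N-N_1$: for $N$ large these are positive, sum to $N$, and satisfy $N_i\to\infty$ and $N/N_i\to t_i^{-1}$. We may assume $\overline Q_{p,s}(\mu_1),\overline Q_{p,s}(\mu_2)<\infty$, else there is nothing to prove. Since $N_i=N_i(N)\to\infty$ one has $\limsup_N N_i^{1/s}e_{N_i,p}(\mu_i)\le\overline Q_{p,s}(\mu_i)$, and since the prefactors $(N/N_i)^{1/s}$ converge to the positive constants $t_i^{-1/s}$, the $\limsup$ of each term on the right is at most $t_i^{-1/s}\overline Q_{p,s}(\mu_i)$ while the error term vanishes. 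Passing the $\limsup$ through $+_p$ by Lemma~\ref{lem:psumlim} (its last inequality when $p=\infty$, and subadditivity of $\limsup$ applied to $p$th powers when $p<\infty$) yields $\overline Q_{p,s}(\mu)\le t_1^{-1/s}\overline Q_{p,s}(\mu_1)+_p t_2^{-1/s}\overline Q_{p,s}(\mu_2)$.

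For the bound on $\underline Q_{p,s}(\mu)$ the two measures play asymmetric roles, so I would pass to a subsequence realizing the lower limit for $\mu_1$. Assuming wlog $\underline Q_{p,s}(\mu_1),\overline Q_{p,s}(\mu_2)<\infty$, pick $M_j\uparrow\infty$ with $M_j^{1/s}e_{M_j,p}(\mu_1)\to\underline Q_{p,s}(\mu_1)$. Since $1/t_1>1$, for each $j$ the half-open interval $\bigl((M_j-1)/t_1,\,M_j/t_1\bigr]$ has length $>1$ and hence contains an integer $N_j$, i.e. $\lceil t_1N_j\rceil=M_j$; put $N_{2,j}:=N_j-M_j$, so that $N_j/M_j\to t_1^{-1}$, $N_{2,j}\to\infty$ and $N_j/N_{2,j}\to t_2^{-1}$. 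Applying the key estimate along $(N_j)_j$ with $(N_1,N_2)=(M_j,N_{2,j})$: the first term converges to $t_1^{-1/s}\underline Q_{p,s}(\mu_1)$, the second has $\limsup$ at most $t_2^{-1/s}\overline Q_{p,s}(\mu_2)$, and the error term vanishes. Since $\underline Q_{p,s}(\mu)=\liminf_N N^{1/s}e_{N,p}(\mu)\le\liminf_j N_j^{1/s}e_{N_j,p}(\mu)$, the inequality $\liminf(a_n+_p b_n)\le\liminf a_n+_p\limsup b_n$ from Lemma~\ref{lem:psumlim}, together with monotonicity of $+_p$, gives $\underline Q_{p,s}(\mu)\le t_1^{-1/s}\underline Q_{p,s}(\mu_1)+_p t_2^{-1/s}\overline Q_{p,s}(\mu_2)$.

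The only delicate points are bookkeeping ones: making the non-attainment error $2N^{1/s}\eta_N$ vanish (hence the choice $\eta_N=o(N^{-1/s})$), ensuring the allocated point counts tend to $\infty$ so that each term is governed by the correct quantization coefficient, and — for the lower coefficient — choosing $N_j$ so that the $\mu_1$-allocation lands exactly on the minimizing subsequence $M_j$, which is possible precisely because $t_1<1$. The remaining manipulations, namely homogeneity and the triangle inequality for $+_p$ and the passage of $\liminf/\limsup$ through $+_p$, are all supplied by Lemma~\ref{lem:psumlim}; I do not anticipate any substantive obstacle beyond these.
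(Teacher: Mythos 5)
Your proposal is correct and follows essentially the same proportional-allocation argument as the paper's proof: split $N$ into parts of proportions $t_1,t_2$, apply the subadditivity of the quantization error over $\mu=\mu_1+\mu_2$, and pass to the limit via Lemma~\ref{lem:psumlim}, treating $\mu_1$ along (a subsequence realizing) its lower limit for the second inequality. The only cosmetic differences are your use of ceilings instead of floors and the $\eta_N$ near-minimizer bookkeeping, which is redundant since Proposition~\ref{prop:enpprops}~(i) already yields $e_{N,p}(\mu)\le e_{N_1,p}(\mu_1)+_p e_{N_2,p}(\mu_2)$ directly.
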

\begin{proof}\
\begin{itemize}
\item For the first inequality, let $N \in \N$ such that $N \geq \max\{t_1^{-1}, t_2^{-1}\}$, and set $N_i = N_i(N) := \lfloor t_i N \rfloor \geq 1$. By Proposition \ref{prop:enpprops},
\begin{align*} 
N^{1/s} e_{N,p}(\mu) &\leq N^{1/s} e_{N_1,p}(\mu_1) +_p N^{1/s} e_{N_2,p}(\mu_2) 
\\ & = \left(\frac{N_1}{N}\right)^{-1/s} N_1^{1/s} e_{N_1,p}(\mu_1) +_p \left(\frac{N_2}{N}\right)^{-1/s} N_2^{1/s} e_{N_2,p}(\mu_2).
\end{align*}
Then taking the limit superior of both sides, applying Lemma \ref{lem:psumlim} and noting that $\lim_{N\to\infty} \frac{N_i}{N} = t_i$ yields the first inequality.
\item For the second inequality, instead for $N_1 \geq \frac{t_1}{t_2}$ set $N_2 = N_2(N_1) := \lfloor \frac{t_2}{t_1} N_1 \rfloor \geq 1$ and $N_t = N_t(N_1) := N_1 + N_2(N_1)$. Again we have
\begin{align*} 
N_t^{1/s} e_{N_t,p}(\mu) &\leq N_t^{1/s} e_{N_1,p}(\mu_1) +_p N_t^{1/s} e_{N_2,p}(\mu_2) 
\\ & = \left(\frac{N_1}{N_t}\right)^{-1/s} N_1^{1/s} e_{N_1,p}(\mu_1) +_p \left(\frac{N_2}{N_t}\right)^{-1/s} N_2^{1/s} e_{N_2,p}(\mu_2).
\end{align*}
Taking the limit inferior and noting that $\lim_{N_1\to\infty} \frac{N_1}{N_t} = t_1$ and $\lim_{N_1\to\infty} \frac{N_2}{N_t} = t_2$ yields the result. \qedhere
\end{itemize}
\end{proof}

\begin{proof}[Proof of Proposition \ref{prop:subadd}]
We then minimize over all possible choices of $t_i$ using Lemma \ref{lem:optpsum}:
\[ \overline Q_{p,s}(\mu) \leq \inf_{\substack{t_1, t_2 \in (0,1) \\ t_1 + t_2 = 1}} \left[ t_1^{-1/s} \overline Q_{p,s}(\mu_1) +_p t_2^{-1/s} \overline Q_{p,s}(\mu_2) \right] = \overline Q_{p,s}(\mu_1) +_{p^\prime} \overline Q_{p,s}(\mu_2), \]
and likewise for $\underline Q_{p,s}(\mu_1)$, where $\frac{1}{p^\prime} = \frac{1}{p} + \frac{1}{s}$.
\end{proof}

\subsection{Finite superadditivity}\label{app:supadd}

We now prove the $p^\prime$-superadditivity of lower quantization coefficients on disjoint compact sets, stated above as Proposition \ref{prop:compsupadd}. We prove the following stronger statement:

\begin{prop}[Finite superadditivity, compact case]
Let $p \in [1,\infty]$, $\mu \in \M^p_+(X)$, $s \in (0,\infty)$. Set $\frac{1}{p^\prime} := \frac{1}{p} + \frac{1}{s}$.

Let $K_1, \ldots, K_m \subseteq X$ be disjoint compact sets. Let $\cS = (S_k)_{N_k}$ be an asymptotically optimal sequence of quantizers for $\mu$, and consider the sequence of proportions $(v_k)_k \subset [0,1]^m$ given by
\[ v_{k,i} := \frac{\#(S_k \mres K_i)}{N_k}, \quad i = 1, \ldots, m; \ k \in \N. \]
Then $\sum_{i=1}^m v_{k,i} \leq 1$ for $k$ sufficiently large, and for any limit point $\bar v \in [0,1]^m$ of the sequence $(t_k)_k$,
\begin{itemize}
\item For each index $i = 1, \ldots, m$, the following inequality holds:
\[ \underline{Q}_{p,s}^{(\mu)}(K_i; \cS \mres K_i) \leq \bar v_i^{1/s} \underline{Q}_{p,s}(\mu). \]
In particular, if $\underline{Q}_{p,s}(\mu) < \infty$, then $\bar v_i = 0$ implies $\underline{Q}_{p,s}^{(\mu)}(K_i; \cS \mres K_i) = 0$ thus also $\underline{Q}_{p,s}^{(\mu)}(K_i) = 0$. 
\item The following general inequality holds:
\[ \underline{Q}_{p,s}(\mu) \geq {}^p \sum_{i=1}^m \bar v_i^{-1/s} \underline{Q}_{p,s}^{(\mu)}(K_i; \cS \mres K_i) \geq {}^{p^\prime} \sum_{i=1}^m \underline{Q}_{p,s}^{(\mu)}(K_i), \]
where by convention the first summand is $0$ irrespective of $\bar v_i$ when $\underline{Q}_{p,s}^{(\mu)}(K_i; \cS \mres K_i) = 0$.
\end{itemize}
\end{prop}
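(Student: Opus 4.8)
The plan is to combine additivity of $e_p$ over the disjoint $K_i$ (Proposition \ref{prop:enpprops}(i)) with the $p$-sum minimization of Lemma \ref{lem:optpsum}, after passing to a subsequence along which $v_k\to\bar v$. A few reductions come first. An index $i$ with $\mu(K_i)=0$ contributes zero to all three asserted inequalities, since then $e_p^{(\mu)}(K_i;\cdot)\equiv0$ and hence $\underline Q^{(\mu)}_{p,s}(K_i)=\underline Q^{(\mu)}_{p,s}(K_i;\cS\mres K_i)=0$; and replacing $K_i$ by $K_i\cap\supp\mu$ leaves $\mu|_{K_i}$ unchanged. So I may assume $\emptyset\neq K_i\subseteq\supp\mu$ for every $i$. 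Since $\cS$ is admissible, $e_p(\mu;S_k)\to0$, so by Proposition \ref{prop:admisseq} $d(\cdot,S_k)\to0$ uniformly on the compact set $\bigcup_i K_i\subseteq\supp\mu$. Set $\eta:=\min_{i\neq j}d(K_i,K_j)>0$: if some $a\in S_k\mres K_i\cap S_k\mres K_j$ with $i\neq j$, there are $x\in K_i$, $y\in K_j$ with $d(x,a)=d(x,S_k)$, $d(y,a)=d(y,S_k)$, forcing $d(x,S_k)+d(y,S_k)\ge d(x,y)\ge\eta$, impossible once $d(\cdot,S_k)<\eta/2$ on $\bigcup_i K_i$. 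Hence $S_k\mres K_1,\dots,S_k\mres K_m$ are pairwise disjoint for large $k$, so $\sum_i\#(S_k\mres K_i)\le\#S_k\le N_k$, i.e.\ $\sum_i v_{k,i}\le1$, whence $\sum_i\bar v_i\le1$ for every limit point $\bar v$. Note also $\#(S_k\mres K_i)\ge1$, since the Voronoi cells of $S_k$ cover $K_i\neq\emptyset$.

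For the two remaining statements, start from Proposition \ref{prop:enpprops}(i): since the $K_i$ are disjoint, $e_p(\mu;S_k)\ge{}^p\sum_i e_p^{(\mu)}(K_i;S_k)$, and $e_p^{(\mu)}(K_i;S_k)=e_p^{(\mu)}(K_i;S_k\mres K_i)$ because $\mres K_i$ preserves $d(\cdot,S_k)$ on $K_i$; writing $N_k^{1/s}=v_{k,i}^{-1/s}\#(S_k\mres K_i)^{1/s}$ gives
\[
N_k^{1/s}e_p(\mu;S_k)\ \ge\ {}^p\sum_{i=1}^m v_{k,i}^{-1/s}\,\bigl[\#(S_k\mres K_i)^{1/s}e_p^{(\mu)}(K_i;S_k\mres K_i)\bigr].
\]
Fix a limit point $\bar v$ and a subsequence $(k_l)$ with $v_{k_l}\to\bar v$; along it $N_{k_l}^{1/s}e_p(\mu;S_{k_l})\to\underline Q_{p,s}(\mu)$ by asymptotic optimality. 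For the first bullet, use monotony $e_p^{(\mu)}(K_i;S_k)\le e_p(\mu;S_k)$ (Proposition \ref{prop:enpprops}(iii)) and take $\liminf$ along $(k_l)$: $\underline Q^{(\mu)}_{p,s}(K_i;\cS\mres K_i)\le\liminf_l v_{k_l,i}^{1/s}\,N_{k_l}^{1/s}e_p(\mu;S_{k_l})=\bar v_i^{1/s}\underline Q_{p,s}(\mu)$, so if $\underline Q_{p,s}(\mu)<\infty$ and $\bar v_i=0$ then $\underline Q^{(\mu)}_{p,s}(K_i;\cS\mres K_i)=0$, and hence $\underline Q^{(\mu)}_{p,s}(K_i)=0$ too (since $e_p^{(\mu)}(K_i;S_k\mres K_i)\ge e^{(\mu)}_{\#(S_k\mres K_i),p}(K_i)$ and $\#(S_k\mres K_i)\ge1$). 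For the first inequality of the chain, take $\liminf$ along $(k_l)$ in the display: the left side tends to $\underline Q_{p,s}(\mu)$, and the $m$-term version of Lemma \ref{lem:psumlim}, together with $v_{k_l,i}^{-1/s}\to\bar v_i^{-1/s}$ when $\bar v_i>0$ (when $\bar v_i=0$ the corresponding summand is $0$ by convention, or else the liminf of that summand is $+\infty$, making $\underline Q_{p,s}(\mu)=\infty$ and the inequality trivial), yields $\underline Q_{p,s}(\mu)\ge{}^p\sum_i\bar v_i^{-1/s}\underline Q^{(\mu)}_{p,s}(K_i;\cS\mres K_i)$.

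For the second inequality of the chain, apply Lemma \ref{lem:optpsum} with the tuple $\alpha_i:=\underline Q^{(\mu)}_{p,s}(K_i;\cS\mres K_i)$ and $\beta:=\bar v$, which is admissible for that lemma precisely because $\bar v\in[0,1]^m$ and $\sum_i\bar v_i\le1$ (the step from Paragraph 1); this gives ${}^p\sum_i\bar v_i^{-1/s}\alpha_i\ge{}^{p^\prime}\sum_i\alpha_i$. Finally $\underline Q^{(\mu)}_{p,s}(K_i)\le\alpha_i$ and monotonicity of $p^\prime$-sums give ${}^{p^\prime}\sum_i\alpha_i\ge{}^{p^\prime}\sum_i\underline Q^{(\mu)}_{p,s}(K_i)$, closing the chain.

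The analytic core here — additivity over the $K_i$ plus the $p$-sum optimization of Lemma \ref{lem:optpsum} — is short; the delicate part is the bookkeeping around the $\mres$ operation and the limit points. The main obstacle is establishing $\sum_i v_{k,i}\le1$ for large $k$: this genuinely requires the reduction to $K_i\subseteq\supp\mu$ (otherwise a single wide Voronoi cell can meet several far-away $K_i$) together with uniform convergence of $d(\cdot,S_k)$ to $0$ on compact subsets of $\supp\mu$. Beyond that, one must interchange $\liminf$ with finite $p$-sums carefully, check $\#(S_k\mres K_i)\ge1$, and consistently honor the $0$-summand convention when $\bar v_i=0$ or $\alpha_i=0$ so that no $0\cdot\infty$ ambiguity arises.
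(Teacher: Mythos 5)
Your proof is correct and follows essentially the same route as the paper's: disjointness of the sets $S_k \mres K_i$ for large $k$ via uniform convergence of $d(\cdot,S_k)$ on compacta, passing to a subsequence along which $v_k \to \bar v$, additivity of $e_p$ over the disjoint $K_i$ combined with Lemma \ref{lem:psumlim}, and finally the optimization of Lemma \ref{lem:optpsum}. The only differences are cosmetic — your explicit reduction to $K_i \subseteq \supp\mu$ (which the paper leaves implicit when invoking Proposition \ref{prop:admisseq}) and your inline treatment of the degenerate cases $\bar v_i = 0$, $\alpha_i = 0$ in place of the paper's ``wlog'' reductions.
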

\begin{proof}
By Proposition \ref{prop:admisseq}, since $e_p(\mu; S_k) \to 0$, $d(\cdot, S_k) \to 0$ uniformly on $K = \bigsqcup_{i=1}^m K_i$ compact. 
Since the sets $K_i$ are compact and disjoint, there exists $\veps > 0$ such that the open neighborhoods $K_i^\veps$ are also disjoint. 

Thus for $k$ sufficiently large, each $\max_{x \in K_i} d(x, S_k) < \veps$, and $S_k \mres K_i \subseteq S_k \cap K_i^\veps$, which are disjoint.
This shows that the sets $\{S_k \mres K_i\}_{i=1}^m$ are disjoint subsets of $S_k$ for $k$ sufficiently large, which implies that $\sum_{i=1}^m v_{k,i} \leq 1$.

We now characterize the limit points of the sequence $(v_k)_k$, which always exist by the compactness of $[0,1]^m$.
We can assume wlog that $v_k \to \bar v \in [0,1]^m$, since restricting to a subsequence preserves $Q_{p,s}(\mu; \cS) = \underline{Q}_{p,s}(\mu)$ and can only increase $\underline Q_{p,s}^{(\mu)}(K_i; \cS \mres K_i)$. 

For any index $i = 1, \ldots, m$, we first observe
\[ \#(S_k \mres K_i)^{1/s} e_p^{(\mu)}(K_i; S_k \mres K_i) = v_{k,i}^{1/s} N_k^{1/s} e_p^{(\mu)}(K_i; S_k) \leq v_{k,i}^{1/s} N_k^{1/s} e_p(\mu; S_k), \]
and taking the $\liminf$ of both sides implies the first inequality.

For the second inequality, assume wlog that $\underline{Q}_{p,s}(\mu) < \infty$ and that each $\underline{Q}_{p,s}^{(\mu)}(K_i; \cS \mres K_i) > 0$, so that also each $\bar v_i > 0$ (otherwise, we can simply exclude $K_i$ by the above argument). Since the sets $K_i$ are disjoint, Proposition \ref{prop:enpprops} (i) implies
\[ N_k^{1/s} e_p(\mu; S) \geq N_k^{1/s} e^{(\mu)}_p(K; S) = {}^p \sum_{i=1}^m N_k^{1/s} e^{(\mu)}_p(K_i; S) = {}^p \sum_{i=1}^m v_{k,i}^{-1/s} \#(S_k \mres K_i)^{1/s} e^{(\mu)}_p(K_i; S \mres K_i). \]
Taking the $\liminf$ of both sides and applying Lemma \ref{lem:psumlim} yields the second inequality. The final inequality then follows from Lemma \ref{lem:optpsum}.
\end{proof}

Under the assumption that $\mu$ is $(p,s)$-quantizable and the quantization coefficients of subsets exist, we can forgo the compactness assumption and combine both additivity statements, simultaneously deducing Theorem \ref{thm:stats}:

\begin{theorem}[Finite additivity] \label{thm:finadd}
Let $p \in [1,\infty)$, $s \in (0,\infty)$. Set $\frac{1}{p^\prime} := \frac{1}{p} + \frac{1}{s}$.

Let $\mu \in \M^p_+(X)$ be $(p,s)$-quantizable, $A \subseteq X$ Borel such that $Q^{(\mu)}_{p,s}(\partial A) = 0$ and $Q^{(\mu)}_{p,s}(A)$ exists. Then
\begin{itemize}
\item $\underline Q_{p,s}(\mu)^{p^\prime} = Q^{(\mu)}_{p,s}(A)^{p^\prime} + \underline Q^{(\mu)}_{p,s}(A^c)^{p^\prime}$.
\item Suppose $0 < \underline Q_{p,s}(\mu) < \infty$. Then for any asymptotically optimal sequence $\cS = (S_k)_{N_k}$ for $\mu$, we have
\[ \lim_{k \to \infty} \frac{\# (S_k \cap A)}{N_k} = \frac{Q^{(\mu)}_{p,s}(A)^{p^\prime}}{\underline Q_{p,s}(\mu)^{p^\prime}}; \quad \lim_{k \to \infty} \frac{\# (S_k \cap A^c)}{N_k} = \frac{\underline Q^{(\mu)}_{p,s}(A^c)^{p^\prime}}{\underline Q_{p,s}(\mu)^{p^\prime}}. \]
\end{itemize}
\end{theorem}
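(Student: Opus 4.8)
The plan is to combine the subadditivity from Proposition~\ref{prop:subadd} with the superadditivity from Proposition~\ref{prop:compsupadd}, using the hypothesis $Q^{(\mu)}_{p,s}(\partial A) = 0$ to bridge between the Borel set $A$ and the compact sets required by the superadditivity statement. First I would record the easy half: writing $\mu = \mu|_{\bar A} + \mu|_{(\bar A)^c}$ and applying Proposition~\ref{prop:subadd} gives $\underline Q_{p,s}(\mu)^{p'} \le \overline Q^{(\mu)}_{p,s}(\bar A)^{p'} + \underline Q^{(\mu)}_{p,s}((\bar A)^c)^{p'}$; since $\bar A = A \cup \partial A$ and $Q^{(\mu)}_{p,s}(\partial A) = 0$, Proposition~\ref{prop:subadd} (applied once more, splitting $\mu|_{\bar A} = \mu|_A + \mu|_{\partial A}$, noting $\overline Q^{(\mu)}_{p,s}(\partial A) = 0$) shows $\overline Q^{(\mu)}_{p,s}(\bar A) = \overline Q^{(\mu)}_{p,s}(A) = Q^{(\mu)}_{p,s}(A)$, and similarly $(\bar A)^c \subseteq A^c \subseteq (\bar A)^c \cup \partial A$ forces $\underline Q^{(\mu)}_{p,s}((\bar A)^c) = \underline Q^{(\mu)}_{p,s}(A^c)$. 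Hence $\underline Q_{p,s}(\mu)^{p'} \le Q^{(\mu)}_{p,s}(A)^{p'} + \underline Q^{(\mu)}_{p,s}(A^c)^{p'}$.

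For the reverse inequality I would fix an asymptotically optimal sequence $\cS = (S_k)_{N_k}$ for $\mu$ and approximate from inside: by Ulam's theorem (and Proposition~\ref{prop:qpscont}, which applies since $\mu$ is $(p,s)$-quantizable), pick compact $K \subseteq A$ and $K' \subseteq A^c$ with $\mu(A \setminus K)$ and $\mu(A^c \setminus K')$ arbitrarily small, so that $\underline Q^{(\mu)}_{p,s}(K)$ is within $\eta$ of $\underline Q^{(\mu)}_{p,s}(A) = Q^{(\mu)}_{p,s}(A)$ and likewise for $K'$ (using $(p,s)$-quantizability to control the discarded mass via Proposition~\ref{prop:subadd}). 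Apply Proposition~\ref{prop:compsupadd} to the disjoint compact pair $K, K'$: passing to a subsequence along which the proportions $v_{k,1} = \#(S_k \mres K)/N_k$, $v_{k,2} = \#(S_k \mres K')/N_k$ converge to some $\bar v$, we obtain $\underline Q_{p,s}(\mu)^{p'} \ge \underline Q^{(\mu)}_{p,s}(K)^{p'} + \underline Q^{(\mu)}_{p,s}(K')^{p'}$. Letting the compact exhaustion improve and $\eta \to 0$ yields $\underline Q_{p,s}(\mu)^{p'} \ge Q^{(\mu)}_{p,s}(A)^{p'} + \underline Q^{(\mu)}_{p,s}(A^c)^{p'}$, completing the first bullet.

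For the spatial distribution statement, assume $0 < \underline Q_{p,s}(\mu) < \infty$. The key point is that along \emph{any} subsequence $(N_{k_l})$ realizing the $\liminf$ defining $\underline Q_{p,s}(\mu)$, and along which the proportions $v_{k_l,1}, v_{k_l,2}$ (now with $\mres K$ replaced by $\cap A$, $\cap A^c$, which differ only by points whose Voronoi cells touch $\partial A$ — negligible because $Q^{(\mu)}_{p,s}(\partial A) = 0$) converge to a limit point $\bar v$, the second bullet of Proposition~\ref{prop:compsupadd} forces $\underline Q_{p,s}(\mu)^{p} \ge \bar v_1^{-p/s}\underline Q^{(\mu)}_{p,s}(K;\cS\mres K)^p + \cdots$, and Lemma~\ref{lem:optpsum} identifies the minimizing proportions uniquely as $\bar v_1 = Q^{(\mu)}_{p,s}(A)^{p'}/\underline Q_{p,s}(\mu)^{p'}$, $\bar v_2 = \underline Q^{(\mu)}_{p,s}(A^c)^{p'}/\underline Q_{p,s}(\mu)^{p'}$ (using that equality must hold throughout, by the already-proved additivity identity and the fact that $\bar v$ is admissible in Lemma~\ref{lem:optpsum}). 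Since every convergent subsequence of $(v_k)_k$ has the \emph{same} limit, the full sequence converges, and $\#(S_k\cap A)/N_k \to \bar v_1$, $\#(S_k\cap A^c)/N_k\to\bar v_2$ as claimed. The main obstacle I anticipate is the careful bookkeeping at $\partial A$: one must show that replacing the compact sets $K, K'$ by $A, A^c$ and replacing $S_k \mres K_i$ by $S_k \cap A$ introduces only $o(N_k)$ discrepancy in the counts, which requires quantifying that points near $\partial A$ contribute negligibly both to the counts and to the quantization error — this is exactly where the hypotheses $Q^{(\mu)}_{p,s}(\partial A) = 0$ and $(p,s)$-quantizability are jointly essential, and handling the interplay of the three limits (compact exhaustion, $\eta\to 0$, and the subsequence extraction) in the right order is the delicate part.
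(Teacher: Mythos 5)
Your overall architecture matches the paper's: subadditivity (Proposition~\ref{prop:subadd}) for the inequality $\leq$, superadditivity on disjoint compact approximations (Proposition~\ref{prop:compsupadd}) for $\geq$, and the uniqueness of the minimizer in Lemma~\ref{lem:optpsum} to pin down the limiting proportions. The first bullet goes through as you describe (your detour through $\bar A$ is unnecessary --- the paper applies Proposition~\ref{prop:subadd} directly to $\mu = \mu|_A + \mu|_{A^c}$ --- but harmless).

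There is, however, a genuine gap in your treatment of the second bullet, at exactly the step you flag as delicate. You propose to compare $\#(S_k \mres K)$ with $\#(S_k \cap A)$ by arguing that they ``differ only by points whose Voronoi cells touch $\partial A$,'' which you then call negligible because $Q^{(\mu)}_{p,s}(\partial A) = 0$. Neither half of this works: the symmetric difference is not described by cells touching $\partial A$ (e.g.\ $S_k \cap A$ contains many points whose cells never meet $K$), and more importantly the hypothesis $Q^{(\mu)}_{p,s}(\partial A) = 0$ controls a quantization \emph{coefficient}, not a \emph{count} of quantizer points near the boundary --- a priori a positive fraction of $S_k$ could cluster near $\partial A$. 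The paper's resolution avoids any two-sided count comparison. One takes the compacts $K_i$ inside the open interiors $U_1 = \inter A$, $U_2 = \inter A^c$, so that $K_i^\veps \subseteq U_i$ for some $\veps > 0$; Proposition~\ref{prop:admisseq} (uniform convergence of $d(\cdot, S_k)$ to $0$ on $K_i$) then gives the \emph{one-sided} inclusion $S_k \mres K_i \subseteq S_k \cap U_i \subseteq S_k \cap A$ (resp.\ $A^c$) for $k$ large, hence $\bar t_i \leq \bar v_i$ for the limiting proportions. Because the functional $\beta_i \mapsto \beta_i^{-1/s}$ is decreasing, this one-sided bound is all that is needed to close the chain $\underline Q_{p,s}(\mu) \geq {}^p\sum_i \bar t_i^{-1/s}\underline Q^{(\mu)}_{p,s}(K_i) \geq {}^p\sum_i \bar v_i^{-1/s}\underline Q^{(\mu)}_{p,s}(K_i)$, and the equality case of Lemma~\ref{lem:optpsum} then forces $\bar v$ to be the unique optimizer. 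The role of $Q^{(\mu)}_{p,s}(\partial A) = 0$ is solely to guarantee, via Proposition~\ref{prop:subadd}, that $Q^{(\mu)}_{p,s}(U_1) = Q^{(\mu)}_{p,s}(A)$ and $\underline Q^{(\mu)}_{p,s}(U_2) = \underline Q^{(\mu)}_{p,s}(A^c)$, so that approximating from inside the interiors loses nothing. You should restructure the argument around the interiors and the one-sided inclusion rather than attempting to bound the discrepancy at $\partial A$ directly.
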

\begin{proof}
Proposition \ref{prop:subadd} immediately implies the inequality
\[ \underline Q_{p,s}(\mu)^{p^\prime} \leq Q^{(\mu)}_{p,s}(A)^{p^\prime} + \underline Q^{(\mu)}_{p,s}(A^c)^{p^\prime}. \]
For the converse inequality, let $\cS$ be an arbitrary asymptotically optimal sequence for $\mu$, and write
\[ v_{k,1} := \frac{\# (S_k \cap A)}{N_k}; \quad v_{k,2} := \frac{\# (S_k \cap A^c)}{N_k}. \]
Evidently $v_{k,1} + v_{k,2} \leq 1$ for arbitrary $k$. Restricting to a subsequence, we can assume wlog that $v_{k,i} \to \bar v_i \in [0,1]$.

Consider the disjoint open subsets $U_1 = \inter A$ and $U_2 = \inter A^c$. By the assumption that $Q^{(\mu)}_{p,s}(\partial A) = 0$, Proposition \ref{prop:subadd} implies that $Q^{(\mu)}_{p,s}(U_1) = Q^{(\mu)}_{p,s}(A)$ and $\underline Q^{(\mu)}_{p,s}(U_2) = \underline Q^{(\mu)}_{p,s}(A^c)$.

Let $\alpha > 1$ be arbitrary. By the assumption of $(p,s)$-quantizability, we can take compact subsets $K_i \subseteq U_i$ such that $\underline Q^{(\mu)}_{p,s}(K_i) \geq \alpha^{-1} \underline Q^{(\mu)}_{p,s}(U_i)$. Taking $\veps > 0$ such that each $K_i^\veps \subseteq U_i$, Proposition \ref{prop:admisseq} again implies that for $k$ sufficiently large, $S_k \mres K_i \subseteq S_k \cap U_i$ are disjoint, and
\[ t_{k,i} := \frac{\# (S_k \mres K_i)}{N_k} \leq \frac{\# (S_k \cap U_i)}{N_k} \leq v_{k,i}. \]
Restricting to a further subsequence, we can assume that $t_{k,i} \to \bar t_i \leq \bar v_i$. We then deduce from Proposition \ref{prop:compsupadd} that $\bar v_i \geq \bar t_i > 0$ whenever $\underline Q^{(\mu)}_{p,s}(U_i) > 0$, and that 
\[ \underline{Q}_{p,s}(\mu) \geq {}^p \sum_{i=1}^2 \bar t_i^{-1/s} \underline{Q}^{(\mu)}_{p,s}(K_i) \geq {}^p \sum_{i=1}^2 \bar v_i^{-1/s} \underline{Q}^{(\mu)}_{p,s}(K_i) \geq \alpha^{-1} \ {}^p \sum_{i=1}^2 \bar v_i^{-1/s} \underline{Q}^{(\mu)}_{p,s}(U_i). \]
We can then let $\alpha \to 1^-$ to obtain
\[ \underline{Q}_{p,s}(\mu) \geq \bar v_1^{-1/s} Q^{(\mu)}_{p,s}(A) +_{p} \bar v_2^{-1/s} \underline Q^{(\mu)}_{p,s}(A^c) \geq Q^{(\mu)}_{p,s}(A) +_{p^\prime} \underline Q^{(\mu)}_{p,s}(A^c). \]
This proves the desired equality, and shows also that $\bar v$ minimizes the above expression. When $0 < \underline Q_{p,s}(\mu) < \infty$, we have that $Q^{(\mu)}_{p,s}(A)$ and $\underline Q^{(\mu)}_{p,s}(A^c)$ are both finite and not both zero, thus by Lemma \ref{lem:optpsum}, we obtain that
\[ \bar v_1 = \frac{Q^{(\mu)}_{p,s}(A)^{p^\prime}}{\underline Q_{p,s}(\mu)^{p^\prime}}; \quad \bar v_2 = \frac{Q^{(\mu)}_{p,s}(A^c)^{p^\prime}}{\underline Q_{p,s}(\mu)^{p^\prime}}. \]
This shows that the limit point of the sequence $(v_k)_k$ is unique, yielding the desired convergence statement.
\end{proof}

One can likewise argue that for $A$ compact, the sequence $\cS \mres A$ is also asymptotically optimal for $\mu|_A$, by approximating only $A^c$ by compact subsets. 

The above argument only uses $(p,s)$-quantizability to ensure that the quantization coefficients of $A$ and $A^c$ can be approximated from below by compact subsets. 
This property can also be satisfied by appropriate sets in the case $p = \infty$, in which $(p,s)$-quantizability is not applicable, e.g.~by Jordan measurable subsets of $\R^d$.
One could then impose such assumptions on the set $A$ in order to prove Theorem \ref{thm:stats} also for $p = \infty$.

\section{Concentration inequalities}\label{app:conc}

If a measure $\nu$ satisfies concentration inequalities of the form $\nu(B_r(x)) \geq \vartheta r^s$ or $\nu(B_r(x)) \leq \vartheta r^s$, the quantization coefficients of a set $A$ with respect to $\nu$ can be controlled by $\vartheta^{-1} \nu(A)$. 
{These inequalities are quantitative statements of $\nu$ being at most resp.~at least $s$-dimensional, and yield upper resp.~lower bounds on the quantization coefficients of $\nu|_A$.}

The arguments we apply in this section are elementary and previously known, cf.~\cite[Sect.~12]{quantbook} and \cite[Sect.~4]{discapprox}; we nevertheless provide full proofs for the sake of completeness and precision.

\subsection{Lower bound}
We first prove the following lower bound, adapted from \cite[Prop.~4.2]{discapprox}:

\begin{lemma}[Lower bound]\label{lem:conclb}
Let $A \subseteq X$ be Borel with $\nu(A) < \infty$, and suppose there exist $\vartheta, \delta > 0$ such that
\[ \nu(A \cap B_r(x)) \leq \vartheta r^s \quad \text{for all } x \in X, r < \delta. \]
Then for each $p \in [1,\infty]$, the lower quantization coefficient of $A$ with respect to $\nu$ is bounded as follows:
\[ \underline{Q}^{(\nu)}_{p,s}(A)^{p^\prime} \geq \left( \frac{s}{s+p} \right)^{\frac{s}{s+p}} \vartheta^{-\frac{p}{s+p}} \nu(A), \] 
{where $\frac{1}{p^\prime} = \frac1p + \frac1s$.}
By convention, for $p = \infty$, $\left( \frac{s}{s+p} \right)^{\frac{s}{s+p}} := \lim_{t \to 0^+} t^t = 1$.
\end{lemma}
\begin{proof}
We prove the inequality for $p < \infty$; the case $p = \infty$ follows by letting $q < \infty$ arbitrarily large, bounding
\[ \underline{Q}^{(\nu)}_{q,s}(A) \leq \nu(A)^{\frac{1}{q}} \underline{Q}_{\infty,s}(A) \implies \underline{Q}_{\infty,s}(A)^{\frac{qs}{q+s}} \geq \nu(A)^{-\frac{s}{q+s}} \underline{Q}^{(\nu)}_{q,s}(A)^{\frac{qs}{q+s}} \geq \left( \frac{s}{s+q} \right)^{\frac{s}{s+q}} \vartheta^{-\frac{q}{s+q}} \nu(A)^{\frac{q}{s+q}} \]
and letting $q \to \infty$ so that $\frac{q}{s+q} \to 1$.

Suppose wlog that $A \subseteq \supp\nu$, noting that neither side of the inequality is affected by this restriction, and that $\nu(A) > 0$, since otherwise the inequality is trivial.
 Let $S \in \cS_N(X)$, $N \in \N$. By the layer cake representation, we have
\begin{align*} 
e_p^{(\nu)}(A; S)^p % & = \int_A d(x, S)^p \di\nu(x) 
= \int_0^\infty p r^{p-1} \nu(A \cap \{ d(\cdot, S) \geq r \}) \di r
= \int_0^\infty p r^{p-1} \left[ \nu(A) - \nu(A \cap \bigcup_{x \in S} B_r(x) ) \right] \di r. 
\end{align*}
By the union bound and the assumption on $A$, for any $r < \delta$, we have
\[ \nu(A \cap \bigcup_{x \in S} B_r(x)) \leq \sum_{x \in S} \nu(A \cap B_r(x)) \leq N \vartheta r^s = \nu(A) \frac{r^s}{r_N^s } \quad \text{where we set} \quad r_N := \left( \frac{\nu(A)}{N \vartheta} \right)^{1/s}. \]
For $N$ large enough so that $r_N < \delta$, the following then holds for all $S \in \cS_N(X)$:
\begin{align*} 
e_p^{(\nu)}(A; S)^p 
& \geq \nu(A) \int_0^{r_N} p r^{p-1} \left[ 1 - \frac{r^s}{r_N^s} \right] \di r
= \left( 1 - \frac{p}{s+p} \right) r_N^p \nu(A)
= \frac{s}{s+p} N^{-p/s} \vartheta^{-p/s} \nu(A)^{1+p/s}. 
\end{align*}
Taking the infimum over all admissible $S$, rearranging and taking the $p$th root, we obtain
\[ N^{1/s} e_{N,p}^{(\nu)}(A) \geq \left( \frac{s}{s+p} \right)^{1/p} \vartheta^{-1/s} \nu(A)^{\frac{s+p}{sp}}. \]
Letting $N \to \infty$ yields the inequality for $p < \infty$. 
\end{proof}

\subsection{Upper bounds}
We obtain a converse upper bound by controlling the packing numbers of compact sets. First recall the duality between packings and covers:

\begin{recall*}
Let $A \subseteq X$ be compact. The packing and covering numbers of $A$ satisfy the following inequalities:
\[ N(A; 2r) \leq P(A; r) \leq N(A; r) \quad \text{for all} \quad r > 0. \]
The first inequality follows from maximality: since $P(A; r) < \infty$, there exists an $r$-packing $P \subseteq A$ such that $P(A; r) = \# P$. Then for any $x \in A \setminus P$, $P \sqcup \{x\}$ cannot be an $r$-packing of $A$, so there exists some $a \in P$ such that $B_r(x) \cap B_r(a) \neq \emptyset$ hence $x \in B_{2r}(A)$. Thus $P$ is a $2r$-cover of $A$.

The second inequality follows from the pigeonhole principle: given any $r$-packing $P \subseteq A$ and $r$-cover $S \subseteq X$ of $A$, each element $x \in S$ is contained in one of the balls $B_r(a)$, $a \in S$, and if two distinct elements $x, y \in P$ were contained in the same ball $B_r(a)$, that would imply that $a \in B_r(x) \cap B_r(y)$. Hence there always exists an injection from an $r$-packing to an $r$-cover. This shows in particular that $P(A; r) < \infty$ for $A$ compact, since there always exists a finite $r$-cover of $A$ for arbitrary $r > 0$.

Note moreover that for any cover of $A$ by $N$ open balls of radius $r$ with centers $S = \{x_i\}_{i=1}^N$, by compactness $e_\infty(A; S) = \max_{x \in A} d(x, S)$ is attained, hence strictly less than $r$. Hence for all $N < \# A$, we have that $e_{N,\infty}(A) > 0$ and $N(A; e_{N,\infty}(A)) > N$.

Since packings are always subsets of $A$, the packing numbers of $A$ are independent of the choice of the ambient domain $X$. While covers of $A$ need not be comprised of elements of $A$, every $r$-cover $S \subseteq X$ still induces a $2r$-cover $S^\prime \subseteq A$ with at most the same number of elements. Thus considering the covering problems separately on the metric spaces $A$ and $X$, we have $e_{N,\infty}^{[X]}(A) \leq e_{N,\infty}^{[A]}(A) \leq 2 e_{N,\infty}^{[X]}(A)$ for each $N$.
\end{recall*}

For the case $p = \infty$, we prove a more general statement {in terms of Minkowski contents (see Definition \ref{def:mincon})} which also encompasses sets of dimension less than $s$, cf.~\cite[Lem.~2.10]{covgrow}.

\begin{lemma}[Packing upper bound]\label{lem:packub}
Let $(X, \nu)$ be a metric measure space, $A \subseteq X$ compact such that for some $\vartheta, \delta, s > 0$,
\[ \nu(B_r(x)) \geq \vartheta r^s \quad \text{for all } x \in A,\ r < \delta. \]
Then for each $0 < m \leq s$,
\[ \overline{Q}_{\infty,m}^{[X]}(A)^m \leq \overline{Q}_{\infty,m}^{[A]}(A)^m \leq 2^m \omega_{s-m} \vartheta^{-1} \overline \cM^m_{(\nu)}(A), \]
where $\overline{Q}_{\infty,m}^{[A]}(A)$ denotes the upper $\infty$-quantization coefficient of $A$ within the compact metric space $A$, i.e.~when covers are constructed only from elements of $A$.
\end{lemma}
\begin{proof}
Assume wlog that $A$ is infinite, so that $e_N := e^{[A]}_{N,\infty}(A) > 0$ for all $N$. Proposition \ref{prop:enpconv} applied to the metric space $A$ implies that $e_N \to 0$ as $N \to \infty$.

Let $N \in \N$ be sufficiently large so that $e_N \leq \delta$, and take $0 < r < e_N/2$. 
Take a maximal $r$-packing $S$ on $A$. Then $S$ is a $2r$-cover of $A$ by elements of $A$, thus $\# S > N$ since $2r < e_N$. Since $S$ is an $r$-packing, we have
\[ \nu(A^r) \geq \nu\left( \bigsqcup_{x \in S} B_r(x) \right) = \sum_{x \in S} \nu(B_r(x)) \geq N \vartheta r^s. \]
Rearranging and taking the supremum over all such $r$,
\[ \overline{Q}_{\infty,m}^{[A]}(A)^m = \limsup_{N \to \infty} N e^{[A]}_{N,\infty}(A)^m = 2^m \limsup_{N \to \infty} \sup_{0 < r < e_N/2} N r^m \leq 2^m \limsup_{N \to \infty} \sup_{0 < r < e_N/2} \frac{\nu(A^r)}{\vartheta r^{s-m}}. \]
The desired inequality then follows from the fact that $e_N/2 \to 0$ and
\[ \overline \cM^m_{(\nu)}(A) = \inf_{r_0 > 0} \sup_{0 < r < r_0} \frac{\nu(A^r)}{\omega_{s-m} r^{s-m}} = \lim_{r_0 \to 0} \sup_{0 < r < r_0} \frac{\nu(A^r)}{\omega_{s-m} r^{s-m}}. \qedhere \]
\end{proof}

Note that we do not assume $X$ itself to be Polish; the compactness of $A$ still ensures that finite $r$-covers always exist. 

We can then deduce the following bound for $p < \infty$ in the case $m = s$:

\begin{lemma}[Upper bound]\label{lem:concub}
Let $A \subseteq X$ be compact, and let $\nu$ be a locally finite Borel measure on $X$ admitting constants $\vartheta, \delta, s > 0$ such that
\[ \nu(B_r(x)) \geq \vartheta r^s \quad \text{for all } x \in A,\ r < \delta. \]
Then for each $p \in [1,\infty]$, the upper quantization coefficient of $A$ with respect to $\nu$ is bounded as follows:
\[ \overline{Q}^{(\nu)}_{p,s}(A)^{p^\prime} \leq 2^{p^\prime} \vartheta^{-\frac{p}{s+p}} \nu(A), \] 
{where $\frac{1}{p^\prime} = \frac1p + \frac1s$, and} again the same upper bound holds whether quantizers are chosen from $X$ or only from $A$.
\end{lemma}
\begin{proof}
For $m = s$, Lemma \ref{lem:packub} yields
\[ \overline{Q}_{\infty,s}(A)^s \leq 2^s \vartheta^{-1} \nu(A), \]
which proves the statement for $p = \infty$. For $p < \infty$, {Proposition \ref{prop:coeffprops} (iii) implies that}
\[ \overline{Q}^{(\nu)}_{p,s}(A) \leq \nu(A)^{1/p} \overline{Q}_{\infty,s}(A) \leq 2 \vartheta^{-\frac{1}{s}} \nu(A)^{\frac{1}{p}+\frac{1}{s}}, \]
and exponentiating by $p^\prime$ yields the statement for $p < \infty$.
\end{proof}

% \begin{paragraph}{{\bf Acknowledgments}}
\noindent {{\bf Acknowledgments.}}
The author expresses his warm gratitude to his advisor, Mikaela Iacobelli, for her constructive feedback and her continued academic and personal support.
The author is also very grateful to Urs Lang for his insightful comments and discussions,
{and the two anonymous referees for their careful reviews leading to substantial improvements in the presentation and scope of the paper.}

\bibliographystyle{alpha}
%\nocite{*}
\raggedright
%\addcontentsline{toc}{section}{References}
\bibliography{references}

\end{document}